\newcommand\Modified{1.0; October 17th, 2013}
\documentclass[11pt]{amsart}
\usepackage{amsfonts, amsmath,amscd, amssymb, latexsym, mathrsfs, mathtools, slashed, stmaryrd, verbatim, wasysym }
\usepackage[all]{xy}
\usepackage{hyperref}
\usepackage{graphicx}
\textheight=235mm \textwidth=148mm \topmargin=-15mm
\oddsidemargin=-0mm \evensidemargin=-35mm

\newcommand\datver[1]{\def\datverp%
{\par\boxed{\boxed{\text{Version: #1; Run: \today}}}}}

\newtheorem{theorem}{Theorem}

\newtheorem{definition}[theorem]{Definition}
\newtheorem{corollary}[theorem]{Corollary}

\newtheorem{lemma}[theorem]{Lemma}

\newtheorem{proposition}[theorem]{Proposition}
\newtheorem{remark}[theorem]{Remark}

\numberwithin{equation}{section}
\numberwithin{theorem}{section}

\newtheorem{assume}{Assumption}



\usepackage{color}
\definecolor{darkgreen}{cmyk}{1,0,1,.2}
\definecolor{m}{rgb}{1,0.1,1}

\renewcommand{\bar}{\overline}

\renewcommand{\hat}[1]{\widehat{#1}}
\newcommand{\rest}[1]{\big\rvert_{#1}} 

\newcommand{\wt}[1]{\widetilde{#1}}


\newcommand{\Coll}{\mathrm{Coll}} 
\newcommand\eps\varepsilon

\newcommand\lra{\longrightarrow}
\newcommand\xlra[1]{\xrightarrow{\phantom{x} #1 \phantom{x}}}

\newcommand\pa{\partial}

\newcommand\spec{\mathrm{spec}}
\newcommand\Spec{\mathrm{Spec}}


\newcommand\ie{\operatorname{ie}}
\newcommand\iie{\operatorname{iie}}


\newcommand\Iie{{}^{\iie}}

\newcommand \R {\mathbb{R}}

\newcommand\dCI{\dot{\mathcal{C}}^{\infty}}

\newcommand\CI{{\mathcal{C}}^{\infty}}
\newcommand\CIc{{\mathcal{C}}^{\infty}_c}
\newcommand\CmI{{\mathcal{C}}^{-\infty}}


\newcommand{\lrpar}[1]{\left( #1 \right)}
\newcommand{\lrspar}[1]{\left[ #1 \right]}
\newcommand\ang[1]{\left\langle #1 \right\rangle}
\newcommand{\lrbrac}[1]{\left\lbrace #1 \right\rbrace}
\newcommand{\norm}[1]{\lVert #1 \rVert}
\newcommand{\abs}[1]{\left\lvert #1 \right\rvert}

\DeclareMathOperator*{\btimes}{\times} 


\newcommand{\coker}{\operatorname{ coker }}

\newcommand\diag{\operatorname{diag}}
\newcommand\Diff{\operatorname{Diff}}

\newcommand{\dR}{\operatorname{dR}}
\newcommand\dvol{\operatorname{dvol}}

\newcommand{\tend}{\operatorname{End}}

\newcommand{\Hom}{\operatorname{Hom}}
\newcommand{\Ht}{\operatorname{H}}

\newcommand\Id{\operatorname{Id}}
\renewcommand{\Im}{\operatorname{Im}}
\newcommand{\Image}{\operatorname{Image}}

\newcommand{\loc}{\operatorname{loc}}
\renewcommand{\mid}{\operatorname{mid}}

\newcommand{\phg}{ \mathrm{phg} }

\renewcommand{\Re}{\operatorname{Re}}

\newcommand{\reg}{ \mathrm{reg} }

\newcommand{\sign}{\operatorname{sign}}

\newcommand{\supp}{\operatorname{supp}}

\newcommand{\ff}{\mathrm{ff}}

\newcommand\Mand{\text{ and }}

\newcommand\Mfor{\text{ for }}
\newcommand\Mforall{\text{ for all }}

\newcommand\Mhence{\text{ hence }}

\newcommand\Mif{\text{ if }}

\newcommand\Min{\text{ in }}

\newcommand\Mon{\text{ on }}
\newcommand\Mor{\text{ or }}

\newcommand\Mst{\text{ s.t. }}

\newcommand\Mwhenever{\text{ whenever }}
\newcommand\Mwhere{\text{ where }}
\newcommand\Mwith{\text{ with }}



\DeclareMathAlphabet{\mathpzc}{OT1}{pzc}{m}{it}
\newcommand{\cl}[1]{\mathpzc{cl}\left( #1 \right)}



\newcounter{mnotecount}[section]

\newcommand\paperintro%
        {%
         }
\newcommand\paperbody%
        {%
         }


\newcommand\bbB{\mathbb{B}}
\newcommand\bbC{\mathbb{C}}

\newcommand\bbN{\mathbb{N}}

\def\RR{\bbR}
\newcommand\bbR{\mathbb{R}}
\newcommand\bbS{\mathbb{S}}

\newcommand\cA{\mathcal{A}}
\newcommand\cB{\mathcal{B}}
\newcommand\cC{\mathcal{C}}
\newcommand\calC{\mathcal{C}}
\newcommand\cD{\mathcal{D}}
\newcommand\calD{\mathcal{D}}
\newcommand\cE{\mathcal{E}}
\newcommand\cF{\mathcal{F}}
\newcommand\cG{\mathcal{G}}
\newcommand\cH{\mathcal{H}}
\newcommand\cI{\mathcal{I}}
\newcommand\cJ{\mathcal{J}}
\newcommand\cK{\mathcal{K}}
\newcommand\cL{\mathcal{L}}
\newcommand\cM{\mathcal{M}}

\newcommand\cO{\mathcal{O}}
\newcommand\calO{\mathcal{O}}
\newcommand\cP{\mathcal{P}}

\newcommand\cS{\mathcal{S}}
\newcommand\cT{\mathcal{T}}
\newcommand\cU{\mathcal{U}}
\newcommand\cV{\mathcal{V}}
\newcommand\cW{\mathcal{W}}

\newcommand\sB{\mathscr{B}}
\newcommand\sC{\mathscr{C}}
\newcommand\sD{\mathscr{D}}

\newcommand\sK{\mathscr{K}}

\newcommand\sP{\mathscr{P}}

\newcommand\sT{\mathscr{T}}

\newcommand\tH{\operatorname{H}}
\newcommand\tI{\operatorname{I}}

\newcommand\bB{\mathbf{B}}
\newcommand\bN{\mathbf{N}}

\newcommand\cTR{\mathcal{TR}}


\marginparwidth 0pt
\oddsidemargin  0pt
\evensidemargin  0pt
\marginparsep 0pt
\topmargin   0pt
\textwidth 6.5 in 
\textheight 8.5 in


\datver{\Modified}

\begin{document}
\title{Hodge theory on Cheeger spaces}

\author{Pierre Albin}
\address{University of Illinois at Urbana-Champaign}
\email{palbin@illinois.edu}
\author{Eric Leichtnam}
\address{CNRS Institut de Math\'ematiques de Jussieu-PRG, b\^atiment Sophie Germain.}
\author{Rafe Mazzeo}
\address{Department of Mathematics, Stanford University}
\email{mazzeo@math.stanford.edu}
\author{Paolo Piazza}
\address{Dipartimento di Matematica, Sapienza Universit\`a di Roma}
\email{piazza@mat.uniroma1.it}

\begin{abstract}
We extend the study of the de Rham operator with ideal boundary conditions from the case of isolated conic singularities,
as analyzed by Cheeger, to the case of arbitrary stratified pseudomanifolds. We introduce a class of ideal boundary 
conditions and the notion of mezzoperversity, which intermediates between the standard lower and upper middle perversities 
in intersection theory, as interpreted in this de Rham setting, and show that the de Rham operator with these boundary
conditions is Fredholm and has compact resolvent. We also prove an isomorphism between the resulting Hodge 
and $L^2$ de Rham cohomology groups, and that these are independent of the choice of iterated edge metric. 
On spaces which admit ideal boundary conditions of this type which are also self-dual, which we call `Cheeger spaces', 
we show that these Hodge/de Rham cohomology groups satisfy Poincar\'e Duality.
\end{abstract}

\maketitle

\tableofcontents

\paperintro
\section*{Introduction}

This paper is part of a larger project which develops various aspects of de Rham and Hodge cohomology theory for the natural 
class of `iterated edge' metrics on smoothly stratified spaces. In the course of this, we develop new analytic tools 
adapted to this singular geometry.\\

Stratified spaces arise naturally even when one is primarily interested in smooth objects. For example, Whitney showed that 
if $\hat X$ is the set of common zeros of a finite family of polynomials, then the singular part of $\hat X,$ 
$\mathrm{sing}(\hat X),$ is again algebraic and of dimension strictly less than that of $\hat X.$ This gives a natural filtration
\begin{equation*}
	\hat X \supseteq \mathrm{sing}(\hat X) \supseteq \mathrm{sing}(\mathrm{sing}(\hat X)) \supseteq \ldots
\end{equation*}
by algebraic subvarieties, or alternately a decomposition of $\hat X$ into a union of analytic manifolds: 
\begin{equation*}
	\hat X^{\mathrm{reg}} = \hat X \setminus \mathrm{sing}(\hat X), \quad
	\mathrm{sing}(\hat X) \setminus \mathrm{sing}(\mathrm{sing}(\hat X)), \quad \ldots,
\end{equation*}
where these different `strata' fit together in a precise way. There has been extensive investigation of this notion of 
stratified spaces. This class includes every analytic variety (Whitney), subanalytic set (Verdier), as well as the generalization 
of this last class to the class of definable sets in an arbitrary order-minimal structure (Loi). Stratified spaces also include 
polyhedra, orbit spaces of many group actions on manifolds, and mapping cylinders of maps between manifolds.
These spaces, the definition of which is recalled below, thus arise as ubiquitously as smooth manifolds. Nonetheless,
many basic aspects of geometric analysis on these spaces have not yet been developed in the literature, and this paper
should be regarded as one step toward this goal. 

A fundamental problem is to understand the de Rham and Hodge theory of these spaces. The seminal work on this was
carried out by Cheeger in the early 1980's \cite{Cheeger:Conic, Cheeger:Hodge, Cheeger:Spec}.  If $\hat X$ is a compact
stratified pseudomanifold and $X$ its top-dimensional regular part, and if $g$ is an `iterated incomplete 
edge' (which we abbreviate as $\iie$) metric on $X$, then there  are two natural de Rham complexes of $L^2$ forms:  
the first uses the minimal extension of the exterior derivative, with domain
\begin{equation*}
	\mathcal D_{\min}(d)
	= \{ \omega \in L^2(X; \Lambda^*T^*X): 
	\exists \omega_j \in \cC^{\infty}_c(X;\Lambda^*T^*X) \text{ s.t. } \omega_j \to \omega, \Mand (d\omega_j) 
\text{ is $L^2$-Cauchy} \},
\end{equation*}
while the second uses the maximal extension of $d$, with domain
\begin{equation*}
	\mathcal D_{\max}(d)
	= \{ \omega \in L^2(X; \Lambda^*T^*X): d\omega \in L^2(X; \Lambda^*T^*X)\}.
\end{equation*}
Both complexes have finite dimensional cohomology and Cheeger \cite[Theorem 6.1]{Cheeger:Hodge} showed 
that the latter is dual to the topological lower middle perversity intersection homology group $IH_*^{\overline m}(\hat X)$ 
of Goresky-MacPherson. It follows that the cohomology of the minimal domain $L^2$ complex is the dual of the upper 
middle perversity groups $IH_*^{\overline n}(\hat X).$ There is a special class of stratified spaces, satisfying the Witt condition, 
on which these two cohomologies coincide. Even more strongly, for suitably chosen metrics, the domains 
$\cD_{\min}(d)$ and $\cD_{\max}(d)$ coincide.

In a previous paper \cite{ALMP} we have given a new treatment of the de Rham/Hodge theory of an $\iie$ metric on 
Witt spaces; this was then applied to the Novikov conjecture on these spaces. In the present paper, we extend that theory
to handle general non-Witt spaces. For any metric on a non-Witt space, $\cD_{\min}(d) \neq \cD_{\max}(d)$, and one key
part of our task is to define geometrically appropriate domains for the exterior derivative that lie between these two extremes.
This is done following ideas of Cheeger, published by him in the case of isolated conic singularities \cite{Cheeger:Conic}, 
and here extended to this general stratified setting. A space $\hat X$ is non-Witt if and only if there is a stratum $Y$ with 
link at $q \in Y,$ $Z_q,$ of even dimension such that
\begin{equation*}
	IH^{\overline m}_{\tfrac12\dim Z_q}(Z_q) \neq \{ 0 \}.
\end{equation*}
If this is the case, these spaces form a vector bundle over $Y$ with a natural flat connection. At every stratum where
the Witt condition fails, we  choose a smooth subbundle, with 
\begin{equation*}
	W(Z_q) \subseteq (IH^{\overline m}_{\tfrac12\dim Z_q}(Z_q))^*,
\end{equation*}
which is parallel with respect to the flat connection, and so that whenever two strata have intersecting closures,
these choices are compatible. Such a choice will be called a {\em mezzoperversity.}

We work with `suitably scaled, rigid' $\iie$ metrics. The rigidity condition states that the metric has an `exact conic'
structure on the conic fibers of the normal neighborhoods of each stratum.  As shown in \cite[Propositions 3.1-3.2 ]{ALMP},
there is no loss of generality in assuming this. The other condition of being suitably scaled is also easy to
arrange, and requires simply that the restriction of the metric to the link of each of these conic fibrations be multiplied
by a small enough factor so as to exclude spurious small eigenvalues of the induced operators on these links, 
and hence also excludes unnecessary indicial roots. We say more about this in \S\ref{sec:IndicialDeRham}, see in
particular Lemma~\ref{lem:SuitableScaling}.
\begin{theorem}
Let $(\hat X, g)$ be a stratified pseudomanifold with a (suitably scaled) $iie$-metric. Each mezzoperversity $\cW$ 
on $\hat X$ determines a domain for the exterior derivative and for the de Rham operator
\begin{equation*}
	\mathcal D_{\cW}(d), \quad
	\mathcal D_{\cW}(d + \delta). 
\end{equation*}
We refer to this specification by saying that we are imposing {\em Cheeger ideal boundary conditions.}
The latter makes $d+ \delta$ a self-adjoint operator with compact resolvent and induces a strong Kodaira decomposition on differential forms, the former induces a Fredholm complex whose cohomology
\begin{equation*}
	\text{H}_{\cW}^*(\hat X) = \text{H}^*(d, \mathcal D_{\cW}(d))
\end{equation*}
satisfies a Hodge theorem and is independent of the choice of $\iie$-metric.
\end{theorem}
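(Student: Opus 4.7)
The proof will proceed by induction on the depth of the stratification, following the strategy already used in \cite{ALMP} for Witt spaces but now incorporating the extra data of the mezzoperversity $\mathcal L$ at each non-Witt stratum. The base case of depth zero is a smooth closed manifold, where everything reduces to classical Hodge theory. For the inductive step, I would work stratum by stratum, starting at the deepest one $Y$ and moving outward. Near $Y$ the iterated edge metric has a warped product structure $dr^2 + r^2 g_Z + \phi^* g_Y$, and, after the rigidity reduction, the Gauss–Bonnet operator $d+\delta$ takes the standard Bismut-type model form
\begin{equation*}
    \eth = \frac{1}{r}\Bigl( r \partial_r + A_Z + \text{higher-order-in-}r \text{ transverse terms}\Bigr),
\end{equation*}
so that local domain questions reduce, via separation of variables on the link $Z_q$, to analyzing the vertical family of Gauss–Bonnet operators on the links together with their indicial roots.

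The first main step is to carry out the indicial analysis: the indicial roots of $\eth$ along $Y$ are in bijection with the (shifted) spectrum of the vertical Gauss–Bonnet operator on $Z_q$, and in the suitably scaled regime of Lemma~\ref{lem:SuitableScaling} only those coming from the harmonic forms on the link contribute to the borderline behavior separating $\mathcal D_{\min}$ from $\mathcal D_{\max}$. By the inductive hypothesis applied to $Z_q$, the vertical harmonic forms are identified with $\mathrm{H}^*_{\mathcal L|_Z}(Z_q)$, which (in middle degree, when $\dim Z_q$ is even) is the fibre over $q$ of the flat bundle $\mathcal H$ of which $W(Z_q)$ is the chosen parallel subbundle. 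One then defines
\begin{equation*}
    \mathcal D_{\mathcal L}(d) = \{ \omega \in \mathcal D_{\max}(d) : \text{at each non-Witt stratum $Y$, the leading asymptotic coefficient of $\omega$ lies in } W \},
\end{equation*}
with $\mathcal D_{\mathcal L}(d+\delta)$ defined analogously using both $W$ and its annihilator for $\delta$. Flatness of $W$ under the natural connection is used precisely to check that $d$ preserves this domain, and parallel compatibility across strata ensures consistency when several strata meet in a closure.

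Having defined the domain, Fredholmness and the compact resolvent property follow from the parametrix construction in the iterated edge pseudodifferential calculus developed in our earlier work: one builds a generalized inverse for $\eth$ whose Schwartz kernel is polyhomogeneous on the iterated edge double space, with index sets dictated exactly by the mezzoperversity choice at each stratum, so that composition with the inclusion into $L^2$ is compact. Self-adjointness of $\eth_{\mathcal L} = d + \delta$ with this domain is checked by verifying that $\mathcal D_{\mathcal L}(d+\delta)$ equals its own adjoint domain, which is a boundary pairing computation: the symplectic form on leading coefficients at each non-Witt stratum pairs $W$ with its annihilator, and the $\star$-duality makes the chosen lagrangian self-annihilating. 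From self-adjointness and compact resolvent the strong Kodaira decomposition $L^2 = \ker \eth_{\mathcal L} \oplus \overline{\mathrm{Ran}\, d_{\mathcal L}} \oplus \overline{\mathrm{Ran}\, \delta_{\mathcal L}}$ is automatic, and this immediately yields the Hodge isomorphism $\mathrm{H}^*_{\mathcal L}(\hat X) \cong \ker \eth_{\mathcal L}$ and Fredholmness of the complex.

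Finally, metric independence is obtained by a deformation argument: given two suitably scaled rigid $\iie$-metrics $g_0, g_1$ carrying the same mezzoperversity, one connects them by a smooth path $g_t$ through such metrics and uses the parametrix construction, which depends continuously on $g_t$, to show that the family $\eth_{\mathcal L,t}$ has locally constant index and, more strongly, that the dimensions of $\ker \eth_{\mathcal L,t}$ in each degree are constant in $t$. This is combined with the fact that any two $\iie$-metrics compatible with the same smooth stratified structure are path-connected through the rigid suitably scaled class, the deformation of the rigidification being handled as in \cite[Propositions 3.1--3.2]{ALMP}. The main obstacle throughout is the interaction between different strata: the flatness and compatibility requirements on $\mathcal L$ are exactly what is needed so that the local constructions near each $Y$ glue to a globally self-adjoint operator, and it is the verification of this gluing — that the boundary leading-coefficient maps are well-defined on $\mathcal D_{\max}$ and that the parallelism makes the chosen lagrangian preserved by both $d$ and $\delta$ — that is the technical heart of the argument.
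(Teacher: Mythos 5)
Your overall architecture matches the paper's: induction on the depth of the stratification, indicial analysis at each stratum identifying the borderline behaviour with middle-degree vertical harmonic forms on the links, domain defined by constraining the leading asymptotic coefficient to lie in the chosen flat subbundle, a parametrix construction for Fredholmness and compact resolvent, self-adjointness by a boundary pairing, and the Kodaira decomposition and Hodge isomorphism by abstract Hilbert-complex theory. Two points deserve comment.

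First, a technical device the paper relies on that your sketch omits: the leading coefficients of elements of $\cD_{\max,\bB}(\eth_{\dR})$ are only distributional (in $H^{-1/2}$ along the stratum), so the naive Stokes-theorem boundary pairing does not apply directly, and the parametrix a priori only gives information on weighted spaces $x^{\eps}L^2$. The paper bridges this with the ``integration by parts'' identity (Lemma~\ref{lem:IntByParts}) and its consequence Lemma~\ref{lem:IndOpConclusion}, which allow the parametrix identities to be transferred from $x^{\eps}L^2$ to $L^2$ with the $\cD_{\cL}$ domain. It is this mechanism, not the parametrix alone, that yields the key decay statement (Theorem~\ref{thm:WitlessDecay}) and hence compactness of the inclusion $\cD_{\cL}(\eth_{\dR})\hookrightarrow L^2$. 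Your proposal takes the boundary pairing computation as unproblematic; it is not, and this is where much of the technical work in the paper lives.

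Second — and this is a genuine gap — your metric-independence argument does not work as stated. Continuity of a self-adjoint family $\eth_{\cL,t}$ with compact resolvent gives locally constant index, but the index is zero anyway, and what you actually need is constancy of $\dim\ker\eth_{\cL,t}\rvert_{\Lambda^k}$ in each fixed degree $k$. Eigenvalues of such a family can cross zero, changing the kernel dimension in individual degrees while preserving the Euler characteristic; nothing in continuity of the parametrix rules out this spectral flow. The paper proves something stronger and, in the end, more elementary: that the domains $\cD_{\cW}(d;g)$ and $\cD_{\cW'}(d;g')$ are literally \emph{equal}. Since any two rigid, suitably scaled $\iie$ metrics are quasi-isometric, $\cD_{\max}(d)$ and $\cD_{\min}(d)$ coincide for $g$ and $g'$; then Lemma~\ref{lem:CoefIsExact} shows that the difference $\alpha(u_{\delta_g})-\alpha(u_{\delta_{g'}})$ of leading coefficients is \emph{exact} on the link, hence represents the same class in $\tH^{\mid}(H/Y)$, so the condition defining $\cD_{\cW}(d)$ is metric-independent once phrased in de Rham (rather than Hodge) terms. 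This gives equality, not merely isomorphism, of the de Rham cohomology groups. If you wish to salvage the deformation argument you would need to supply a chain-homotopy equivalence of the Hilbert complexes along the path $g_t$, which your proposal does not provide.
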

In a companion \cite{ALMP13.2}  to this paper, we show that this cohomology is invariant under stratified homotopy equivalences.

In proving this theorem, there are two main analytic difficulties to overcome. The first is the definition of the domains. If $Y$ 
is a stratum of depth one, then its link $Z$ is a closed manifold, and we establish below that any element 
$u \in \cD_{\max}(d+\delta)$ has a distributional asymptotic expansion in terms of $x,$ a `boundary defining function' for $Y,$
\begin{equation}\label{DistExpansion}
\begin{gathered}
	u = x^{-f/2}(\alpha(u) + dx\wedge \beta(u)) + v, \\
	\Mwith 
	\alpha(u), \beta(u) \in H^{-1/2}(Y,\Lambda^*T^*Y \otimes \Ht^{\mid}_{L^2}(Z) ), \quad
	v \in x^{1-\eps}H^{-1}(X; \Lambda^* \Iie T^*X)
\end{gathered}
\end{equation}
(for arbitrarily small $\eps$).
The coefficients $\alpha(u),$ $\beta(u)$ vanish if $\dim Z$ is not even or, more generally, if the Witt condition holds.
Otherwise these forms constitute the Cauchy data, on which the ideal boundary conditions are (algebraic) restrictions.
We show that once we have imposed suitable boundary conditions on the first $k$ singular strata (in terms of
increasing depth), then there is an expansion of the form \eqref{DistExpansion} at the contiguous stratum of depth $k+1$. 

The second analytic difficulty, to the resolution of which the bulk of the paper is devoted, involves the construction 
of a parametrix for $d+\delta$ with Cheeger ideal boundary conditions. On a space with a simple edge, i.e.\ only one
singular stratum, $x(d+\delta)$ is an `elliptic edge operator' in the sense of \cite{Mazzeo:Edge}, and there is a complete 
pseudodifferential theory to study it \cite{Mazzeo:Edge, Mazzeo-Vertman}, at least when $\dim Z$ is even so that there 
is no indicial root exactly at the $L^2$ cutoff.   The strategy here is to construct a parametrix for $x(d+\delta)$ acting 
on the weighted space $x^{\eps}L^2,$ and then modify it to obtain a parametrix for $(d+\delta,\cD_{\cW}(d+\delta))$ 
with an error term taking values in $x^{\eps}L^2.$  Using this parametrix, we show that the domain $\cD_{\cW}(d+\delta)$ 
is compactly included in $L^2.$

There is now a substantial amount of literature concerning related problems on spaces with simple edge singularities.
We mention in particular, beyond \cite{Mazzeo:Edge} and \cite{Mazzeo-Vertman}, the ongoing work of
Krainer and Mendoza \cite{Krainer-Mendoza:Kernel, Krainer-Mendoza2, Krainer-Mendoza3} and the recent work
of Cheeger and Dai \cite{Cheeger-Dai}. The oeuvre of Schulze \cite{Schulze}
handles certain problems, though less general than considered here, for stratified spaces. \\

Every mezzoperversity $\cW$ has a dual mezzoperversity $D\cW$ such that the intersection pairing of differential forms 
restricts to a non-degenerate pairing between the de Rham cohomology of $\cW$ and that of $D\cW.$
A mezzoperversity which coincides with its dual is called a {\em self-dual mezzoperversity}. It is not hard to 
see that there are topological obstructions, for example involving the signatures of the links, to the existence 
of a self-dual mezzoperversity on a stratified pseudomanifold. We shall call a compact stratified space which carries a  
self-dual mezzoperversity a {\em Cheeger space}. 

\begin{theorem}
Let $(\hat X, g)$ be a stratified pseudomanifold with a (suitably scaled) $iie$-metric. 
If $\mathcal L$ is a self-dual mezzoperversity on $\hat X,$ then the domains
\begin{equation*}
	\mathcal D_{\cW}(d), \quad
	\mathcal D_{\cW}(d + \delta)
\end{equation*}
are invariant under the action of the Hodge star and the cohomology
\begin{equation*}
	\text{H}_{\cW}^*(\hat X) = \text{H}^*(d, \cD_{\cW}(d))
\end{equation*}
satisfies Poincar\'e duality. Moreover, its signature is the index of the analytic signature operator, i.e.\ 
the de Rham operator $d+\delta$ with the involution induced by the Hodge star, with boundary
conditions and the domain induced by $\cW.$
\label{thm2}
\end{theorem}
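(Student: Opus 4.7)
The plan is to deduce the three assertions from Theorem 1 together with the self-duality of $\mathcal L.$ The heart of the matter is to establish that the Hodge star $*$ preserves $\mathcal D_{\mathcal L}(d)$ and $\mathcal D_{\mathcal L}(d+\delta);$ Poincar\'e duality and the signature identity then follow by the classical Hodge-theoretic formalism, already known to work modulo this one input.

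To prove domain invariance I argue inductively on the depth of the stratification. On a stratum $Y$ of depth one, any $u \in \mathcal D_{\max}(d+\delta)$ admits the distributional expansion \eqref{DistExpansion}. Using the factorization of $*_{\hat X}$ on a conic neighborhood in terms of $*_Y$ on the base and $*_Z$ on the link, one computes that $*u$ has an expansion of the same form with leading data
\begin{equation*}
(\alpha(*u),\beta(*u)) = (\pm *_Y \beta(u),\, \pm *_Y \alpha(u)),
\end{equation*}
where the star acting on the $\Ht^{\mid}_{L^2}(Z)$ factor is Poincar\'e duality on the middle-perversity cohomology of $Z.$ The algebraic constraint that $(\alpha(u),\beta(u))$ takes values in the flat subbundle $W$ defining $\mathcal L$ is therefore transformed by $*$ into the defining constraint of the dual mezzoperversity $D\mathcal L;$ when $\mathcal L=D\mathcal L$ this gives $*\mathcal D_{\mathcal L}(d+\delta) \subseteq \mathcal D_{\mathcal L}(d+\delta),$ and the reverse inclusion follows from $*^2 = \pm \Id.$ At a stratum of depth $k+1,$ once Cheeger ideal boundary conditions have been imposed on the shallower strata, the analogous expansion sketched in the discussion after Theorem 1 is available, and the same algebraic manipulation establishes $*$-compatibility at that stratum.

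With $*$-invariance in hand, $d+\delta$ anticommutes with the chirality $\tau$ induced by $*,$ so the harmonic representatives $\mathcal H^k_{\mathcal L}(\hat X)$ of $\text{H}_{\mathcal L}^k(\hat X)$ split as $\mathcal H^{+}_{\mathcal L} \oplus \mathcal H^{-}_{\mathcal L},$ and $*$ restricts to an isomorphism $\mathcal H^k_{\mathcal L} \xrightarrow{\cong} \mathcal H^{n-k}_{\mathcal L}.$ Combined with the Hodge isomorphism of Theorem 1, the wedge pairing $\int_X \omega \wedge \eta,$ already known to pair $\text{H}_{\mathcal L}^k$ non-degenerately with $\text{H}_{D\mathcal L}^{n-k},$ becomes a non-degenerate pairing on $\text{H}_{\mathcal L}^*(\hat X),$ giving Poincar\'e duality. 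The signature identity then follows from a McKean--Singer argument: the restrictions $(d+\delta)^\pm$ to the $\pm 1$ eigenbundles of $\tau$ are Fredholm by Theorem 1, their index is the supertrace of the projection onto $\ker(d+\delta) = \mathcal H_{\mathcal L}(\hat X),$ and on middle-degree harmonic forms the $\pm 1$ eigenspaces of $\tau$ are precisely the maximal positive/negative subspaces of the intersection form, while the $\tau$-contributions in complementary degrees cancel in pairs under $*.$

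The main obstacle is the inductive step in the $*$-invariance argument at strata of depth $\ge 2.$ One must verify that the Hodge star on $\hat X$ factors cleanly through the Hodge star on the link of each stratum, with no cross-terms from sub-leading asymptotics spoiling the algebraic transformation of the Cauchy data $(\alpha,\beta).$ This is exactly where the \emph{rigidity and suitable scaling} hypotheses on the $\iie$ metric become essential: they guarantee that in a neighborhood of each stratum $g$ has an exact conic/product structure for which $*$ acts by a purely algebraic formula, and that no spurious indicial roots produce additional boundary traces beyond $\alpha, \beta$ that the mezzoperversity cannot constrain.
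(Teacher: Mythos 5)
Your proposal is correct and follows essentially the same route as the paper. The key input you isolate — that, in the leading term of the distributional expansion at each stratum, the Hodge star acts by $(\alpha,\beta)\mapsto(\pm *\beta,\ \pm *\alpha)$ with the vertical star on $\cH^{\mid}_{\bB(Z)}(Z)$, thereby carrying the constraint $\alpha\in W,\ \beta\in W^\perp$ to the constraint defining $\sD W = *_{H/Y}W^\perp$ — is exactly the content of Proposition~\ref{prop:DualMezzo}, and your induction on depth (apply the depth-one calculation to each link, which has already been equipped with the shallower boundary conditions) is the paper's inductive step as well. The passage from $*$-invariance to Poincar\'e duality via the wedge pairing $Q(\eta,\omega)=\langle\eta,*\omega\rangle_{L^2}$ and Stokes' theorem, and the identification of the signature with the index of $\eth_{\sign}^+$ via the chirality decomposition, coincide with the corollary and closing theorem of \S\ref{sec:SignLag}.
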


In a companion paper \cite{ALMP13.2}, we apply the constructions of this paper to define higher signatures on a 
Cheeger space and then prove the Novikov conjecture for Cheeger spaces which have fundamental groups satisfying 
the strong Novikov conjecture. We also prove there that the signature of Theorem~\ref{thm2} above is independent 
of the choice of self-dual mezzoperversity 
and is also a bordism invariant. These properties of the signature can also be deduced by comparing the analytic approach 
developed in the present paper with a purely topological definition of self-dual mezzoperversities and Poincar\'e duality 
developed earlier by Banagl. The fact that these two approaches lead to equivalent signatures is proved in \cite{ABLMP}, 
joint with Banagl. \\

Notice that at each stratum there are two canonical choices of mezzoperversity, the subbundles of $\cH^{\mid}_{\cL(Z)}(Z)$ of full rank or of rank zero.
It follows from the work of Cheeger, and is shown directly in \cite{ABLMP}, that these choices correspond to the upper-middle and lower-middle perversities of Goresky-MacPherson at this stratum. This companion paper also shows that any choice of self-dual sheaf in the sense of \cite{BanaglShort} yields a mezzoperversity, satisfying the self duality studied in \S\ref{sec:SignLag}. In particular, from \cite{Banagl-Kulkarni} we see that the reductive Borel-Serre compactification of a Hilbert modular surface is an example of a space carrying interesting non-trivial self-dual mezzoperversities, i.e., these are Cheeger spaces.
Although showing that a particular space is a Cheeger space is in general a difficult task, Cheeger spaces (with boundary) are sufficiently abundant to provide cobordisms between any two smooth oriented manifolds with the same dimension and signature (see \cite[\S 6]{ALMP13.2}).

\noindent {\bf Acknowledgements.} 
P.A. was partly supported by NSF Grant DMS-1104533 and an IHES visiting position and thanks Sapienza 
Universit\`a di Roma, Stanford, and Institut de Math\'ematiques de Jussieu for their hospitality and support.
R.M. acknowledges support by NSF Grant DMS-1105050. 
P.P. thanks the {\it Projet Alg\`ebres d'Op\'erateurs} of {\it Institut
de Math\'ematiques de Jussieu}
for hospitality during several short visits and a two months long visit
 in the Spring of 2013; financial support was
provided by Universite' Paris 7, Istituto Nazionale di Alta Matematica (INDAM) and CNRS
(through the bilateral project ``Noncommutative geometry") 
and Ministero dell'Universit\`a e della Ricerca Scientifica 
(through the project ``Spazi di Moduli e Teoria di Lie"). 
E.L. thanks Sapienza Universita' di Roma 
for hospitality during several week-long visits; 
financial support was  provided again by INDAM and CNRS
through the bilateral project ``Noncommutative Geometry".

\smallskip

The authors are happy to thank {Jesus Alvarez-Lopez, Markus Banagl, Francesco Bei, Jeff Cheeger, Michel Hilsum, Thomas Krainer, 
Richard Melrose and Gerardo Mendoza} for many useful and interesting discussions.

\paperbody
\section{High codimension boundary conditions} \label{sec:BdyConditions}
In this section we give an overview of the types of operators and boundary conditions to which our methods apply.
To help motivate and better explain all of this, we describe it in the now well-known simple edge setting,
i.e.\ where $X$ has only one singular stratum, then explain the analogous constructions in the next most complicated
case, the depth two setting, and finally turn to spaces with singular strata of arbitrary depth. As we will see, much
of the new complexity already occurs in the depth two case; the extension to higher depth singularities involves
an induction argument, but otherwise differs mostly in the notation.  We present all of this for differential operators 
of first order satisfying a few structural hypotheses, but our main interest is in the
de Rham and signature operators associated to an incomplete iterated edge metric, and so we use these operators 
as the key examples throughout, and describe the types of boundary problems in full detail only for these operators. 

Let $\hat X$ be a compact stratified space, with singular strata 
\begin{equation*}
	Y^1, Y^2, \ldots, Y^{k+1},
\end{equation*}
ordered by increasing depth. We also denote by $\hat X^{\mathrm{reg}}$ the
open dense stratum of top dimension, the so-called regular part of $\hat X$; the other strata $Y^j$ are thus regarded
as higher codimension boundaries of $\hat X$. We refer the reader to \cite[\S 2]{ALMP}
for a careful discussion of this class of spaces, and for the accompanying description of a resolution process
which associates to any such stratified space $\hat X$ a compact manifold with corners $\widetilde X$ with iterated
fibration structure on its boundary faces, obtained by blowing up the strata $Y^j$ in order of decreasing depth. 

\begin{remark}
For notational convenience, we assume for the rest of this paper that each stratum is connected, and even
more strongly that there is a single maximal chain of strata $Y^1, \ldots, Y^{k+1}$ where
$\overline{Y}^j \supset \overline{Y}^{j+1}$ for all $j$. Both of these assumptions are simple to remove
and are adopted here to make the already complicated notation as simple as possible. 
\end{remark}

Without yet specifying the degeneracies of the operators we shall consider later (i.e.\ the behavior of their 
coefficients near the singular strata), we first recall some generalities about closed extensions of
unbounded operators on Hilbert spaces. 

Let $L$ be an elliptic operator of order one (for convenience, one can proceed similarly for general elliptic differential operators) 
on $\hat X$ acting between sections of two bundles $E$ and $F$, and fix a smooth measure 
on the regular part of $\hat X$ and Hermitian metrics on the fibres of $E$ and $F$. We may then regard $L$ as an 
unbounded operator acting on $\calC^\infty_0(\hat X^{\mathrm{reg}})$ sections of $E$.  Its graph, as a subspace
of $L^2(\hat X^{\reg}; E) \oplus L^2(\hat X^{\reg}; F)$, is not closed. There are two canonical closed extensions of this subspace,
both of which are graphs over dense domains $L^2 \supset \calD_{\max} \supset \calD_{\min} \supset \calC^\infty_0(\hat X)$. 
These are the minimal and maximal extensions: $\calD_{\min}$ is the domain corresponding to the graph closure of $L$ on the
core domain $\calC^\infty_c(\hat X^{\mathrm{reg}})$, while $\calD_{\max}$ consists of all sections $u \in L^2$
with $L u \in L^2$. The imposition of boundary condition consists of a choice of an intermediate domain $\calD$ 
lying between $\calD_{\min}$ and $\calD_{\max}$. The goal is to choose such a domain so that the corresponding opertor 
$(L, \calD)$ is not only closed but also Fredholm. 

It is very difficult to analyze $\calD_{\max}$ for operators on a stratified space directly.  Instead, we proceed
inductively. If $\hat X$ has depth $k$, assume that `good' boundary conditions have already been chosen 
at the strata $Y^1, \ldots, Y^{k-1}$ of depth less than $k$.  We study the `partially minimal' and `partially maximal' domains 
obtained by imposing these boundary conditions at the lower depth strata and applying the general definitions above
at the stratum $Y^k$ of depth $k$.  Functions (or sections) in this partially maximal domain which satisfy
$Lu = 0$ (or $Lu = f$ for $f \in L^2$) have partial asymptotic expansions at $Y^k$, the coefficients of which 
can be regarded as the Cauchy data of $u$.  A choice of good boundary conditions at $Y^k$ consists of imposing 
appropriate conditions on these Cauchy data; in this paper we shall only impose local algebraic conditions on these
Cauchy data, but more generally one could impose nonlocal (e.g.\ pseudodifferential) conditions.  A refined analysis 
of more general boundary value problem of this type on spaces with simple edge singularities (i.e.\ on depth $1$ spaces)
is carried out in \cite{Mazzeo-Vertman}, and in a series of articles by Krainer and Mendoza, 
\cite{Krainer-Mendoza:Kernel, Krainer-Mendoza2, Krainer-Mendoza3, Gil-Krainer-Mendoza:Wedge}.

As noted earlier, we shall make a set of structural assumptions on the operator $L$, and for the sake of
exposition, we introduce these assumptions gradually, once we have provided motivation and suitable notation.
For the first of these, let us say that a domain is {\bf localizable} if it is closed under multiplication by functions in
\begin{equation*}
	\CI_{\Phi}(\wt X) = \{ f \in \CI(\wt X) : i^*_Hf \in \phi_H^*\CI(Y_H) \text{ for all boundary hypersurfaces } H \}.
\end{equation*}
(recall that $i_H:H \lra \wt X$ is the inclusion of a boundary hypersurface and $\phi_H:H\lra Y_H$ is the boundary fibration of $H$).

\begin{assume}\label{Ass:Localizable} $\cD_{\max}(L)$ is localizable. 
\end{assume}
\noindent  
Note that $f \in \CI_{\Phi}(\wt X)$ is equivalent to asking that $df$ have bounded pointwise norm with respect to an $\iie$-metric.
Any first order differential operator $L$ satisfies
\begin{equation*}
	[L,f] = \tfrac1i\sigma(L)(df)
\end{equation*}
and hence $\cD_{\max}(L)$ is localizable as long as the principal symbol of $L$ induces a bounded operator on $L^2.$
We showed in \cite{ALMP} that Assumption~\ref{Ass:Localizable} 
holds for the de Rham operator (the same argument extends easily to show that it also holds for any 
Dirac-type operator associated to an $\iie$ metric).

\subsection{Depth one}\label{sec:SimpleEdgeBC}
As promised, we now embark on a much more detailed discussion, initially on stratified spaces $\hat X$ with
only one singular stratum $Y.$  We call these simple edge spaces.  One of the basic features of a stratified space
is its Thom-Mather data (see \cite[\S2.1]{ALMP}), which in this simple edge setting consists of a tubular neighborhood 
$\sT_Y$ of $Y$ in $\hat X$, equipped with a fibration over $Y$, the typical fiber of which is a cone over a smooth 
closed manifold $Z$. The `resolution' $\wt X$ of $\hat X$ is obtained by radially blowing up each of these
conic fibres at its vertex; thus $\wt X$ is a smooth manifold with boundary, and the Thom-Mather data 
of $\hat X$ is converted to a fibration of $\pa \wt X$ over $Y$ with fiber $Z,$
\begin{equation*}
	Z - \pa \wt X \xlra{\phi} Y.
\end{equation*}
There is a canonical identification of the interior of $\wt X$ and $\hat X^{\mathrm{reg}}$, and we denote these common smooth
spaces by $X$: thus
\begin{equation*}
	X = \wt X^\circ = \hat X^{\mathrm{reg}}.
\end{equation*}

Now choose any smooth boundary defining function $x$ for $\pa \wt X$. A complete edge metric on $X$ is, by
definition, a metric which near $Y$ takes the form
\begin{equation*}
	\frac{dx^2}{x^2} + \frac{\phi^*g_Y}{x^2} + g_Z,
\end{equation*}
where $g_Y$ is metric on $Y$ and $g_Z$ a family of two-tensors that restricts to a metric on each fiber of $\phi.$
We always assume that all of these terms are smooth for $x \geq 0$. We have buried some technicalities about the 
precise form of the asymptotics we allow, and shall return to these details when they become relevant below. 
Complete edge metrics may be regarded as the basic structural regulators of the geometry of simple edge spaces,
and many of the auxiliary quantities defined below are most naturally phrased in terms of these metrics.
However, the focus of this paper is on {\it incomplete} edge metrics; by definition $g$ is an incomplete
edge metric if $x^{-2}g$ is a complete edge metric, see \cite{ALMP}.  

The main reference for all of the definitions and constructions in this simple edge case
is the paper \cite{Mazzeo:Edge}. 
A vector field $V$ on $\wt X$ has pointwise length which is uniformly bounded with respect to a complete edge metric 
if the restriction of $V$ to $\pa \wt X$ is tangent to the fibers of $\phi.$  The space of all such $V$ is called the
space of `edge vector fields'; these are sections of the naturally defined `edge tangent bundle', ${}^e T\wt X$.
This bundle naturally includes in the ordinary tangent bundle, ${}^e T\wt X \lra T\wt X$, and this inclusion is an 
isomorphism over $X$. Over $\pa \wt X$, however, 
\begin{equation*}
	{}^e N\pa X = \ker ( {}^e T\wt X \lra T\wt X ) \lra \pa \wt X
\end{equation*}
is a bundle of rank $\dim Y + 1.$ 

A first-order differential operator $L$ on $X$ is an `{\em edge differential operator}' if it can be written as a locally finite sum of
products of edge vector fields. This means, in local coordinates near the boundary, that it has the form
\begin{equation*}
	L = \sum_{j+ |\alpha|+|\beta| \leq 1} a_{j,\alpha,\beta}(x,y,z) (x\pa_x)^j (x\pa_y)^{\alpha} \pa_z^{\beta}.
\end{equation*}
Here, as before, $x$ is a boundary defining function, while $y_1, \ldots, y_h$ are coordinates 
along $Y,$ and $z_1, \ldots, z_f$ are coordinates along $Z$. The coefficients $a_{j,\alpha,\beta}$ are assumed to be 
smooth functions in these variables; if $L$ acts between sections of smooth bundles $E$ and $F$ over $\wt X$, then
the $a_{j,\alpha,\beta}$ are smooth sections of $\mathrm{End}(E,F)$. The space of these operators of degree at most one
is denoted $\Diff^1_e(X; E, F)$. 

There is an invariantly defined {\em edge symbol} of $L$,
\begin{equation*}
	{}^e\sigma_m(L): = \sum_{j+ |\alpha|+|\beta| = 1} a_{j,\alpha,\beta}(x,y,z) (\xi)^j (\eta)^{\alpha} (\theta)^{\beta},
\end{equation*}
which is naturally defined as a section of the pullback bundle $p^* \operatorname{Hom}(E,F)$ over the edge cotangent bundle 
${}^e T^*X \xlra{p} X$, which is the dual of ${}^e TX$.  This edge symbol is part of a short exact sequence
\begin{equation*}
	0 \lra
	\Diff_e^{0}(X;E,F) \lra
	\Diff_e^1(X;E,F) \xlra{{}^e\sigma_1}
	\CI({}^e T^*X; p^*\Hom(E, F))
	\lra 0
\end{equation*}
in which $\Diff_e^0(X;E,F) = \CI(X;\Hom(E,F)).$
An edge differential operator $L$ is said to be (edge-)elliptic if ${}^e\sigma_1(L)$ is invertible away from
the zero section of ${}^eT^*X$.   For brevity, we always say ellipticity rather than edge ellipticity; the latter
is always implied. 

Next, the wedge operators (also known as incomplete edge operators) of order one are defined as the class
\begin{equation*}
	\Diff_{\ie}^1(X;E,F) = x^{-1} \Diff_e^1(X;E,F).
\end{equation*}
Thus $L$ is in $\Diff_{\ie}^1(X;E,F)$ if, in the same type of local coordinates, 
\begin{equation*}
	L = \sum_{j+ |\alpha|+|\beta| =1} a_{j,\alpha,\beta}(x,y,z) (\pa_x)^j (\pa_y)^{\alpha} (\tfrac 1x\pa_z)^{\beta} + \tfrac 1x a_{0,0,0}(x,y,z),
\end{equation*}
where the coefficients $a_{\cdot}$ are sections of $\Hom(E,F).$
We say that any such $L$ is elliptic if the associated edge operator $P = x L$ is edge elliptic. 

As we described in general earlier, if $L$ is a wedge operator, then 
\begin{equation*}
	L: \CIc(X; E) \lra \CIc(X;F),
\end{equation*}
and if $X$ is endowed with an (incomplete) edge metric and $E,$ $F$ with Hermitian metrics, 
there are two canonical closed extensions: the {\em minimal extension}, with domain 
\begin{multline*}
	\cD_{\min}(L)  = \{ u \in L^2(X;E) : \mbox{there exists} \ u_j \in \CIc(X;E) \\
	\Mst 	u_j \to u \Min L^2(X;E) \Mand Lu_j \text{ is Cauchy in }L^2(X;F) \},
\end{multline*}
where by definition, if $u \in \cD_{\min}(L)$ then $Lu = \lim Lu_j$, and the {\em maximal extension}, with domain
\begin{equation*}
	\cD_{\max}(L) = \{ u \in L^2(X;E) : Lu \in L^2(X;F) \};
\end{equation*}
here $Lu$ is defined distributionally.

We can perform the same constructions for the associated edge operator $P = xL$, 
and if it is elliptic then it is a consequence of the results in \cite{Mazzeo:Edge} that 
\begin{equation*}
	\cD_{\min}(P) = \cD_{\max}(P) =H^1_e(X;E) = \{ u \in L^2(X;E) : V u \in L^2(X;E) \Mforall V \in \CI(X;{}^e TX) \}.
\end{equation*}
This is the edge Sobolev space of order one.  There is some relationship between these spaces and
the domains of the wedge operator $L$, namely
\begin{equation*}
	xH^{1}_e(X;E) \subseteq \cD_{\min}(L) \subseteq \cD_{\max}(L) \subseteq H^{1}_e(X;E),
\end{equation*}
but in general, these subspaces may all be different.  To prove this chain of inclusions, note that clearly $u \in \cD_{\max}(L)$ 
implies $u \in \cD_{\max}(xL) = H^{1}_e(X;E)$; on the other hand, since $H^{1}_e(X;E) = \cD_{\min}(Lx),$ then
given any $u \in xH^{1}_e(X;E)$, we can find $w_n \in \CIc(X;E)$ which converges to $x^{-1}u$ in $L^2(X;E)$ 
and such that $Lx(w_n) \to Lx(x^{-1}u) = Lu,$ which shows that $u \in \cD_{\min}(L).$

A key difference from the closed (nonsingular) case is that an elliptic edge operator is not automatically Fredholm,
and one of the main points of the theory is to determine the extra conditions which need to be imposed to obtain
a Fredholm problem.  These conditions are phrased in terms of a secondary model for $L$ beyond its
principal symbol; this is a family of operators known collectively as the `normal operator,'
\begin{equation*}
	N: \Diff_e^1(X;E,F) \lra
	\Diff_e^1( (TY^+ \times_Y \pa X) /Y; \pi^*E,\pi^*F);
\end{equation*}
here $TY^+$ is the inward-pointing half of the bundle ${}^eN\pa X$ and $\pi:TY^+ \times_Y \pa X \lra Y$ is the natural projection.
The restriction to the fiber over a point $q \in Y$ is denoted 
\begin{equation*}
	N_q: \Diff_e^1(X;E,F) \lra
	\Diff_e^1( T_qY^+ \times Z_q; \pi_q^*E,\pi_q^*F)
\end{equation*}
where, in an abuse of notation, $\pi_q^*E$ denotes the restriction of the bundle $\pi^*E\lra TY^+ \times_Y \pa X$ to 
$T_qY^+ \times Z_q,$ and similarly $\pi_q^*F.$ In local coordinates, after identifying $T_qY^+$ with $\bbR^+ \times T_qY,$ 
\begin{equation*}
	N_q\lrpar{ 
	\sum_{j+ |\alpha|+|\beta| \leq 1} a_{j,\alpha,\beta}(x,y,z) (x\pa_x)^j (x\pa_y)^{\alpha} \pa_z^{\beta} }
	= 
	\sum_{j+ |\alpha|+|\beta| \leq 1} a_{j,\alpha,\beta}(0,q,z) (s\pa_s)^j (s\pa_u)^{\alpha} \pa_z^{\beta} 
\end{equation*}
where $s$ and $u$ are coordinates on $\bbR^+$ and $T_qY$ respectively. In \cite{Mazzeo:Edge} it is shown that an elliptic 
edge differential operator $L$ induces a Fredholm operator on $x^aL^2(X;E) \lra x^aL^2(X;F)$ precisely when
\begin{equation*}
	N_q(L): 
	s^a L^2(T_qY \times \bbR^+ \times Z_q; \pi_q^*E)
	\lra s^aL^2(T_qY \times \bbR^+ \times Z_q; \pi_q^*F)
\end{equation*}
is invertible for every $q \in Y$.  

The method for proving this result relies on a detailed analysis of the structure of the Schwartz kernel of a parametrix for $L$.
This parametrix is constructed using both ${}^e\sigma_m(L)^{-1}$, via a modification of the standard symbol calculus
parametrix method, and $N_q(L)^{-1}$. The inverse of this family of model problems is integrated into
the parametrix using a fundamentally geometric construction. Specifically, we make the ansatz that the
Schwartz kernel of the parametrix lifts to a polyhomogeneous (or at least conormal) distribution on the `edge double space',
\begin{equation*}
	X^2_e = [X^2; \pa X \btimes_Y \pa X].
\end{equation*}
This space is obtained from $X^2$ by radially blowing up the fiber diagonal of $\pa X,$ i.e., replacing the fiber diagonal with 
its spherical inward-pointing normal bundle, see \cite{Mazzeo:Edge}. The boundary hypersurface produced by this blow-up is 
called the {\em edge front face} and denoted $\ff_e.$  A pseudodifferential edge operator $A$, by definition, is a pseudodifferential
operator on $X$, the Schwartz kernel $K_A$ of which is a distribution on $X^2$, but which lifts to $X^2_e$ as a polyhomogeneous
distribution.  We typically place restrictions on, or at least keep track of, the exponents in the expansions of this lift of $K_A$ 
at the various boundary faces. In particular, this lift depends smoothly on the normal variable to $\ff_e$, and its restriction
to this face is, by definition, the normal operator of $N(A)$.  The face $\ff_e$ is the total space of a fibration over $Y$
with each fiber naturally identified with a projective compactification of $T_q Y \times \bbR^+ \times Z_q$, and $N(A)$ should
be regarded as a family of operators $N_q(A)$ on this space. There are analogues of the various symbolic rules, including
that 
\[
N_q(A \circ B) = N_q(A) \circ N_q(B).
\]
Specializing to the case where $A$ is an elliptic differential edge operator with invertible normal operator, one should
therefore expect to construct a parametrix $B$ for $A$ by choosing $B$ so that ${}^e \sigma_{-m}(B) = {}^e\sigma_m(A)^{-1}$
and $N_q(B) = N_q(A)^{-1}$.  This is indeed the case, and is the basis for the parametrix construction in \cite{Mazzeo:Edge}. 
Thus pseudodifferential operators include both differential edge operators and parametrices of elliptic edge operators with invertible normal operator.
We carry out a somewhat stripped down version of this construction here on stratified spaces, see \S \ref{sec:InvNormal}. 

In studying the normal operator of $P \in \Diff^1_e(X;E,F)$, one uses properties of a simpler model operator. 
This is the {\em indicial family}, 
\begin{equation*}
	(Y, \bbR) \ni (q,\zeta) \mapsto I_q(P;\zeta) \in \Diff^1(Z_q;E,F),
\end{equation*}
given in local coordinates by 
\begin{equation*}
	I_q\lrpar{ 
	\sum_{j+ |\alpha|+|\beta| \leq 1} a_{j,\alpha,\beta}(x,y,z) (x\pa_x)^j (x\pa_y)^{\alpha} \pa_z^{\beta} ; \zeta}
	= 
	\sum_{j+|\beta| \leq 1} a_{j,0,\beta}(0,q,z) (\zeta)^j  \pa_z^{\beta}.
\end{equation*}
It is also defined by the equation
\begin{equation*}
	P( x^{\zeta} u(x, y,z) ) = x^{\zeta} I_y(P;\zeta) u(0,y,z) + \cO(x^{\zeta+1}).
\end{equation*}
For any fixed $q \in Y,$ the indicial family is invertible away from a discrete set $\spec_b(P;q)$, the elements of
which are called the {\em indicial roots} of $P$ at $q.$  There is a slightly more refined object, 
\begin{equation*}
	\Spec_b(P;q) = \{ (\zeta, p) \in \bbC \times \bbN_0: I_q(P;\eta)^{-1} \text{ has a pole of order at least $p+1$ at } \zeta \}. 
\end{equation*}
We say that a indicial root $\zeta$ is {\em simple} if $I_q(P;\eta)^{-1}$ has only a simple pole at $\zeta.$
We also set
\begin{equation*}
	\spec_b(P) = \bigcup_{q \in Y} \spec_b(P;q), \quad
	\Spec_b(P) = \bigcup_{q \in Y} \Spec_b(P;q).
\end{equation*}

Returning now to the incomplete edge operator $L \in \Diff^1_{\ie}(X;E,F)$,  observe that 
\begin{equation}
	u \in \cD_{\max}(L) \implies Lu \in L^2(X;F) \implies Pu = x Lu \in x L^2(X;F);
\label{extravan}
\end{equation}
in other words, $Pu$ decays faster than expected, compared to a generic element of $\cD(P)$.  The paper \cite{Mazzeo:Edge}
restricts attention only to those elliptic operators $P$ which satisfy the strong hypothesis that 
the set $\Spec_b(P)$ is independent of $y \in Y$.
Using this, a basic result in \cite{Mazzeo:Edge}, (cf. Lemma \ref{lem:ExpansionExistence} below) is that
\eqref{extravan} implies that $u$ has a partial expansion: 
\begin{equation}\label{Eq:ModelExpansion1}
	u \in \cD_{\max}(L) \implies 
	u \sim \sum_{(\zeta_j,p) \in \Spec_b(P)} u_{\zeta_j,p}(y,z) x^{\zeta_j}(\log x)^p + \wt u, \qquad \wt u \in x^{1-}H^{-1}_e(X;E)
\end{equation}
this sum is over the subset $\cS\cD(L)$ of pairs $(\zeta_j,p)$ in $\Spec_b(P)$ for which $x^{\zeta_j} \in L^2_{\loc} \setminus xL^2_{\loc},$ and we use the notation
\begin{equation*}
	x^{1-}H^{-1}_e(X;E) := \bigcap_{\eps>0} x^{1-\eps}H^{-1}_e(X;E).
\end{equation*}
Moreover, each map 
\begin{equation*}
	u \mapsto u_{\zeta_j, 0}, \ (\zeta_j,0) \in \cS\cD(L), 
\end{equation*}
is continuous from $\cD_{\max}(L)$, endowed with the graph topology, to $L^2(dz, H^{-\Re \zeta_j}(Y))$
(and similarly for $u_{\zeta_j,p},$ see the works of Krainer-Mendoza and Mazzeo-Vertman cited above). 
Whenever $p_j$ is the largest logarithmic power accompanying a given $\zeta_j,$ $u_{\zeta_j,p_j}$ is in the null space of $I(P;\zeta_j).$

The paper \cite{Mazzeo-Vertman} studies elliptic edge operators $L$ for which only the indicial roots $\zeta \in \Spec_b(P)$ 
lying in a particular strip $\underline{\delta} < \mathrm{Re}\, \zeta < \overline{\delta}$ (where $\underline{\delta}$ 
and $\overline{\delta}$ are in a range determined by mapping properties of the normal operators $N_q(P)$) are required to be constant.
The orders of the indicial roots are allowed to change, however.  In the present paper we work with the intermediate
\begin{assume}[Constant, simple indicial roots in a strip]\label{Ass:CstIndicialRoots}
	The set $\cS\cD(L)$ is independent of $y \in Y$ and consists entirely of simple indicial roots.
\end{assume}
This strip turns out to be the same as the one considered in \cite{Mazzeo-Vertman} (although there $\underline{\delta}$ and $\overline{\delta}$ are not allowed to be indicial roots, and we will necessary deal with this case). 
Quite a lot can still be done even without Assumption \ref{Ass:CstIndicialRoots}.  For example, 
in a series of papers \cite{Krainer-Mendoza:Kernel, Krainer-Mendoza2, Krainer-Mendoza3} and further
ongoing work, Krainer and Mendoza study elliptic edge operators assuming only that $\spec_b(P)$ 
does not intersect the boundaries of this same strip, but allowing the indicial data in this range to vary. 

Under Assumption \ref{Ass:CstIndicialRoots}, it is straightforward to regard the asymptotic coefficients 
$u_{\zeta_j}$ as distributional sections of a finite dimensional smooth 
vector bundle over $Y,$ called the trace bundle $\cTR_Y(L).$ One of the key results by Krainer and Mendoza is that one can still make sense of 
this trace bundle even when the indicial roots vary.
In any case, we henceforth restrict attention to operators for which the elements of $\Spec_b(P)$ in this strip are constant. 
We show in Lemma \ref{lem:SuitableScaling} that the de Rham operator for a (suitably scaled) metric satisfies this assumption, and do not
discuss the more general situation further. 

The starting point for the formulation of boundary conditions is the Cauchy data map 
\begin{eqnarray*}
\cD_{\max}(L) & \longrightarrow & \cTR_Y(L) \\
 u & \longmapsto & \cC_Y(L)(u) := ( u_{\zeta_j}) .
\end{eqnarray*}
For a single singular stratum, we show in Proposition \ref{prop:DminB} that for a first order, elliptic, incomplete edge operator $L$ we have
\begin{equation*}
	\cD_{\min}(L) = \{ u \in \cD_{\max}(L) : \cC_Y(L)(u) = 0 \}. 
\end{equation*}

This fact now shows that the projection 
\begin{equation*}
	\cD_{\max}(L) \lra \cD_{\max}(L) / \cD_{\min}(L) 
\end{equation*}
can be identified with the Cauchy data map, and hence every closed domain for $L$ can be realized as 
\begin{equation*}
	\cD_B(L) = \{ u \in \cD_{\max}(L) : B \circ \cC_Y(u) = 0 \},
\end{equation*}
where $B$ is an arbitrary closed linear operator acting on the coefficients $u_{\zeta_j, k}.$  Thus $L$ has a 
unique closed extension, or equivalently $\cD_{\max}(L) = \cD_{\min}(L)$, if and only if $\cS\cD(L)$ is empty.

A {\bf local ideal boundary condition} for $L$ is a bundle homomorphism 
\begin{equation*}
	B \in \CI\lrpar{Y;  \Hom(\cTR_Y(L) , \cG)},
\end{equation*}
where $\cG \lra Y$ is some auxiliary bundle, with associated domain
\begin{equation*}
	\cD_B(L) = \{ u \in \cD_{\max}(L) : B\circ \cC_Y(u)=0 \}.
\end{equation*}

For the de Rham operator $\eth_{\dR}$ of an incomplete edge metric (suitably scaled, see Lemma \ref{lem:SuitableScaling} below), 
$\cS\cD(\eth_{\dR})$ is nonempty if and only if $\hat X$ does {\it not} satisfy the {\em Witt} condition. In fact, from Corollary 
\ref{cor:DeRhamExp}, $\cS\cD(\eth_{\dR})$ contains a single element $\zeta_0$, and the corresponding coefficient 
$u_{\zeta_0}$ is a distributional section of the bundle 
\begin{equation*}
	\cH^{\mid}(\pa X/Y) \oplus \cH^{\mid}(\pa X/Y) \lra Y;
\end{equation*}
here $\cH^*(\pa X/Y)$ is the vertical Hodge bundle with respect to the induced metric and 
\begin{equation*}
	\mid = \tfrac12 \dim Z.
\end{equation*}
Thus when $\hat X$ is Witt, there is no need to impose boundary conditions, while if $\hat X$ is not Witt,
then a local ideal boundary condition is a bundle homomorphism
\begin{equation*}
	B \in \CI\lrpar{ Y; \Hom( \cH^{\mid}(\pa X/Y)\oplus \cH^{\mid}(\pa X/Y), \cG ) }.
\end{equation*}

We shall be primarily interested in very special local ideal boundary conditions
associated to arbitrary sub-bundles 
\begin{equation}\label{eq:SubBundle}
	W \lra Y \text{ of } \cH^{\mid}(\pa X/Y) \lra Y.
\end{equation}
Indeed, for any such $W$, if $W^{\perp}$ is the orthogonal complement with respect to the induced bundle 
metric, and 
\begin{equation*}
	\cP_W \in \CI(Y; \tend(\cH^{\mid}(\pa X/Y) ) )
\end{equation*}
is orthogonal projection onto $W,$ then we define the {\bf Cheeger ideal boundary condition} associated
to $W$ by 
\begin{equation*}
	B_W = ( \Id - \cP_W, \cP_W) \in \CI(Y; \tend( \cH^{\mid}(\pa X/Y) \oplus \cH^{\mid}(\pa X/Y) ) ).
\end{equation*}

The bundle 
\begin{equation*}
	\tH^*(\pa X/Y) \lra Y,
\end{equation*}
of de Rham cohomology groups of the fibers of the fibration $\pa X \lra Y,$
has a natural flat structure, \cite[Proposition 10.1]{Berline-Getzler-Vergne}, 
\cite[Proposition 3.4]{Bismut-Lott}, \cite[Proposition 14]{Hausel-Hunsicker-Mazzeo}. This is a general fact 
about the vertical cohomology of a fibration: if $M \xlra{\phi} B$ is a smooth fibration endowed with a 
connection $TM = T^HB \oplus TM/B,$ then there is a splitting of the differential forms on $M$ as 
\begin{equation*}
	\Omega^1(M) = \bigoplus_{p+q=1}\Omega^{p,q}(M),
\end{equation*}
where $p$ and $q$ are the horizontal and vertical degrees, respectively. The exterior differential $d_M$ decomposes as 
\begin{equation*}
	d_M = d^{0,1}_M + d^{1,0}_M + d^{2,-1}_M, \qquad 
	d^{j,k}: \Omega^{p,q}(M) \lra \Omega^{p+j, q+k}(M).
\end{equation*}
The first term, $d^{0,1}_M = d_{M/B}$, is the vertical de Rham operator on the fibers, the second, $d^{1,0}_M$, is the sum
of a lift of $d_B$ and a tensorial term built from the second fundamental form of the fibration, and the third,
$d^{2,-1}_M=R$, is purely tensorial and involves the curvature of the fibration.  The identity $d_M^2=0$ implies 
various identities among these terms, one of which implies that $d^{1,0}_M$ induces a flat connection on the vertical 
de Rham cohomology bundle
\begin{equation*}
	\tH^*(M/B) \lra B
\end{equation*}
and, in the presence of a Riemannian submersion metric, $d^{1,0}_M$ projects to a connection on the vertical Hodge cohomology bundle
\begin{equation*}
	\cH^*(M/B) \lra B.
\end{equation*}

For a simple edge space $\hat X$, the boundary fibration $\pa X \xlra{\phi} Y$ extends to small values of $x,$
\begin{equation*}
	\xymatrix{
	 Z \ar@{-}[r] & X \cap \{ x = c \} = \pa_c X \ar[r]^-{\phi_c} & Y. }
\end{equation*}
It is not hard to calculate how each of the individual terms in the decomposition of $d$, acting on $\ie$ forms, scale with $c$:
\begin{equation*}
	d^{0,1}_{\pa_c X} = \tfrac1c d_Z, \quad
	d^{2,-1}_{\pa_c X} = cR, \quad
	d^{1,0}_{\pa_c X} \text{ is independent of $c.$}
\end{equation*}
As before, $d^{1,0}_{\pa_c X}$ induces flat connections
\begin{equation*}
	\nabla^{\tH} \Mon \tH^*(\pa X/Y) \lra Y 
	\quad \Mand \quad
	\nabla^{\cH} \Mon \cH^*(\pa X/Y) \lra Y.
\end{equation*}

We shall call a choice of subbundle $W$ in \eqref{eq:SubBundle} a (Hodge) {\bf mezzoperversity} or {\bf flat structure} if it 
is parallel with respect to $\nabla^{\cH}$. We show in Theorem \ref{Thm:MainHodgeThm} below that given any 
mezzoperversity $W,$ the closed realization of the de Rham operator with Cheeger ideal boundary conditions,
\begin{equation*}
	\lrpar{ \eth_{\dR}, \cD_{B_W}(\eth_{\dR}) },
\end{equation*}
is a self-adjoint Fredholm operator on $L^2$ with compact resolvent.

\subsection{Depth two} \label{sec:TwoEdgeBC}
Now suppose that the stratification of the pseudomanifold $\hat X$ has two strata, one
contained in the closure of the other: 
\begin{equation*}
	Y^2 \subseteq \bar Y^1 \subseteq \hat X.
\end{equation*}
Thus the strata of $\hat X$ are 
\begin{equation*}
	Y^2, \quad Y^1 = \bar Y^1 \setminus Y^2, \quad \mbox{and} \quad X = \hat X \setminus \bar Y^1.
\end{equation*}
We denote by $Z^1$ the link of $Y^1$ and $Z^2$ the link of $Y^2$, both in $\hat X$.  Notice that $\bar Y^1$ is
a stratified space in its own right; in fact, it is a simple edge space with singular stratum $Y^2$, and we denote 
by $M$ its link in $\bar Y^1$.  Moreover, the link $Z^2$ is also a simple edge space with singular stratum
$M$ and such that the link of $M$ in $Z^2$ is $Z^1$.  Note finally that $Y^2$, $Z^1$ and $M$ are all smooth
closed manifolds. 

\begin{center}  \includegraphics[scale=0.4]{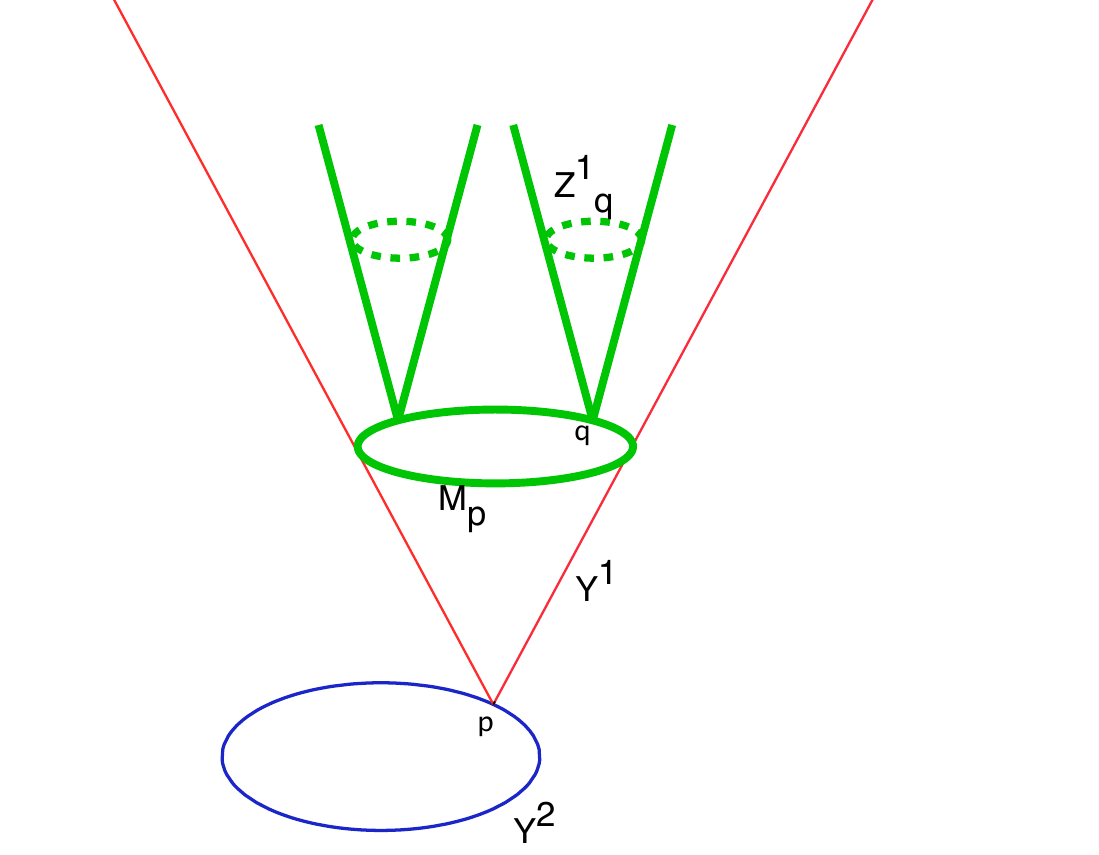}\end{center}

This figure is a schematic representation of a stratified space with two strata:
\begin{itemize}
\item the stratum $Y^2$ is the base circle; 
\item  the stratum $Y^1$ is the union of the large 
punctured cones 
$C_p\setminus\{p\}$, as $p$ varies within $Y^2$; the link of a point
$q$ in $Y^1$ is the closed manifold $Z^1_q,$ denoted by the small 
dotted circles;
\item 
the closure $\bar Y^1$ is the union of the large 
cones $C_p$ as $p$
varies within $Y^2$; the link of $\bar Y^1$ at $p,$ $M_p,$ is denoted by the large bold 
circle;
\item
the link of $\hat X$ at $p,$ $Z^2_p,$ is denoted in bold 
in the picture and consists of the cones $C(Z^1_q),$ as $q$ varies in $M_p;$
it is a depth one stratified space with singular stratum $M_p.$
\end{itemize}
Notice that
given  $q$ in $Y^1$  as in the picture, its link $Z^1_q$ is also equal to the link of $q$ viewed as
a point in the singular stratum $M_p$  of $Z^2_p.$ This exhibits 
the well known slogan ``the link of a link is a link."

\medskip
As explained in \cite{ALMP}, it is convenient to resolve $\hat X$ to a smooth manifold with corners, $\wt X$
with iterated fibration structure on its boundary. This resolution process is explained in great detail
in \S 2 of that paper. In this depth two case, $\wt X$ is a manifold with corners of codimension two 
obtained by radially blowing up first $Y^2$ and then the lift of $Y^1$: 
\begin{equation*}
	\wt X = [\hat X; Y^2; Y^1].
\end{equation*}
This space has two boundary hypersurfaces, one corresponding to $Y^1$ and the other to $Y^2$,
and each of these boundary faces fibers over the resolution of the (closure of the) corresponding stratum, 
\begin{equation*}
	H^i \xlra{\phi_i} \wt Y^i
\end{equation*}
with fiber $Z^1$ over $\wt Y^1$ and $\wt Z^2,$ the resolution of $Z^2,$ over $Y^2 = \wt Y^2.$ These fibrations are 
compatible at the corner $H^1 \cap H^2$ in the sense that they fit into a commutative diagram
\begin{equation*}
	\xymatrix{
	\pa \wt Z^2 \ar@{-}[rd] & & & & M \ar@{-}[ld] \\
	Z^1 \ar@{-}[r] & H^1 \cap H^2  \ar[rr]^-{\phi_1} \ar[rd]^-{\phi_2} & & \pa \wt Y^1 \ar[ld] & \\
	& & Y^2 & &}
	\xymatrix{ \\
	\text{ which induces } Z^1 \ar@{-}[r] & \pa \wt Z^2 \ar[r]^-{\phi_1} & M }
\end{equation*}

A Riemannian metric $g$ on $X$ is an {\em iterated incomplete edge (or $\iie$) metric} if near $Y^1$ it is an incomplete 
edge metric as defined above in \S\ref{sec:SimpleEdgeBC}, while near points of $Y^2$ it has the form
\begin{equation*}
	dx^2 + x^2g_{Z^2} + \phi_2^*g_{Y^2}
\end{equation*}
where $x$ is a boundary defining function for $H^2,$ $g_{Y^2}$ is a Riemannian metric on $Y^2$ and $g_{Z^2}$ restricts to an 
incomplete edge metric on each fiber of $\phi_2.$  As in the simple edge case, this can be interpreted as a metric on the 
iterated incomplete edge cotangent bundle $\Iie T^*X$ over all of $\wt X,$ see \cite[\S 3]{ALMP}. If $E$ and $F$ are vector 
bundles over $\hat X,$ then, following \cite{ALMP},  a differential operator $L$ of order one acting between sections of 
$E$ and $F$ is an {\em iie operator} or {\em iterated wedge operator} if 
\begin{itemize}
\item [i)] In local coordinates near $q \in Y^1,$ 
\begin{equation*}
	L = \frac1r \sum_{j+ |\alpha|+|\beta| \leq 1} a_{j,\alpha,\beta}(r,y,z) (r\pa_r)^j (r\pa_y)^{\alpha} (\pa_z)^{\beta}.
\end{equation*}
where $r$ is a bdf for $Y^1,$ $y_1, \ldots y_{h^1}$ are local coordinates along $Y^1$ and $z_1,\ldots, z_{v^1}$ are local 
coordinates along $Z_q^1.$ The coefficient functions $a_{j,\alpha,\beta}(r,y,z)$ are smooth on the resolved space $\wt X.$
Thus, restricted to a subset of $\hat X \setminus Y^2,$ $L$ is an $\iie$ operator as in 
\S \ref{sec:SimpleEdgeBC}.
\item [ii)]
In local coordinates near $q \in Y^2,$ 
\begin{equation}\label{Eq:LocCoord2}
	L = \frac1x \sum_{j+ |\alpha|+|\beta| \leq 1} a_{j,\alpha,\beta}(x,y,z) (x\pa_x)^j (x\pa_y)^{\alpha} (V_z)^{\beta}.
\end{equation}
where here $x$ is a bdf for $Y^2,$ $y_1, \ldots, y_{h^2}$ are local coordinates along $Y^2,$ and $V_z$ are $\phi_2$-vertical iie-operators on the fibers of $\phi_2$ of order at most one. The coefficient functions are again smooth on the resolved space.
\end{itemize}

\medskip

We start by analyzing elements of $\cD_{\max}(L)$ at points in $Y^1.$ 
Each point $q\in Y^1$ has a distinguished 
neighborhood $\cU_q$ in which $E$ and $F$ are trivial, and which is identified with 
$\bbB^{h_1} \times (0,1)_r \times Z^1_q$. Here $r$ is a bdf for $Y^1$ and $\bbB^{h_1}$ is the unit ball in $T_qY^1.$
This neighborhood is an (open) simple edge space and the restriction of $L$ to $\cU_q$ is an element of $\Diff^{1}_{\ie}(\cU_q).$
This restricted operator may then be analyzed as described above; in particular, we impose {\em Assumption \ref{Ass:CstIndicialRoots}}
there
so that elements of $\cD_{\max}(L|_{\cU_q})$ have distributional partial asymptotic expansions as $r\to0.$ The leading terms of any such element fit together to determine a section of a trace bundle 
\begin{equation*}
	\cTR_{Y^1}(L) \lra \wt Y^1.
\end{equation*}

We choose 
\begin{equation*}
	B^1 \in \CI\lrpar{\wt Y^1; \Hom(\cTR_{Y^1}(L); \cG^1)}
\end{equation*}
for some auxiliary bundle $\cG^1$ over $\wt Y^1,$
and then we 
impose the local ideal boundary condition $B^1$ for $L$
by defining
\begin{multline*}
	\cD_{\max, B^1}(L) = \Big\lbrace u \in L^2(X;E) : Lu \in L^2(X;F) \\ \left.
	\text{ and } \chi u \in \cD_{B^1}(L\rest{\hat X \setminus Y^2}) \ \mbox{for every} \ \chi \in \CIc( \hat X \setminus Y^2) \right\rbrace.
\end{multline*}
The key point is to analyze elements of $\cD_{\max, B^1}(L)$ near $Y^2.$  Thus, near any $q \in Y^2$ choose a 
distinguished neighborhood $\cU_q$ as before. Thus, $E$ and $F$ are trivial on this neighbourhood and 
\begin{equation*}
	\cU_q \cong \bbB^{h_2} \times (0,1)_x \times Z^2_q,
\end{equation*}
where $x$ is a bdf for $Y^2$ and $\bbB^{h_2}$ is the unit ball in $T_qY^2.$ In each such neighbourhood, 
\begin{equation*}
	\bar{\cU_q} \cap Y^2 = \bbB^{h_2}, \quad
	\cU_q \cap Y^1 = \bbB^{h_2} \times (0,1)_x \times M_q.
\end{equation*}
We prove below that elements of $\cD_{\max, B^1}(L)$ with support in $\cU_q$ have partial 
asymptotic expansions at $Y^1\cap \cU_q$ with coefficients in the null space of $B^1.$ 

If $L$ is as in \eqref{Eq:LocCoord2} in this distinguished neighborhood, then the `partially completed operator' has the form
\begin{equation*}
	P := x L = \sum_{j+ |\alpha|+|\beta| \leq 1} a_{j,\alpha,\beta}(x,y,z) (x\pa_x)^j (x\pa_y)^{\alpha} (V_z)^{\beta}.
\end{equation*}
It is natural to think of this operator $P$ near $Y^2$ as an edge differential operator `with values in the space of 
wedge differential operators on $Z^2$'. Thus we can proceed essentially just as in the simple edge case, via
model operators, with the proviso that we must keep track of the boundary conditions along $Y^1$. 

First define the family of normal operators of $P$ along $Y^2$ by
\begin{equation*}
	N_q(P) = \sum_{j+ |\alpha|+|\beta| \leq 1} a_{j,\alpha,\beta}(0,q,z) (s\pa_s)^j (s\pa_u)^{\alpha} (V_z\rest{x=0,y=q})^{\beta}.
\end{equation*}
This an element of $x \Diff^1_{\iie}(\bbR^+_s\times \bbR^{h_2}_u \times Z^2_q; \pi_q^*E, \pi_q^*F)$ for each $q \in Y^2$. 

There is a stratification of $\bbR^+_s \times T_q Y^2 \times Z^2_q$; the face $\{ s= 0\}$, which is just $T_qY^2 \times Z^2_q$ 
resolves the depth two stratum, and has link (or fiber) $Z^2_q,$ while the resolution of the depth one stratum is 
$\{ r= 0\}$, or equivalently, $\bbR^+_s \times T_qY^2 \times M$, which has link $Z^1.$ 
An $\iie$ metric on $\hat X$ and Hermitian metrics on $E$ and $F$ induce corresponding objects on 
$\bbR^+_s \times T_qY^{2}\times Z^2_q$. 
At an interior point $q \in \{ r=0, s \neq 0 \}$, $N_q(P)$ is an incomplete edge operator. By the same analysis as 
described earlier, see also Lemma \ref{lem:ExpansionExistence} below, elements of $\cD_{\max}(N_q(P))$
have distributional expansions as $r \searrow 0$, 
\begin{equation*}
	v \in \cD_{\max}(N_q(P)) \implies 
	v \sim \sum_{\zeta_j \in \cS\cD(N_q(P))} v_{\zeta_j}(u,z) s^{\zeta_j} + \wt v,
\end{equation*}
with error term $\wt v \in s^{1-}H^{-1}_e(\bbR^+_s \times T_qY^{2}\times Z^2_q;\pi_q^*E).$  Note that by Assumption 2, 
all indicial roots are simple so this expansion and the ones below do not contain logarithmic powers.
The indicial family of $N_q(P)$ is 
$I_q(P;\zeta)$, so 
\begin{equation*}
	\Spec_b(N_q(P)) = \Spec_b(P;q) \quad \Mand \quad \cS\cD(N_q(P)) = \cS\cD(P,q).
\end{equation*}
The coefficients are distributional 
sections of the trace bundle,
\begin{equation*}
	\cTR_{\bbR^+_s \times T_qY^{2}\times M}  
	\lra \bbR^+_s \times T_qY^{2} \times M.
\end{equation*}
Unwinding the definitions, this is identified with the pull-back and restriction of $\cTR_{Y^1}(L) \lra Y^1.$ The Cauchy data 
map assigns to $u \in \cD_{\max}(N_q(P))$ the coefficients in its expansion, 
\begin{equation*}
	\cC_{\bbR^+_s \times T_qY^{2} \times M}: \cD_{\max}(N_q(P)) \lra \CmI\lrpar{ \bbR^+_s \times T_qY^{2} \times M; \pi_q^*\cTR_{Y^1}(L) }.
\end{equation*}

Now consider the bundle homomorphism $B^1$ on $Y^1$ which determines the boundary conditions there.  Recalling
that $\bar Y^1$ is a simple edge space with singular stratum $Y^2$ and link $M$, and that $B^1$ is
a zero order (edge) operator, we analyze its normal operator at points of $Y^2$, 
\begin{equation*}
	Y^2 \ni q \mapsto N_q(B^1) \in \CI(\bbR^+_s \times T_qY^{2} \times M; \Hom(\pi_q^*\cTR_{Y^1}(L), \pi_q^*\cG^1) ).
\end{equation*}
This defines a domain at the level of normal operators consistent with $\cD_{\max, B^1}(L)$:
\begin{multline*}
	\cD_{\max, N_q(B^1)}(N_q(P)) = \{ u \in L^2(\bbR^+_s \times T_qY^{2} \times Z^2_q;\pi_q^*E) : \\
	N_q(P)u \in L^2(\bbR^+_s \times T_qY^{2} \times Z^2_q;\pi_q^*F) \Mand N_q(B^1) \circ \cC_{\bbR^+_s \times T_qY^{2} \times M}(u) =0 \}.
\end{multline*}
The partially completed operator $P$ has indicial family at point $q \in Y^2$,  given in local coordinates as
\begin{equation*}
	\Diff^1_{\ie}(Z^2_q; \pi_q^*E, \pi_q^*F) \ni I_q(P;\zeta) = \sum_{j+ |\beta| \leq 1} a_{j,0,\beta}(0,q,z) (\zeta)^j (V_z\rest{x=0,y=q})^{\beta}.
\end{equation*}
This has domain 
\begin{multline*}
	\cD_{\max, I_q(B^1)}(I_q(P;\zeta)) = \\ \{ u \in L^2(Z^2_q;\pi_q^*E) : 
	I_q(P;\zeta)u \in L^2(Z^2_q;\pi_q^*F) \Mand I_q(B^1)\circ \cC_{M}(u) =0 \}.
\end{multline*}
Here $\cC_M$ is the Cauchy data map at $M \subseteq Z^2$, which is defined as before, recalling that $Z^2$ is 
a simple edge space with $\iie$ metric. 

We now impose the 
\begin{assume} [Fredholm indicial family] \label{AssIndFamily}
For each $q\in Y^2,$ the family of indicial operators 
\begin{equation*}
	(I_q(P;\zeta), \cD_{\max, I_q(B^1)}(I_q(P;\zeta)) )
\end{equation*}
is Fredholm on $L^2(Z^2_q)$ and there is some $\zeta$ for which this operator is invertible.
\end{assume}
We prove in Theorem \ref{Thm:MainHodgeThm} that this holds for the de Rham operator. 

Since this is a holomorphic family of Fredholm operators, this assumption provides the extra piece of information needed
to apply the analytic Fredholm theorem, which implies that the family of inverses $(I_q(P;\zeta), 
\cD_{\max, I_q(B^1)}(I_q(P;\zeta)) )^{-1}$ is meromorphic.  Hence for each $q$, its poles are discrete and we will continue to assume that they are simple. This meromorphic
family determines the sets
\begin{equation*}
\begin{gathered}
	\spec_b(P;B^1,q) = \{ \zeta \in \bbC : 
	(I_q(P;\zeta), \cD_{\max, I_q(B^1)}(I_q(P;\zeta)) ) \text{ does not have an $L^2$-bounded inverse} \} \\
	\cS\cD(P, B^1, q) = \{ \zeta_j \in \spec_b(P;B^1,q) \text{ such that } x^{\zeta_j}\in L^2_{\loc} \setminus  xL^2_{\loc} \}.
\end{gathered}
\end{equation*}

If $\cS\cD(P,B^1,q)$ remains disjoint from all other indicial roots of $L$ as $q$ varies, then 
by
\cite[Theorems 3.2 and 6.3]{Krainer-Mendoza:Kernel} there is a smooth vector bundle 
\begin{equation*}
	\cTR_{Y^2}(L;B^1) \lra Y^2,
\end{equation*}
the smooth sections of which are
\begin{equation*}
	\Big\lbrace \sum_{ \zeta_j \in \cS\cD(P, B^1, q)} 
	u_{\zeta_j}(y,z) x^{\zeta_j} : u_{\zeta_j} \in \CI(Z^2_q; \pi_q^*E) \Big\rbrace.
\end{equation*}

We impose again the {\em Assumption \ref{Ass:CstIndicialRoots} } that the set $\cS\cD(P,B^1,q)$ is independent of $q \in Y^2$ and that the indicial roots in this strip are simple. Lemma \ref{lem:ExpansionExistence} below asserts that 
\begin{equation}\label{eq:ExpY2}
	u \in \cD_{\max,B^1}(L) \implies u \sim \sum_{\zeta_j \in \cS\cD(P; B^1)} u_{\zeta_j}(y,z) x^{\zeta_j} + \wt u,
\end{equation}
where this sum is a distributional section of $\cTR_{Y^2}(L;B^1)$ and $\wt u \in xH^{-1}_e(X;E).$ We again 
identify sections of $\cTR_{Y^2}(L;B^1)$ with the (finite) set of coefficients $u_{\zeta_j}$, and then define
the Cauchy data map of $L$ at $Y^2$ by 
\[
\mathcal C_{Y^2} = \mathcal C_{Y^2}(L;B^1): \cD_{\max,B^1}(L)  \ni u \longrightarrow 
( u_{\zeta_j} )  \in \mathcal C^{-\infty}( Y^2; \cTR_{Y^2}(L;B^1) ).
\]

We can now define local ideal boundary conditions at $Y^2$ by considering a bundle homomorphism,
\begin{equation*}
	B^2 \in \CI\lrpar{ Y^2; \Hom(\cTR_{Y^2}(L;B^1), \cG^2 ) }
\end{equation*}
where $\cG^2$ is an auxiliary bundle. The corresponding domain for $L$ is 
\begin{equation*}
	\cD_{(B^1,B^2)}(L) = \lrbrac{ u \in \cD_{\max, B^1}(L):  B^2 \circ \cC_{Y^2} u = 0 }.
\end{equation*}
We refer to the pair $(B^1, B^2)$ as {\bf local ideal boundary conditions} for $L$.
We show later that the operator 
\begin{equation*}
	(L, \cD_{(B_1, B_2)}(L))
\end{equation*}
is closed for any choice of local ideal boundary conditions. 

\medskip

We now specialize this discussion to where $L$ is the de Rham operator of an $\iie$ metric (scaled as in 
Lemma \ref{lem:SuitableScaling}). If $\hat X$ is Witt, there is no need to impose boundary conditions. 
If either $Y^1$ or $Y^2$, but not both, does not satisfy the Witt condition, then we can proceed just as in 
\S\ref{sec:SimpleEdgeBC}.  Hence we turn immediately to the case where the Witt condition fails at
both $Y^1$ and $Y^2$.  The leading coefficient of an element of the maximal domain is a distributional section of the bundle
\begin{equation*}
	\cH^{\mid}(H^1/\wt Y^1) \oplus \cH^{\mid}(H^1/\wt Y^1) \lra \wt Y^1,
\end{equation*}
over $\wt Y^1,$ the resolution of $Y^1.$
Each summand here has a flat connection. Suppose that $W^1$ is a flat sub-bundle of $\cH^{\mid}(H^1/\wt Y^1) \lra \wt Y^1$
with orthogonal projection $\cP_{W^1}$. Then we can impose the Cheeger ideal boundary condition
\begin{equation*}
	B_{W^1} = (\Id - \cP_{W^1}, \cP_{W^1}) \in \CI( \wt Y^1;
	\tend(\cH^{\mid}(H^1/\wt Y^1) \oplus \cH^{\mid}(H^1/\wt Y^1) ) ).
\end{equation*}

At the next stratum $Y^2,$ the leading coefficient of an element in
\begin{equation*}
	\cD_{\max, B_{W^1}}(\eth_{\dR})
\end{equation*}
is a distributional section of the bundle
\begin{equation}\label{eq:TraceBdleY2}
	\cH^{\mid}_{W^1}(H^2/Y^2) \oplus \cH^{\mid}_{W^1}(H^2/Y^2)  \lra Y^2.
\end{equation}
The typical fiber in either of these summands is
\begin{equation*}
	\cH_{W^1}^{(\dim Z^2)/2}(Z_q^2) = 
	\ker \lrpar{ \eth_{\dR}^{Z^2_q}, \cD_{B_{W^1}}(Z^2_q) }  \bigcap L^2(Z^2_q; \Lambda^{(\dim Z^2)/2}({}^{\iie}T^*Z_q^2)).
\end{equation*}
As these are the null spaces of a smooth family of Fredholm operators, the fact that they form a bundle is equivalent to the fact that the dimension is constant.
This follows from Theorem \ref{thm:SelfDual}, which identifies these Hodge cohomology spaces with de Rham cohomology spaces and Theorem \ref{thm:StratDiffInv} which shows that the latter are invariant under pull-back by stratified diffeomorphism.
This also follows by combining
Theorem \ref{Thm:MainHodgeThm}, which shows that {\em Assumption \ref{AssIndFamily}} holds, with \cite{Krainer-Mendoza:Kernel}.

The bundle $\cH_{W^1}^{\mid}(H^2/Y^2)$ has a natural flat connection, obtained as in \S\ref{sec:SimpleEdgeBC} from the 
exterior derivative on $H^2$. Since the vertical cohomology is now an $L^2$-cohomology, we discuss this more closely.
Choose a small collar neighborhood $\Coll(H^2) \cong H^2 \times [0,\eps]_x$ of $H^2$ in $\wt X$ and extend the boundary fibration
\begin{equation*}
	\lrpar{Z^2 \times [0,\eps]_x} - \Coll(H^2) \xlra{\Coll(\phi)} Y^2.
\end{equation*}
Using the metric on $\hat X$, we obtain an orthogonal splitting of the tangent bundle
\begin{equation}\label{eq:SplittingCollH^2}
	\Iie T\Coll(H^2) \cong \Iie TH^2 \oplus T[0,\eps]_x \cong  \Iie TY^2 \oplus \Iie TZ^2 \oplus T[0,\eps]_x.
\end{equation}
This induces a decomposition of $d$ on differential forms supported in $\Coll(H^2)$:
\begin{equation}\label{eq:dNearBdy}
	d\rest{\Coll(H^2)} = 
	\begin{pmatrix}
	\tfrac1xd^{Z^2} + d^{1,0}_{H^2} + xR & 0 \\
	\pa_x + \tfrac1x \bN & -(\tfrac1xd^{Z^2} + d^{1,0}_{H^2} +xR),
	\end{pmatrix}
\end{equation}
where $d^{1,0}_{H^2}$ is independent of $x.$

The graph closure of $d$ with core domain $\cD_{\max, B_{W^1}}(\eth_{\dR})$ has domain
\begin{equation*}
	\cD_{\max, B_{W^1}}(d),
\end{equation*}
see also \S\ref{sec:L2Coho} for an alternate description.  We also consider 
\begin{equation*}
	\cE_c^t(d\rest{\Coll(H^2)}) \subset \cD_{\max,B_{W^1}}(d); 
\end{equation*}
this is the space of forms with support in $\Coll(H^2)$ projecting down to a compact subset 
of $Y^2,$ and which annihilate $\pa_x$. 
As in \cite{Bismut-Lott} there is a natural surjective quotient map
\begin{equation*}
	\cE_c^t \cap \ker d^{Z^2}
	\xlra{\psi} 
	\CIc( Y^2; \Lambda^*(\Iie T^*Y^2) \otimes \tH_{W^1}^*( H^2/Y^2 ))
\end{equation*}
which defines a differential operator (for any $m \in \bbN_0$) by
\begin{equation*}
\xymatrix @R=1pt{
	\nabla^{\tH} : 
	\CIc( Y^2; \Lambda^p(\Iie T^*Y^2) \otimes \tH_{W^1}^m( H^2/Y^2 ) )
	\ar[r] &
	 \CIc( Y^2; \Lambda^{p+1}(\Iie T^*Y^2) \otimes \tH_{W^1}^m( H^2/Y^2 ) ) \\
	 \omega = \psi(e) \ar@{|->}[r] & \psi(d^{1,0}_{\Coll(H^2)} e) }
\end{equation*}
It follows from $(d\rest{\Coll(H^2)})^2=0$ that this is well-defined independently of the choice of $e$ lifting $\omega,$ and that
$(\nabla^{\tH})^2=0.$ The Leibniz rule satisfied by $d\rest{\Coll(H^2)}$ induces a Leibniz rule for $\nabla^{\tH},$ so we see that $\nabla^{\tH}$ is a flat connection on the bundle $ \tH_{W^1}^m( H^2/Y^2 )  \lra Y^2.$

\begin{remark}
We have defined the action of $\nabla^{\tH}$ on compactly supported sections, but one can now extend it to arbitrary smooth sections. Alternately, it is easy to see that the map $\psi$ extends to a surjective quotient map
\begin{equation*}
	\cD_{\max, B_{W^1}}(d\rest{\Coll(H^2)}) \cap \mathrm{Ann}(\pa_x) \cap \ker d^{Z^2}
	\lra
	\cD_{\max}(\nabla^{\tH})
\end{equation*}
(where $\mathrm{Ann}(\pa_x)$ refers to the annihilator of $\pa_x$) and so the definition of $\nabla^{\tH}$ above 
extends naturally to this domain.
\end{remark}

There is a similar decomposition of $\delta$ on forms supported in $\Coll(H^2),$
\begin{equation}\label{eq:deltaNearBdy}
	\delta\rest{\Coll(H^2)} = 
	\begin{pmatrix}
	\delta^{Z^2} + (d_{\Coll(H^2)}^{1,0})^* + (xR)^* & \pa_x + \tfrac1x(\bN -f) \\
	0 & -(\delta^{Z^2} + (d_{\Coll(H^2)}^{1,0})^* + (xR)^* )
	\end{pmatrix},
\end{equation}
and finally, a natural identification 
\begin{equation*}
	\cE_c^t(d\rest{\Coll(H^2)}) \cap \ker d^{Z^2} \cap \ker \delta^{Z^2}
	\lra
	\CIc(Y^2, \Lambda^*(\Iie T^*Y^2) \otimes \cH_{W^1}^*(H^2/Y^2)).
\end{equation*}
This allows us to define a flat connection $\nabla^{\cH}$ on the bundle of vertical harmonic forms
\begin{equation*}
	 \cH_{W^1}(H^2/Y^2) \lra Y^2,
\end{equation*}
namely, $\nabla^{\cH} = \Pi_0 d_{H^2}^{1,0} \Pi_0$ where $\Pi_0$ is the orthogonal projection onto $ \ker d^{Z^2} \cap \ker \delta^{Z^2}.$
The Hodge map induces a bundle isomorphism
\begin{equation*}
	 \cH_{W^1}^*(H^2/Y^2)
	 \lra  \tH_{W^1}^*(H^2/Y^2),
\end{equation*}
which also identifies $\nabla^{\cH}$ with $\nabla^{\tH}$ (\cite[Proposition 3.14]{Bismut-Lott}).\\

A local ideal boundary condition at $Y^2$ is a bundle homomorphism acting on sections of the bundle \eqref{eq:TraceBdleY2}.
We associate to each flat sub-bundle 
\begin{equation*}
	W^2 \subseteq \cH^{\mid}_{W^1}(H^2/Y^2)
\end{equation*}
the Cheeger ideal boundary condition
\begin{equation*}
	B_{W^2} = ( \Id - \cP_{W^2}, \cP_{W^2}) \in \CI(Y^2; \tend(\cH^{\mid}_{W^1}(H^2/Y^2) \oplus \cH^{\mid}_{W^1}(H^2/Y^2) ) )
\end{equation*}

The pair 
\begin{equation*}
\begin{gathered}
	W^1, \text{a flat subbundle of }\cH^{\mid}(H^1/\wt Y^1),\\
	W^2, \text{a flat subbundle of }\cH^{\mid}_{W^1}(H^2/Y^2)
\end{gathered}
\end{equation*}
constitute a (Hodge) {\bf mezzoperversity}. We show in Theorem \ref{Thm:MainHodgeThm} that for any such 
mezzoperversity the de Rham operator with Cheeger ideal boundary conditions 
\begin{equation*}
	\lrpar{ \eth_{\dR}, \cD_{B^1, B^2}(\eth_{\dR}) }
\end{equation*}
(with $B^j = B_{W^j}$)
is a self-adjoint Fredholm operator on $L^2$ with compact resolvent.
 
\subsection{Arbitrary depth}\label{sec:ArbitraryDepthBVP}
Now consider a space $\hat X$ which has singular strata up to depth $k+1$. One reason for our elaboration of the depth
two case is that this discussion extends to the higher depth almost unchanged. Order the strata $Y^1, Y^2, \ldots, Y^{k+1}$
so that $\mathrm{depth}\, Y^j < \mathrm{depth}\, Y^{j+1}$, $j < k$.  Thus, at one end, $Y^1$ is a stratum of least depth and
its link is a smooth closed manifold, while $Y^k$ is a smooth closed manifold which has link a stratified pseudomanifold 
with depth $k-1$.  We assume inductively that we have defined local ideal boundary conditions for all spaces with depth 
less than or equal to $k$.  We now describe how to extend this definition to $Y^{k+1}$.  As before, we let $x$ denote
a boundary defining function for the hypersurface corresponding to this stratum in the resolution $\wt X$, and 
work with the partially completed operator $P = xL$. 

We use now the notation of \cite{ALMP} for iterated edge and iterated incomplete edge operators.
Let $L \in \Diff_{\iie}^1(X;E)$ be an elliptic $\iie$ operator of order one acting on sections of a vector bundle $E.$
The local ideal boundary conditions at $Y^1, \ldots, Y^k$ are bundle homomorphisms
\begin{equation}\label{Eq:LocalBC1k}
\begin{split}
	B^1 & \in \CI(\wt Y^1; \Hom(\cTR_{Y^1}(L), \cG^1 ) )\\
	B^2 & \in \CI\lrpar{ \wt Y^2; \Hom ( \cTR_{Y^1}(L;B^1), \cG^2 )}\\
	& \vdots \\
	B^k & \in \CI\lrpar{ \wt Y^k; \Hom( \cTR_{Y^k}(L;B^1, \ldots, B^{k-1}), \cG^k )}
\end{split}
\end{equation}
where each $\cG^j$ is an auxiliary bundle over $\wt Y^j$.  We always impose Assumptions \ref{Ass:Localizable}, \ref{Ass:CstIndicialRoots}
and \ref{AssIndFamily} at each stratum. Recall also that $\wt Y^j,$ $j>2,$ are themselves manifolds with corners with iterated 
fibration structures.

The indicial and normal operators for the elements of $\bB = (B^1, \cdots, B^k)$ define local ideal boundary conditions 
$I_q(\bB)$ for $I_q(P;\zeta)$ and $N_q(\bB)$ for $N_q(P)$, for every $q \in Y^{k+1}$.  Defining $\spec_b(L;\bB)$ at $Y^{k+1}$
as before, we write
\begin{equation}\label{eq:cScDDef}
	\cS\cD_{k+1}(L,\bB) = \{  \zeta_j \in \spec_b(L; \bB): x^{\zeta_j} \in L^2_{\loc}\setminus xL^2_{\loc} \} \ \ \subset\ \ 
\{\zeta: 0 < \Re \zeta + \tfrac{f+1}2 \leq 1\}. 
\end{equation}
Using Assumption \ref{AssIndFamily}, there are smooth bundles 
\[
\ker (I(P; \zeta_j);  I(\bB)) \longrightarrow \wt Y^{k+1}
\]
for every $\zeta_j \in \cS\cD_{k+1}(L, \bB)$. For simplicity, if the meaning is clear, the subscript $k+1$ is omitted 
from $\cS\cD$. The direct sum of these over all $\zeta_j \in \cS\cD(L, \bB)$ gives a smooth vector bundle 
\begin{equation*}
	\cTR_{Y^{k+1}}(L;\bB) \lra \wt Y^{k+1};
\end{equation*}
this is a simple instance of \cite[Theorems 3.2 and 6.3]{Krainer-Mendoza:Kernel}. 

 Choose an adapted neighborhood $\cU_q \cong \bbB^{h_{k+1}} \times (0,1)_{x} \times Z_q^{k+1} \subseteq \hat X$. 
Lemma \ref{lem:ExpansionExistence} below shows that elements of $\cD_{\max, \bB}(L)$ have partial distributional 
asymptotic expansions at $Y^{k+1},$
\begin{equation}\label{eq:ExpYk1}
u \in \cD_{\max,\bB}(L) \implies u \sim \sum_{ \zeta_j \in \cS\cD(L, \bB) } u_{\zeta_j}(y,z) x^{\zeta_j} + \wt u  
\end{equation}
where $u_{\zeta_j} \in H^{-\Re \zeta_j}(\bbB^{h_{k+1}}; \ker (I(P; \zeta_j, I(\bB))$ and $\wt u \in x^{1-}H^{-1}_e(X;E)$.
This allows us to define the Cauchy data map of $L$ at $Y^{k+1}$, 
\begin{equation*}
	\xymatrix @R=1pt @C=80pt{
	\cD_{\max,\bB}(L) \ar[r]^-{\cC_{Y^{k+1}} = \cC_{Y^{k+1}}(L;\bB)} &  \CmI\lrpar{ \wt Y^{k+1}; \cTR_{Y^{k+1}}(L;\bB) } \\
	 u \ar@{|->}[r] &  ( u_{\zeta_j} )  }
\end{equation*}

Local ideal boundary conditions at $Y^{k+1}$ are defined by a bundle homomorphism 
\begin{equation*}
	B^{k+1} \in \CI\lrpar{ \wt Y^{k+1}; \Hom( \cTR_{Y^{k+1}}(L;\bB), \cG^{k+1} )},
\end{equation*}
with $\cG^{k+1} \lra \wt Y^{k+1}$ an auxiliary bundle. We complete the inductive step by setting
\begin{equation*}
	\cD_{\max, (B^1, \ldots, B^{k+1})} = \{ u \in \cD_{\max, \bB}(L) : B^{k+1} \circ \cC_{Y^{k+1}}(u)=0 \}.
\end{equation*}

The collection of bundle homomorphisms $\bB = (B^1, \cdots, B^k)$ constitutes a {\bf local ideal boundary condition} for $L.$
Theorem \ref{Thm:ClosedDomains} shows that for any such choice of $\bB$, 
\begin{equation*}
	(L, \cD_{\bB}(L))
\end{equation*}
is a closed operator. \\

Specialize again to the de Rham operator for a scaled $\iie$ metric. Generalizing the discussion in \S\ref{sec:TwoEdgeBC} we 
make the 
\begin{definition}\label{Def:FlatSystem}
Let $(\hat X,g)$ be a stratified pseudomanifold and $Y^1, \ldots, Y^{k+1}$ an ordering of the strata of $\hat X$ with increasing depth.
A (Hodge) {\bf mezzoperversity} is a collection of bundles 
\begin{equation*}
	\cW = \{ W^1 \lra \wt Y^1, \ldots, W^{k+1} \lra \wt Y^{k+1} \}, \quad \Mwith
\end{equation*}
\begin{equation*}
\begin{split}
	&W^1, \text{ a flat subbundle of }\cH^{\mid}(H^1/\wt Y^1),\\
	&W^2, \text{ a flat subbundle of }\cH^{\mid}_{W^1}(H^2/\wt Y^2), \\
	& \phantom{x}\vdots \\
	& W^{k+1}, \text{ a flat subbundle of }\cH^{\mid}_{W^1, \ldots, W^{k}}(H^{k+1}/\wt Y^{k+1}). 
\end{split}
\end{equation*}
\end{definition}
Here $\cH^{\mid}_{ \lrpar{W^1, \ldots, W^{j-1}} }(H^j/\wt Y^j) \lra \wt Y^j $ is the bundle with fiber at $q\in \wt Y^j$
\begin{equation*}
	\cH^{\tfrac12 \dim Z^j}_{ \lrpar{W^1, \ldots, W^{j-1}} }(Z_q^j)
	= \ker \lrpar{ \eth_{\dR}^{Z_q^j}, \cD_{B^1, \ldots, B^{j-1}}(\eth_{\dR}^{Z^j_q}) }
	\bigcap L^2(Z_q^j ; \Lambda^{\tfrac12\dim Z^j}({}^{\iie}T^*Z^j_q)),
\end{equation*}
with $B^j = B_{W^j}.$
As before, that these spaces form a bundle follows from \cite[Theorems 3.2 and 6.3]{Krainer-Mendoza:Kernel} 
once we establish the appropriate versions of {\em Assumption \ref{AssIndFamily}}, but also follow directly from 
Theorem \ref{thm:SelfDual} where these Hodge cohomology groups are identified with de Rham cohomology groups. 
The discussion of the flat connection in \S\ref{sec:TwoEdgeBC} extends to the general case essentially unchanged.

Every mezzoperversity determines Cheeger ideal boundary conditions for the de Rham operator on $X,$
\begin{equation*}
	\lrpar{ \eth_{\dR}, \cD_{B^1, \ldots, B^{k+1}}(\eth_{\dR})},
\end{equation*}
and we shall prove that this is a closed, self-adjoint Fredholm operator on $L^2,$ with compact resolvent.

\section{The model operators} \label{sec:ModelOps}
\subsection{The distributional asymptotic expansion}
We work as usual with a stratified space $\hat X$ with singular strata $Y^1, \ldots, Y^{\ell+1}$ ordered with increasing depth,
an $\iie$ metric $g$ on $X = \hat X^{\reg}$, and an elliptic $\iie$ differential operator $L \in \Diff^{1}_{\iie}(X;E)$.
For simplicity we assume that $L$ acts on sections of a single bundle $E$. 
We will show that imposing local ideal boundary conditions at the first $k$-strata allows one to impose local ideal boundary conditions at $Y^{k+1}.$\\

Assume that local ideal boundary conditions 
\begin{equation*}
	\bB = (B^1, \ldots, B^k)
\end{equation*}
have been chosen at the first $k$ strata, and that $L$ satisfies Assumptions \ref{Ass:Localizable}, \ref{Ass:CstIndicialRoots} 
and \ref{AssIndFamily} at $Y^{k+1}.$ We show now that elements in $\cD_{\max, \bB}(L)$ have distributional asymptotic 
expansions \eqref{eq:ExpYk1} at $Y^{k+1}$. 

To study expansions of sections at a singular stratum it will be useful to have a notation for functions supported near point on a stratum.
\begin{definition}
Let $A(Y^j) \subseteq \CI_{\Phi}(\wt X)$ be the subset of functions supported in a distinguished neighborhood $\cU_q$ of a point $q \in Y^j$ (i.e., a neighborhood of $q$ in which the boundary fibration structure is trivial, so that $\cU_q$ is diffeomorphic to $\bbB^{h_j} \times (0,1)_r \times \wt Z^j_q$) over which $E$ is trivial.
\end{definition}

A well-known approach to asymptotic expansions is through the Mellin transform. To apply it in our context, we identify a distinguished neighborhood $\cU_q$ of a point 
$q \in Y^{k+1}$ with $\bbB^{h_{k+1}} \times (0,1)_{x} \times Z_q^{k+1}$ and we fix a trivialization of $E,$ $E\rest{\cU_q} \cong \cU_q \times E\rest{Z_q^{k+1}}$.   
We also fix the measure $\frac{dx}xdy\dvol_{Z}$ on $\cU_q$, and set 
\begin{equation*}
	\gamma = -\frac{f+1}2,
\end{equation*}
so that $L^2 = L^2(\cU_q,g) = x^\gamma L^2(\cU_q,\frac{dx}xdy\dvol_Z).$ As before, we work with the partially completed operator
$P = x L$. 

\begin{lemma}\label{lem:ExpansionExistence}
If $u \in \cD_{\max, \bB}(L)$ and $\mathrm{supp}\, u \subset \cU_q$ then, as $x \to 0,$ 
\begin{equation}\label{Eq:Expansion}
	u \sim \sum_{\zeta_j \in \cS\cD(L, \bB) } u_{\zeta_j}(y,z) x^{\zeta_j} + \wt u,
\end{equation}
with $\cS\cD(L, \bB)$ from \eqref{eq:cScDDef} and with
\begin{equation*}
\begin{gathered}
	u_{\zeta_j}(y,z) \in H^{-\Re \zeta_j}(\bbB^{h_{k+1}}; L^2(Z^{k+1}_q ; E\rest{Z_q^{k+1}})), \\
	\wt u \in x^{1-} H^{-1}(\bbB^{h_{k+1}}, L^2( C(Z_q^{k+1}); E\rest{Z_q^{k+1}} )).
\end{gathered}
\end{equation*}
 The coefficients $(u_{\zeta_j})$ 
and the remainder $\wt u$ all depend continuously on $u \in \cD_{\max, \bB}(L)$ (with respect to the graph norm).

The coefficients $\{ u_{\zeta_j } \}$ are the images of $u$ under the projectors which are the leading coefficients in the Laurent 
expansion of the inverse of the indicial family at $\zeta_j$;  these projectors are finite rank on $L^2(Z^{k+1}_q)$ for each $y \in \bbB^{h_{k+1}}$,
and hence are really sections over $\bbB^{h_{k+1}}$ of the kernel of the indicial operator of $xL,$
\begin{equation*}
	u_{\zeta_j}(y,z) \in H^{-\Re \zeta_j}(\bbB^{h_{k+1}}; \ker (I(xL;\zeta_j), \cD_{I(\bB)}(I(xL;\zeta_j) ) ) ).
\end{equation*}
Note that the fact that these kernels fit together to form a vector bundle follows from the fact that the trace bundle is a bundle and the constancy of the indicial roots.
\end{lemma}

\begin{remark}
The remainder term is in general in the space above with weight $x^{1-},$ because it is possible for there to be an indicial root $\zeta$ such that 
\begin{equation*}
	x^\zeta \in x^{1-}L^2_{\loc} \setminus L^2_{\loc}
\end{equation*}
(e.g., this will happen for the de Rham operator at a `Witt stratum'). If this does not happen, then the remainder term satisfies
\begin{equation*}
	\wt u \in x H^{-1}( \bbB^{h_{k+1}}, L^2( C(Z_q^{k+1}); E\rest{Z_q^{k+1}} )).
\end{equation*}
\end{remark}

\begin{proof}
Following \cite[\S4.2]{ALMP}, a variant of the standard symbol calculus produces a ``small calculus'' parametrix $A$ 
for $\rho L$, where $\rho$ is a product of boundary defining function for each of the hypersurface boundaries 
of $\wt X$.  This does not require boundary conditions and has the following properties: if $u = \calO(x^\nu)$, then $Au = \calO(x^\nu)$ for any $\nu > -\gamma$; furthermore, if 
\begin{equation*}
	Q = \Id - A\rho L
\end{equation*}
and $w = Qu$, then $w$ is smooth in $X$, and $(x\pa_x)^i (x\pa_y)^j w \in L^2$ for all $i, j$. Writing $u=A\rho Lu + Qu$, then
these properties show that the term $A \rho Lu$ can be included into the error term $\wt u$. Hence we may assume
for the remainder of the proof that $u \in \mathrm{Image}(Q).$
To simplify notation we also replace $h_{k+1}$ and $Z^{k+1}_q$ by $h$ and $Z$ for the duration of this section.
 
Since $u \in L^2 = x^\gamma L^2(\cU_q,\frac{dx}xdy\dvol_Z)$, its Mellin transform in $x$, $\cM u(\zeta)$, takes values in
\begin{equation*}
	L^2( \{ \eta = \gamma\}, d\xi; L^2(\bbB^h \times Z; E) ), \qquad \zeta = \xi + i\eta.
\end{equation*}
This extends to a holomorphic function in the region 
\begin{equation*}
	\{ \eta < \gamma \} \lra L^2(\bbB^h \times Z; E),
\end{equation*}
and satisfies the ideal boundary conditions for $I(P)$ induced by $\bB$. 

Writing $P = I(P) + S$ so that $I(P)u = Pu - Su$, 
and using Assumption \ref{AssIndFamily}, we have that 
\begin{equation}\label{eq:HoloEquality}
	\cM u(\zeta) = I(P; i\zeta)^{-1}\lrpar{ \cM(P u - Su) (\zeta) } 
\end{equation}
is holomorphic in the region $\{ \eta < \gamma \}$ with values in $L^2( \bbB^h \times Z; E)$,
even though the right hand side is {\em a priori} a meromorphic function of $\zeta$ (because of the meromorphy
of the inverse of the indicial operator). 

Notice that $P u \in x^{\gamma+1}L^2(\cU_q,\frac{dx}xdy\dvol_Z)$, and hence 
$\cM(P u)$ is a holomorphic function in  the half-plane $\{ \eta < \gamma+1 \}$ 
with values in $L^2(\bbB^h \times Z; E)$.  Reasoning as in \cite{Mazzeo:Edge}, and recalling that
$u \in \mathrm{Image}(Q)$, we obtain that for every $s \in [0,1]$, $\cM(Su)$ is holomorphic in $\{ \eta < \gamma+s \}$
with values in $L^2(Z_q; H^{-s}(\bbB^h) \otimes E)$.

Altogether then, \eqref{eq:HoloEquality} implies that $\cM u$ extends to a meromorphic function
from $\{ \eta < \gamma+s \}$ to $L^2(Z_q; H^{-s}(\bbB^h) \otimes E)$ for any $s \in [0,1].$ 
Moreover, the poles all come from the inverse of the indicial operator and hence the leading term at each
pole is the image of $u$ under the projector appearing as the leading coefficient in the Laurent
expansion of $I(P; i\zeta)^{-1}$ at that pole. 
Finally $\cM(u)$ is, for any $\eps>0,$ in 
\begin{equation*}
	L^2( \{ \eta = \gamma+1-\eps \}, d\xi; H^{-1}(\bbB^h; L^2(Z; E) ) )
\end{equation*}
and taking inverse Mellin transform along $\{ \eta =\gamma+1-\eps \}$ yields the element $\wt u.$
\end{proof}

If $u \in \cD_{\max, \bB}(L)$ and $\chi \in A(Y^{k+1})$ then we can apply Lemma \ref{lem:ExpansionExistence} to $\chi u.$
The coefficients in the expansion of $\chi u$ are sections of the trace bundle over $Y^{k+1}$ and, since $\chi \in \CI_{\Phi}(\wt X),$ it only enters multiplicatively.
Thus associated to $u$ is a distributional section of the trace bundle defined over all of $\wt Y^{k+1}.$
We will refer to this section as the Cauchy data of $u$ and denote it $\cC_{Y^{k+1}} u$ or simply $\cC u$ when $Y^{k+1}$ is clear from context.

An immediate consequence of the continuity of the coefficients in the partial asymptotic expansion is that local ideal boundary conditions determine closed domains.
\begin{theorem}\label{Thm:ClosedDomains}
Let $\bB = (B^1, \ldots, B^{k})$ be local ideal boundary conditions for $L$. Then 
\begin{equation*}
	( L, \cD_{\max, \bB}(L) )
\end{equation*}
is a closed operator on $L^2(X;E).$
\end{theorem}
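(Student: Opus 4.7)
The plan is to prove closedness by induction on the depth of the stratum at which the boundary condition is imposed, with the key observation that each Cauchy data map $\cC_{Y^j}$ is continuous from the previous partially closed domain (equipped with the graph norm) into a suitable distributional section space.

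For the inductive step, suppose closedness of $(L, \cD_{\max, (B^1, \ldots, B^k)}(L))$ has been established; we must show $(L, \cD_{\max, \bB}(L))$ is closed, where $\bB = (B^1, \ldots, B^{k+1})$ and
\[
\cD_{\max, \bB}(L) = \{ u \in \cD_{\max, (B^1, \ldots, B^k)}(L) : B^{k+1} \circ \cC_{Y^{k+1}}(u) = 0 \}.
\]
Take a sequence $u_n \in \cD_{\max, \bB}(L)$ with $u_n \to u$ in $L^2(X;E)$ and $L u_n \to v$ in $L^2(X;E)$. The inductive hypothesis gives $u \in \cD_{\max, (B^1, \ldots, B^k)}(L)$ with $Lu = v$, and that $u_n \to u$ in the graph norm of this partially closed extension.

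The heart of the argument is then the continuity statement in Lemma~\ref{lem:ExpansionExistence}: the assignment
\[
\cD_{\max, (B^1, \ldots, B^k)}(L) \ni w \longmapsto \cC_{Y^{k+1}}(w) \in \CmI\lrpar{ Y^{k+1}; \cTR_{Y^{k+1}}(L; B^1, \ldots, B^k) }
\]
is continuous with respect to the graph norm on the source and the distributional topology on the target (as it is built out of the coefficients $u_{\zeta_j, \kappa, p}$, each of which depends continuously on $w$). Since $B^{k+1}$ is a differential operator acting on sections of a smooth bundle over $Y^{k+1}$, it is continuous for the distributional topology. Therefore
\[
0 = B^{k+1} \circ \cC_{Y^{k+1}}(u_n) \lra B^{k+1} \circ \cC_{Y^{k+1}}(u),
\]
which forces $B^{k+1} \circ \cC_{Y^{k+1}}(u) = 0$ and hence $u \in \cD_{\max, \bB}(L)$.

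To launch the induction, one needs a base case. The natural starting point is $\cD_{\max}(L)$ itself, which is closed simply because it is the graph of the distributional extension of $L$ on $L^2$; alternatively one localizes away from all singular strata and argues on the smooth part. The main obstacle is not any single step of the induction — given Lemma~\ref{lem:ExpansionExistence} the argument is essentially formal — but rather verifying that the continuity of $\cC_{Y^{k+1}}$ invoked above is genuinely a graph-norm statement, and not merely $L^2$ continuity. This is ensured by inspecting the proof of Lemma~\ref{lem:ExpansionExistence}, where the expansion of $u$ and each of its coefficients is obtained via the meromorphic extension of $\cM u$, whose norm bounds in the relevant Sobolev spaces are controlled by $\|u\|_{L^2} + \|Lu\|_{L^2}$; this quantitative control is precisely what feeds continuity in the graph norm.
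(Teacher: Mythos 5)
Your proof is correct and takes essentially the same approach as the paper: induction on depth, closedness of $\cD_{\max}(L)$ (or the previous partial domain) as the base, and then the graph-norm continuity of the Cauchy data map from Lemma~\ref{lem:ExpansionExistence} to pass the boundary condition to the limit. Your added remarks—that $B^{k+1}$, being a differential operator, is continuous on distributions, and that the graph-norm continuity is traceable to the Mellin-transform bounds in the proof of Lemma~\ref{lem:ExpansionExistence}—make explicit two points the paper leaves implicit, but the argument is the same.
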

\begin{proof}
Suppose that $u_n$ is a sequence in $\cD_{\max, \bB}(L)$ which is Cauchy in the graph norm. Since $(L, \cD_{\max}(L))$ is 
closed, the limit $u = \lim u_n$ lies in $\cD_{\max}(L)$ and $Lu_n \to Lu$ in $L^2$ so it remains to show that $u$
satisfies the ideal boundary conditions, i.e.\ that $u \in \cD_{\max, \bB}(L).$ 

We may assume that the statement is proven at all strata $Y^j$, $j < k$. It follows that $u$ has an expansion
at $Y^{k}$. Since the coefficients $u_{\zeta_j}$ depend continuously on $u$, they are the limits of the 
corresponding coefficients of $u_n$, and hence belong to $\ker B^{k}$. This proves that $u \in \cD_{\max, \bB}(L)$.
\end{proof}

The regularity of $\cC u$ in Lemma \ref{lem:ExpansionExistence} can not be improved in general, see \cite[Example 7.8]{Mazzeo:Edge}.
If $u$ is required to be `smooth in $y$' then the Cauchy data of $u$ will inherit that regularity.
Indeed, note that if $u$ is supported in $\cU_q$ and has extra Sobolev regularity in directions tangent to $\bbB^{h_j} \times Z$ then its Mellin transform will be a meromorphic function into this Sobolev space and the coefficients will have this much extra Sobolev regularity in $\bbB^{h_j}$ (cf. the proof of \cite[Theorem 7.14]{Mazzeo:Edge}).
Thus let us define
\begin{multline}\label{eq:DefRegDom}
	\cD_{\max, \bB}^{\reg}(L) = \{ u \in \cD_{\max,\bB}(L): \Mforall \chi\in A(Y^{k+1}), N \in \bbN, \Mand (V_j) \subseteq \cV_b(\wt X), \\
	V_1\ldots V_N (\chi u) \in L^2(X;E) \}.
\end{multline}
(Here $\cV_b(\wt X)$ refers to vector fields tangent to all of the boundary hypersurfaces of $\wt X.$)
As we have just explained, the proof of Lemma \ref{lem:ExpansionExistence} shows that if $u \in \cD_{\max, \bB}^{\reg}(L)$ then $\cC u$ is a smooth section of the trace bundle and that, for any $\chi \in A(Y^{k+1}),$ the error term in the expansion of $(\chi u)$ is an element of $x^{1-}L^2.$ 
In particular this shows that $\cD_{\max, \bB}^{\reg}(L)\subseteq \cD_{\max, \bB}(L) \cap x^a L^2(X;E)$ for $a$ small.
In section \ref{sec:CoreDomain} below we will give a sufficient condition for $\cD_{\max, \bB}^{\reg}(L)$ to be a core domain for $\cD_{\max, \bB}(L).$

\subsection{Inversion of the normal operator} \label{sec:InvNormal}
In this section we describe a structure theorem for the generalized inverse of the 
normal operator of a first order $\iie$ differential operator.

We shall use some extensions and adaptations to the present setting of the calculus of pseudodifferential edge operators, 
$\Psi_e^*$, from \cite{Mazzeo:Edge} and already mentioned in \S 1.1 in the setting of depth $1$ spaces.
We refer to that paper for details on this calculus, and shall follow its language and notation freely. \\ 

Let $L \in \Diff^{1}_{\iie}(X;E)$ be elliptic, with local ideal boundary conditions 
\begin{equation*}
	\bB = (B^1, \ldots, B^k)
\end{equation*}
at the first $k$-strata of $\hat X$. We work near a point $q \in Y^{k+1}$ with distinguished neighborhood $\cU_q$, and denote by
$Z$ the link of $\hat X$ at $q$ and fix a boundary defining function $x$ for $Y^{k+1}$; we also write $\dim Y^{k+1} = h$ 
and $\dim Z = f$.  Briefly, the idea is to analyze $N_q(L)$ by treating it as an (incomplete) edge operator, making allowances
for the fact that the fibre $Z$ is stratified of depth $k$. By virtue of the inductive structure of the argument, the operator induced 
by $L$ on $Z$, together with the boundary conditions above, is Fredholm and hence can be treated essentially
as when $Z$ is closed and nonsingular.  Beyond Assumptions \ref{Ass:Localizable},  \ref{Ass:CstIndicialRoots} and \ref{AssIndFamily},
this requires the additional 
\begin{assume} [Compact domain] \label{AssCmpDomain}
The inverse $(I_q(P;\zeta), \cD_{\max, I_q(\bB;\zeta)}(I_q(P;\zeta)) )^{-1},$ when it exists, is a compact operator on $L^2(Z_q;\pi_q^*E).$
\end{assume}
We denote the space of compact operators on $L^2(Z_q;\pi_q^*E)$ by $\cK (L^2(Z_q;\pi_q^*E)).$
This assumption is verified for the de Rham operator in Theorem \ref{Thm:MainHodgeThm}.\\ 
 
As we have shown, the boundary conditions $\bB$ induce a domain $\cD_{\max, N(\bB)}(N_q(P))$ for $N_q(P)$ on 
$L^2(\bbR^+_s \times T_qY^{k+1} \times Z_q; \pi_q^*\Lambda^*(\Iie T^*X)).$ Taking the Fourier transform in 
$T_q Y^{k+1}$, writing the dual variable as  $\eta$, and rescaling by $t = s|\eta|$ (which reflects the dilation 
invariance of $P$ in $T_q^*Y^{k+1} \times \bbR^+$) we can transform $N_q(P)$ to a family of `Bessel-type' operators 
parametrized by points in the cosphere bundle over $Y^{k+1},$
\begin{equation*}
	\bbS^*_qY^{k+1} \ni
	\hat\eta \mapsto  
	\Theta_q(P)(\hat\eta) = 
	\sum_{j + |\alpha|+|\beta| \leq 1} a_{j,\alpha, \beta}(0,q,z) (t\pa_t)^j (t\hat\eta)^{\alpha} (V_{z}\rest{x=0, y=q})^{\beta}.
\end{equation*}
Here $\hat\eta = \eta/|\eta|$ is the variable in the cosphere bundle.  Each $\Theta_q(P)$ can be regarded near $t=0$ as 
an elliptic $b$-operator on $\bbR^+ \times Z$ depending smoothly on $(q, \hat\eta) \in \bbS^*Y^{k+1}$, and 
inherits the domain $\cD_{\max, \Theta(\bB)}(\Theta_q(P))$ by passing to the Bessel reductions of the boundary operators, 
$\Theta_q(B^1), \ldots, \Theta_q(B^k)$. However, as already discussed, there is an important proviso: in the standard $b$-theory, 
the link $Z$ is assumed to be a smooth compact manifold, while here it is a stratified space of depth $k$. 
By the assumptions above, and by induction, the indicial family, which is a family of $\iie$ operators on $Z$ with 
ideal boundary conditions, has a discrete set of poles and a good regularity theory for elements in its nullspace, 
and these two facts suffice to proceed with the main constructions of the $b$-calculus.  However, beyond
this behavior near $t=0$, the family $\Theta_q(P)$ has a `Bessel structure' as $t \to \infty$, and this plays an
important role too.

We now analyze $( \Theta_q(P), \Theta_q(\bB))$ following \cite[\S5]{Mazzeo:Edge} closely, cf.\ also \cite[\S 5.6]{ALMP}. 
In the following, use boundary defining functions $\rho_0 = t/(1+t)$ for $t=0$ and $\rho_{\infty} = 1/(1+t)$ for $1/t=0$ . 
\begin{lemma}
Suppose that no indicial root of $\Theta_q(P)$ has real part equal to $\delta+\tfrac {f+1}2.$ Then,  for any $\gamma \in \bbR$,  
\begin{equation*}
	\Theta_q(P)(\hat \eta): 
	\cD_{\max, \Theta(\bB)}(\Theta_q(P)) \cap \rho_0^\delta \rho_{\infty}^\gamma L^2(\bbR^+ \times Z_q;E)
	 \lra 
	\rho_0^\delta \rho_{\infty}^{\gamma+1} L^2(\bbR^+ \times Z_q;E)
\end{equation*}
is Fredholm. 
\label{lemfred}
\end{lemma}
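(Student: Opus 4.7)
The plan is to follow the parametrix construction of \cite{Mazzeo:Edge}, \S5 (adapted in \cite{ALMP}, \S5.6), but to accommodate the fact that the fiber $Z = Z_q^{k+1}$ is now a stratified pseudomanifold of depth $k$ rather than a closed manifold. By induction on depth, we may assume that the operator induced by $L$ on $Z_q$ with the boundary conditions $\bB$ is already known to be Fredholm and self-regularizing; Assumptions \ref{Ass:CstIndicialRoots}, \ref{AssIndFamily} and \ref{AssCmpDomain} package exactly what is needed for the standard arguments to carry over.

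First I would regard $\Theta_q(P)(\hat\eta)$ as an elliptic $b$-operator on $\bbR^+_t \times Z_q$ with values in wedge differential operators on $Z_q,$ smoothly parametrized by $\hat\eta \in \bbS^*_qY^{k+1}.$ Its indicial family in $t$ is, up to a shift of variables, the family $I_q(P;\zeta)$ endowed with the boundary conditions $I_q(\bB;\zeta),$ and by Assumption \ref{AssIndFamily} this is a holomorphic family of Fredholm operators on $L^2(Z_q;\pi_q^*E)$ with meromorphic inverse having only finitely many poles in any horizontal strip. The hypothesis that no indicial root has real part $\delta + \tfrac{f+1}{2}$ guarantees that the inverse of the indicial family is holomorphic and $L^2$-bounded on the contour $\Re\zeta = \delta + \tfrac{f+1}{2},$ and Assumption \ref{AssCmpDomain} guarantees that it takes values in compact operators on that contour.

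Next I would construct a parametrix $G_q(\hat\eta)$ in two stages. Near $t=0,$ I use the Mellin transform and the indicial inverse on the contour $\Re\zeta = \delta + \tfrac{f+1}{2}$ to produce a right inverse modulo an error that gains a factor of $\rho_0.$ Near $t = \infty,$ the family $\Theta_q(P)(\hat\eta)$ has the structure of a first-order elliptic scattering-type (Bessel) operator: the terms $(t\hat\eta)^\alpha$ provide the symbol at infinity, and ellipticity together with $|\hat\eta|=1$ makes the operator invertible there modulo an error gaining a factor of $\rho_\infty.$ Splicing these two parametrices with a smooth cut-off in $t$ and correcting the interior with the standard small-calculus parametrix built from ${}^e\sigma_1(\Theta_q(P))^{-1}$ yields $G_q(\hat\eta)$ with
\begin{equation*}
    \Theta_q(P)(\hat\eta)\, G_q(\hat\eta) = \Id - R_1, \qquad G_q(\hat\eta)\, \Theta_q(P)(\hat\eta) = \Id - R_2,
\end{equation*}
where $R_1, R_2$ are smoothing and mapping $\rho_0^\delta \rho_\infty^\gamma L^2$ into $\rho_0^{\delta+\eps}\rho_\infty^{\gamma+\eps}L^2$ for some $\eps>0.$

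Finally I would show that $R_1$ and $R_2$ are compact on $\rho_0^\delta \rho_\infty^{\gamma+1}L^2$ (respectively $\rho_0^\delta\rho_\infty^\gamma L^2$). Compactness at infinity comes from the extra decay gained by the scattering-type parametrix, while compactness at $t=0$ uses Assumption \ref{AssCmpDomain}: the pointwise values of the Schwartz kernel along the front face $t=0$ are compact operators on $L^2(Z_q;\pi_q^*E),$ so the error term composes into a Hilbert--Schmidt-type operator on the weighted space. Fredholmness of $\Theta_q(P)(\hat\eta)$ on the stated spaces then follows by the standard argument. The main obstacle, and the place where the proof genuinely uses the inductive structure rather than being a verbatim copy of \cite{Mazzeo:Edge}, is this last compactness step: because $Z_q$ is not smooth, one cannot invoke Rellich's theorem on $Z_q$ directly, and compactness of the error must instead be extracted from the compactness of the indicial inverse (Assumption \ref{AssCmpDomain}) combined with the gain in $\rho_0$-weight furnished by the parametrix.
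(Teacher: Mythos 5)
Your proposal is correct and takes essentially the same approach as the paper: a $b$-type Mellin parametrix near $t=0$ built from the indicial inverse (with compactness of the error extracted precisely from Assumption~\ref{AssCmpDomain} and the extra factor of $t$ in $(\Theta_q(P)-I_q(P))I_q(P)^{-1}$), a Bessel/scattering-type parametrix at $t=\infty$ coming from ellipticity of the partial principal symbol, and a patching argument. The one place where you gloss over a step that the paper handles explicitly: you assert a two-sided parametrix $\Theta_q(P)G_q = \Id - R_1$, $G_q\Theta_q(P) = \Id - R_2$ as emerging directly from the splicing, but the construction as described naturally yields only a \emph{right} parametrix. The paper obtains the left parametrix by the cleaner route of constructing a right parametrix for the transpose $\Theta_q(P)^t$ and dualizing. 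Your argument would be complete if you either added that duality step or carried out a separate left-parametrix construction; as written, Fredholmness is only half-established.
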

\begin{proof}
It suffices to construct a right parametrix for $\Theta_q(P)$ since a left parametrix is obtained as the dual
of the right parametrix for $\Theta_q(P)^t$.  To do this, we construct a parametrix near $t=0$ and
another $t=\infty$ and then patch these together. 

Near $t=0,$ and for any $\hat\eta \in \bbS^*_qY^{k+1}$, let 
\begin{equation*}
	Q_0 = I(\Theta_q(P)(\hat\eta))^{-1} = \left(\sum_{j +|\beta| \leq 1} a_{j,0, \beta}(0,q,z) (t\pa_t)^j  (V_{z}\rest{x=0, y=q})^{\beta}\right)^{-1}.
\end{equation*}
(equivalently, $I_q(P)^{-1}$) on $t^\delta \cD_{\max, I(\bB)}(I_q(P)).$  This is obtained by restricting the inverse of the indicial family
to the line $\eta = \delta$ and taking the inverse Mellin transform by integrating along this line. 
That this is a parametrix for $\Theta_q(P)(\hat\eta)$ near $t=0$ follows easily from Assumption \ref{AssCmpDomain}. Indeed, 
\begin{equation*}
	\Theta_q(P)(\hat\eta) I_q(P)^{-1} - \Id = (\Theta_q(P)(\hat\eta)-I_q(P))(I_q(P)^{-1}) = \sum_{|\alpha|= 1} a_{0,\alpha, 0}(0,q,z)  (t\hat\eta)^{\alpha}I_q(P)^{-1},
\end{equation*}
and this is a compact operator on functions supported near $t=0$. 

As for its behavior when $t$ is large, conjugate $\Theta_q(P)(\hat\eta)$ by the Fourier transform in $t$. Letting $\tau$ be the variable 
dual to $t,$ we have
\begin{equation*}
	\Theta_q(P)(\hat\eta)u = (2\pi)^{-1}\int e^{it\tau}\wt\sigma(\Theta_q(P)(\hat\eta))(t, \tau) \hat u(\tau,z) \; d\tau,
\end{equation*}
where
\[
\wt \sigma(\Theta_q(P)(\hat\eta))(t,\tau) = 
i(a_{1,0,0}(t\tau) + \sum_{|\alpha|= 1} a_{0,\alpha, 0}(0,q,z)  (t\hat\eta)^{\alpha}) + \sum_{|\beta|=1} a_{0,0,\beta}(V_{z}\rest{x=0, y=q})^{\beta}
\]
is the `partial principal symbol' of $\Theta_q(P).$ As explained in \cite[Lemma 5.5]{Mazzeo:Edge}, the ellipticity of $P$ 
implies that 
\begin{equation*}
	Q_{\infty}(\hat\eta)u = (1-\phi(t)) \int e^{it\tau} \wt\sigma(\Theta_q(P)(\hat\eta))^{-1} \hat u (\tau,z) \; d\tau,
\end{equation*}
where $\phi$ is a cut-off function, induces a bounded map
\begin{equation}\label{eq:spaces}
	\rho_0^\delta \rho_{\infty}^{\gamma+1} L^2(\bbR^+ \times Z_q;E) \lra
	\rho_0^\delta \rho_{\infty}^\gamma  L^2(\bbR^+ \times Z_q;E)\cap \cD_{\max, \Theta(\bB)}(\Theta_q(P)(\hat\eta)).
\end{equation}
Patching together $Q_0$ and $Q_\infty$, we obtain an operator $Q,$ which is bounded as in \eqref{eq:spaces}, 
and such that $\Theta_q(P)Q - \Id$ is compact. Dualizing a right parametrix for $L^t$ gives a left parametrix.
\end{proof}

The reader may have noticed that the specific value of the weight parameter $\gamma$ at $t = \infty$ plays no real
role, and indeed, it follows from this construction that elements of the nullspace of $\Theta_q(P)$ which are tempered 
as $t \to \infty$ necessarily decrease rapidly: 
\begin{equation*}
\ker \Theta_q(P) \cap \lrpar{\bigcup_{\gamma \in \bbR} \rho_0^\delta \rho_{\infty}^\gamma L^2(\bbR^+ \times Z_q;E)}
\subseteq \bigcap_{\gamma \in \bbR^+} \rho_0^\delta \rho_{\infty}^\gamma L^2(\bbR^+ \times Z_q;E)
\end{equation*}

\medskip

Let us now describe the finer structure of $Q_0$, the portion of the parametrix acting on 
functions supported near $t=0$.  

The indicial family $I_q(P;\zeta),$ takes values in $\Diff_{\iie}^*(Z_q;E)$ and by assumption its inverse is 
a meromorphic function with simple poles valued in $\sK(L^2(Z_q;E)),$ the compact operators on $L^2(Z_q;E).$ 
Taking the inverse Mellin transform, the resulting inverse of the indicial operator is an element 
\begin{equation*}
	Q_0 \in \Psi^{-1,\cE}_b(\bbR^+) \otimes \sK(L^2(Z_q;E)) 
\end{equation*}
and satisfies
\begin{equation*}
	\Theta_q(P)Q_0 = \Id - R_0 \Mwith R_0 \in t\Psi_b^{-1,\cE}(\bbR^+) \otimes \sK(L^2(Z_q;E)).
\end{equation*}
Let us recall here that $\Psi_b^{-1,\cE}$ is the space of $b$-pseudodifferential operators $A$ of order $-1$ and
with index family $\cE = (E_{10}, E_{01}, E_{11})$ describing the exponents in the asymptotic expansions of the Schwarz 
kernel of $A$  at the various boundary components of the $b$-stretched product $(\RR^+)^2_b$. 
The index sets $E_{ij}$ are determined by the indicial roots of $P$ and the weight $\delta,$ and are given by
\begin{equation*}
\begin{gathered}
	E_{10} = \{ \zeta : \zeta \in \spec_b(P;\bB,q), \Re \zeta > \delta - \tfrac{f+1}2 \}, \\
	E_{01} = \{ -\zeta : \zeta \in \spec_b(P;\bB,q), \Re \zeta < \delta - \tfrac{f+1}2 \}, \\
	E_{11} = \bbN_0.
\end{gathered}
\end{equation*}
The error term $R_0$ contains an extra factor of $t$, which means that it vanishes to first order on the front face 
and also at the left face. (This latter vanishing is achieved by using a careful extension off of the front face, 
as in \cite[Proof of Proposition 5.43]{APSBook}).

Since $Q_0$ is obtained by integrating the inverse of the indicial family along a vertical line in the complex plane,
it is not surprising that the indicial roots determine the index sets. Since this inverse is meromorphic, changing 
the contour of integration to a parallel vertical line would produce another operator which differs from the first
by the (finite) sum of residues at the poles between these two lines. These residues are the coefficients of the expansion 
of $\cK_{Q_0}$ at the left and right faces, see \cite[Lemma 5.16]{APSBook}. In particular, if $\zeta_0$ is a simple pole 
of $I_q(P,\zeta)^{-1}$ with $\Re \zeta_0 >\delta,$ then the coefficient of $s^{\zeta_0}$ of $\cK_{Q_0}$ at the left face is 
the orthogonal projection onto $\ker I_q(P;\zeta_0).$ (Of course if $\Re\zeta_0 < \delta,$ this is the coefficient of 
$s^{-\zeta_0}$ of the expansion at the right face.) This is consistent with the fact that $Q_0$ maps into 
$\cD_{\max, \Theta(\bB)}(\Theta_q(P))$ and elements in this domain have partial asymptotic expansions with coefficients in the null 
space of the indicial family at the corresponding indicial root.\\

Before proceeding, let us note that tensoring with $\sK(L^2(Z_q;E))$ does not interact with the constructions of the $b$-calculus.
We now improve $Q_0$ to a finer parametrix, i.e.\ one with an even smaller remainder term. The first step in this is to solve away 
the expansion of the error $R_0$ at the left face. This is done as in \cite[Lemma 5.44]{APSBook}, using $I_q(P,\zeta)^{-1}$ again,
and in particular staying within the $b$-calculus twisted by $\sK(L^2(Z^2_q;E)).$ This leads to the next parametrix
\[
Q_1 \in \Psi^{-1,\cF}_b(\bbR^+) \otimes \sK(L^2(Z_q;E)),
\]
which satisfies
\[
\Theta_q(P)Q_1 = \Id - R_1 \Mwith R_1 \in t\Psi_b^{-1,(\infty,F_{01},0)}(\bbR^+) \otimes \sK(L^2(Z_q;E)),
\]
for some index set $\cF= (F_{10}, F_{01}, 0).$

The second step is to form the asymptotic sum of the Neumann series $\sum R_1^j.$ This is possible because 
the kernel of $R_1$ vanishes to order one at the front face and to infinite order at the left face. Writing this asymptotic
sum as $\Id + S$, then 
\[
S \in t\Psi_b^{-1,(\infty,G_{10},0)}(\bbR^+) \otimes \sK(L^2(Z_q;E))
\]
and $(\Id + S) - (\Id - R_1)^{-1}$ is the residual term
\[
R_2 = \Id - (\Id - R_1)(\Id + S) \in \Psi_b^{-\infty, (\infty, \Gamma, \infty)}(\bbR^+) \otimes \sK(L^2(Z_q;E)),
\]
for some index set $\Gamma$.  By construction, $\Theta_q(P)Q_2 = \Id - R_2$, where
\begin{equation*}
	Q_2 = Q_1 (\Id + S) \in \Psi_b^{-1,(E_{10}, J_{01}, J_{11})}(\bbR^+) \otimes \sK(L^2(Z_q;E))
\end{equation*}
for index sets satisfying $J_{01} \geq E_{01}$ and $J_{11} \geq 0.$

By the same sort of duality argument, we can also construct a parametrix $Q_2'$ on the left for $\Theta_q(P).$

Having constructed these two parametrices, then Lemma~\ref{lemfred} implies abstractly that $\Theta_q(P)$ has a 
generalized inverse $G_{\Theta_q(P)}$. A further argument, \cite[(4.25)]{Mazzeo:Edge}, gives the identity
\begin{equation*}
	G_{\Theta_q(P)} = Q_2 + R_2 G_{\Theta_q(P)} R_2' + R_2 Q_2' - R_2 \Pi_{\ker \Theta_q(P)} Q_2' - Q_2 \Pi_{\coker \Theta_q(P)},
\end{equation*}
which allows us to understand this generalized inverse as an element in the twisted $b$-calculus. 
(Note that for us the cokernel will always be identified with the orthogonal complement of the image.) In particular, 
if there is no indicial root in $\spec_b(P;\bB,q)$ with real part $\delta-\tfrac {f+1}2$, (to understand this shift, 
recall that $x^{\delta}L^2(x^f \; dx) = x^{\delta-(f+1)/2}L^2(\tfrac{dx}x)$), then we obtain finally that
\begin{equation*}
\begin{gathered}
	G_{\Theta_q(P)} 
	\in \Psi_b^{-1,\cH}(\bbR^+) \otimes \sK(L^2(Z_q;E)), \\
	\Pi_{\ker \Theta_q(P)}
	\in \Psi_b^{-\infty,\cE}(\bbR^+) \otimes \sK(L^2(Z_q;E)), \\
	\Pi_{\coker \Theta_q(P)}
	\in \Psi_b^{-\infty,\cF}(\bbR^+) \otimes \sK(L^2(Z_q;E)).
\end{gathered}
\end{equation*}
The index sets here are \cite[(4.22)]{Mazzeo:Edge}
\begin{equation}\label{eq:GralInvIndSets}
\begin{gathered}
	E_{10} = \{ \zeta \in \spec_b(P;\bB,q) : \Re \zeta > \delta - \tfrac{f+1}2 \}, \quad
	E_{01} = E_{10} - 2(\delta - \tfrac{f+1}2) \\
	F_{01} = \{ -\zeta : \zeta \in \spec_b(P;\bB,q), \quad \Re\zeta < \delta -\tfrac{f+1}2 \}, \quad
	F_{10} = F_{01} + 2(\delta - \tfrac{f+1}2) \\
	H_{10} = E_{10} \bar\cup F_{10}, \quad
	H_{01} = E_{01} \bar\cup F_{01}, \quad
	E_{11}=F_{11} = \infty, \quad H_{11} = \bbN_0.
\end{gathered}
\end{equation}

So far we have been working at a fixed $\hat\eta \in \bbS^*_qY^{k+1}.$ However, we can now apply the analysis 
in \cite[(5.11-19)]{Mazzeo:Edge} verbatim.
The generalized inverses of the Bessel operators $\Theta_q(P)(\hat\eta)$ can be reassembled into the generalized inverse of $N_q(P)$:  
\begin{equation}\label{eq:NfromB}
	\cK_{G_{N_q(P)}}(s, s', u, u', z, z')
	= \int e^{i(u-u')\cdot \eta} \cK_{G_{\Theta_q(P)}}(s|\eta|, s'|\eta|, z, z', \hat \eta)|\eta| \; d\eta.
\end{equation}
The change from the parameter $\eta$ to the variable $u$ means that we should interpret this not as a $b$-operator on $\bbR^+,$ but as an element of the $0$-calculus of \cite{Mazzeo:Hodge} on $\bbR^+ \times \bbR^h;$ recall that while the $b$-calculus $\Psi^*_b$ is a `microlocalization' of vector fields tangent to boundaries, the $0$-calculus $\Psi^*_0$ is a microlocalization of vector fields that vanish on boundaries.
Indeed, some analysis of formula \eqref{eq:NfromB} leads to the 
\begin{proposition}\label{prop:InvNormalOp}
If there is no element of $\spec_b(P;\bB,q)$ with real part equal to $\delta-\tfrac {f+1}2,$ and if $(N_q(P), \cD_{\max, N(\bB)}(N_q(P)))$ 
is either injective or surjective on $s^\delta L^2$, then there is an operator
\begin{equation}\label{eq:GNqP}
	G_{N_q(P)} \in \Psi_0^{-1,\cH}(\bbR^+ \times \bbR^{h}) \otimes \sK(L^2(Z_q;E)), 
\end{equation}
with image in $\cD_{\max, N_q(B^1)}(N_q(P))$ and such that 
\begin{equation*}
\begin{gathered}
	\Id - G_{N_q(P)}N_q(P) = \Pi_{\ker N_q(P)}
	\in \Psi_0^{-\infty,\cE}(\bbR^+\times \bbR^{h}) \otimes \sK(L^2(Z_q;E)), \\
	\Id - N_q(P)G_{N_q(P)} = \Pi_{\coker N_q(P)}
	\in \Psi_0^{-\infty,\cF}(\bbR^+\times \bbR^{h}) \otimes \sK(L^2(Z_q;E))
\end{gathered}
\end{equation*}
are the orthogonal projections onto the kernel and cokernel of $\cD_{\max, N(\bB)}(N_q(P))$. 
In particular, for any such $\delta,$ the operator $N_q(P)$ (which implicitly depends on $\delta$) has 
closed range. The integral kernels of these operators depend smoothly on $q \in Y^{k+1}.$
\end{proposition}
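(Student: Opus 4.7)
My plan is to reassemble the family of Bessel generalized inverses $G_{\Theta_q(P)}(\hat\eta)$ constructed in the preceding discussion into the candidate operator $G_{N_q(P)}$ via the integral formula \eqref{eq:NfromB}, and then check that (a) the result lies in the $0$-calculus with the claimed index family, (b) the algebraic identities on the Bessel side descend to the identities involving $\Pi_{\ker N_q(P)}$ and $\Pi_{\coker N_q(P)}$, and (c) the hypothesis of injectivity or surjectivity converts these into genuine kernel/cokernel projections of the $\cD_{\max, N(\bB)}$-realization of $N_q(P)$. This is modeled directly on \cite[\S 5]{Mazzeo:Edge}; the one conceptual addition is that everything is tensored with the Banach-space-valued $\cK(L^2(Z_q;E))$-factor, and I must verify that this tensoring is invisible to each step of the construction.

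First I will observe that the entire $b$-calculus construction yielding $G_{\Theta_q(P)}(\hat\eta)$, $\Pi_{\ker\Theta_q(P)}(\hat\eta)$, $\Pi_{\coker\Theta_q(P)}(\hat\eta)$ depends smoothly on $\hat\eta \in \bbS^*_qY^{k+1}$: the indicial family $I_q(P;\zeta)$ is independent of $\hat\eta$, the partial principal symbol $\wt\sigma(\Theta_q(P)(\hat\eta))$ depends smoothly (indeed polynomially) on $\hat\eta$, and the contour integrals, Neumann series, and duality steps all preserve this smoothness. Next, I will insert the Bessel-side parametrix into the rescaling formula \eqref{eq:NfromB}. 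Following \cite[(5.11)--(5.19)]{Mazzeo:Edge}, the change of variables $t = s|\eta|$ converts the integration in $\eta$ into a distributional kernel on $(\bbR^+\times \bbR^h)^2$ that lifts to a polyhomogeneous distribution on the $0$-stretched product $(\bbR^+\times\bbR^h)^2_0$. The index family at the front face is read off from the Bessel kernel at $t=0$ together with the symbolic-order $-1$ behavior on the interior lifted diagonal, while the left- and right-face index sets $H_{10}, H_{01}$ emerge from those of $G_{\Theta_q(P)}$ listed in \eqref{eq:GralInvIndSets} via the $b$-to-$0$ rescaling. The same analysis applied to $\Pi_{\ker\Theta_q(P)}$ and $\Pi_{\coker\Theta_q(P)}$ yields the claimed smoothing $0$-operators with index sets $\cE$ and $\cF$.

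Turning to the algebraic identities, the equations $\Theta_q(P) G_{\Theta_q(P)} = \Id - \Pi_{\coker\Theta_q(P)}$ and $G_{\Theta_q(P)} \Theta_q(P) = \Id - \Pi_{\ker\Theta_q(P)}$ hold pointwise in $\hat\eta$, and the Fourier reassembly is compatible with composition, so the analogous identities hold for $G_{N_q(P)}$. Combined with the mapping statement that $G_{N_q(P)}$ takes values in $\cD_{\max, N(\bB)}(N_q(P))$ (which follows from the left-face index set $H_{10} \geq E_{10}$, which encodes the allowed expansions forced by $\bB$), this shows that the putative error terms are bounded projectors in $L^2$. At this stage $N_q(P)$ on $s^\delta L^2$ has closed range with finite-dimensional kernel and cokernel modeled on the Bessel fibers, but the hypothesis that $N_q(P)$ is either injective or surjective on $s^\delta L^2$ forces the corresponding Bessel fiber projector to vanish identically in $\hat\eta$, so $G_{N_q(P)}$ becomes a true one-sided inverse modulo the orthogonal projection onto the (possibly nontrivial) other side. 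Smoothness in $q \in Y^{k+1}$ is inherited from the smoothness of all the coefficients $a_{j,\alpha,\beta}(0,q,z)$ of $P$ together with the stability of $\cS\cD(L,\bB)$ guaranteed by Assumption \ref{Ass:CstIndicialRoots}.

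The main obstacle will be the technical verification in the middle paragraph: that the integral \eqref{eq:NfromB} over $\hat\eta$ of a polyhomogeneous $b$-kernel on $(\bbR^+)^2_b$ rescaled by $|\eta|$ does indeed produce a polyhomogeneous kernel on $(\bbR^+\times \bbR^h)^2_0$, with precisely the index family $\cH$. This is delicate because it mixes the $b$-stretching near $t=0$ with the Fourier-oscillatory character of the $|\eta|\,d\eta$ integration; one must carry out the construction on the $0$-stretched product, track which faces arise from the $b$-front face under rescaling and which from the Fourier integration, and verify that the resulting kernel is in fact conormal at the $0$-diagonal with the correct symbolic order and polyhomogeneous at the boundary faces. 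Since this is precisely the content of \cite[Theorem 5.22]{Mazzeo:Edge}, my plan is to quote that result after checking that its proof is insensitive to the $\cK(L^2(Z_q;E))$-twist, which holds because all the estimates there are operator-norm estimates on a fixed Banach space.
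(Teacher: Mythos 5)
Your proposal follows essentially the same route as the paper: reduce $N_q(P)$ to the family of Bessel operators $\Theta_q(P)(\hat\eta)$, build a generalized inverse in the twisted $b$-calculus with the index sets of \eqref{eq:GralInvIndSets}, reassemble via the rescaled Fourier integral \eqref{eq:NfromB} following \cite[\S5]{Mazzeo:Edge}, and note that the $\cK(L^2(Z_q;E))$-twist is inert throughout. The paper treats this proposition as a summary of the construction leading up to it rather than giving a separate proof, and your write-up is in effect that construction spelled out, including the point that the injectivity/surjectivity hypothesis forces the fiberwise Bessel projectors to vanish so that $G_{N_q(P)}$ is a one-sided inverse modulo the orthogonal projection onto the other side.
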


An advantage of this explicit description for this integral kernel is that we can read off the mapping properties 
on weighted $L^2$ spaces. To state this, define the `real part' of an index set, e.g.\ $E_{10}$, by
\begin{equation*}
	\Re E_{10} = \inf \{ \Re \zeta : (\zeta, p) \in E_{10} \}.
\end{equation*}
From \cite[Theorem 3.25]{Mazzeo:Edge}, if 
\begin{equation*}
	\cK_A \in \Psi_0^{-\infty, \cG}(\bbR^+ \times \bbR^h) \otimes \sK(L^2(Z_q;E)), \qquad \cG = (G_{10}, G_{01}, G_{11}),
\end{equation*}
then $A$ extends from $\CIc(\bbR^+ \times \bbR^h \times Z_q;E)$ to a bounded map
\[
A: x^a L^2(\tfrac{dx}xdy; L^2(Z_q;E)) \lra x^b L^2(\tfrac{dx}xdy; L^2(Z_q;E)) 
\]
if and only if 
\[
\Re G_{10} >b, \quad \Re G_{01} > -a, \quad	\Re G_{11} \geq b-a.
\]
Equivalently, in terms of the volume form of an $\iie$-metric, 
\begin{equation}\label{eq:MappingProperty}
\begin{split}
A: x^{a'} L^2(& x^f \; dxdy; L^2(Z_q;E)) \lra x^{b'} L^2(x^f \; dxdy; L^2(Z_q;E)) \\[0.3ex]
& \text{ if and only if } \\[0.3ex]
\Re G_{10} & >b'-\tfrac{f+1}2, \quad \Re G_{01} > \tfrac{f+1}2-a', \quad \Re G_{11} \geq b'-a',
\end{split}
\end{equation}
which again uses $L^2(x^f dx) = x^{-(f+1)/2}L^2(\tfrac{dx}x).$\\

Since it will be of particular importance below, let us spell out these mapping properties more carefully
when the index sets $\cE,$ $\cF,$ and $\cH$ are as in \eqref{eq:GralInvIndSets}. Define
\begin{equation}\label{eq:DefEta}
\begin{gathered}
	\eta^+ = -(\delta - \tfrac{f+1}2)  + \inf \{ \Re\zeta : \zeta \in \spec_b(P;\bB,q), \ \Re \zeta > \delta - \tfrac{f+1}2 \} \\[0.5ex]
	\eta^- = (\delta - \tfrac{f+1}2) - \sup \{ \Re\zeta :\zeta \in \spec_b(P;\bB,q), \ \Re \zeta < \delta - \tfrac{f+1}2 \}.
\end{gathered}
\end{equation}
Then $A \in \Psi_0^{-\infty, \cG}(\bbR^+ \times \bbR^h) \otimes \sK(L^2(Z_q;E))$ defines a bounded map 
\[
x^{a'} L^2(x^f \; dxdy; L^2(Z_q;E)) \lra x^{b'} L^2(x^f \; dxdy; L^2(Z_q;E))
\]
provided 
\begin{equation*}
	\begin{cases}
	\delta - \eta^+ < a'\leq b' < \eta^+ + \delta & \Mif \cG = \cE\\
	\delta - \eta^- < a'\leq b' < \eta^- + \delta & \Mif \cG = \cF\\
	\delta - \min(\eta^+,\eta^-) < a' = b' < \min(\eta^+,\eta^-) + \delta & \Mif \cG = \cH
	\end{cases}
\end{equation*}

\medskip

To conclude this section, let us say a bit more about the meaning of ``reassembling'' the integral kernels of the $G_{N_q}$ 
as $q \in Y$ varies. Let $\cU_q \cong [0,1)_x \times \bbB^h \times Z_q$ be a distinguished neighborhood of $q \in Y^{k+1},$ 
where $\bbB^h$ is the unit ($h$-dimensional) ball in $T_qY^{k+1}$, and fix a trivialization 
\begin{equation*}
	E\rest{\cU_q} \cong [0,1)_x \times \bbB^h \times E\rest{Z_q}.
\end{equation*}

We define the {\bf zero double space} of $\cU_q$ by radially blowing up the fiber diagonal of the boundary: 
\begin{equation}\label{eq:ZeroDouble}
	(\cU_q)^2_0 = ([0,1)_x \times \bbB^h)^2_0 \times Z_q^2
	= \lrspar{ [0,1)_x^2 \times (\bbB^h)^2; (0,0) \times \diag_{\bbB} } \times Z_q^2.
\end{equation}
This space has a natural blow-down map
\begin{equation*}
	\beta: (\cU_q)^2_0 \lra \cU_q^2
\end{equation*}
and we denote by $\beta_L,$ $\beta_R$ the compositions of this blow-down with the projections onto the left or right 
factor of $\cU_q.$ We refer to $\beta^{-1}_L(\{ x=0\})$, $\beta_R^{-1}(\{ x= 0\})$ and 
$\beta^{-1}( (0,0) \times \diag_{\bbB} \times Z_q^2)$ as the {\bf left}, {\bf right} and ${\bf front}$ boundary faces,
denoted $\cB_{10}$, $\cB_{01}$ and $\cB_{11}$, respectively. 

Choose coordinates $x,y, x', y', z, z'$ on $([0,1)_x \times \bbB^h)^2_0 \times Z_q^2$. Then, we define projective 
coordinates on $(\cU_q)^2_0$ away from the right face $x'=0$ by  
\begin{equation*}
	s = \frac{x}{x'}, \ \ 	u = \frac{y-y'}{x'}, \ \ x', \ \ y', \ \ 	z, \ \ z'.
\end{equation*}
In these coordinates, $\{s=0\}$ defines the left face while $x'$ is a defining function for the front face. 
We can use these coordinates to express the lifts from the left factor of functions and edge vector fields: 
\begin{equation*}
\begin{gathered}
	f(x,y,z) \mapsto \beta_L^*f(s,u,x',y',z,z') = f(sx', y'+x'u, z), \\[0.5ex]
	\beta_L^*(x\pa_x) = s\pa_s, \quad 	\beta_L^*(x\pa_y) = s\pa_u, \quad 	\beta_L^*(\pa_z) = \pa_z.
\end{gathered}
\end{equation*}
The front face is naturally the total space of a fibration with base $\diag_\bbB$ and fiber $Z_q^2$ times a quarter-sphere;
in these coordinates, this is the map $(s,u,y',z,z') \mapsto y'$ (with $x'=0$ since it is at the front face). 
Using $\beta_L$, we can extend this fibration to a neighborhood of the front face by
\begin{equation*}
	(s,u,x',y',z,z') \mapsto y'+x'u.
\end{equation*}

Now suppose that $\cK_{G_{y'}}(s,u,z,z')$ is the integral kernel of $G_{N_{y'}(P)}$ from \eqref{eq:GNqP}, which is the 
generalized inverse of $N_{y'}(P)$. We say that 
\begin{equation}\label{eq:Amalgamation}
	\cK_{G}(s, u, x',y',z,z') = \chi \cK_{G_{y'+x'u}}(s, u, z, z')
\end{equation}
is the {\bf amalgamation} of these integral kernels; here $\chi$ a smooth cut-off function on 
$\cU^2_q$ which equals one near the front face. By construction, $\cK_G$ is compatible with the 
extended fibration and
\begin{equation*}
	\cK_G\rest{x'=0, y'=p} = \cK_{G_p}.
\end{equation*}
Moreover, the conormal singularity in each $\cK_{G_{y'}}$ at $(s,u) = (1,0)$ gives rise to a conormal singularity of 
the same order for $\cK_G$ at $(s,u) =(1, 0),$ which is the (interior) lift of the diagonal in $[0,1)\times \bbB.$

Just as for the $b$-calculus, see \cite[Proof of Proposition 5.43]{APSBook}, the advantage of this extension is seen in the
behavior of the index set at the left face when we compose with $P.$  The restriction of the lift of $P$ to the fibres composed 
with the restriction of $G$ there is a bounded operator, and we have that $PG \in \Psi_0^{0,\cH}(\cU_q) 
\otimes \sB(L^2(Z_q;E))$. Consider just the expansion at the left face: since $G$ has index set $H_{10}$ there, 
we would expect $N_q(PG)$ to have index set $H_{10},$ but instead, because of the action of the indicial operator 
of $P$, it actually has index set $F_{10}$ at that face.  Essentially by definition, $I_q(P)$ acts on 
the leading terms of an asymptotic expansion (see especially \cite[(A.9)]{Mazzeo:Edge}). However, in the expansion of $N_q(G)$ 
at the left face, all coefficients of the leading terms in $H_{01}\setminus F_{01}$ are in the null space of $I_q(P)$, and hence
are annihilated. 

If, in local coordinates, 
\begin{equation*}
	P = \sum f_{j,\alpha,\beta}(x,y,z) (x\pa_x)^j(x\pa_y)^{\alpha}(\pa_z)^{\beta},
\end{equation*}
then in the projective coordinates on $(\cU_q)^2_0,$
\begin{equation*}
	\cK_{PG}(s,u,x',y',z,z') = 
	\lrspar{
	\sum f_{j,\alpha,\beta}(x's, y'+x'u,z) (s\pa_s)^j(s\pa_u)^{\alpha}(\pa_z)^{\beta} }	
	\cK_{G_{y'+x'u}}(s, u, z, z').
\end{equation*}
Clearly, the expansion of $\cK_{PG}$ at the left face is given by $I_{y'+x'u}(P)$ applied to the terms in the expansion of 
$\cK_{G_{y'+x'u}}$ at that face. In particular, we have been careful to match the base points of the indicial family with 
the extension of the Schwartz kernel of $G,$ so we do indeed obtain the improvement of the index set. 
This means that we can omit the leading terms in $H_{10}\setminus F_{10}$ in the index set of $PG$ at the left face, and 
hence take 
\begin{equation}\label{eq:ImprovedIndex}
	\wt H_{10} = 
	\lrpar{ F_{10} \cap \{ \Re \zeta <1+ \Re (H_{10} \setminus F_{10})\} }
	 \bigcup
	 \lrpar{ H_{10} \cap \{ \Re \zeta \geq 1+ \Re (H_{10} \setminus F_{10})\} }
\end{equation}
for the index set of $PG$ there. 

Finally, note that the amalgamated integral kernels form a $\sK(L^2(Z_q;E))$-twisted $0$-pseudodifferential operator.
The indicial roots may vary with the base point $y$, and we refer the reader to \cite{Krainer-Mendoza:Kernel} for
a detailed treatment of this, see also  \cite{Borthwick:ScatVarCurv}. For our purposes, it is enough to have 
uniform bounds on the index sets. For example, if $\wt A$ is obtained from amalgamating operators 
with index set $\cE$ as above, and if we assume that $\eta$ from \eqref{eq:DefEta} is uniformly bounded 
below by $\underline{\eta},$ then $A$ defines a continuous map
\begin{equation*}
	\wt A: x^{a'} L^2(\cU_q; E) \lra	x^{b'} L^2(\cU_q; E), \quad
	\Mwhenever \delta - \underline\eta < a'\leq b' < \underline\eta + \delta.
\end{equation*}
%

\subsection{A core domain} \label{sec:CoreDomain}
In this section we establish that the regular domain defined in \eqref{eq:DefRegDom} is a core domain.
In the process we will explain various properties of adjoint domains.

Let $L \in \Diff^{1}_{\iie}(X;E)$ be elliptic, with local ideal boundary conditions 
\begin{equation*}
	\bB = (B^1, \ldots, B^k)
\end{equation*}
at the first $k$-strata of $\hat X$. 
Recall that $\cS\cD(L)$ consists of those indicial roots $\zeta_j$ such that $x^{\zeta_j} \in L^2_{\loc}\setminus xL^2_{\loc}.$
Define
\begin{equation*}
	\min \cS\cD(L) = \min \{\Re(\zeta_j) : \zeta_j \in \cS\cD(L) \}, \quad
	\max \cS\cD(L) = \max \{\Re(\zeta_j) : \zeta_j \in \cS\cD(L) \}.
\end{equation*}
We will need the following assumption:
\begin{assume}[Injective/surjective normal operator]\label{Ass:InvNormalOp}
For each $q \in Y^{k+1},$ the normal operator 
$N_q(P)$ with domain $\cD_{\max, N(\bB)}(P)\cap s^{\delta}L^2(\bbR^+ \times T_qY \times Z_q;E)$
is injective if $\delta > \max \cS\cD$ and surjective if $\delta < \min \cS\cD.$
\end{assume}
It follows from our results in this section that this assumption is equivalent to asking that $N_q(L) $ be surjective on $\cD_{\max, N(\bB)}(N_q(L))$ and injective on $\cD_{\min, N(\bB)}(N_q(L))$ (defined below). Hence this assumption is a necessary condition for $N_q(L)$ to have an invertible extensions, and so for $L$ to have a Fredholm extension. We show in Proposition \ref{prop:NullNormal} below that this assumption holds for the de Rham operator.\\

Let us start by recalling some basic facts about the adjoints. The formal adjoint $L^*$ of $L$ is the differential operator determined by 
\begin{equation*}
	\ang{Lu,v} = \ang{u, L^*v} \Mforall u,v \in \CIc(X;E).
\end{equation*}
A simple local computation shows that the formal adjoint of an $\iie$ operator is again an $\iie$ operator, and has principal symbol
\begin{equation*}
	\sigma_{\iie}(L^*) = \sigma_{\iie}(L)^*,
\end{equation*}
where the adjoint on the right hand-side is the pointwise adjoint in $\Hom(E).$ Define the {\em boundary pairing} of $L$ by
\begin{equation*}
\begin{gathered}
	[\cdot, \cdot]_L:\cD_{\max}(L) \times \cD_{\max}(L^*) \lra \bbC \\
	[u,v]_L = \ang{Lu,v}-\ang{u,L^*v}.
\end{gathered}
\end{equation*}
Endowing these maximal domains with the graph norms for $L$ and $L^*$, respectively, this is continuous.
We refer to \cite{Gil-Mendoza:Adjoints} for a careful discussion of this pairing in the setting of isolated conic singularities. 
Note that if either $u \in \cD_{\min}(L)$ or $v \in \cD_{\min}(L^*)$, then $[u,v]_L=0$. More generally, if $\cD(L) \subseteq \cD_{\max}(L)$, 
then 
\begin{equation*}
	(L, \cD(L))^* \text{ has domain } \{ v \in \cD_{\max}(L^*) : [u,v]_L=0 \Mforall u \in \cD(L) \}.
\end{equation*}

Now let us examine more closely what this passage to the adjoint entails in our setting.  We claim that the adjoint of a domain $\cD(L)$ 
defined by ideal local boundary conditions at $Y^1, \ldots, Y^k$ is given by the adjoint ideal local boundary conditions $\bB^{\star}$ at 
these same strata.  To prove this, first recall that by Assumption~\ref{Ass:Localizable}, $\cD_{\max}(L)$ is localizable with respect to 
multiplication by functions in $\CI_{\Phi}(\wt X)$. 
Since the $\bB$ are local ideal boundary conditions, it is clear that $\cD_{\max, \bB}(L)$ is also localizable, and moreover, the identity
\begin{equation*}
	[fu,v]_L = [u,fv]_L \Mforall f \in \CI_{\Phi}(\wt X)
\end{equation*}
shows that the adjoint domain of a localizable domain is again localizable.  The assertion now follows by noting that for any
suitably localized function, its inclusion in $(\cD_{\max, \bB}(L))^*$ depends on its germ at each of the $Y^j$, and by Proposition 
\ref{prop:DminB} below, this only depends on its Cauchy data at these strata.  This can be done by induction on $k$.
We denote the adjoint domain of $\cD_{\max, \bB}(L)$ by 
\begin{equation*}
	\cD_{\min, \bB^{\star}}(L^*) = (\cD_{\max, \bB}(L))^*.
\end{equation*}
(Note that $\bB^{\star}$ denotes boundary conditions for the adjoint domain, and do not involve the adjoint operators of the individual $B_k.$) For the de Rham operator below we shall have $\bB^{\star} = \bB,$ so we shall not discuss the adjoint boundary conditions in detail. For a careful treatment in the depth one setting see \cite[Proposition 4.12]{Krainer-Mendoza3}.

In \S 2.2 extensions of various constructions from \cite{Mazzeo:Edge} have been worked out for the partially completed operator $P$ as
well as its model operators, $N_q(P),$ $\Theta_q(P),$ $I_q(P),$ with domains induced by $\bB.$ Now consider the effect of imposing a 
boundary condition $B^{k+1}$ at $Y^{k+1}.$ The model operators for $L$ itself are defined by
\begin{equation*}
	N_q(L) = \tfrac1s N_q(P), \quad \Theta_q(L) = \tfrac1t \Theta_q(P), \quad I_q(L) = \tfrac1s I_q(P);
\end{equation*}
each of these has a maximal domain with boundary conditions induced by $\bB,$ elements of which have Cauchy data at 
$s =0$ (or $t=0$). We may impose boundary conditions on each of these using $N_q(B^{k+1})$.  We consider now the special
case $B^{k+1}=\Id,$ or in other words, that all Cauchy data at $Y^{k+1}$ vanish. Later, in Proposition \ref{prop:Other}, we 
consider more general cases where $B^{k+1}\neq 0$. \\
 
Let us start by considering 
\begin{equation*}
	\bbS^*_qY^{k+1} \ni
	\hat\eta \mapsto  
	\Theta_q(L)(\hat\eta) = 
	\frac1t \sum_{j + |\alpha|+|\beta| \leq 1} a_{j,\alpha, \beta}(0,q,z) (t\pa_t)^j (t\hat\eta)^{\alpha} (V_{z}\rest{x=0, y=q})^{\beta}
\end{equation*}
as a family of unbounded operators
\begin{equation*}
	\Theta_q(L)(\hat\eta): \cD_{\max, \Theta(\bB)}(\Theta_q(L)(\hat\eta)) \subseteq
	L^2(\bbR^+ \times Z_q;E) \lra
	L^2(\bbR^+ \times Z_q;E).
\end{equation*}
For each $\hat\eta,$ this can be regarded as a conic operator on $\bbR^+$ near $t=0$ with coefficients in $\Diff_{\iie}^1(Z_q;E).$
The Bessel type behavior as $t \to \infty$ makes these operators Fredholm. Elements in $\cD_{\max, \Theta(\bB)}(\Theta_q(L)(\hat\eta))$ 
have asymptotic expansions as $t \to 0$ and we now focus on the subspace $\cD_{(\Theta(\bB),\Id)}(\Theta_q(L)(\hat\eta))$ of functions
with vanishing Cauchy data. It is known in this setting, see \cite{Gil-Mendoza:Adjoints, Gil-Krainer-Mendoza:Wedge}, that 
\begin{equation*}
	\cD_{(\Theta(\bB),\Id)}(\Theta_q(L)(\hat\eta)) = \cD_{\max,\Theta(\bB)}(\Theta_q(L)(\hat\eta)) \cap \rho_0^{1-}L^2(\bbR^+ \times Z_q;E).
\end{equation*}

The analogous statement is true for the domain of the normal operator. Indeed, taking the Fourier transform of any element 
\begin{equation*}
	u \in \cD_{(N(\bB), \Id)}(N_q(L))
\end{equation*}
and rescaling by $t = s|\eta|$ yields a family of elements in $\cD_{(\Theta(\bB),\Id)}(\Theta_q(L)(\hat\eta)) \subset \rho_0^{1-}L^2(\bbR^+ \times Z_q;E)$.
Hence 
\begin{equation}\label{eq:VanishingDN}
	\cD_{(N(\bB),\Id)}(N_q(L)) = \cD_{\max,N(\bB)}(N_q(L)) \cap s^{1-}L^2(\bbR^+ \times T_qY \times Z_q;E)
\end{equation}
as well.

By constancy and discreteness of the indicial roots, if $\delta \in (1-\eps, 1)$ for sufficiently small $\eps$, then $\delta + \tfrac{f+1}2$ 
is not an indicial root. Let $G_{N_q(P)}$ be the generalized inverse of $(N_q(P), \cD_{\max, N(\bB)}(N_q(P)))$ on $s^\delta L^2(\bbR^+ \times T_qY \times Z_q;E).$
By assumption, $N_q(P)$ is injective on $s^{\delta}L^2,$ so 
\begin{equation*}
	G_{N_q(P)}N_q(P) = G_{N_q(P)}sN_q(L) = \Id
\end{equation*}
on $\cD_{\max,N(\bB)}(N_q(P)) \cap s^\delta L^2(\bbR^+ \times T_qY \times Z_q;E)$. By  \eqref{eq:VanishingDN}, this includes $\cD_{(N(\bB),\Id)}(N_q(L))$.

Fix a distinguished neighborhood $\cU_q$ of $q \in Y^{k+1}$ and let $\bar T$ be the amalgamation of the integral kernels of $G_{N_q(P)}$ and 
write $T = \bar T x.$ Then 
\begin{equation*}
	(N_q(TL),\cD_{(N(B),\Id)}(L)) = \Id
\end{equation*}
and hence
\begin{equation*}
	TL = \Id - R \quad \Mon \cD_{\max, (\bB,\Id)}(L).
\end{equation*}
Moreover $T$ and $R$ are twisted $0$-pseudodifferential operators:
\begin{equation*}
\begin{gathered}
	T \in \Psi_0^{-1, \cH}([0,1)\times \bbB^h) \otimes \sK(L^2(Z_q;E)), \quad
	R \in x\Psi_0^{0, \cH}([0,1)\times \bbB^h) \otimes \sK(L^2(Z_q;E)) \\
	\Mwhere
	\Re(H_{10})  = \Re(E_{10}), \quad
	\Re(H_{01}) = 1+\Re (E_{01}), \quad
	\Re(H_{11}) = 1.
\end{gathered}
\end{equation*}
Using this we find that any $u \in \cD_{\max, (\bB,\Id)}(L)$ supported in $\cU_q$ satisfies
\begin{equation*}
	u = TLu + Ru \in x^{1-}L^2(X;E).
\end{equation*}
This holds near any $q\in Y^{k+1}$, so 
\begin{equation*}
	\cD_{\max,(\bB,\Id)}(L) \subseteq \cD_{\max, \bB}(L) \cap x^{1-}L^2(X;E).
\end{equation*}

\begin{lemma} $ $

\begin{enumerate}
\item 
The closure of $\cD_{\max, \bB}(L) \cap x^{0+}L^2(X;E)$ in the graph norm of $L$ is $\cD_{\max, \bB}(L).$
\item
If $u \in \cD_{\max, \bB}(L) \cap x^{1-}L^2(X;E)$ then $\chi u \in \cD_{\min, \bB}(L)$ for all $\chi \in A(Y^{k+1}).$
\end{enumerate}
\end{lemma}

\begin{proof}
(i) 
First we note that this assertion is true for the normal operator $(N_q(L),\cD_{\max, N(\bB)}(N_q(L))).$
Indeed, in the coordinates $s, u, z$ on $\bbR^+_s \times T_qY \times Z_q,$ the normal operator is a constant coefficient operator in $s$ and $u.$
Convolution in $u$ with a mollifier shows that $\cD_{\max, N(\bB)}^{\reg}(N_q(L))$ is dense in $\cD_{\max, N(\bB)}(N_q(L))$ with respect to the graph norm of $N_q(L).$
As mentioned above, $\cD_{\max, N(\bB)}^{\reg}(N_q(L)) \subseteq \cD_{\max, N(\bB)}(N_q(L))\cap s^a L^2$ for small enough $a.$

Secondly we use this to construct a pseudodifferential left generalized inverse of $N_q(L)$ on the domain $\cD_{\max, N(\bB)}(N_q(L)).$
Let $a> 0$ be small enough so that 
\begin{equation*}
	\cD_{\max, N(\bB)}^{\reg}(N_q(L)) \subseteq \cD_{\max, N(\bB)}(N_q(L))\cap s^a L^2, \quad
	\cS\cD \cap \{\zeta : 0 < \Re \zeta + \tfrac{f+1}2 \leq a\}= \emptyset.
\end{equation*}
By Assumption \ref{Ass:InvNormalOp} $N_q(P)$ is surjective on $\cD_{\max, N(\bB)}(N_q(P))\cap s^{a}L^2$ and so by the previous section its generalized inverse, $\bar T_q,$ is a twisted zero pseudodifferential operator. We denote
\begin{equation*}
	\Pi = \Id - \bar T_qN_q(P).
\end{equation*}
Since $\bar T_q$ and $\Pi$ are twisted zero pseudodifferential operators, both extend from bounded operators on $s^aL^2$ to bounded operators on $L^2.$
The equality $\Pi = \Id - \bar T_q s N_q(L)$ holds on $\cD_{\max, N(\bB)}(N_q(L))\cap s^a L^2,$ hence by density on $\cD_{\max, N(\bB)}(N_q(L)).$
Also by density $N_q(L)\Pi = 0$ on $s^a L^2$ implies $N_q(L)\Pi=0$ on $L^2.$
Altogether this shows that $\bar T_q s$ is a (possibly skew) generalized left inverse of $N_q(L)$ and a twisted zero pseudodifferential operator.

Let $\bar T$ be the amalgamation of the kernels of $\bar T_q$ and let $T = \bar T x.$
Note that $T$ maps $L^2$ into $x^{\delta}L^2$ for some $\delta>0$ (indeed any $\delta < \Re E_{10}$ where $E_{10}$ is the index set at the left face of $T$).
Now let
\begin{equation*}
	S = TL - \Id \Mon \cD_{\max, \bB}(L)
\end{equation*}
and note that $S$ is a twisted $0$-pseudodifferential operator of order minus one (on $\cU_q$) and hence defines a bounded operator on $L^2.$
Moreover, since we have $N_q(L T L) = N_q(L)(\Id + \Pi) = N_q(L),$ it follows that $N_q(LS)=0$ and so $LS$ is $x$ times a twisted $0$-pseudodifferential operator.

For $u \in \cD_{\max, \bB}(L)$ supported in $\cU_q$ we have $u = TLu + Su$ and we define
\begin{equation*}
	u_n = TLu + S(x^{1/n}u).
\end{equation*}
Since $x^{1/n}u \to u$ in $L^2,$ we have $S(x^{1/n}u) \to Su$ and $LS(x^{1/n}u) \to LSu.$
It follows that 
\begin{equation*}
	Lu_n = LTLu + LS(x^{1/n}) \to LTLu + LSu = Lu
\end{equation*}
and hence $u_n \to u$ in the graph norm of $L.$
Finally note that $TLu \in x^\delta L^2$ for some $\delta>0$ and, for $n$ sufficiently large $S(x^{1/n}u) \in x^{1/n}L^2,$ and so $u_n \in x^{0+}L^2$ as required.

(ii)
Assume that $\mbox{supp}\, u \subset \cU_q$, $q \in Y^{k+1}$. If $u \in \cD_{\max, \bB}(L) \cap x L^2(X;E)$ and $v \in \cD_{\max, \bB^{\star}}(L^*)$, then
\begin{equation*}
	\ang{Lu, v} = \ang{(Lx)\bar u, v} = \ang{\bar u, (Lx)^*v} = \ang{u, L^*v}
\end{equation*}
where the integration by parts is justified since $Lx$ is a partially completed edge operator at $Y^{k+1}$ and $u$ and $v$ satisfy adjoint boundary 
conditions at $Y^i,$ $i \leq k.$ Next, if $u \in \cD_{\max, \bB}(L) \cap x^{1-}L^2(X;E),$ then by the previous case, $u_n = x^{1/n}u \in 
\cD_{\min, \bB}(L)$ and clearly $u_n$ converges to $u$ in $L^2.$ Now, for $v \in \cD_{\max, \bB^{\star}}(L^*) \cap x^{0+}L^2(X;E),$ we have
\begin{equation*}
	[u,v]_L = 
	\ang{Lu, v} - \lim_{n\to\infty} \ang{u_n, L^*v}
	= \lim_{n\to\infty} \ang{L(u-u_n), v}
	= \lim_{n\to\infty} -\tfrac1n \ang{\sigma(L)(dx) x^{1/n-1}u, v}.
\end{equation*}
To see that this vanishes, first note first that $\sigma(L)(dx) \in \calC^\infty(\wt X; \operatorname{Hom}(E))$ induces a bounded operator on $L^2,$ and secondly that $(x^{-1}u,v)$ is in $L^1(X).$ Once we know for all $v \in \cD_{\max, \bB^{\star}}(L^*) \cap x^{0+}L^2(X;E)$ that $[u,v]_L=0,$  it follows from ($i$) that $[u,v]_L=0$ for all $v \in \cD_{\max, \bB^{\star}}(L^*)$ and hence that $u \in \cD_{\min, \bB}(L).$ 
\end{proof}

Combining this with the argument preceding this lemma, we have now shown that the `minimal domain at $Y^{k+1}$' coincides 
the subdomain of the maximal domain obtained by imposing zero Cauchy data at $Y^{k+1}.$ 
\begin{proposition}\label{prop:DminB}
If $u \in \cD_{\max, \bB}(L)$ is supported in a distinguished neighborhood of a point $q \in Y^{k+1},$ then 
\begin{equation*}
	u \in \cD_{\min, \bB}(L) \iff u \in \cD_{\max, \bB}(L) \cap x^{1-}L^2(X;E) \iff u \in \cD_{\max, (\bB,\Id)}(L).
\end{equation*}
\end{proposition}

To proceed, now fix $\tau \in (0,1]$ and consider the pairing 
\begin{equation*}
	\ang{\cdot,\cdot}_{x^\tau L^2 \times L^2}:x^{\tau}L^2 \times L^2 \lra \bbC
\end{equation*}
induced by the $x^{\tau/2}L^2$ inner product. The operator $P=xL$ has a formal transpose with respect to this pairing which we 
denote $P^{\dagger}$. This is related to the formal $L^2$-adjoint of $L$ by $P^\dagger = x^{\tau}L^*x^{1-\tau}.$

\begin{proposition}\label{prop:IntByParts}
Let $\tau \in (0,1],$
\begin{equation*}
	u \in \cD_{\max, \bB}(L), \quad v \in \cD_{\max, \bB^{*}}(L^*), \quad P^\dagger v \in x^\tau L^2
\end{equation*}
and assume that at least one of $u,v$ is in the regular domain \eqref{eq:DefRegDom} and supported in a distinguished neighborhood 
of a point in $Y^{k+1}.$ Then
\begin{multline*}
	\ang{ u, P^\dagger v}_{L^2 \times x^{\tau}L^2} 
	- \ang{P^{\phantom{\dagger}}u, v}_{x^{\tau}L^2\times L^2}
	= 
	\sum_{\substack{ \zeta_j \in \cS\cD(L) \\ \Re(\zeta_j)<\tau - \tfrac{f+1}2}}
	\oint_{\Gamma_j} \cM(u)(\zeta) \bar{ \cM(P^\dagger v)(-\zeta) } \; d\xi dy\dvol_Z \\
	= 
	\sum_{\substack{ \zeta_j \in \cS\cD(L) \\ \Re(\zeta_j)<\tau - \tfrac{f+1}2}}
	\oint_{\Gamma_j} \cM(P u)(\zeta) \bar{ \cM(v)(-\zeta) } \; d\xi dy\dvol_Z.
\end{multline*}
where $\Gamma_j$ is a small counterclockwise contour around $\zeta_j.$
\end{proposition}

\begin{proof}
For notational convenience, conjugating $P$ by $x^{\tau/2}$, we can assume that 
\begin{equation*}
	u, v \in x^{-\tau}L^2 \Mand 
	Pu, P^\dagger v \in x^{\tau}L^2.
\end{equation*}
The natural pairing between $x^{-\tau}L^2$ and $x^{\tau}L^2$ is the usual pairing on $L^2.$
Also for convenience assume that $u \in \cD_{\max, \bB}^{\reg}(L)$ and both $u$ and $v$ are supported in a distinguished neighborhood of a point in $Y^{k+1}.$

Parseval's formula for the Mellin transform gives
\begin{equation*}
\begin{gathered}
	\ang{u, P^\dagger v}_{L^2} = 
	\int_{\eta = -\tau} \cM(u)(\zeta) \bar{\cM(P^\dagger v)(-\zeta)} \; d\xi dy\dvol_Z \\
	\ang{Pu, v}_{L^2} =
	\int_{\eta = \tau} \cM(Pu)(\zeta) \bar{\cM(v)(-\zeta)} \; d\xi dy\dvol_Z.	
\end{gathered}
\end{equation*}
Using 
that $u \in \cD_{\max,\bB}^{\reg}(L),$ we can integrate by parts in the latter integral to get
\begin{equation*}
	\int_{\eta = \tau} \cM(u)(\zeta) \cM(P^\dagger v)(-\zeta) \; d\xi dy\dvol_Z
\end{equation*}
where the integrand is now interpreted as a pairing between 
\begin{equation*}
	H^{2\tau}(dy; L^2(d\xi \dvol_Z; E)) \Mand
	H^{-2\tau}(dy; L^2(d\xi \dvol_Z;E)).
\end{equation*}
Thus
\begin{equation}\label{eq:FirstExp}
	\ang{u, P^\dagger v}_{L^2} 
	- \ang{P^{\phantom{\dagger}}u, v}_{L^2} 
	= 
	\oint_{\Gamma} \cM(u)(\zeta) \cM(P^\dagger v)(-\zeta) \; d\xi dy\dvol_Z 
\end{equation}
where $\Gamma$ is a simple closed contour, traversed counterclockwise, surrounding the poles of the integrand with imaginary part between $-\tau$ and $\tau.$
In the same way we see that
\begin{equation}\label{eq:SecondExp}
	\ang{u, P^\dagger v}_{L^2} 
	- \ang{P^{\phantom{\dagger}}u, v}_{L^2} 
	= 
	\oint_{\Gamma} \cM(P u)(\zeta) \cM(v)(-\zeta) \; d\xi dy\dvol_Z.
\end{equation}
\end{proof}

We can now establish that the regular domain is a core domain.

\begin{theorem} \label{thm:CoreDomain}
Let $L \in \Diff_{\iie}^1(X;E)$ with  local ideal boundary conditions $\bB$ at $Y^1, \ldots, Y^k$ satisfying Assumption \ref{Ass:InvNormalOp}. 
The graph closure of the domain $\cD_{\max, \bB}^{\reg}(L)$ is $\cD_{\max, \bB}(L).$
\end{theorem}

\begin{proof}
Since $\cD_{\max, \bB}^{\reg}(L) \subseteq \cD_{\max, \bB}(L),$ it is enough to show that $\cD_{\max, \bB}^{\reg}(L)^* \subseteq \cD_{\max, \bB}(L)^*.$
Thus assume that $v \in \cD_{\max}(L^*)$ satisfies
\begin{equation}\label{eq:RegAdjoint}
	[u, v]_{L} = 0 \Mforall u \in \cD_{\max, \bB}^{\reg}(L). 
\end{equation}
We must show that $[u,v]_L=0$ for all $u \in \cD_{\max, \bB}(L).$

Directly from the definition of adjoint boundary conditions, if $v$ satisfies \eqref{eq:RegAdjoint} then $v \in \cD_{\max, \bB^{\star}}(L^*).$
Since $\cD_{\max, \bB}^{\reg}(L)$ and $\cD_{\max, \bB}(L)$ only differ at $Y^{k+1},$ it suffices to consider $v$ supported in a distinguished neighborhood of a point in $Y^{k+1}.$

From Proposition \ref{prop:IntByParts} with $\tau =1$ we see that \eqref{eq:RegAdjoint} is equivalent to $\cM(v)$ being holomorphic on $\{\Im \zeta < 1 - \tfrac12(f+1) \},$ for all $\chi \in A(Y^{k+1}).$ By Lemma \ref{lem:ExpansionExistence} this in turn is equivalent to knowing that the Cauchy data of $v$ at $Y^{k+1}$ vanishes.
Then, from Proposition \ref{prop:DminB}, this is equivalent to what we wished to show,
\begin{equation*}
	v \in \cD_{\min, \bB^{\star}}(L^*) = (\cD_{\max, \bB}(L))^*.
\end{equation*}
\end{proof}

Although the full strength of this theorem is used occasionally below,  most frequently we use the following consequence.

\begin{corollary}\label{cor:IdentityTransfer}
If $\bar a= \min \{ \Re\zeta : \zeta \in \cS\cD(L) \}$ and $a<\bar a$ then
\begin{equation*}
	\cD_{\max, \bB}(L) \cap x^a L^2(X;E)
\end{equation*}
is dense in $\cD_{\max, \bB}(L)$ with respect to the graph norm of $L.$
\end{corollary}

\begin{proof}
As pointed out above,
\begin{equation*}
	\cD_{\max, \bB}^{\reg}(L) \subseteq \cD_{\max, \bB}(L) \cap x^a L^2(X;E),
\end{equation*}
so the corollary follows from the theorem.
\end{proof}

\section{The de Rham operator}
In this section we assume that $\hat X$, with singular strata $Y^1, \ldots, Y^{k+1}$, ordered by increasing depth,
is endowed with a {\em rigid} $\iie$ metric $g$, and we study the associated de Rham operator $\eth_{\dR}.$
We first show that any choice of Cheeger ideal boundary conditions yields a self-adjoint extension. 
We then show inductively that Assumptions \ref{Ass:CstIndicialRoots}, \ref{AssIndFamily} and \ref{AssCmpDomain} 
all hold in this case. This is done through the analysis of its indicial and normal operators.

\subsection{The indicial operator of $\eth_{\dR}$} \label{sec:IndicialDeRham}
Our first task is to compute the indicial roots of $\eth_{\dR}$; in particular, we simplify and sharpen 
the computation from \cite{ALMP} and extend it to non-Witt spaces. Assume that we have chosen local ideal boundary conditions 
\begin{equation*}
	\bB = (B^1, \ldots, B^k)
\end{equation*}
at the first $k$ strata, and work near $q \in Y^{k+1}.$  The link of $\hat X$ at $q$ is denoted $Z$ and $x$ is a boundary 
defining function for $Y^{k+1}.$ Assume inductively that $(\eth_{\dR}^Z, \cD_{\bB}(\eth_{\dR}^Z))$ has compact resolvent,
and hence discrete spectrum, as an operator on $L^2(Z; \Lambda^*(\Iie T^*Z)).$ Assume too that there is a strong 
Kodaira decomposition on $Z$. This will be justified  in \S \ref{sec:L2Coho}. \\

It is not hard to deduce from \eqref{eq:dNearBdy} and \eqref{eq:deltaNearBdy} that
\begin{equation}\label{eq:NormalOperator}
	N_q(x\eth_{\dR}) =
	\begin{pmatrix} 
	s\eth_{\dR}^{\bbR^h} & -s\pa_s -f \\
	s\pa_s & -s\eth_{\dR}^{\bbR^h}
	\end{pmatrix}
	+
	\begin{pmatrix} 
	\eth_{\dR}^{Z_q} & \bN \\
	\bN & -\eth_{\dR}^{Z_q}
	\end{pmatrix} ,
\end{equation}
and we want to understand the indicial roots of this operator. 

First, $N_q(x\eth_{\dR})$ preserves $\ker \eth_{\dR}^Z,$ and in this subspace the indicial roots are the values of 
$\zeta$ for which 
\begin{equation*}
	\begin{pmatrix}
	0 & -\zeta +\bN -f \\ \zeta +\bN & 0 
	\end{pmatrix}
\end{equation*}
is not invertible. Thus
\begin{equation*}
	\spec_b(x\eth_{\dR}\rest{\ker \eth_{\dR}^Z}) = \{ -k, k-f : \ker \eth_{\dR}^Z  \cap \text{ k-forms } \neq \{ 0\}  \}.
\end{equation*}

To study the indicial roots on the orthogonal complement of this nullspace, we compute the indicial roots 
of a closely related operator. Conjugating $N_q(x\eth_{\dR}^Z)$ by $s^{-f/2}$ gives 
\begin{equation*}
	\begin{pmatrix} 
	s\eth_{\dR}^{\bbR^h} & -s\pa_s  \\
	s\pa_s & -s\eth_{\dR}^{\bbR^h}
	\end{pmatrix}
	+
	\begin{pmatrix} 
	\eth_{\dR}^{Z_q} & \bN -\tfrac f2\\
	\bN - \tfrac f2 & -\eth_{\dR}^{Z_q}
	\end{pmatrix} .
\end{equation*}
Then 
\begin{multline*}
\lrpar{ s^{f/2} N_q(x\eth_{\dR}^X) s^{-f/2} }^2 = s^{f/2} N_q( (x\eth_{\dR}^X)^2 ) s^{f/2} \\
	= (s^2\Delta^{\bbR^h} - (s\pa_s)^2)
	\begin{pmatrix} 1 & 0 \\ 0 & 1 \end{pmatrix}
	+
	\begin{pmatrix} 
	\Delta^Z + (\bN-\tfrac f2)^2 & -d^Z + \delta^Z \\
	d^Z - \delta^Z & \Delta^Z + (\bN-\tfrac f2)^2
	\end{pmatrix}.
\end{multline*}
The two summands here commute, so the next step is to compute the spectrum of the second summand
on the right; this operator is in fact the square of
\begin{equation*}
	D = \begin{pmatrix} \eth^Z_{\dR} & \bN - \tfrac f2 \\ \bN - \tfrac f2 & -\eth^Z_{\dR} \end{pmatrix}.
\end{equation*}
We assume inductively that $\eth_{\dR}^Z,$ and hence $D,$ is self-adjoint, with compact resolvent, and this means 
that the spectrum of $D^2$ is non-negative and discrete. 

Thus, fix any $\lambda^2>0,$ write $\wt\bN = \bN - \tfrac f2$, and decompose $a = a_d + a_{\delta}$, and similarly for $b$,
according to the Kodaira decomposition on $Z$. (We have already handled the case where $a$ and $b$ are harmonic.) Then
we must solve 
\begin{equation*}
	\begin{cases}
	(\Delta^Z+\wt\bN^2 - \lambda^2)a -db + \delta b =0\\
	(\Delta^Z+\wt\bN^2 - \lambda^2)b +da - \delta a =0
	\end{cases}
	\iff
	\begin{cases}
	(\Delta^Z+\wt\bN^2 - \lambda^2)a_d =db_{\delta}\\
	(\Delta^Z+\wt\bN^2 - \lambda^2)a_\delta = - \delta b_d\\
	(\Delta^Z+\wt\bN^2 - \lambda^2)b_d = - da_\delta \\
	(\Delta^Z+\wt\bN^2 - \lambda^2)b_\delta =  \delta a_d
	\end{cases}
\end{equation*}

Start with $a_d$: first apply $(\Delta^Z + (\wt\bN-1)^2 -\lambda^2)$ to both sides of the first equation to get
\begin{equation*}
\begin{gathered}
	(\Delta^Z + (\wt\bN-1)^2 -\lambda^2)(\Delta^Z+\wt\bN^2 - \lambda^2)a_d 
	=(\Delta^Z + (\wt\bN-1)^2 -\lambda^2)db_{\delta}
	=d(\Delta^Z + \wt\bN^2 -\lambda^2)b_{\delta}
	= \Delta^Z a_d\\
	\implies
	\lrspar{
	(\Delta^Z + (\wt\bN-1)^2 -\lambda^2)(\Delta^Z+\wt\bN^2 - \lambda^2) - \Delta^Z }a_d =0.
\end{gathered}
\end{equation*}
This last operator factors as 
\begin{equation*}
\begin{gathered}
	(\Delta^Z + (\wt\bN-1)^2 -\lambda^2)(\Delta^Z+\wt\bN^2 - \lambda^2) - \Delta^Z \\
	= 
	(\Delta^Z + \wt\bN^2 - \wt\bN -\lambda^2)^2 - \lambda^2\\
	= (\Delta^Z + \wt\bN^2 - \wt\bN -\lambda^2 + \abs{\lambda} )(\Delta^Z +\wt\bN^2 - \wt\bN - \lambda^2 - \abs{\lambda}).
\end{gathered}
\end{equation*}
We can remove the absolute value from $\lambda$ since $\Spec(D)$ is symmetric about $0$; restricting to an 
eigenspace of $\Delta^Z$ with eigenvalue $\mu$, we find that 
\begin{equation*}
	\lambda \in 
	\lrbrac{
	\pm \tfrac12 \pm \sqrt{ (k-\tfrac f2 - \tfrac12)^2 + \mu } : \mu \in \Spec(\Delta^Z\rest{k-\text{forms}}) }.
\end{equation*}

Next consider $a_{\delta}$: applying $(\Delta^Z + (\wt\bN+1)^2-\lambda^2)$ to both sides of the second equation above gives
\begin{equation*}
\begin{gathered}
	(\Delta^Z + (\wt\bN+1)^2 -\lambda^2)(\Delta^Z+\wt\bN^2 - \lambda^2)a_\delta
	= \Delta^Z a_\delta\\
	\implies
	(\Delta^Z + \wt\bN^2 + \wt\bN -\lambda^2- \lambda)
	(\Delta^Z + \wt\bN^2 + \wt\bN -\lambda^2+ \lambda)
	a_\delta =0
\end{gathered}
\end{equation*}
and hence
\begin{equation*}
	\lambda \in 
	\lrbrac{
	\pm \tfrac12 \pm \sqrt{ (k-\tfrac f2+\tfrac12)^2 + \mu } : \mu \in \Spec(\Delta^Z\rest{k-\text{forms}}) }.
\end{equation*}
Altogether we conclude that
\begin{equation*}
	\Spec(D) \subseteq 
	\bigcup_k
	\lrbrac{
	\pm \tfrac12 \pm \sqrt{ (k -\tfrac f2 \pm \tfrac12)^2 + \mu } : \mu \in \Spec(\Delta^Z\rest{k-\text{forms}}) }
\end{equation*}
where all of the $\pm$ signs can be chosen independently.\\

On the $\lambda^2$ eigenspace of $D^2,$ 
\begin{equation*}
	I_q( s^{f/2} N_q( (x\eth_{\dR}^Z)^2 ) s^{-f/2} )(\zeta) =
	(\lambda^2- \zeta^2)
	\begin{pmatrix} 1 & 0 \\ 0 & 1 \end{pmatrix}.
\end{equation*}
This means that the indicial roots of $x\eth_{\dR}^Z$ are contained in
\begin{equation*}
\begin{gathered}
	\bigcup_{k \in [0,f]} \{ -k, k-f : 0 \in \Spec(\Delta^Z\rest{k-\text{forms}}) \} \\
	\cup \bigcup_{k \in [0,f]} \lrbrac{
	-\tfrac f2\pm \tfrac12 \pm \sqrt{ (k-\tfrac f2 \pm \tfrac12)^2 + \mu } : \mu \in \Spec(\Delta^Z\rest{k-\text{forms}})\setminus \{ 0 \} }.
\end{gathered}
\end{equation*}

This proves the 
\begin{lemma}\label{lem:SuitableScaling}
Let $(\hat X, g)$ be a stratified pseudomanifold with $\iie$ metric, with all notation as before, and $\bB = (B^1, \ldots, B^k)$ 
a set of Cheeger ideal boundary conditions corresponding to a mezzoperversity at the first $k$ strata. Let $f$ be the dimension 
of the link of $\hat X$ at $Y^{k+1}$. 
\begin{itemize}
\item If $f$ is even and the induced Hodge Laplacians on the links $Z_q$ of $\hat X$ at $q\in Y^{k+1}$ satisfy
\begin{equation*}
	\Spec(\Delta^Z\rest{j-\text{forms}}) \cap [-\tfrac 34, \tfrac 34] \subseteq \{ 0 \} \quad
	\Mwhenever |j-\tfrac f2|\leq 1, 
\end{equation*}
then 
\begin{equation*}
	\spec_b(x\eth_{\dR}^X;\bB) \cap [-\tfrac f2 - \tfrac12, -\tfrac f2 + \tfrac12 ] \subseteq \{ -\tfrac f2 \}
\end{equation*}
\item
If $f$ is odd  and the induced Hodge Laplacians on the links $Z_q$ of $\hat X$ at $q\in Y^{k+1}$ satisfy
\begin{equation*}
	\Spec(\Delta^Z\rest{j-\text{forms}}) \cap (- 1,  1) \subseteq \{ 0 \} \quad
	\Mwhenever |j-\tfrac f2|=\tfrac 12, 
\end{equation*}
then 
\begin{equation*}
	\spec_b(x\eth_{\dR}^X;\bB) \cap [-\tfrac f2 - \tfrac12, -\tfrac f2 + \tfrac12 ] \subseteq \{ -\tfrac f2 \pm \tfrac12 \}
\end{equation*}
\end{itemize} 
In particular, any $\iie$ metric $g$ can be replaced by another one where the metric on the link at $Y^{k+1}$ is scaled by a 
large constant so that these condition as well as {\em Assumption} \ref{Ass:CstIndicialRoots} are all true. 
\end{lemma}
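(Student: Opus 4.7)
My plan is to deduce the lemma directly from the explicit characterization of $\Spec_b(x\eth_{\dR}^X;\bB)$ just derived. That calculation decomposes the indicial set as the union of a \emph{harmonic contribution}
\[
	\{-k,\; k-f : \ker\eth_{\dR}^Z \cap \text{$k$-forms} \neq 0\}
\]
and a \emph{non-harmonic contribution}
\[
	\bigl\{-\tfrac{f}{2} + \tfrac{\varepsilon_1}{2} + \varepsilon_2 \sqrt{(k-\tfrac{f}{2}+\tfrac{\varepsilon_3}{2})^2 + \mu}\bigr\},
\]
with $\varepsilon_i \in \{\pm 1\}$ independent and $\mu$ ranging over the nonzero eigenvalues of $\Delta^Z$ on $k$-forms. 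I will intersect each piece with the strip $[-\tfrac{f}{2}-\tfrac{1}{2},\,-\tfrac{f}{2}+\tfrac{1}{2}]$ and verify that the stated hypotheses leave only the asserted roots.

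For the harmonic piece, the membership condition on $-k$ forces $k \in [\tfrac{f-1}{2},\tfrac{f+1}{2}]$: for $f$ even the only integer solution is $k = \tfrac{f}{2}$, giving $-\tfrac{f}{2}$; for $f$ odd the two solutions $k \in \{(f\pm 1)/2\}$ give exactly the endpoints $-\tfrac{f}{2} \pm \tfrac{1}{2}$. For the non-harmonic piece, writing the square root as $R$, membership in the strip amounts to $|\tfrac{\varepsilon_1}{2}+\varepsilon_2 R|\leq \tfrac{1}{2}$; since $\mu>0$ gives $R > 0$, this forces $\varepsilon_1 = -\varepsilon_2$ and reduces to the inequality
\[
	(k-\tfrac{f}{2}+\tfrac{\varepsilon_3}{2})^2 + \mu \leq 1.
\]
The parity of $f$ then controls how small the square term can be. For $f$ even, $k-\tfrac{f}{2}+\tfrac{\varepsilon_3}{2}$ is a half-integer; its square equals $\tfrac{1}{4}$ exactly when $|k-\tfrac{f}{2}|\leq 1$ and is $\geq \tfrac{9}{4}$ otherwise, so the inequality forces $\mu \leq \tfrac{3}{4}$ in a degree $j = k$ with $|j-\tfrac{f}{2}|\leq 1$, which is exactly what the first hypothesis excludes. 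For $f$ odd, $k-\tfrac{f}{2}+\tfrac{\varepsilon_3}{2}$ is an integer, equal to $0$ only for $k \in \{(f\mp 1)/2\}$ (with suitable $\varepsilon_3$) and $\geq 1$ otherwise; in the first case the inequality becomes $\mu \leq 1$, with $\mu \in (0,1)$ producing a root in the open interior of the strip (forbidden by the second hypothesis) and $\mu = 1$ recovering the allowed endpoint $-\tfrac{f}{2}\pm\tfrac{1}{2}$, while in the second case $R \geq 1$ already places any surviving root at the boundary or outside.

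For the last clause, I observe that replacing the link metric $g_Z$ by a fiberwise conformal rescaling $\lambda^2 g_Z$ transforms $\Delta^Z$ by $\lambda^{-2}$ and hence scales its nonzero eigenvalues by the same factor; choosing $\lambda$ appropriately small drives all such eigenvalues, in the finitely many relevant form degrees, above $\tfrac{3}{4}$ (resp.\ $1$), producing the required spectral gap. Rigidity of the $\iie$ metric guarantees that the link geometry at $q \in Y^{k+1}$ is independent of $q$, so the rescaling can be performed uniformly and the resulting indicial set in the strip is constant along $Y^{k+1}$, which gives Assumption \ref{Ass:CstIndicialRoots} in this range. The main subtlety throughout is simply the bookkeeping of the three independent sign choices $(\varepsilon_1, \varepsilon_2, \varepsilon_3)$ in the non-harmonic formula; once this is organized, the lemma follows directly from the spectral decomposition that precedes its statement.
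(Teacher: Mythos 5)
The proposal is correct and follows essentially the same approach as the paper: the paper states ``This proves the Lemma'' immediately after deriving the explicit spectral formula for the indicial roots, leaving the strip intersection implicit, and the student simply carries out that bookkeeping explicitly and correctly (the harmonic piece, the forced cancellation $\varepsilon_1 = -\varepsilon_2$, and the parity-dependent minimum of $(k-\tfrac f2 + \tfrac{\varepsilon_3}{2})^2$). One small misattribution is worth flagging: you claim that ``rigidity of the $\iie$ metric guarantees that the link geometry at $q\in Y^{k+1}$ is independent of $q$.'' That is not what rigidity means (cf.\ \cite[Props.\ 3.1--3.2]{ALMP}): rigidity says the metric has an exact conic structure in each fiber, i.e.\ no $x$-dependence in $g_Z$, but the link metric can and generally does vary smoothly with the base point. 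The correct justification for Assumption~\ref{Ass:CstIndicialRoots} is different: after scaling, the strip $[-\tfrac f2-\tfrac12,-\tfrac f2+\tfrac12]$ contains only the harmonic roots $-\tfrac f2$ (resp.\ $-\tfrac f2\pm\tfrac12$), and those contribute constant indicial data because $\cH^{\mid}_{\bB(Z)}(Z_q)$ forms a vector bundle of locally constant rank over $Y^{k+1}$; moreover, compactness of $Y^{k+1}$ and continuity of the nonzero spectrum of $\Delta^{Z_q}$ in $q$ guarantee that a single scaling constant works uniformly along the stratum. Your final conclusion is correct, just for this reason rather than for the one you gave. (Incidentally, your choice of scaling the link metric by a small factor agrees with the paper's introduction, even though the lemma's concluding sentence says ``scaled by a large constant''; the substance of the claim is that the nonzero eigenvalues of $\Delta^Z$ can be pushed out of the relevant interval.)
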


We say that $g$ is {\bf suitably scaled} if the Hodge Laplacians of the links satisfy the hypotheses of Lemma \ref{lem:SuitableScaling}.
We are abusing notation slightly by regarding this as a property of the metric, since it also depends on the ideal boundary conditions 
$\bB$. 

\begin{corollary}\label{cor:DeRhamExp}
Let $(\hat X,g)$ be a stratified pseudomanifold with Cheeger ideal boundary conditions $\bB$ at the first $k$ strata, and a 
suitably scaled metric. If $u \in \cD_{\max, \bB}(\eth_{\dR})$ is supported in a distinguished neighborhood 
$\cU_q \cong [0,1) \times \bbB^h \times Z_q$,  $q \in Y^{k+1},$ then 
as in Lemma \ref{lem:ExpansionExistence}, 
\begin{multline*}
u \sim x^{-f/2}u_{f/2} + \wt u, \qquad \text{where} \\[0.5ex]
u_{f/2} \in H^{-1/2}(\bbB^{h}; \ker I(x\eth_{\dR}; \bB, -f/2)) 
\Mand \wt u \in x^{1-} L^2( C(Z_q), H^{-1}(\bbB^h)\otimes \Lambda^*(\Iie T^*X) ).
\end{multline*}
If $f = \dim Z_q$ is odd or $\cH^{\mid}_{\bB(Z_q)}(Z_q) = 0$ then $u_{f/2}=0$; in all other cases, 
\begin{equation*}
	u_{f/2} = \alpha(u) + dx \wedge \beta(u), \quad
	\alpha(u), \beta(u) \in H^{-1/2}(\bbB^h; \Lambda^* (\Iie T^*Y) \otimes \cH^{\mid}_{\bB}(Z_q)).
\end{equation*}
\end{corollary}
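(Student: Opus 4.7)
The plan is to specialize Lemma \ref{lem:ExpansionExistence} to the partially completed operator $P = x\eth_{\dR}$ and use Lemma \ref{lem:SuitableScaling} to control which indicial roots actually contribute to the expansion in the $L^2$ window. Recall that with respect to the volume form $\frac{dx}{x}\,dy\,\dvol_Z$, the $L^2$ cutoff in $x$ corresponds to $\Re\zeta = \gamma = -\tfrac{f+1}{2}$, and that the partial expansion in Lemma \ref{lem:ExpansionExistence} runs over triples $(\zeta_j, \kappa, p)\in \cS\cD(\eth_{\dR},\bB)$ with $x^{\zeta_j}, x^{\zeta_j+\kappa} \in L^2_{\loc}\setminus xL^2_{\loc}$, i.e.\ whose real parts lie in the open interval $(-\tfrac{f+1}{2}, -\tfrac{f-1}{2})$.

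First I would invoke Lemma \ref{lem:SuitableScaling}: when $g$ is suitably scaled, the only indicial root of $x\eth_{\dR}$ whose real part lies in $[-\tfrac{f}{2}-\tfrac12,-\tfrac{f}{2}+\tfrac12]$ is $\zeta_0=-\tfrac{f}{2}$ if $f$ is even, and $-\tfrac{f}{2}\pm\tfrac12$ if $f$ is odd. In the odd case both candidate exponents lie on the boundary of the window $(-\tfrac{f+1}{2}, -\tfrac{f-1}{2})$: the root $-\tfrac{f+1}{2}$ produces $x^{-(f+1)/2}\notin L^2_{\loc}$, while $-\tfrac{f-1}{2}$ produces a term in $x L^2$ that gets absorbed into $\wt u$. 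Hence $\cS\cD(\eth_{\dR},\bB)=\varnothing$ when $f$ is odd, forcing $u_{f/2}=0$. When $f$ is even, the only contributing root is $\zeta_0 = -\tfrac{f}{2}$ with $\kappa=0$ and $p=0$ (there is no logarithmic accompaniment because $I_q(x\eth_{\dR};\zeta)^{-1}$ has only simple poles here, as the computation in \S\ref{sec:IndicialDeRham} shows that $\zeta_0$ only arises from the block $\begin{pmatrix}0 & -\zeta+\bN-f\\ \zeta+\bN & 0\end{pmatrix}$ on $\ker\eth_{\dR}^Z$, whose inverse has order $1$).

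Next I would identify $\ker I_q(x\eth_{\dR};\bB,-f/2)$. From \eqref{eq:NormalOperator} and its reduction in \S\ref{sec:IndicialDeRham}, the indicial family splits as a direct sum of a block acting on $\ker\eth_{\dR}^Z\oplus\ker\eth_{\dR}^Z$ and a block on its orthogonal complement; the suitable scaling condition pushes all indicial roots of the latter block out of the critical strip. On the former block the indicial family at $\zeta$ equals $\begin{pmatrix}0 & -\zeta+\bN-f\\ \zeta+\bN & 0\end{pmatrix}$, so at $\zeta=-\tfrac{f}{2}$ its kernel consists of pairs $(\alpha,\beta)$ of harmonic forms in $\cD_{\bB}(\eth_{\dR}^Z)$ annihilated by $\bN-\tfrac{f}{2}$, i.e.\ of degree exactly $\tfrac{f}{2}=\mid$. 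This kernel is thus $\cH^{\mid}_{\bB}(Z_q)\oplus \cH^{\mid}_{\bB}(Z_q)$, which is trivial precisely when $\cH^{\mid}_{\bB(Z_q)}(Z_q)=0$, recovering the second vanishing case.

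Finally I would assemble the expansion. The factor of $dx$ in the second summand reflects the normal-tangential decomposition of forms relative to the collar: on the tangential component the indicial operator is the block with $(\alpha,0)\mapsto (0,(\bN-\tfrac{f}{2})\alpha)$, and on the $dx\wedge$ component it is symmetric, so the kernel splits into $\alpha$-forms (purely tangential harmonic) and $dx\wedge\beta$ (with $\beta$ harmonic tangential). The Sobolev regularity $H^{-1/2}$ for the coefficients $\alpha(u)$ and $\beta(u)$ follows from tracking the Mellin-transform argument in the proof of Lemma \ref{lem:ExpansionExistence} at the leading pole; continuity from the graph topology is inherited from the same lemma. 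The main obstacle in the argument is the bookkeeping in step three: verifying that the scaling condition really eliminates every non-harmonic contribution to the critical strip, and identifying the kernel of the indicial operator with the (boundary-condition-twisted) middle-degree Hodge bundle rather than a potentially larger space of solutions to the indicial equation.
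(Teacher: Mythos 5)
Your overall strategy is correct and matches what the paper intends: specialize Lemma~\ref{lem:ExpansionExistence} to $P = x\eth_{\dR}$, invoke Lemma~\ref{lem:SuitableScaling} to locate the indicial roots inside the $L^2$ window, and identify the kernel of $I(x\eth_{\dR};-f/2)$ using the block computation of \S\ref{sec:IndicialDeRham}. Your verification that the pole at $\zeta_0=-\tfrac f2$ of $I(x\eth_{\dR};\zeta)^{-1}$ is simple (because on $\cH^{\mid}_{\bB(Z)}(Z)\oplus\cH^{\mid}_{\bB(Z)}(Z)$ the indicial matrix is $(\zeta+\tfrac f2)\begin{pmatrix}0&-1\\1&0\end{pmatrix}$, whose inverse has a first-order pole) and that the kernel is exactly the middle-degree Hodge bundle are precisely the computations needed; the paper leaves them implicit.

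However, there is a concrete error in your treatment of the $f$ odd case. You assert that the root $-\tfrac{f-1}2$ ``produces a term in $xL^2$ that gets absorbed into $\wt u$.'' This is false: with the iie volume form $x^f\,dx\,dy\,\dvol_Z$, a pure power $x^{-(f-1)/2}$ has $x^{-(f-1)/2}/x = x^{-(f+1)/2}$ with $\int_0^1 x^{-(f+1)}x^f\,dx = \infty$, so $x^{-(f-1)/2}\in L^2_{\loc}\setminus xL^2_{\loc}$. Thus $-\tfrac{f-1}2$ lies exactly on the upper boundary of the window defining $\cS\cD(L,\bB)$, and \emph{a priori} it could contribute. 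The reason $u_{f/2}=0$ for $f$ odd is trivial --- $-f/2$ is not an integer shift of any indicial root, so there is no coefficient with that exponent --- but the harder assertion of the corollary is that $\wt u\in xL^2$, which requires there to be \emph{no} poles of $\cM u$ on the terminal Mellin line $\eta = \gamma+1 = -\tfrac{f-1}{2}$. For $f$ even, the first bullet of Lemma~\ref{lem:SuitableScaling} gives exactly this (the closed interval $[-\tfrac{f+1}2,-\tfrac{f-1}2]$ contains only $-\tfrac f2$), so your argument is complete in that case. For $f$ odd, the second bullet permits $\Spec_b(x\eth_{\dR};\bB)$ to meet $\{-\tfrac{f\pm1}2\}$, and whenever $\cH^{(f-1)/2}_{\bB(Z)}(Z)\neq 0$ the indicial family does have a genuine simple pole at $-\tfrac{f-1}2$ (with residue the projection onto tangential degree-$\tfrac{f-1}2$ harmonic forms). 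You should either (a) argue that this boundary pole contributes only to the error with a slightly weaker decay $\wt u\in x^{1-\eps}L^2$ for every $\eps>0$ (which is what actually comes out of the contour shift when the terminal line carries a pole, and is also what Theorem~\ref{thm:WittyDecay} ultimately delivers), or (b) observe that the convention for $\cS\cD$ in the edge calculus should be understood with the open strip, relegating boundary poles to the remainder; but the blanket claim that $x^{-(f-1)/2}\in xL^2$ is not a valid route and, as written, would prove too much.

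Finally, a small bookkeeping point: the Sobolev exponent in Lemma~\ref{lem:ExpansionExistence} is stated as $H^{-\Re\zeta_j-\kappa}(\bbB^h)$, which for $\zeta_j=-f/2$ would read $H^{f/2}$; what the Mellin argument actually yields at the pole $\zeta_j$ is $H^{\gamma-\Re\zeta_j-\kappa}$, and with $\gamma=-\tfrac{f+1}2$ this is $H^{-1/2}$ as the corollary asserts. Your citation of ``tracking the Mellin-transform argument at the leading pole'' is the right idea; be sure your own normalization reproduces $-1/2$ rather than inheriting the apparent typo in the lemma.
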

To put this into perspective, recall that $dV_g \approx x^f \; dxdy\dvol_Z$; thus since  $x^{-f/2} \in x^{\eps}L^2_{\loc}(x^f \; dx)$ 
for every $\eps <1/2,$ the expansion here is of the form studied in Lemma \ref{lem:ExpansionExistence}.

\subsection{The normal operator of $\eth_{\dR}$}
We now turn to a consideration of the normal operator of $\eth_{\dR}$ at a point $q \in Y^{k+1}$. The link of $\hat X$ 
at $q$ is denoted $Z$ and $x$ is a boundary defining function for $Y^{k+1}.$ We assume that 
the metric is suitably scaled, 
and that there is Kodaira decomposition for forms on $Z$ as in \S\ref{sec:L2Coho}.

Since we will be working with $\iie$-differential forms on $\bbR^+_s \times T_qY^{k+1} \times Z^{k+1}_q,$ it simplifies
notation to write 
\begin{equation}\label{eq:AbbL2}
\begin{gathered}
	Y = Y^{k+1}, \quad Z = Z^{k+1}_q, \quad h = \dim Y, \quad f = \dim Z, \\
	T_qY^+ = \bbR^+_s \times T_qY, \quad 
	L^2 = L^2(T_qY^+ \times Z; \pi^*_q\Lambda^*(\Iie T^*X)), \quad 
	P = x\eth_{\dR}.
\end{gathered}
\end{equation}
The normal operator at $q$ acts on sections of $\Lambda^*(\Iie T^*X)$ pulled-back to $\cU_q$ and restricted to the fiber 
over $q$, which can be identified with $\Lambda^*(T^*_qY^+) \hat\otimes \Lambda^*(\Iie T^*Z)$ (the graded tensor product 
of forms on $Y^+$ and $\iie$ forms on $Z$). Thus we also write
\begin{equation*}
	\Lambda^*_q = \pi^*_q\Lambda^*(\Iie T^*X)) = \Lambda^*(T^*_qY^+) \hat\otimes \Lambda^*(\Iie T^*Z).
\end{equation*}

Once we impose the boundary conditions $\bB$ at $Y^1, \ldots, Y^k,$ the normal operator of $P$ at a point $q \in Y^{k+1}$ naturally induces an unbounded operator on $L^2$  but it is also profitable to consider the unbounded operators it induces on weighted $L^2$ spaces. For any $\eps \geq 0$ we have
\begin{equation*}
	N_q(P): \cD^{\eps}_{\max, N(\bB})(N_q(P)) \subseteq s^{\eps}L^2(T_qY^+ \times Z; \Lambda^*_q) \lra  s^{\eps}L^2(T_qY^+ \times Z; \Lambda^*_q)
\end{equation*}
where
\begin{equation*}
	\cD^{\eps}_{\max, N(\bB)}(N_q(P)) = \{ u \in \cD_{\max, N(\bB)}(N_q(P)) \cap s^{\eps}L^2 : N_q(P)u \in s^{\eps}L^2 \}.
\end{equation*}
This is a closed operator and, in \cite[\S5.4]{ALMP} we have shown that the two assumptions
\begin{equation}\label{Ass2}
\begin{gathered}
	a) \ \Spec(\eth_{\dR}^{Z}) \cap (-1,1) \subseteq \{0\},  \quad
	b)  \ \cH^{\tfrac12\dim Z}_{L^2}(Z)=0. 
\end{gathered}
\end{equation} 
imply that $(N_q(P), \cD_{\max, N(\bB)}^{\eps}(N_q(P)) )$ is injective for all $\eps \in (0,1).$

{\em Now consider the situation where ($a$) holds but ($b$) does not.} Thus we assume that $f = \dim Z$ 
is an even integer and, letting $\cD_{\bB(Z)}(\eth_{\dR}^Z)$ be the domain of $\eth_{\dR}^Z$ induced 
by the boundary conditions $\bB,$ that 
\begin{equation*}
	\cH^{f/2}_{\bB(Z)}(Z) = \ker (\eth_{\dR}^Z, \cD_{\bB(Z)}(\eth_{\dR}^Z))\rest{\text{degree} = f/2}\neq \{ 0 \}.
\end{equation*}

Because of the inductive hypothesis that the de Rham operator on the fibers is Fredholm and self-adjoint, 
and that the de Rham complex on the fibers satisfies a strong Kodaira decomposition and a Hodge theorem, 
we can use the computation in  \cite[Lemma 5.5]{ALMP} here. In particular, with $P = x\eth_{\dR},$ this 
shows that elements in $\ker (N_q(P), \cD_{\max, N(\bB)}(N_q(P)))$ arising from the fact that b) no longer holds
are necessarily of the form $v= \alpha + dx \wedge \beta$ with 
\begin{equation*}
	\alpha, \beta \in L^2(T_qY^+; \Lambda^*(T_qY^+) \hat\otimes \cH^{f/2}_{\bB(Z)}(Z))
\end{equation*}
From \eqref{eq:NormalOperator}, on forms of middle vertical degree, the equation $N_q(P)v=0$ reduces to
\begin{equation*}
\begin{gathered}
	s\eth_{\dR}^{\bbR^h}\alpha - s\pa_s \beta - \tfrac{f}2\beta =0 \\
	s\pa_s \alpha + \tfrac{f}2\alpha - s\eth_{\dR}^{\bbR^h}\beta =0
\end{gathered}
\end{equation*}
Setting $(\alpha', \beta')= (s^{f/2}\alpha, s^{f/2}\beta)$, then these become
\begin{equation}\label{NullSpacePair}
	\begin{cases}
	\eth^{\bbR^h}_{\dR}\alpha' = \pa_s \beta' \\[0.5ex]
	\eth^{\bbR^h}_{\dR}\beta' = \pa_s \alpha'.
	\end{cases}
\end{equation}
Since $\pa_s$ and $\eth^{\bbR^h}_{\dR}$ commute, \eqref{NullSpacePair} implies that $\alpha', \beta' \in \ker (-\pa_s^2 + \Delta^{\bbR^h})$.
Taking Fourier transform in $\bbR^h,$ (with dual variable $\eta$), we find that 
\begin{equation*}
	\pa_s^2\cF(\alpha') = |\eta|^2\cF(\alpha') \Longrightarrow 
	\cF(\alpha') = A(\eta)e^{-|\eta|s} + B(\eta) e^{|\eta|s}.
\end{equation*}
However, a solution lying in any $s^\delta L^2$ must have $B(\eta) \equiv 0$. Moreover, 
\begin{equation*}
	\int_0^{\infty} \int_{\bbR^h} |A(\eta)e^{-|\eta|s}|^2 \; ds \; d\eta 
	= \int_{\bbR^h} \frac{|A(\eta)|^2}{2|\eta|} \; d\eta
\end{equation*}
so the $L^2$ condition becomes 
\begin{equation*}
	\cF(\alpha') = A(\eta)e^{-|\eta|s}, \Mwith A(\eta) \in |\eta|^{1/2}L^2(\bbR^h; \Lambda^* T^*_q Y \hat \otimes  \cH^{f/2}_{\bB(Z)} (Z)).
\end{equation*}
Given $\alpha',$ then \eqref{NullSpacePair} determines $\beta'$, so altogether
\begin{multline*}
	\ker (N_q(P), \cD_{\max,\bB'}( N_q(P) ) )\\
	=s^{-f/2}\{ \cF^{-1}_q (A(\eta) e^{-s|\eta |}, -i\cl{\hat{\eta}} A(\eta) e ^{-s|\eta |})
		\;|\: A(\eta)\in |\eta|^{1/2} L^2 (\bbR^h, \Lambda^* T^*_q Y\otimes \cH^{f/2}_{\bB(Z)} (Z) )\}\,
\end{multline*} 
where $\hat{\eta}= \frac{ \eta}{| \eta |}$.

Since the relation between $\alpha$ and $\beta$ in $\ker N_q(P)$ is symmetric, we have
\begin{equation}\label{Eq:SigmaInvolution}
	\alpha = \hat\sigma(q) \beta, \quad \beta = \hat\sigma(q) \alpha
\end{equation}
where $\hat\sigma$ is the involution $\cF^{-1}_q \circ \tfrac1i\cl{\hat\eta} \circ \cF_q.$

\begin{proposition}\label{prop:NullNormal}
The operator
\begin{equation*}
	N_q(P):  \cD_{\max, N(\bB)}^{\eps}(N_q(P)) \subseteq
	s^{\eps}L^2 \lra s^{\eps}L^2
\end{equation*}
is injective for $\eps\geq 1/2$ and has closed range if $0 < \eps < 1$, $\eps \neq 1/2$.  Moreover if $\eps<1/2$,
then its null space is 
\begin{equation}\label{null-normal}
	s^{-f/2}\{ \cF^{-1}_q (A(\eta) e^{-s|\eta |},  -i\cl{\hat{\eta}} A(\eta) e ^{-s|\eta |}): 
		A(\eta)\in |\eta|^{1/2} L^2 (\bbR^h, \Lambda^* T^*_q Y\hat \otimes \cH^{f/2}_{\bB(Z)} (Z) )\}
\end{equation}
Observe that the space \eqref{null-normal} is an infinite dimensional subspace of $s^{1/2-}L^2$ and hence, in particular, the normal operator is Fredholm
if and only if it is invertible.
\end{proposition}

\begin{proof}
Closedness of the range follows from Proposition \ref{eq:GNqP}. The other statements follow directly from the computation.
\end{proof}

Since, by induction, the de Rham operator with Cheeger ideal boundary conditions coming from a mezzoperversity 
is self-adjoint, see Theorem \ref{thm:SelfDual}, the following holds by duality.
\begin{corollary}\label{cor:NormalOpSurj}
The normal operator with domain $\cD_{\min, N(\bB)}^{\eps}(N_q(x\eth_{\dR}))$, defined by duality from 
$\cD_{\max,N(\bB)}^{1-\eps}(N_q(x\eth_{\dR})),$ is surjective when $\eps<1/2.$
\end{corollary}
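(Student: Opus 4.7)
The plan is to extract surjectivity on the minimal domain at weight $\eps$ from the injectivity plus closed-range statement for $N_q(P)$ on the maximal domain at the dual weight $1-\eps$ (Proposition \ref{prop:NullNormal}) by a standard Hilbert-space adjoint argument. The essential input is the self-adjointness of $\eth_{\dR}^Z$ with Cheeger ideal boundary conditions coming inductively from Theorem \ref{thm:SelfDual}, which implies that $N_q(\eth_{\dR})$, and hence $N_q(P) = s\cdot N_q(\eth_{\dR})$, is formally symmetric on the model space with respect to the $L^2$-pairing determined by the model $\iie$-volume form.

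First, I would make the duality explicit. Viewing $N_q(P)$ as an unbounded operator $s^\eps L^2 \to s^\eps L^2$, its Banach-space adjoint (computed via the $L^2$ pairing) is $N_q(P)$ acting as an unbounded operator $s^{1-\eps}L^2 \to s^{1-\eps}L^2$; the shift from weight $\eps$ to weight $1-\eps$ is precisely accounted for by the factor of $s$ that distinguishes $P$ from $\eth_{\dR}$. By construction, the adjoint domains in this pairing are $\cD_{\min, N(\bB)}^{\eps}$ and $\cD_{\max, \bB}^{1-\eps}$ -- this is the content of the phrase ``defined by duality from $\cD_{\max, \bB}^{1-\eps}$'' in the corollary's statement, and it relies on the fact that $\bB$ yields a \emph{self-adjoint} extension on the link. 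For $\eps < 1/2$ (and, tacitly, $\eps > 0$), the dual weight $1-\eps$ lies in $(1/2, 1)$, so Proposition \ref{prop:NullNormal} applies at the weight $1-\eps$ and produces that $(N_q(P), \cD_{\max, \bB}^{1-\eps})$ is injective with closed range. A direct application of the Hilbert-space closed-range theorem (a closed densely defined operator is injective with closed range iff its adjoint is surjective) then gives surjectivity of the adjoint $(N_q(P), \cD_{\min, N(\bB)}^{\eps})$, which is the conclusion.

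The main obstacle is really the identification in the first step: one must verify that the Banach adjoint of $(N_q(P), \cD_{\max, \bB}^{1-\eps})$ is exactly $(N_q(P), \cD_{\min, N(\bB)}^{\eps})$ on the nose, i.e.\ that integration by parts against elements of the maximal domain at weight $1-\eps$ produces precisely the minimal domain at weight $\eps$, with the correct Cheeger boundary conditions. This reduces to the self-adjointness of the Cheeger boundary conditions on the link $Z$ (provided inductively by Theorem \ref{thm:SelfDual}) combined with the observation that the weight pair $(\eps, 1-\eps)$ is the one compatible with the multiplicative factor $s$ in $P = s\cdot\eth_{\dR}$. Once that identification is in place, the rest of the argument is entirely formal, which is why the author presents the statement as an immediate corollary.
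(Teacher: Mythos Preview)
Your proposal is correct and follows essentially the same approach as the paper. The paper's entire argument is the single sentence preceding the corollary: the inductive self-adjointness of $\eth_{\dR}^Z$ with Cheeger ideal boundary conditions (Theorem~\ref{thm:SelfDual}) identifies the adjoint, and then surjectivity at weight~$\eps$ follows by duality from the injectivity and closed range of $(N_q(P),\cD_{\max,N(\bB)}^{1-\eps})$ established in Proposition~\ref{prop:NullNormal}; you have simply unpacked this into the explicit closed-range-theorem statement.
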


This description of the null space yields a direct sum decomposition of the trace bundle of the de Rham operator at $Y.$
Assume now that $W = W^{k+1}$ is a sub-bundle of 
\begin{equation*}
	\cH^{f/2}_{\bB(Z)}(H/Y) \lra Y
\end{equation*}
with $H\lra Y$ the fibration with typical fiber $Z.$

\begin{lemma}\label{lem:decomp} There is a direct sum decomposition
\begin{equation}\label{Eq:DecopToShow}
	H^{-1/2}( T_qY; \Lambda^*T^*_qY \hat\otimes \cH^{f/2}_{\bB(Z)}(Z))
	\oplus
	H^{-1/2}( T_qY; \Lambda^*T^*_qY \hat\otimes \cH^{f/2}_{\bB(Z)}(Z))
	  = C_{\ker} \oplus C_W
\end{equation}
where 
\begin{equation*}
\begin{gathered}
	C_{\ker}=\{ \cF^{-1}_y (A(\eta) , -i\cl{\hat{\eta}} A(\eta) ) :
	A(\eta)\in |\eta|^{1/2} L^2 (\R^h, \Lambda^*T^*_q Y \hat \otimes \cH^{f/2}_{\bB(Z)} (Z) )\} \\
	C_W=
	H^{-1/2}( T_qY; \Lambda^*T^*_qY \hat\otimes W_q)
	\oplus H^{-1/2}( T_qY; \Lambda^*T^*_qY \hat\otimes W^{\perp}_q)
\end{gathered}
\end{equation*}
\end{lemma}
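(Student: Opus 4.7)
I will prove the decomposition explicitly, after making the key observation that $\hat\sigma$ acts only on the $\Lambda^* T^*_q Y$ tensor factor (through $\tfrac{1}{i}\cl{\hat\eta}$), and therefore commutes with the pointwise orthogonal decomposition $\cH^{f/2}_{\bB(Z)}(Z) = W_q \oplus W_q^\perp$, descending to both summands. Since $\cl{\hat\eta}^2 = -1$ on a unit covector and $(1/i)^2 = -1$, the operator $\hat\sigma$ squares to the identity; in particular $\hat\sigma$ is injective, and $C_{\ker}$ is nothing but the graph $\{(\gamma,\hat\sigma\gamma)\}$ of this involution sitting inside the pair of Sobolev spaces on the right-hand side of \eqref{Eq:DecopToShow}.

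\textbf{Step 1: trivial intersection.} Any element of $C_{\ker}\cap C_W$ has the form $(\gamma,\hat\sigma\gamma)$ with $\gamma$ taking values in $W_q$ and $\hat\sigma\gamma$ in $W_q^\perp$. Because $\hat\sigma$ preserves the $W_q$-summand, $\hat\sigma\gamma$ also takes values in $W_q$, so $\hat\sigma\gamma = 0$ and hence $\gamma = 0$.

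\textbf{Step 2: exhaustion.} Given an arbitrary pair $(\alpha,\beta)$, decompose each component under $W \oplus W^\perp$ as $\alpha = \alpha_W + \alpha_{W^\perp}$ and $\beta = \beta_W + \beta_{W^\perp}$, and set
\[
\gamma := \hat\sigma\beta_W + \alpha_{W^\perp}, \qquad w := \alpha_W - \hat\sigma\beta_W, \qquad w^\perp := \beta_{W^\perp} - \hat\sigma\alpha_{W^\perp}.
\]
A direct check using $\hat\sigma^2 = \Id$ and the commutativity of $\hat\sigma$ with the $W$-decomposition gives $(\gamma,\hat\sigma\gamma) + (w,w^\perp) = (\alpha,\beta)$, with $w \in W_q$ and $w^\perp \in W_q^\perp$, so the summands lie in $C_{\ker}$ and $C_W$ respectively.

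\textbf{Expected obstacle.} The only non-algebraic point is the mapping property of $\hat\sigma$ on the Sobolev spaces appearing in the statement. Because the Fourier symbol $\tfrac{1}{i}\cl{\hat\eta}$ is a bounded matrix (an isometry on each fibre), $\hat\sigma$ is bounded on every $H^s(T_qY; \Lambda^*T^*_qY\hat\otimes \cH^{f/2}_{\bB(Z)}(Z))$, so all summands lie in the required space. The identification of $C_{\ker}$, originally defined via $A \in |\eta|^{1/2}L^2$, with the graph of $\hat\sigma$ at regularity $H^{-1/2}$ is just the usual Fourier-side characterization of the homogeneous $H^{-1/2}$ norm. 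In short, once one isolates the involutive nature of $\hat\sigma$ and its commutation with the bundle decomposition, the lemma reduces to a routine algebraic identity, and I do not anticipate any serious obstacle.
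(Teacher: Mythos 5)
Your proposal is correct and follows essentially the same route as the paper: the key observation is that $\hat\sigma=\cF_y^{-1}\circ\tfrac1i\cl{\hat\eta}\circ\cF_y$ is an involution that commutes with the $W_q\oplus W_q^\perp$ splitting, giving trivial intersection, and your explicit exhaustion formula $(\gamma,\hat\sigma\gamma)+(w,w^\perp)$ with $\gamma=\hat\sigma\beta_W+\alpha_{W^\perp}$ is (after relabeling) exactly the paper's decomposition $(\omega+\gamma_{W^\perp},\kappa_W+\eta)+(\gamma_W-\omega,\kappa_{W^\perp}-\eta)$. The only point to keep an eye on is the one you already flag: the membership $A\in|\eta|^{1/2}L^2$ is the homogeneous $\dot H^{-1/2}$ condition, and one must be consistent about which $H^{-1/2}$ convention is in force when identifying $C_{\ker}$ with a graph over that space, but the paper itself treats this identically.
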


\begin{proof}
Note that 
\begin{equation*}
	\alpha 
	\in H^{-1/2}( T_qY; \Lambda^*T^*_qY \hat\otimes W_q) 
	\implies
	\cF_y^{-1}( -i \cl{\hat\eta} \cF_y(\alpha))
	\in H^{-1/2}( T_qY; \Lambda^*T^*_qY \hat\otimes W_q)
\end{equation*}
and so $C_{\ker} \cap C_W = \{0 \}.$ Thus we only need to show that $C_{\ker} \oplus C_W$ contains the left hand side of \eqref{Eq:DecopToShow}.

Decompose any $\gamma, \kappa \in H^{-1/2}( T_qY; \Lambda^*T^*_qY \hat\otimes \cH^{f/2}_{\bB(Z)}(Z))$ as 
\begin{equation*}
	\gamma = \gamma_W + \gamma_{W^\perp}, \quad
	\kappa = \kappa_W + \kappa_{W^\perp}
\end{equation*}
according to the splitting $C_W$. Let
\begin{equation*}
\begin{gathered}
	( \cF_y^{-1}( -i \cl{\hat\eta} \cF_y(\kappa_W)), \kappa_W) := (\omega, \kappa_W) \in C_{\ker} \\
	(\gamma_{W^{\perp}}, \cF_y^{-1}( -i\cl{\hat\eta} \cF_y(\gamma_{W^{\perp}}) ) ) := (\gamma_{W^{\perp}}, \eta) \in C_{\ker}
\end{gathered}
\end{equation*}
and notice that $(\omega, \eta) \in C_W.$ We can now decompose an arbitrary pair $(\gamma, \kappa)$ as
\begin{equation*}
	(\gamma, \kappa) = (\omega+\gamma_{W^\perp}, \kappa_W+\eta) +  (\gamma_W - \omega, \kappa_{W^\perp}-\eta) \in C_{\ker} \oplus C_W.
\end{equation*}
\end{proof}

Define
\begin{equation}\label{eq:DefNeth}
\begin{gathered}
	N_q(\eth_{\dR}) = \tfrac1sN_q(x\eth_{\dR}), \\
	\cD_{\max, N(B)}(N(\eth_{\dR})) 
	= \{ u \in \cD_{\max, N(B)}(N_q(x\eth_{\dR})) : N_q(\eth_{\dR})u \in L^2 \},
\end{gathered}
\end{equation}
then by Lemma \ref{lem:ExpansionExistence}, elements in $N_q(\eth_{\dR})$ have a partial asymptotic expansion at $s=0.$
The same indicial root computation of the de Rham operator for a suitably scaled $\iie$ metric implies that this partial 
asymptotic expansion has only one term, and this has exponent $-f/2$, and coefficient in \eqref{Eq:DecopToShow}. 
Denote the corresponding Cauchy data map by $\sC_q,$
\begin{equation}\label{eq:CauchyNq}
	\cD_{\max,N(\bB)}(N_q(\eth_{\dR})) \ni u 
	\xrightarrow{\sC_q} (\alpha_0, \beta_0) \in \lrpar{ H^{-1/2}( T_qY; \Lambda^*T^*_qY \hat\otimes \cH^{f/2}_{\bB(Z)}(Z)) }^2.
\end{equation}

From Lemma \ref{lem:decomp}, there is a (not necessarily orthogonal) projection into the null space
\begin{equation*}
	\Phi_{q}: 
	\cD_{\max, N(\bB)}(N_q(\eth_{\dR})) \lra
	\ker
	(N_q(\eth_{\dR}), \cD_{\max, N(\bB)}(N_q(\eth_{\dR})) ).
\end{equation*}
Indeed, if
\begin{equation}\label{eq:DefPiW}
	\Pi_W: C_{\ker} \oplus C_W \lra C_W
\end{equation}
is the natural projection, and 
\begin{equation*}
\begin{gathered}
	\sP: 
	H^{-1/2}( T_qY; \Lambda^*T^*\bbR \hat\otimes \Lambda^*T^*_qY \hat\otimes \cH^{f/2}_{\bB(Z)}(Z))
	\lra
	s^{1/2-}L^2(\bbR^+_s \times T_qY; \Lambda^*T^*\bbR \hat\otimes \Lambda^*T^*_qY \hat\otimes \cH^{f/2}_{\bB(Z)}(Z))\\
	\sP = s^{-f/2}\cF_y^{-1} \circ e^{-s|\eta|} \circ \cF_y
\end{gathered}
\end{equation*}
then we can set
\begin{equation*}
	\Phi_{q} = \sP (\Id - \Pi_W) \sC_q, 
\end{equation*}
and it follows from \eqref{null-normal} that $\Phi_{q}$ maps into $\ker(N_q(\eth_{\dR}), \cD_{\max, N(\bB)}(N_q(\eth_{\dR})) ).$

\begin{lemma}\label{lem:PhiEps}
The operator $\Phi_{q}$ is a twisted $0$-pseudodifferential operator
\begin{equation*}
	\Phi_{q} \in \Psi_0^{-\infty, \cJ}(\bbR^+\times \bbR^h) \otimes \sK(L^2(Z_q;\Lambda^*))
\end{equation*}
with index set 
\begin{equation*}
\cJ=(J_{10}, J_{01}, J_{11}), \qquad 	J_{10} = E_{10}, \quad J_{01} = H_{01}, \quad J_{11} = \bbN_0,
\end{equation*}
using the notation \eqref{eq:GralInvIndSets} (with $\delta \in (0,1/2)$). 
\end{lemma}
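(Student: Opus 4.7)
The plan is to write $\Phi_{\eps,q} = \sP_{\eps}(\Id - \Pi_W)\sC_q$ as a composition of three factors, to identify each factor with an element (or a boundary analogue) of the twisted $0$-calculus developed in \S\ref{sec:InvNormal}, and then to invoke the composition rules for that calculus.

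First, I will analyze $\sC_q$. Because the metric is suitably scaled (Lemma \ref{lem:SuitableScaling}), the only indicial root of $x\eth_{\dR}$ in the critical strip is $-f/2$, and by Corollary \ref{cor:DeRhamExp} together with the proof of Lemma \ref{lem:ExpansionExistence}, the Cauchy data map is realized as the residue of $I_q(P;\zeta)^{-1}$ at this root, postcomposed with the projection onto $\cH^{f/2}_{\bB(Z)}(Z)$. The parametrix construction of \S\ref{sec:InvNormal} identifies this residue with the leading coefficient at the left face of the Schwartz kernel of $G_{\Theta_q(P)}$, from which I can read off that $\sC_q$ admits a polyhomogeneous lift to the zero double space whose right-face behavior is captured by $H_{01}$ (inherited from $G_{N_q(P)}$) and whose left-face support is concentrated at the leading power $s^{-f/2}$.

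Next I will analyze the extension operator $\sP_{\eps} = s^{-f/2}\cF_y^{-1}\circ e^{-s|\eta|}\circ \cF_y$. Its explicit Schwartz kernel $c_h s^{1-f/2}/(s^2+|y-y'|^2)^{(h+1)/2}$ acts componentwise on the two-dimensional bundle associated to the splitting $v = \alpha + dx\wedge\beta$ and lifts to a polyhomogeneous distribution on $(\bbR^+\times\bbR^h)^2_0$ that is smooth at the front face (index $\bbN_0$), vanishes to infinite order at the right face (as a boundary-to-interior Poisson-type operator), and at the left face has leading power $s^{-f/2}$ with subsequent powers generated recursively by matching against $N_q(\eth_{\dR})$; these subsequent powers fill out precisely the indicial roots comprising $E_{10}$. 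The operator $\Pi_W$ from \eqref{eq:DefPiW} is a classical zeroth-order Fourier multiplier on $\bbR^h$ built from the Clifford multiplier $\cl{\hat\eta}$ and the fiberwise orthogonal projection onto $W$, hence preserves index sets under composition.

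Finally I will apply the composition theorem for the $0$-calculus with $\sK(L^2(Z_q;\Lambda^*))$-values. The composite $\sP_{\eps}(\Id-\Pi_W)\sC_q$ inherits left-face index $E_{10}$ from $\sP_\eps$, right-face index $H_{01}$ from $\sC_q$, and front-face index $\bbN_0$ from the smoothness of all three factors across that face, yielding exactly the stated index family $\cJ$. By \eqref{null-normal} the image lies automatically in $\ker N_q(\eth_{\dR})$, consistent with $\Phi_{\eps,q}$ being a projection. The hard part will be to match the left-face index set precisely to $E_{10}$, rather than to a proper sub- or super-set; the key input is that the polyhomogeneous solutions of $N_q(\eth_{\dR})v=0$ realize exactly the exponents in $E_{10}$, as already recorded for the generalized inverse $G_{N_q(P)}$ in \eqref{eq:GralInvIndSets}.
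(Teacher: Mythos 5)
Your decomposition $\Phi_{\eps,q} = \sP_\eps(\Id - \Pi_W)\sC_q$ and your instinct to read off index sets from a lifted kernel on the $0$-double space are both on target, but the mechanism by which you arrive at $J_{10}=E_{10}$ has a genuine gap. You attribute the left-face index $E_{10}$ to $\sP_\eps$, asserting its subsequent powers are ``generated recursively by matching against $N_q(\eth_{\dR})$.'' This is not so: $\sP_\eps = s^{-f/2}\cF^{-1}_y\circ e^{-s|\eta|}\circ\cF_y$ is explicitly given, not a parametrix, and the left-face expansion of its lifted kernel is just the Taylor series of $e^{-s|\eta|}$, yielding the integer shifts $-f/2+\bbN_0$. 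In general this is a proper subset of $E_{10}$, since $E_{10}$ contains all elements of $\Spec_b(P;\bB,q)$ with real part above $\eps-\tfrac{f+1}{2}$, and these are generically not integer shifts of $-f/2$. The missing exponents enter for a reason your proposal omits: on $\ker N_q(P)$, the operator $\Phi_{\eps,q}$ acts as the identity, so one must split $\Phi_{\eps,q}= \Phi_{\eps,q}(\Id - \Pi_{\ker N_q(P)}) + \Pi_{\ker N_q(P)}$, and it is the projector $\Pi_{\ker N_q(P)}\in\Psi_0^{-\infty,\cE}$ from Proposition~\ref{prop:InvNormalOp} that carries the full $E_{10}$ at the left face.

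A secondary problem is that you treat $\sP_\eps$, $\Id-\Pi_W$, $\sC_q$ as operators in the twisted $0$-calculus and invoke its composition theorem, but $\sP_\eps$ (boundary-to-interior) and $\sC_q$ (interior-to-boundary) are not $0$-pseudodifferential operators on $\bbR^+\times\bbR^h$, so that theorem does not directly apply; a Boutet de Monvel-type extension of the calculus would be required and is not supplied. The paper's proof avoids this by never realizing $\sC_q$ as an operator in its own right: for $v$ orthogonal to $\ker N_q(P)$ one writes $v = G_qN_q(P)v$ with $N_q(P)v\in sL^2$, so that $\sC_q v$ is the action on $N_q(P)v$ of the coefficient $\cK_{(G_q,-f/2)}$ in the left-face expansion of $\cK_{G_q}$; this produces a single explicit kernel $\cK_{\wt\Phi_{\eps,q}}$ on the $0$-double space whose index sets $(\{-f/2\},H_{01},\bbN_0)$ are read off directly, after which one adds $\Pi_{\ker N_q(P)}$ as above to reach $(E_{10},H_{01},\bbN_0)$. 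You should incorporate both the kernel/non-kernel splitting and the substitution $v=G_qN_q(P)v$ to make your argument go through.
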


\begin{proof}
We compute the integral kernel of $\Phi_{q}.$ 

Fix $\eps \in (0,1/2),$ we will describe the integral kernel of $\cC_q$ using the generalized inverse 
\begin{equation*}
	G_q = G_{N_q(P)} \quad \text{ of } \quad ( N_q(x\eth_{\dR}), \cD_{\max, N(\bB)}^{\eps}(N_q(x\eth_{\dR})) ).
\end{equation*}
Note that by Corollary \ref{cor:NormalOpSurj}, $( N_q(x\eth_{\dR}), 
\cD_{\max, N(\bB)}^{\eps}(N_q(x\eth_{\dR})) )$ is surjective, so Proposition \ref{prop:InvNormalOp} can be applied 
to $P=x\eth_{\dR}.$

First, since $\Phi_q$ is the identity on $\ker N_q(P),$ we have $\Phi_{q}\Pi_{\ker N_q(P)}v = \Pi_{\ker N_q(P)}v,$ 
so it suffices to focus on those $v\in \cD_{\max, N(\bB)}(N_q(\eth_{\dR})$ orthogonal to $\ker N_q(P).$
Thus 
\begin{equation*}
	v = G_q N_q(P)v
\end{equation*}
and we are looking for the coefficient of $s^{-f/2}$ in the expansion of the left hand side.

Since $G_q \in \Psi_0^{-1,\cH}(\bbR^+\times \bbR^h) \otimes \sK(L^2(Z_q;\Lambda^*_q))$ and 
$N_q(P)v \in sL^2,$ this coefficient comes from the expansion of $\cK_{G_q}$ at the left face, $\cB_{10},$ of $(\bbR^+\times \bbR^h)^2_0 \times Z_q^2.$ Indeed, there is a term
\begin{equation*}
	s^{-f/2} \cK_{(G_q,-f/2)}(s',u,u',z,z')
\end{equation*}
in the expansion of $\cK_{G_q}$ at the left face, and $\sC v$ is given by the action of $\cK_{(G_q,-f/2)}(s',u,u',z,z')$ on $N_q(P)v.$
It is convenient to write this coefficient in terms of the generalized inverse of the Bessel-type operator $\Theta_q(P).$
Indeed, from \eqref{eq:NfromB} (cf. \cite[(5.20)]{Mazzeo:Edge}) we have
\begin{equation*}
	\cK_{(G_q,-f/2)}(s',u,u',z,z') 
	= \int e^{i(u-u')\cdot \eta} \cK_{(G_{\Theta_q(P)}, -f/2)}(s'|\eta|, z, z', \hat \eta)|\eta|^{-f/2} \; d\eta
\end{equation*}
where $\cK_{(G_{\Theta_q(P)}, -f/2)}$ is the coefficient in the corresponding coefficient in the asymptotic expansion of $\cK_{G_{\Theta_q(P)}}$ 
at the left face.
 
Thus for all $v$ orthogonal to $\ker N_q(P),$ we have
\begin{equation*}
	\cK_{\wt\Phi_{\eps,q}}(s, s', u, u', z, z') = s^{-f/2}
	\int e^{i(u-u')\cdot \eta} |\eta|^{-f/2} e^{-s|\eta|} (\Id - \Pi_W) \cK_{(G_{\Theta_q(P)}, -f/2)}(s'|\eta|, z, z', \hat \eta) \; d\eta
\end{equation*}
and $\Phi_{q}v$ is given by the action of $\cK_{\wt\Phi_{\eps,q}}$ on $N_q(P)v.$
So to see that $\Phi_{q}$ acts as a $\sK(L^2(Z_q;E))$-twisted $0$-pseudodifferential operator, we need only 
note that $\wt \Phi_{\eps,q}$ is such an operator. Directly from this expression we see that $J_{10} = -f/2.$ 
On the other hand, the restriction of the kernel of $G_{\Theta_q(P)}$ to the left face has index set $H_{11} = \bbN_0$ 
at the front face and $H_{01}$ at the right face, and so we can take these as the index sets for $\wt\Phi_{\eps,q}.$
Finally, for general $v,$ we have
\begin{equation*}
	\Phi_{q}v = \Phi_{q}(v-\Pi_{\ker N_q(P)}v) + \Pi_{\ker N_q(P)}v
\end{equation*}
and we know that $\Pi_{\ker N_q(P)}$ has index sets $E_{10}$ at the left face and $E_{01}\subseteq H_{01}$ at the right face.
\end{proof}

\section{Parametrix construction}
\subsection{Non-Witt strata} \label{sec:NonWittStrata}
Continuing as above, fix $\eps \in (0,1/2)$ and the boundary operators 
\begin{equation*}
	\bB = (B^1, \ldots, B^k)
\end{equation*}
at the first $k$ singular strata of $\hat X.$ Assume that $Y^{k+1}$ is {\bf non-Witt} for $(\eth_{\dR}, \cD_{\max, \bB}(\eth_{\dR})),$ 
in the sense that $ f = \dim Z$ even and 
\begin{equation}\label{eq:NonWittStratum}
	\cH^{f/2}_{\bB(Z)}(Z) = \ker (\eth_{\dR}^Z, \cD_{\bB(Z)}(\eth_{\dR}^Z))\rest{\text{degree} = f/2}\neq \{ 0 \}.
\end{equation}
As explained in \S\ref{sec:ArbitraryDepthBVP} the union of these null spaces form a flat vector bundle over $Y^{k+1}$.
We now choose a flat sub-bundle
\begin{equation*}
	W \lra Y^{k+1},
\end{equation*}
and let $B^{k+1} = ( \Id- \cP_W, \cP_W)$ be the Cheeger ideal boundary condition associated to $W.$
In this section we show that any $u \in \cD_{\max, (\bB, B^{k+1})}(\eth_{\dR})$ supported in a distinguished neighborhood of $q\in Y^{k+1}$ is in $x^{1/2-}L^2.$\\

Fix a distinguished neighborhood $\cU_q \cong [0,1)_x \times \bbB^h \times Z_q,$ of $q \in Y^{k+1}$, a trivialization 
\begin{equation*}
	\Lambda^*(\Iie T^*X) \cong \Lambda^*( [0,1)_x \times \bbB^h ) \hat\otimes \Lambda^*(\Iie T^*Z_q),
\end{equation*}
and flat trivializations
\begin{equation*}
	\cH^{\mid}_{\bB(Z)}(H^k/Y^k) \cong  \bbB^h \times \cH^{\mid}_{\bB(Z_q)}(Z_q), \quad
	W\rest{\bbB^h} \cong \bbB^h \times W_q.
\end{equation*}

We start by modifying the generalized inverse $G_q$ of the normal operator $N_q(x\eth_{\dR})$ with domain 
$\cD_{\max, N(\bB)}^{\eps}(N_q(x\eth_{\dR}))$ (this is surjective by Corollary \ref{cor:NormalOpSurj}) using the 
projector $\Phi_{q}$ from Lemma \ref{lem:PhiEps}. Recall that
\begin{equation*}
	N_q(\eth_{\dR}) = s^{-1}N_q(x\eth_{\dR}).
\end{equation*}
In \eqref{eq:DefNeth} we defined a domain for $N_q(\eth_{\dR})$ using the first $k$ boundary conditions $\bB$.
We refine this to
\begin{equation*}
	\cD_{N(\bB, B^{k+1})}(N_q(\eth_{\dR}))
	= \{ u \in \cD_{\max, N(\bB)}(N_q(\eth_{\dR})) : \sC u = \Pi_W \sC u \}
\end{equation*}
where $\Pi_W$ is as in \eqref{eq:DefPiW}. Now define
\begin{equation}\label{eq:DefGBq}
	T_{q} = G_{q}s
	- \Phi_{q} G_{q}s 
	= \bar T_q s; 
\end{equation}
the premultiplication by $s$ serves to mediate between $N_q(x\eth_{\dR})$ and $N_q(\eth_{\dR}).$

\begin{proposition}\label{prop:Other}
For any choice of Cheeger ideal boundary conditions $(\bB, B^{k+1}),$ the operator $T_q$ satisfies
\begin{equation*}
\begin{gathered}
	T_q \in \cB(s^{\eps}L^2), \quad \Image(T_q) \subseteq \cD_{\max, N(\bB, B^{k+1})}(N_q(\eth_{\dR})), \quad
	N_q(\eth_{\dR})T_q = \Id \\
	T_q N_q(\eth_{\dR}) = \Id - \Phi_{q} \Mon \cD_{\max,N(\bB)}^{\eps}(N_q(x\eth_{\dR})), \Mand \\
	T_q N_q(\eth_{\dR}) = \Id  \Mon \cD_{\max, N(\bB,B^{k+1})}(N_q(\eth_{\dR})), 
\end{gathered}
\end{equation*}
hence
$\lrpar{ N_q(\eth_{\dR}), \cD_{N(\bB, B^{k+1})}(N_q(\eth_{\dR})) }$
is invertible with inverse $T_q.$

Moreover, the composition 
$\eth_{\dR}^Z \circ \bar T_q$ is a bounded operator on 
$s^{\eps}L^2(T_qY^+ \times Z; \pi^*_q\Lambda^*(\Iie T^*X)).$
\end{proposition}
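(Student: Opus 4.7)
The argument decomposes into the algebraic identities for $T_q$, the containment of $\Image(T_q)$ in the constrained domain, and two boundedness statements on $s^\eps L^2$. The single conceptual fact driving everything is that $\Phi_{\eps,q}$ is a (not necessarily orthogonal) projection onto $\ker N_q(\eth_{\dR}) = \ker N_q(x\eth_{\dR})$ along $C_W$. This follows from the defining formula $\Phi_{\eps,q} = \sP_{\eps} (\Id - \Pi_W) \sC_q$ combined with the explicit description \eqref{null-normal} of the kernel: $\sP_{\eps}$ restricted to $C_{\ker}$ produces precisely the kernel element with the prescribed Cauchy datum, so $\sC \circ \sP_{\eps}|_{C_{\ker}} = \Id$, which in turn yields
\begin{equation*}
	\sC \circ \Phi_{\eps,q} = (\Id - \Pi_W) \circ \sC, \quad
	\Phi_{\eps,q}|_{\ker N_q(x\eth_{\dR})} = \Id, \quad
	N_q(\eth_{\dR}) \Phi_{\eps,q} = 0.
\end{equation*}

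Granted these three facts, all the identities are formal. From $N_q(x\eth_{\dR}) G_q = \Id$ (Proposition~\ref{prop:InvNormalOp} together with the surjectivity in Corollary~\ref{cor:NormalOpSurj}) and $N_q(\eth_{\dR}) \Phi_{\eps,q} = 0$ one obtains $N_q(\eth_{\dR}) T_q = \Id$. On $\cD_{\max, N(\bB)}^\eps(N_q(\eth_{\dR}))$, the relation $s N_q(\eth_{\dR}) u = N_q(x\eth_{\dR}) u$ combined with $G_q N_q(x\eth_{\dR}) = \Id - \Pi_{\ker N_q(x\eth_{\dR})}$ and $\Phi_{\eps,q} \Pi_{\ker} = \Pi_{\ker}$ gives
\begin{equation*}
	T_q N_q(\eth_{\dR}) u = (\Id - \Phi_{\eps,q})(u - \Pi_{\ker} u) = u - \Phi_{\eps,q} u.
\end{equation*}
The Cauchy data computation
\begin{equation*}
	\sC T_q u = \sC G_q s u - (\Id - \Pi_W) \sC G_q s u = \Pi_W \sC G_q s u \in C_W
\end{equation*}
shows that $\Image(T_q)$ lies in the constrained domain, and on that constrained domain $(\Id - \Pi_W) \sC u = 0$, whence $\Phi_{\eps,q} u = 0$ and so $T_q N_q(\eth_{\dR}) u = u$. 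Together these establish invertibility of $(N_q(\eth_{\dR}), \cD^\eps_{N(\bB, B^{k+1})}(N_q(\eth_{\dR})))$ with inverse $T_q$.

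The remaining analytic content consists of the two boundedness statements. Boundedness of $T_q$ on $s^\eps L^2$ reduces to applying the mapping property \eqref{eq:MappingProperty} to $G_q \in \Psi_0^{-1,\cH}$ and $\Phi_{\eps,q} \in \Psi_0^{-\infty, \cJ}$ (Lemma~\ref{lem:PhiEps}), and verifying that the chosen weight $\eps \in (0,1/2)$ lies in the admissible interval determined by the index sets \eqref{eq:GralInvIndSets}. For the final statement, the crucial observation is that the image of $\Phi_{\eps,q}$ consists of forms $s^{-f/2}(\alpha + dx \wedge \beta)$ with $\alpha, \beta$ fiberwise harmonic on $Z$, hence annihilated by the fiber operator $\eth_{\dR}^Z$; so $\eth_{\dR}^Z \Phi_{\eps,q} = 0$ and $\eth_{\dR}^Z \bar T_q = \eth_{\dR}^Z G_q$. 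To bound $\eth_{\dR}^Z G_q$ I would use the decomposition \eqref{eq:NormalOperator} to rewrite $\eth_{\dR}^Z$ as $N_q(x\eth_{\dR})$ minus the edge terms $s\pa_s$, $s\eth_{\dR}^{\bbR^h}$ and the bounded grading operator $\bN$; composing with $G_q \in \Psi_0^{-1}$ yields $\Id = N_q(x\eth_{\dR}) G_q$ plus $0$-pseudodifferential operators of order $0$, all of which are bounded on $s^\eps L^2$. The main obstacle throughout is the careful bookkeeping of index sets to ensure that the admissible-weight intervals for all of these simultaneous compositions contain the single chosen $\eps$.
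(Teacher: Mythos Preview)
Your proposal is correct and follows essentially the same approach as the paper's proof: both reduce the algebraic identities to the three properties of $\Phi_{\eps,q}$ you list, use $G_q s$ as the generalized inverse of $N_q(\eth_{\dR})$ on the larger domain, verify the Cauchy-data constraint via $\sC T_q = \Pi_W \sC G_q s$, and handle $\eth_{\dR}^Z \bar T_q$ by writing $\eth_{\dR}^Z = \pm N_q(x\eth_{\dR}) - (\text{edge terms})$ and using $N_q(x\eth_{\dR})G_q = \Id$ together with $\eth_{\dR}^Z \Phi_{\eps,q} = 0$. The only minor point you leave implicit is that $\Image(G_q s) \subseteq \cD_{\max,N(\bB)}^\eps(N_q(\eth_{\dR}))$ (needed so that $\sC$ is defined on $\Image T_q$), which the paper checks via $N_q(x\eth_{\dR})(G_q s u) = s u \in s^{1+\eps}L^2$.
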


\begin{proof}
The operator $G_q$ satisfies
\begin{equation*}
\begin{gathered}
	G_q \in 
	\cB(s^{\eps}L^2), \quad \Image(G_q) \subseteq \cD_{\max, N(\bB)}^{\eps}(N_q(x\eth_{\dR})),\\
	\Id - G_qN_q(x\eth_{\dR}) = \Pi_{\ker N_q(x\eth_{\dR})}, \quad \Id - N_q(x\eth_{\dR})G_q = 0
\end{gathered}
\end{equation*}
where $\Pi_{\ker N_q(x\eth_{\dR})}$ is the orthogonal projection onto $\ker \lrpar{N_q(x\eth_{\dR}), \cD_{\max, N(\bB)}^{\eps}(N_q(x\eth_{\dR})))}.$
Now, conjugation by $s$ shows that
\begin{equation*}
	\Id = s^{-1}(\Id) s = s^{-1}(N_q(x\eth_{\dR})G_q)s = N_q(\eth_{\dR})G_qs \quad
	\Mon s^{\eps}L^2,
\end{equation*}
so that $\Image(G_qs) \subseteq \cD_{\max, N(\bB)}(N_q(\eth_{\dR})).$
Since $N_q(\eth_{\dR})\sP_{\eps} =0,$ we have
\begin{equation*}
	N_q(\eth_{\dR})T_q = N_q(\eth_{\dR})G_q s = \Id \quad \Mon s^{\eps}L^2,
	\Mhence \Image(T_q) \subseteq \cD_{\max, N(\bB)}^{\eps}(N_q(\eth_{\dR})).
\end{equation*}
This shows that $\sC \circ T_q$ makes sense, and then directly from the definition of $T_q$ 
\begin{equation*}
	\sC T_q = \Pi_W \sC G_qs \Longrightarrow \Image(T_q) \subseteq \cD_{\max, N(\bB, B^{k+1})}^{\eps}(N_q(\eth_{\dR})).
\end{equation*}

This null space coincides with that of $(N_q(\eth_{\dR}),\cD_{\max, N(\bB)}(N_q(\eth_{\dR}))),$ so 
\begin{equation*}
	\Id - G_qs N_q(\eth_{\dR}) = \Pi_{\ker N_q(\eth_{\dR})} \quad 
	\Mon \quad \cD_{\max, N(\bB)}(N_q(\eth_{\dR})) \cap s^{\eps}L^2.
\end{equation*}
Since $\cD_{\max, N(\bB)}(N_q(\eth_{\dR})) \cap s^{\eps}L^2$ is dense in $\cD_{\max, N(\bB)}(N_q(\eth_{\dR}))$ in the graph norm of $N_q(\eth_{\dR})$ (Theorem \ref{thm:CoreDomain}), and $\Pi_{\ker N_q(\eth_{\dR})}$ extends to a bounded operator on $L^2,$
it follows that 
\begin{equation*}
	\Id - G_qs N_q(\eth_{\dR}) = \Pi_{\ker N_q(\eth_{\dR})} \quad 
	\Mon \quad \cD_{\max, N(\bB)}(N_q(\eth_{\dR})).
\end{equation*}
and so $G_qs$ is the generalized inverse of $(N_q(\eth_{\dR}), \cD_{\max, N(\bB)}(N_q(\eth_{\dR}))).$\\
Moreover, 
\begin{equation*}
	T_q N_q(\eth_{\dR}) = (\Id - \Phi_{q})( \Id - \Pi_{\ker N_q(\eth_{\dR})}) = \Id - \Phi_{q} \Mon \cD_{\max, N(\bB)}(N_q(\eth_{\dR})),
\end{equation*}
since $\Phi_{q}\Pi_{\ker N_q(\eth_{\dR})} = \Pi_{\ker N_q(\eth_{\dR})}.$ Directly from the definition of $\Pi_W$ we have
$(\Id - \Pi_{W})\sC = 0$ on $\cD_{N(\bB,B^{k+1})}(N_q(\eth_{\dR})).$ So indeed
$(N_q(\eth_{\dR}), \cD_{N(\bB,B^{k+1})}(\eth_{\dR}))$ is invertible with inverse $T_q.$\\

To prove that $\eth_{\dR}^Z \circ \bar T_q$ is a bounded operator, first consider $\eth_{\dR}^Z\circ G_q.$ Using that
\begin{equation*}
	\Id = N_q(x\eth_{\dR})G_q = 
	\lrspar{
	\begin{pmatrix}
	\Id & 0 \\ 0 & -\Id
	\end{pmatrix}
	\eth_{\dR}^Z + 
	\begin{pmatrix}
	s\eth_{\dR}^{T_qY} & -s\pa_s +\bN - f\\
	s\pa_s + \bN & -s\eth_{\dR}^{T_qY}
	\end{pmatrix}
	}G_q. 
\end{equation*}
Since $G_q \in \Psi^{-1, \cH}_0(\bbR^+ \times \bbR^h) \otimes \sK(L^2(Z_q;\Lambda^*_q))$, the composition of the 
second summand above with $G_q$ is bounded. It is clear from the definition of $\Phi_{q}$ that its image consists 
of sections of the vertical Hodge bundle and so $\eth_{\dR}^{Z_q} \circ \Phi_{q}=0,$ thus
\begin{equation}\label{eq:Claim3}
	\text{ $\eth_{\dR}^{Z_q} \circ \bar T_q$ is a bounded operator on $s^{\eps}L^2(T_qY^+ \times Z; \pi^*_q\Lambda^*(\Iie T^*X)).$ }
\end{equation}
Note that this means that both $d^Z \circ \bar T_q$ and $\delta^Z \circ \bar T_q$ are bounded operators.
\end{proof}

Let us point out some further properties of $T_q.$ We know from Proposition \ref{prop:InvNormalOp} that 
\begin{equation*}
	G_q \in \Psi^{-1,\cH}_0(\bbR^+ \times \bbR^h)\otimes \sK(L^2(Z_q;\Lambda^*_q)),
\end{equation*}
with $\cH$ given by \eqref{eq:GralInvIndSets}. From Lemma \ref{lem:PhiEps}, 
\begin{equation*}
	\Phi_{q} \in \Psi^{-1,\cJ}_0(\bbR^+ \times \bbR^h)\otimes \sK(L^2(Z_q;\Lambda^*_q)).
\end{equation*}
Thus altogether,
\begin{equation}\label{eq:TqZeroPsi}
	\bar T_q = (\Id - \Phi_{q})G_q \in  \Psi^{-1,\cL}_0(\bbR^+ \times \bbR^h)\otimes \sK(L^2(Z_q;\Lambda^*_q));
\end{equation}
using \cite[Theorem 3.15]{Mazzeo:Edge}, the composition formula for $0$-pseudodifferential operators, the 
index sets are given by
\begin{equation*}
\begin{gathered}
	L_{10} = E_{10} \bar\cup F_{10} \bar\cup E_{10}, \\
	L_{01} = E_{01} \bar\cup F_{01} \bar\cup E_{01} \bar\cup F_{01}, \\
	L_{11} =  \bbN_0 \bar\cup (E_{10} + E_{01} \bar\cup F_{01} + h+1).
\end{gathered}
\end{equation*}
(The operator $T_q = \bar T_q s$ lies in a similar space, but with $L_{01}$ replaced by $ 1+ L_{01},$ because of the 
factor $s$ on the right in the definition of $T_q.$) Note that since 
\begin{equation*}
	N_q(x\eth_{\dR})\bar T_q 
	= \Id 
\end{equation*}
the indicial operator of $x\eth_{\dR}$ must annihilate all of the terms in the expansion of $\cK_{\bar T_q}$ at the left face (see the discussion above \eqref{eq:ImprovedIndex}).\\

We can read off the mapping properties of $\bar T_q$ from \eqref{eq:TqZeroPsi}.
Indeed, in terms of $\eta^+$ and $\eta^-$ from \eqref{eq:DefEta}, we have
\begin{equation*}
	\Re L_{10} = \min(\eta^+, \eta^-) + \eps - \tfrac{f+1}2, \quad
	\Re L_{01} = \min(\eta^+, \eta^-) -(\eps - \tfrac{f+1}2), \quad
	\Re L_{11} = 0
\end{equation*}
and hence $\bar T_q$ defines a bounded map $x^{a'} L^2 \lra x^{b'} L^2$ provided that
\begin{equation*}
	\eps - \min(\eta^+,\eta^-) < a' = b' < \min(\eta^+,\eta^-) + \eps.
\end{equation*}
Using Lemma \ref{lem:SuitableScaling} we can be more explicit:
\begin{equation}\label{eq:MappingL}
\begin{gathered}
	\eps>\tfrac14 \implies \min(\eta^+, \eta^-) = \tfrac12-\eps \implies 2\eps-\tfrac12<a'= b'<\tfrac12 \\
	\eps<\tfrac14 \implies \min(\eta^+, \eta^-)>\eps, \text{ so we can take } 0 \leq a' = b' <2\eps.
\end{gathered}
\end{equation}
The corresponding values for $T_q$ are
\begin{equation*}
	2\eps-\tfrac32<a'= b'<\tfrac12 \Mfor \eps >\tfrac14, \quad
	\Mand  -1 \leq a' = b' <2\eps \Mfor \eps <\tfrac14.
\end{equation*}
$ $

We now restrict to the distinguished neighborhood $\cU_q$. Following the procedure in \S\ref{sec:InvNormal},
we amalgamate the family of operators $\bar T_q$ to an operator $\bar T_B$ on $\hat X$ supported in this neighborhood.
Thus $\bar T_B$ is defined through its integral kernel
\begin{equation}\label{eq:DefKerGB}
	\cK_{\bar T_B}(s, u, x', y', z, z') = \chi \cK_{\bar T_{(y'+x'u)}}(s,u, z, z'),
\end{equation}
where $\chi$ is a smooth cut-off function equal to one in a neighborhood $\wt\cU_q^2.$
We define $T_B$ similarly, 
\begin{equation*}
	\cK_{T_B}(s, u, x', y', z, z') = \cK_{\bar T_B}(s, u, x', y', z, z')(x's).
\end{equation*}
As in \cite[(3.5)]{Mazzeo:Edge}, $T_B$ acts on a section $f$ by
\begin{equation*}
	T_Bf(x,y,z) = \int \cK_{T_B}(\tfrac{x}{x'},\tfrac{y-y'}{x'}, x', y' , z, z') f(x', y', z') \; d\mu(x',y',z')
\end{equation*}
for the appropriate measure $\mu.$ Notice two features here: first, since $x=x's,$
\begin{equation*}
	N_q(T_B\eth_{\dR}) = N_q(\bar T_B x \eth_{\dR})  = \bar T_q N_q(x\eth_{\dR}) = T_qN_q(\eth_{\dR}),
\end{equation*}
and second, 
\begin{equation}\label{eq:KBGoodCoord}
	\cK_{T_B}(\tfrac{x}{x'},\tfrac{y-y'}{x'}, x', y' , z, z') = \chi \cK_{T_y}(\tfrac x{x'}, \tfrac{y-y'}{x'}, z, z')x.
\end{equation}
From \eqref{eq:TqZeroPsi} we see that
\begin{equation*}
	\bar T_B \in \Psi^{-1,\cL}_0([0,1)\times \bbB^h) \otimes \sK(L^2(Z_q;\Lambda^*_q)),
\end{equation*}
so $\bar T_B$ satisfies properties analogous to those of $\bar T_q,$ while $T_B$ satisfy similar mapping properties with
\begin{equation*}
	2\eps-\tfrac32<a'\leq b'<\tfrac12 \Mfor \eps >\tfrac14, \quad
	\Mand  -1 \leq a' \leq b' <2\eps \Mfor \eps <\tfrac14
\end{equation*}
as long as $b'-a'\leq 1,$ since $(x's)$ vanishes to first order at both the front face and the left face. (Note that though the index 
sets vary with $q$, the bounds on their real parts, hence the mapping properties, discussed above are uniform in $q.$)

Let us show that
\begin{equation}\label{Eq:requirement}
	\text{ $\eth_{\dR}^Z \circ \bar T_B$ is a bounded operator on $x^{\eps}L^2(\hat X).$}
\end{equation}
From the way $\bar T_B$ acts, we see that
\begin{equation*}
	\eth_{\dR}^Z(x,y)( \bar T_Bf)(x,y,z) 
	= 
	\int  \eth_{\dR}^Z(x, y)\cK_{\bar T_B}(\tfrac{x}{x'},\tfrac{y-y'}{x'}, x', y' , z, z') f(x',y',z') d\mu(x',y',z')
\end{equation*}
where $\eth_{\dR}^Z(x,y)$ is the vertical de Rham operator for the metric at $(x,y).$  Since the metric $g$ is rigid
(in fact, this argument works for a slightly larger class of metrics), we have $\eth_{\dR}^Z(x,y) = \eth_{\dR}^Z(0,y)
= \eth_{\dR}^{Z_y}$. By \eqref{eq:Claim3}, this composes with $\bar T_y$ to a bounded operator on $L^2.$

We have shown that $\bar T_B$ satisfies the assumption preceeding \eqref{eq:ImprovedIndex}, so if 
$\wt H=(\wt H_{10}, \wt H_{01}, \wt H_{11})$ is the collection of index sets for  $\Id - x\eth_{\dR} \bar T_B$, then 
\begin{equation*}
\begin{gathered}
	\Re \wt H_{10} = 1+ \Re L_{10} = 1+ \min(\eta^+, \eta^-) + \eps - \tfrac{f+1}2, \\
	\Re \wt H_{01} = \Re L_{01} = \min(\eta^+, \eta^-) -(\eps - \tfrac{f+1}2), \\ 
	\Re \wt H_{11} \geq 1.
\end{gathered}
\end{equation*}
These, in turn, imply that $x\eth_{\dR}\bar T_q: x^{a'} L^2 \lra x^{b'} L^2$ is bounded so long as
\begin{equation*}
	\eps - \min(\eta^+,\eta^-) < a' \leq b' < 1+ \min(\eta^+,\eta^-) + \eps, \quad \Mand b'-a'\leq 1.
\end{equation*}
In particular, we have shown that
\begin{equation*}
	x\eth_{\dR} \bar T_B: x^{\eps}L^2(\hat X; \Lambda^*(\Iie T^*X)) \lra x^{1+\eps}L^2(\hat X; \Lambda^*(\Iie T^*X))
\end{equation*}
is bounded, and hence $\eth_{\dR} \bar T_B$ is a bounded operator on $x^{\eps}L^2.$
Essentially the same reasoning shows that $\eth_{\dR} T_B$ is also a bounded operator on $x^{\eps}L^2.$\\
Define $Q = \Id - \wt\chi \bar T_B x\eth_{\dR}$, where $\wt\chi$ is a cut-off function on a neighborhood of $Y$ smaller 
than the one to which we extended $T_B$. From the mapping properties \eqref{eq:MappingL} of $T_B,$ we see that $Q$ defines bounded operators on both $x^{\eps}L^2$ and $L^2.$

\begin{theorem}\label{thm:WitlessDecay}
If $(\hat X,g)$ is a stratified space with singular strata $Y^1, \ldots, Y^{k+1}$, ordered with increasing depth and
rigid $\iie$ metric $g$. Suppose that 
\begin{equation*}
	(B^1, \ldots, B^{k+1})
\end{equation*}
are Cheeger ideal boundary conditions at the first $k+1$ strata, and $Y^{k+1}$ is non-Witt (so \eqref{eq:NonWittStratum} 
holds there). Then 
\begin{equation}\label{eq:DecayCon}
	\cD_{\max, (B^1, \ldots, B^{k+1})}(\eth_{\dR}) \subseteq \bigcap_{\eps \in (0,1/2)} x^{\eps}L^2(X; \Lambda^*(\Iie T^*X)).
\end{equation}
\end{theorem}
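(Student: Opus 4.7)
Fix $u \in \cD_{\max, (B^1, \ldots, B^{k+1})}^0(\eth_{\dR})$ and some $\eps \in (0, 1/2)$; the aim is to show $u \in x^\eps L^2(X; \Lambda^*(\Iie T^*X))$. By Assumption \ref{Ass:Localizable} and a partition of unity, I reduce to the case in which $\supp u$ lies in a distinguished neighborhood $\cU_q \cong [0,1)_x \times \bbB^h \times Z_q$ of a point $q \in Y^{k+1}$, with the trivializations of $\Lambda^*(\Iie T^*X)$ and of the flat bundle $\cH^{\mid}_{\bB(Z)}(H^{k+1}/Y^{k+1})|_{\bbB^h}$ fixed as in the construction of $\bar T_B$. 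Away from $Y^{k+1}$, the required decay follows from the inductive hypothesis applied to the lower depth strata.

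Since $\eth_{\dR} u \in L^2$, we have $x\eth_{\dR} u \in xL^2 \subset x^\eps L^2$. Assuming $\eps \in (1/4, 1/2)$ (which forces $\min(\eta^+, \eta^-) = \tfrac{1}{2} - \eps$ in \eqref{eq:DefEta}), the mapping properties \eqref{eq:MappingL} show that the amalgamated parametrix $\bar T_B$ is bounded on $x^\eps L^2$ with image in $\cD_{\max, (\bB, B^{k+1})}^\eps(\eth_{\dR})$. Choosing a cutoff $\wt\chi$ supported in $\cU_q$ and equal to $1$ on $\supp u$, set
\[
v := \wt\chi\, \bar T_B(x\eth_{\dR} u) \in \cD_{\max, (\bB, B^{k+1})}^\eps(\eth_{\dR}) \subset x^\eps L^2, \qquad Q := \Id - \wt\chi\, \bar T_B\, x\eth_{\dR},
\]
so that formally $u = v + Qu$. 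The crucial observation is that on the level of normal operators at $q$, $N_q(\wt\chi\, \bar T_B \cdot x\eth_{\dR}) = T_q\, N_q(\eth_{\dR})$, which by Proposition \ref{prop:Other} equals the identity on $\cD_{\max, N(\bB, B^{k+1})}^\eps(N_q(\eth_{\dR}))$; thus $N_q(Q) = 0$, and the Schwartz kernel of $Q$ picks up an additional order of vanishing at the front face of the $0$-double space, as recorded in \eqref{eq:ImprovedIndex}.

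I then apply Lemma \ref{lem:IndOpConclusion} with $P = x\eth_{\dR}$, $G = \wt\chi\, \bar T_B$, $\tau = \eps$, and $\cE(\eth_{\dR}) = \cD^{\reg}_{\max, (\bB, B^{k+1})}(\eth_{\dR})$. Two boundedness properties must be checked: (a) on $\cE_\eps$, the operator $Q = \Id - GP$ takes values in $x^\eps L^2$, which is immediate from the mapping property of $\bar T_B$ above; (b) $Q$ extends to a bounded map $\cE^{(\eps)} \to L^2$, which follows from the improvement \eqref{eq:ImprovedIndex} of the left-face index set combined with the integration by parts identity of Lemma \ref{lem:IntByParts} applied to the transpose $(x\eth_{\dR})^t$, the latter being needed to compare the natural action of $GP$ on the two dense subspaces. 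Lemma \ref{lem:IndOpConclusion} then guarantees the extension is unique. Approximating $u$ in the graph norm by $u_n \in \cD^{\reg}_{\max, (\bB, B^{k+1})}(\eth_{\dR})$ via Lemma \ref{lem:RegularDomain}, the identity $u_n = v_n + Q u_n$ passes to the limit, giving $u = v + Qu$ with both summands in $x^\eps L^2$. Since $\eps \in (0,1/2)$ was arbitrary (for $\eps \le 1/4$ one can run the same argument with any $\eps' \in (1/4, 1/2)$ and use $x^{\eps'}L^2 \subset x^\eps L^2$), the theorem follows.

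\emph{Main obstacle.} The principal difficulty is verifying step (b), i.e.\ producing the bounded $L^2$-extension of $Q$ on $\cE^{(\eps)}$. The delicate point is that elements of $\cD^0_{\max,(\bB,B^{k+1})}$ carry a genuine distributional $x^{-f/2}$ leading term (Corollary \ref{cor:DeRhamExp}), so they do not obviously belong to any $\cE^{(\eps)}$ with $\eps > -f/2$; this forces one to extract the decay not from the expansion but from the action of $Q$ itself, via the normal-operator cancellation $N_q(Q) = 0$ and the attendant gain in the index set $\wt H_{10}$ at the left face. The choice of Cheeger ideal boundary condition is what makes this cancellation work: by Lemma \ref{lem:decomp}, the boundary condition $B^{k+1}$ selects precisely the complement $C_W$ of the infinite-dimensional null space \eqref{null-normal}, so the model parametrix $T_q$ is an exact inverse (no projection term), and $Q$ acquires the extra order of vanishing needed to improve the weight.
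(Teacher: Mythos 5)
Your proposal follows the right scaffolding — localize, use $\bar T_B$ and Lemma \ref{lem:IndOpConclusion}, observe the normal operator cancellation coming from Proposition \ref{prop:Other} — but two of your key steps are wrong, and both are essential to the paper's actual proof.

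First, $N_q(Q) \ne 0$. Proposition \ref{prop:Other} says $T_q N_q(\eth_{\dR}) = \Id - \Phi_{\eps,q}$ on $\cD^{\eps}_{\max,N(\bB)}(N_q(\eth_{\dR}))$, and the restriction to the \emph{smaller} subspace $\cD^\eps_{\max,N(\bB,B^{k+1})}$ equals $\Id$. As an operator, $N_q(Q) = \Phi_{\eps,q}$ is the projection onto the infinite-dimensional kernel \eqref{null-normal}; it does not vanish, it only annihilates elements that already satisfy the $B^{k+1}$ boundary condition. Consequently, the index-set improvement \eqref{eq:ImprovedIndex} does not apply to $Q$, and the extra order of vanishing at the front face that you claim for $Q$ is false. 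The paper's proof explicitly splits $Q = (Q - \Phi_{\eps,B}) + \Phi_{\eps,B}$: the first summand genuinely lies in $\Psi^{0,\cL + \{1\}_{11}}_0 \otimes \sB(\ldots)$ (vanishing to first order at the front face) and maps $\cD^0_{\max,(\bB,B^{k+1})} \to x^{\eps}L^2$, while $\Phi_{\eps,B}u = \cO(x)$ is a \emph{separate} argument that uses the Cheeger boundary condition via $N_q(\Phi_{\eps,B}) = \sP_\eps(\Id - \Pi_{W_q})\sC_q$. Collapsing these two steps leaves a hole in your proof.

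Second, your treatment of the $\eps$-range is backwards and incorrect as stated. You claim to run the argument directly for $\eps \in (1/4, 1/2)$ and deduce smaller $\eps$ by inclusion $x^{\eps'}L^2 \subset x^\eps L^2$. But condition (b) of Lemma \ref{lem:IndOpConclusion} — boundedness of $Q$ from the unweighted domain into $L^2$, or more precisely boundedness of $Q - \Phi_{\eps,B}: L^2 \to x^\eps L^2$ — requires taking $a' = 0$ in the mapping properties \eqref{eq:MappingProperty}, and the condition on the right-face index set forces $\min(\eta^+,\eta^-) > \eps$, which from Lemma \ref{lem:SuitableScaling} holds only for $\eps < 1/4$. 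For $\eps > 1/4$ one has $\min(\eta^+,\eta^-) = \tfrac12-\eps < \eps$, so the needed boundedness fails. The paper establishes $\eps \in (0,1/4)$ first, and then bootstraps: once $u \in x^{\eps_0}L^2$ is known, the argument can be rerun with $a'$ as large as $\eps_0$, enlarging the admissible $\eps$-range; iterating exhausts $(0,1/2)$. This iterative step is not optional — it cannot be replaced by the inclusion you invoke, since the premise (direct validity for $\eps > 1/4$) is what fails.
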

\begin{proof}
We may assume that $u$ is supported in a distinguished neighborhood $\cU_q$ of $q \in Y^{k+1}.$
First suppose that $0 < \eps < 1/4.$ 

We have established that 
\begin{equation}\label{eq:xEps}
	\Id - \wt\chi\bar T_B(x\eth_{\dR}) = Q 
\end{equation}
holds on $\cD_{\max, (\bB, B^{k+1})}(\eth_{\dR}) \cap x^{\eps}L^2(X;E).$
By Corollary \ref{cor:IdentityTransfer}, $\cD_{\max, (\bB, B^{k+1})}(\eth_{\dR}) \cap x^{\eps}L^2(X;E)$ is dense in $\cD_{\max, (\bB, B^{k+1})}(\eth_{\dR})$ with respect to the graph norm of $\eth_{\dR}.$ Since $\Id,$ $\wt\chi\bar T_Bx,$ and $Q$ are bounded operators on $L^2$ it follows that \eqref{eq:xEps} continues to hold on $\cD_{\max, (\bB, B^{k+1})}(\eth_{\dR}).$

Thus to show that $u\in x^{\eps}L^2,$ since we already know that $\bar T_B(x\eth_{\dR}u)  \in x^{\eps}L^2,$ it suffices to show that $Qu$ has extra vanishing.
To discuss the normal operator of $Q,$ we recall that 
\begin{equation*}
	Q \in \Psi^{0, \cL}_0([0,1) \times \bbB^h) \otimes \sB( \cD_{\max}(\eth_{\dR}^Z), L^2(Z_q;\Lambda^*_q)).
\end{equation*}
The normal operator of $Q$ at the point $q \in \bbB^h$ is the operator 
\begin{equation*}
	\Phi_{q} \in \Psi_0^{-\infty, \cJ}(\bbR^+\times \bbR^h) \otimes \sK(L^2(Z_q;\Lambda^*_q))
\end{equation*}
from Lemma \ref{lem:PhiEps}.
Let $\Phi_{B}$ be the amalgamation of the family $q \mapsto \Phi_{q}$ as in \eqref{eq:Amalgamation},
so that  we have
\begin{equation*}
	Q - \Phi_{B} \in \Psi^{0, \cL+\{1\}_{11}}_0([0,1) \times \bbB^h) \otimes \sB( \cD_{\max}(\eth_{\dR}^Z), L^2(Z_q;\Lambda^*_q)), 
\quad \cL + \{ 1\}_{11} = (L_{10}, L_{01}, L_{11}+1).
\end{equation*}
This operator satisfies \eqref{eq:MappingL} with $b'-a'\leq 1,$ so in particular
\begin{equation*}
	Q-\Phi_{B} :
	\cD_{\max, (\bB, B^{k+1})}^{0}(\eth_{\dR}) \lra x^{\eps}L^2([0,1) \times \bbB^h; L^2(Z_q;E))=x^{\eps}L^2.
\end{equation*}
Finally, since $N_q(\Phi_{B}) = \sP_{\eps}(\Id - \Pi_{W_q})\sC_q,$ if $\sC u$ is a section of $W$ over $Y^{k+1}$ then 
 $\Phi_{B} u =\cO(x).$

This establishes the extra decay when $\eps \in (0,1/4).$  It was necessary to restrict to these $\eps$ because 
we needed to take $a'=0$ in \eqref{eq:MappingL}.   But we can now repeat the argument starting with any $\eps$,
which allows us to take $a'<1/4$ in \eqref{eq:MappingL}. By inspection, we can thus take any $\eps \in (0,3/8).$ 
Iterating this $n$ times, we establish decay for any $\eps \in (0, 1/2-(1/2)^{n+1})$ and hence, since $n$ is
arbitrary, for any $\eps < 1/2$. This proves  \eqref{eq:DecayCon}.
\end{proof}

\subsection{Witt strata}
Now return to the situation at the beginning of this section. Namely assume that
\begin{equation*}
	\bB = (B^1, \ldots, B^k)
\end{equation*}
are local ideal boundary conditions for the de Rham operator of a rigid $\iie$ metric on $\hat X.$ 
We say that $Y^{k+1}$ is a {\bf Witt stratum} for $(\eth_{\dR}, \cD_{\max, \bB}(\eth_{\dR}))$ either if $f$ is odd, or else if $f$ is even and
\begin{equation}\label{eq:WittStratum}
	\cH^{f/2}_{\bB(Z)}(Z) = \ker (\eth_{\dR}^Z, \cD_{\bB(Z)}(\eth_{\dR}^Z))\rest{\text{degree} = f/2} = \{ 0 \}.
\end{equation}
The argument in \cite[\S5.4]{ALMP} now shows that
\begin{equation*}
	(N_q(P), \cD_{\max, N(\bB)}^{\eps}(N_q(P)) )
\end{equation*}
is invertible for all $\eps \in (0,1).$  We prove here that any $u \in \cD_{\max, \bB}(\eth_{\dR})$ (supported in the 
distinguished neighborhood $\cU_q$) must lie in $x^{1-}L^2.$ 

The construction in this case is simpler than the one above for a non-Witt stratum. Fix $\eps \in (0,1).$ Since 
$(N_q(P), \cD_{\max, N(\bB)}^{\eps}(N_q(P)) )$ is invertible, the construction in \S\ref{sec:InvNormal} produces an inverse
\begin{equation*}
\begin{gathered}
	G_q \in 
	\cB(s^{\eps}L^2), \quad \Image(G_q) \subseteq \cD_{\max, N(\bB)}(N_q(x\eth_{\dR})),\\
	G_qN_q(P) = \Id, \quad N_q(P)G_q = \Id.
\end{gathered}
\end{equation*}
Arguing as in Proposition \ref{prop:Other}, we see that $G_qs$ is an inverse for $N_q(\eth_{\dR}) = s^{-1}N_q(x\eth_{\dR})$ with domain $	\cD_{N(\bB)}(N_q(\eth_{\dR}))$
and that $\eth_{\dR}^Z \circ G_qs$ is a bounded operator on $s^{\eps}L^2.$

Continuing on, define the integral kernel 
\begin{equation*}
	\cK_B(s, u, x', y', z, z') = \cK_{G_{(y'+x'u)}}(s,u, z, z')x',
\end{equation*}
of $G_B$ by amalgamating the family of operators $G_q$. This lies in $\Psi^{-1,\cH}_0([0,1)\times \bbB^h) \otimes 
\sK(L^2(Z_q;\Lambda^*_q)))$; the index sets are such that $G_B$ is a bounded map on $x^{\eps}L^2(X;\Lambda^*(\Iie T^*X))$ with image in $\cD_{\max, \bB}(\eth_{\dR}) \cap x^{\eps}L^2,$ and $Q = \Id - G_B\eth_{\dR}$ extends from $x^{\eps}L^2$ to a bounded operator on $L^2.$ We can thus
appeal to the density of $\cD_{\max,\bB}(\eth_{\dR})\cap x^{\eps}L^2,$ and so obtain the analogue of Theorem \ref{thm:WitlessDecay}.

\begin{theorem}\label{thm:WittyDecay}
Let $(\hat X, g)$ and $\bB$ be as in Theorem \ref{thm:WitlessDecay}, but suppose that \eqref{eq:WittStratum} holds
at $Y^{k+1}$, so this is a Witt stratum. Then
\begin{equation*}
	\cD_{\max, (B^1, \ldots, B^{k})}(\eth_{\dR}) \subseteq \bigcap_{\eps \in (0,1)} x^{\eps}L^2(X; \Lambda^*(\Iie T^*X)).
\end{equation*}
\end{theorem}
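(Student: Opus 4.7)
The plan is to follow the template of the proof of Theorem \ref{thm:WitlessDecay}, exploiting two simplifications afforded by the Witt condition \eqref{eq:WittStratum}: the normal operator $(N_q(P),\cD_{\max,N(\bB)}^{\eps}(N_q(P)))$ is genuinely invertible on $s^{\eps}L^2$ for every $\eps\in(0,1)$, so no projection $\Phi_{\eps,q}$ and no choice of flat subbundle $W$ are required, and by Corollary \ref{cor:DeRhamExp} the expansion \eqref{Eq:Expansion} of an element of $\cD_{\max,\bB}^{0}(\eth_{\dR})$ has no leading term at exponent $-f/2$. The amalgamated parametrix $G_B$ built in the paragraphs preceding the theorem is therefore a genuine inverse at the model level, not merely modulo a projection.

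As a first step I would reduce, via a partition of unity on $\wt X$, to the case that $u\in\cD_{\max,\bB}^{0}(\eth_{\dR})$ is supported in a distinguished neighborhood $\cU_q$ of a point $q\in Y^{k+1}$, and fix a weight $\eps$ in a small admissible range $(0,\eps_0)$ determined by the mapping window \eqref{eq:MappingProperty} for $G_B$. With $\wt\chi$ a cutoff strictly inside the extension used to build $G_B$, set
\begin{equation*}
Q = \Id - \wt\chi\, G_B\,\eth_{\dR}.
\end{equation*}
Since $N_q(P)G_q=\Id$, the Schwartz kernel of $Q$ vanishes to first order at the front face of the $0$-double space and, after the indicial improvement \eqref{eq:ImprovedIndex}, has index sets whose real parts give simultaneously the two bounds $Q:\cD_{\max,\bB}^{\eps}\subseteq x^{\eps}L^2\to x^{\eps}L^2$ and $Q:\cD_{\max,\bB}^{0}\subseteq L^2\to L^2$. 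In the non-Witt case this step was delicate, being obstructed by the null vectors built from $\cH^{f/2}_{\bB(Z)}(Z)$ and forcing the introduction of $\Phi_{\eps,B}$; here those null vectors are absent by \eqref{eq:WittStratum}.

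Next I would invoke Lemma \ref{lem:IndOpConclusion} with $\cE(L)=\cD_{\max,\bB}^{0}(\eth_{\dR})$ and $\cE_{\eps}(L)=\cD_{\max,\bB}^{\eps}(\eth_{\dR})$. Corollary \ref{cor:DeRhamExp} together with \eqref{eq:WittStratum} ensures that for $\eps$ smaller than the gap to the next indicial root, every term in the partial expansion of an element of $\cD_{\max,\bB}^{0}(\eth_{\dR})$ has real part strictly greater than $\eps$, so $\cE^{(\eps)}(L)=\cE(L)$ and the uniqueness in the lemma transports the identity $\Id-\wt\chi G_B\eth_{\dR}=Q$ from the dense subspace $\cD_{\max,\bB}^{\eps}$ to all of $\cD_{\max,\bB}^{0}$. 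Since $\eth_{\dR}u\in L^2$ and $G_B$ maps $L^2$ into $x^{\eps}L^2$ in the admissible window, while $Qu\in x^{\eps}L^2$ by the previous paragraph, this yields $u\in x^{\eps}L^2$ for every $\eps\in(0,\eps_0)$.

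Finally, the bootstrap is identical to the one at the end of the proof of Theorem \ref{thm:WitlessDecay}: once $u$ is known to lie in some $x^{\eps}L^2$, I rerun the construction from this improved starting weight, which enlarges the admissible window (the threshold $1/2$ of \eqref{eq:MappingL} is replaced by $1$ in the Witt case, since by Lemma \ref{lem:SuitableScaling} the next indicial root above $-(f+1)/2$ is separated from it by at least $1$), and iterate finitely many times to exhaust $(0,1)$. The main obstacle, and the place where the Witt hypothesis is essential, is the simultaneous boundedness of $Q$ in the previous step: without \eqref{eq:WittStratum} the operator $Q$ carries a leading piece $\Phi_{\eps,B}$ that cannot be absorbed and limits the ultimate decay to weight $1/2$, whereas in the present setting $Q$ is already a genuine small remainder and decay up to weight $1$ follows.
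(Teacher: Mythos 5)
Your proposal is correct and follows the paper's own approach exactly: the Witt hypothesis renders $(N_q(P),\cD_{\max,N(\bB)}^{\eps}(N_q(P)))$ invertible, so the amalgamated $G_B$ is a genuine parametrix with no $\Phi_{\eps,q}$ correction, and the decay up to weight $1$ rather than $1/2$ comes from the absence of an indicial root at $-f/2$ combined with the integration-by-parts Lemma~\ref{lem:IndOpConclusion} and the bootstrap. This is precisely how the paper argues, and your elaboration of the bootstrap window is a reasonable (and consistent) expansion of what the paper leaves implicit.
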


\subsection{The de Rham operator is essentially injective}

We have shown how to construct a (left) parametrix at all singular strata. This construction on spaces of depth $k+1$ 
relies on inductive information about the de Rham operator on singular spaces of depth less than or equal to $k$, and
in particular that this operator with Cheeger ideal boundary conditions from a mezzoperversity is self-adjoint.
We now use these parametrices to prove that this de Rham operator is essentially injective, i.e., has closed range 
and finite dimensional null space.  Then, once we have proved Theorem \ref{thm:SelfDual}, which asserts 
that the de Rham operator on spaces of depth $k+1$ is self-adjoint, then this essential injectivity proves that it
must actually be Fredholm. 
\begin{theorem}\label{Thm:MainHodgeThm}
Consider $(\hat X,g)$ as before, where $g$ is suitably scaled, and $\bB = (B^1, \ldots, B^{k+1})$ a set of
Cheeger ideal boundary conditions. Then $\eth_{\dR}$ with domain $\cD_{\bB}(\eth_{\dR})$ is closed, 
essentially injective and has compact resolvent.  Indeed if $\rho$ is a total boundary defining function 
on $\wt X,$ the resolution of $\hat X,$ then 
\begin{equation}\label{eq:CompactInclusionDomain}
	\cD_{\bB}(\eth_{\dR}) \subseteq H^1_{\loc}(X;\Lambda^*(\Iie T^*X)) \cap \bigcap_{\eps \in (0,1/2)} \rho^{\eps}L^2(X;\Lambda^*(\Iie T^*X)). 
\end{equation}
In particular, the de Rham operator satisfies {\em Assumptions} \ref{AssIndFamily} and \ref{AssCmpDomain} at depth $k+1$. 
\end{theorem}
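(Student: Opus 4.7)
The plan is to proceed by induction on the depth $k+1$ of $\hat X$. The base case is a closed smooth manifold, where everything is classical. For the inductive step, assume the theorem has been established on every stratified space of depth $\leq k$; in particular, the de Rham operator on each link $Z^{k+1}_q$ at a point $q\in Y^{k+1}$, with the boundary conditions induced by $(B^1,\ldots,B^k)$, is self-adjoint, essentially injective, and has compact resolvent. This inductive hypothesis immediately delivers Assumption \ref{AssCmpDomain} at $Y^{k+1}$: the indicial operator $I_q(x\eth_{\dR};\zeta)$ has as its inverse, where it exists, a compact operator on $L^2(Z^{k+1}_q;\pi_q^*\Lambda^*)$, since the model is the resolvent of $\eth_{\dR}^{Z^{k+1}_q}$ and hence takes values in the compact operators by the inductive compact resolvent statement. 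Holomorphy of $I_q(x\eth_{\dR};\zeta)$ in $\zeta$ together with the compact resolvent on $Z^{k+1}_q$ yields Assumption \ref{AssIndFamily} via the analytic Fredholm theorem.

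Closedness of $(\eth_{\dR},\cD_{\bB}(\eth_{\dR}))$ is immediate from Theorem \ref{Thm:ClosedDomains}. The key analytic statement is the compact inclusion \eqref{eq:CompactInclusionDomain}. The $H^1_{\loc}$ regularity on $X$ is standard interior elliptic regularity for $\eth_{\dR}$, which is an elliptic operator in the interior of $X$. For the weighted decay, I would argue stratum by stratum: if $u\in\cD_{\bB}(\eth_{\dR})$ is supported in a distinguished neighborhood of a point $q\in Y^j$, then either $Y^j$ is Witt for the already-chosen conditions $(B^1,\ldots,B^{j-1})$, in which case Theorem \ref{thm:WittyDecay} gives $u\in x^{\eps}L^2$ for every $\eps<1$, or else $Y^j$ is non-Witt and $B^j$ imposes a Cheeger condition, in which case Theorem \ref{thm:WitlessDecay} gives $u\in x^{\eps}L^2$ for every $\eps<1/2$. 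Using a finite partition of unity subordinate to distinguished neighborhoods at all strata together with the total boundary defining function $\rho$, these pointwise statements combine to $\cD_{\bB}(\eth_{\dR})\subseteq \rho^{\eps}L^2$ for every $\eps<1/2$.

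Given the compact inclusion \eqref{eq:CompactInclusionDomain}, compact resolvent follows by a standard Rellich-type argument: on any compact subset of $X$, the $H^1_{\loc}$ bound yields compactness by the classical Rellich theorem, while the uniform $\rho^{\eps}$ decay controls the tail near $\pa\wt X$ and lets one approximate the identity on $\cD_{\bB}(\eth_{\dR})$ by compact truncations. This shows that the inclusion $\cD_{\bB}(\eth_{\dR})\hookrightarrow L^2(X;\Lambda^*(\Iie T^*X))$ is compact, which is equivalent to compact resolvent once we know essential injectivity plus closedness.

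Essential injectivity is the last remaining point, and I expect it to be the most delicate: concretely, I need a left parametrix $G$ on $L^2$ with $G\eth_{\dR} = \mathrm{Id} - K$ on $\cD_{\bB}(\eth_{\dR})$ with $K$ compact. The parametrices $T_B$ and $G_B$ constructed in Sections 4.1 and 4.2, amalgamated across distinguished neighborhoods of all strata $Y^1,\ldots,Y^{k+1}$ and patched together with an interior parametrix built from the small calculus of \S4.2 of \cite{ALMP}, produce such a $G$: the remainder $K$ is, up to residual terms, a smoothing operator whose Schwartz kernel vanishes to positive order at the relevant boundary faces, so by the mapping property analysis surrounding \eqref{eq:MappingL} it is compact on $L^2$. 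Essential injectivity then follows in the usual way (closed range and finite-dimensional kernel). The hard part is the bookkeeping that the amalgamated parametrix actually lands in $\cD_{\bB}(\eth_{\dR})$ at every stratum simultaneously, which is exactly what the Cheeger boundary projections $\Pi_{W^j}$ built into the construction of $T_B$ in \S4.1 ensure; combined with the extra decay supplied by Theorems \ref{thm:WitlessDecay} and \ref{thm:WittyDecay}, this closes the induction and also verifies Assumptions \ref{AssIndFamily} and \ref{AssCmpDomain} for the next stratum, so the induction proceeds.
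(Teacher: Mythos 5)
Your proposal tracks the paper's approach closely: the same induction on depth, the same route to the compact inclusion \eqref{eq:CompactInclusionDomain} via $H^1_{\loc}$ interior regularity plus the weighted decay supplied by Theorems \ref{thm:WitlessDecay} and \ref{thm:WittyDecay}, and the same derivation of Assumptions \ref{AssIndFamily} and \ref{AssCmpDomain} from the inductive compact-resolvent statement on the links. The deduction of Assumption \ref{AssCmpDomain} (the inverse of the indicial family is compact because the link de Rham operator has compact resolvent) and of Assumption \ref{AssIndFamily} (analytic Fredholm theorem) is exactly right.

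Where you diverge is in treating essential injectivity as a separate, "most delicate" step requiring a globally patched parametrix. This is unnecessary: once you have that the graph-normed domain $\cD_{\bB}(\eth_{\dR})$ is compactly included in $L^2$, essential injectivity comes for free by a standard argument. The kernel, with graph norm, carries just the $L^2$ topology, so its unit ball is precompact, hence $\ker(\eth_{\dR},\cD_{\bB}(\eth_{\dR}))$ is finite-dimensional by Riesz. Closed range is a one-paragraph contradiction: if $\eth_{\dR}u_n \to f$ with $u_n \perp \ker$ and $\|u_n\|\to\infty$, then $v_n = u_n/\|u_n\|$ is bounded in graph norm, so along a subsequence $v_n \to v$ in $L^2$ with $\|v\|=1$; closedness forces $v \in \ker$, contradicting $v \perp \ker$; thus $\|u_n\|$ is bounded and one can pass to a limit. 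This is precisely what the paper does, in a few lines. The global parametrix you sketch (patching $T_B$, $G_B$ across all strata with an interior small-calculus parametrix, and checking it lands in $\cD_{\bB}(\eth_{\dR})$ everywhere) would work, but it is exactly the bookkeeping the decay theorems were constructed to avoid replicating; the paper uses those parametrices only locally, to prove the local decay, and then lets the compact inclusion do the rest. You also have the logical dependency slightly backwards when you write that compact inclusion "is equivalent to compact resolvent once we know essential injectivity plus closedness'': essential injectivity is a \emph{consequence} of compact inclusion (plus closedness), not a prerequisite for it.
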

\begin{proof}
Assume that this result is true for all spaces of depth less than or equal to $k$. If $u \in \cD_{\bB}(\eth_{\dR})$ then 
$\chi u \in \cD_{\bB}(\eth_{\dR})$ for all $\chi$ 
indicator functions supported in the distinguished neighborhood $\cU_q$ of $q \in Y^{k+1}$, and using the inductive hypothesis and Theorems \ref{thm:WitlessDecay} and \ref{thm:WittyDecay},
\begin{equation*}
	\chi u \in \bigcap_{\eps \in (0,1/2)} \rho^{\eps}L^2(X; \Lambda^*(\Iie T^*X)).
\end{equation*}
Since $q$ is arbitrary, $u$ itself lies in this space. Since this space is compactly included in $L^2(X; \Lambda^*(\Iie T^*X)),$ 
this shows that $(\eth_{\dR}, \cD_{\bB}(\eth_{\dR}))$ is essentially injective and has compact resolvent. 
\end{proof}

\section{$L^2$ cohomology and Kodaira decomposition} \label{sec:L2Coho}
Let $(\hat X,g)$ be a stratified pseudomanifold endowed with a rigid $\iie$ metric. In this section we relate the domains of the 
de Rham operator to domains of the exterior derivative. Useful consequences include the self-adjointness of the de Rham 
operator, a Kodaira decomposition for $L^2$ differential forms, and an identification of the Hodge cohomology associated 
to a mezzoperversity with the associated de Rham cohomology.\\

The exterior derivative $d$ and its formal adjoint $\delta$ have two canonical closed extensions from the smooth forms of compact support on $X=\hat X^{\reg}.$
Namely,
\begin{equation*}
\begin{gathered}
	\cD_{\min}(d) =
	\{ u \in L^2(\hat X; \Lambda^* (\Iie T^*X)) : \exists u_n \in  \CIc(X; \Lambda^* (\Iie T^*X)) \Mst u_n \to u \Mand
	du_n \text{ is Cauchy} \}, \\
	\cD_{\max}(d) =
	\{ u \in L^2(\hat X; \Lambda^* (\Iie T^*X)) : du \in L^2(\hat X; \Lambda^* (\Iie T^*X)) \}.
\end{gathered}
\end{equation*}
In the first case, `Cauchy' means Cauchy in $L^2,$ while in the second $du$ is computed distributionally. We define
in the same way $\cD_{\min}(\delta)$ and $\cD_{\max}(\delta).$
It is clear from these definitions that
\begin{equation*}
	(d, \cD_{\min}(d))^* = (\delta, \cD_{\max}(\delta)), \quad (\delta, \cD_{\max}(\delta))^* = (d, \cD_{\min}(d))
\end{equation*}
and similarly,  with $\cD_{\min}$ and $\cD_{\max}$ exchanged. \\

To identify the adjoints of domains (for $d,$ $\delta$ or $\eth_{\dR}$) intermediate between $\cD_{\min}$ and $\cD_{\max},$
introduce the boundary pairings
\begin{equation*}
\begin{gathered}
	[\cdot, \cdot]_d: \cD_{\max}(d) \times \cD_{\max}(\delta) \lra \bbC, \quad [u,v]_d = \ang{du,v} - \ang{u,\delta v} \\
	[\cdot,\cdot]_{\eth_{\dR}}: \cD_{\max}(\eth_{\dR}) \times \cD_{\max}(\eth_{\dR}) \lra \bbC, \quad
	[u,v]_{\eth_{\dR}}  = \ang{\eth_{\dR}u, v} - \ang{u, \eth_{\dR}v}
\end{gathered}
\end{equation*}
where $\ang{\cdot, \cdot}$ is the $L^2$ pairing. These are closely related: in fact, $[\cdot,\cdot]_d$ can be expressed in terms of 
$[\cdot,\cdot]_{\eth_{\dR}}$ as follows.
Let 
\begin{equation}\label{eq:DefuDelta}
	u_{\delta} = \text{ orthogonal projection of $u$ onto $\bar{\delta(\cD_{\max}(\delta))}.$}
\end{equation}
Note that since $\bar{\delta(\cD_{\max}(\delta))}^{\perp} = \ker(d, \cD_{\min}(d)),$ we have
\begin{equation*}
	\ang{du,v} = \ang{ du_{\delta},v} \Mand
	\ang{u, \delta v} = \ang{ u_{\delta}, \delta v} \Mforall v \in \cD_{\max}(\delta).
\end{equation*}
Similarly, if $v_d$ denotes the orthogonal projection of $v$ onto $\bar{ d(\cD_{\max}(d)) },$ 
we have
\begin{equation*}
	\ang{du,v} = \ang{ du,v_d } \Mand
	\ang{u, \delta v} = \ang{ u, \delta v_d} \Mforall u \in \cD_{\max}(d).
\end{equation*}
Thus it is always true that 
\begin{equation*}
	[u,v]_d = [u_{\delta}, v_d]_d.
\end{equation*}
Moreover, since $\bar{\delta(\cD_{\max}(\delta))} \subseteq \ker ( \delta, \cD_{\max}(\delta) )$ we have
\begin{equation*}
	u_\delta \in \cD_{\max}(d+\delta), \quad (d+\delta) u_{\delta} = d u_{\delta}
\end{equation*}
and similarly $\bar{d(\cD_{\max}(d))} \subseteq \ker (d, \cD_{\max}(d))$ so we also have
\begin{equation*}
	v_d \in \cD_{\max}(d+\delta), \quad (d+\delta)v_d = \delta v_d.
\end{equation*}
Thus altogether,
\begin{equation}\label{eq:Reduction}
	[u,v]_d = [u_{\delta}, v_d]_d = [u_{\delta}, v_d]_{\eth_{\dR}}.
\end{equation}
\medskip

\subsection{Depth one} 

Let us start by considering the setting of $\hat X$ with a single singular stratum $Y.$
For any $w \in \cD_{\max}(\eth_{\dR})$ denote the leading term in its expansion from Lemma \ref{lem:ExpansionExistence} and Corollary \ref{cor:DeRhamExp} by $\alpha(w) + dx \wedge \beta(w).$ Recall from Theorem \ref{thm:CoreDomain} the core subdomain 
$\cD_{\max}^{\reg}(\eth_{\dR})$ with the property that $\alpha(w)$ and $\beta(w)$ are smooth instead of just distributional sections, and that the remainder is in $xL^2.$
\begin{lemma}\label{lem:StokesThmEdge}
If $u,v \in \cD_{\max}(\eth_{\dR})$ and one of them is in $\cD_{\max}^{\reg}(\eth_{\dR})$ then from Stokes' theorem
\begin{equation*}
	[u,v]_d = \ang{\alpha(u), \beta(v)}_Y, \quad
	[u,v]_{\eth_{\dR}} = 
	\ang{\alpha(u),\beta(v)}_Y - \ang{\beta(u), \alpha(v)}_Y
\end{equation*}
where on $Y$ we have the pairing between smooth forms and currents with coefficients in the vertical Hodge bundle, induced by the $L^2$-pairing.
\end{lemma}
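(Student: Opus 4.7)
The plan is to apply Stokes' theorem on the truncated region $M_\varepsilon := \{x \geq \varepsilon\} \subset \widetilde X$ and then pass to the limit $\varepsilon \downarrow 0$. For forms $u, v \in \CIc(X;\Lambda^*)$ of degrees $|v|=|u|+1$, the identity $d(u\wedge\star\bar v) = du\wedge\star\bar v + u\wedge\star\overline{\delta v}$ (with the sign dictated by $|u|+|v|$ being odd) combined with Stokes gives
\begin{equation*}
	\int_{M_\varepsilon} du \wedge \star \bar v \;-\; \int_{M_\varepsilon} u \wedge \star \overline{\delta v}
	\;=\; \int_{\{x=\varepsilon\}} u \wedge \star \bar v,
\end{equation*}
with $\{x=\varepsilon\}$ carrying the orientation induced by $\pa_x$ being outward in $M_\varepsilon$ (so opposite to its orientation as $\pa X_\varepsilon \cong Y \times_\phi Z$). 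By density of $\CIc$ in $\cD_{\max}$ with respect to the graph norm of $d$ and $\delta$ separately (applied on $M_\varepsilon$ first, then passing $\varepsilon\to 0$), the same identity holds for $u,v\in\cD_{\max}(\eth_{\dR})$. Since $du,\delta v, u, v$ are all $L^2$, the left-hand side converges to $A_d(u,v)$ as $\varepsilon\to 0$, so it suffices to identify the limit of the right-hand side with $\ang{\alpha(u),\beta(v)}_Y$.

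For the boundary integral, I would use the asymptotic expansions
\begin{equation*}
	u = x^{-f/2}\bigl(\alpha(u) + dx\wedge\beta(u)\bigr) + \wt u, \qquad
	v = x^{-f/2}\bigl(\alpha(v) + dx\wedge\beta(v)\bigr) + \wt v,
\end{equation*}
with $\wt u, \wt v$ remainders as in \eqref{DistExpansion}. Decomposing any form $\omega$ into tangential and normal parts $\omega = \omega_t + dx\wedge\omega_n$, the Hodge star of the iie metric (with volume element $x^f\,dx\,dy\,\dvol_Z$) satisfies $\star\omega_t = (-1)^{|\omega_t|} dx\wedge \star_\varepsilon\omega_t$ and $\star(dx\wedge\omega_n)=\star_\varepsilon\omega_n$, where $\star_\varepsilon$ is the Hodge star of the induced metric on $\{x=\varepsilon\}$. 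Pulling back to the level set kills any $dx$ factor, so only mixed pairings survive:
\begin{equation*}
	\int_{\{x=\varepsilon\}} u\wedge\star\bar v
	= \pm\int_{\{x=\varepsilon\}} u_t \wedge \star_\varepsilon \bar v_n \;\pm\; \int_{\{x=\varepsilon\}} u_n\wedge \star_\varepsilon \bar v_t.
\end{equation*}
The induced volume on $\{x=\varepsilon\}$ has the form $\varepsilon^f\,\dvol_Y\,\dvol_Z\,(1+O(\varepsilon))$, which cancels the $x^{-f}$ coming from the product of the two leading $x^{-f/2}$ factors. Using that $\beta(v)$ and $\alpha(v)$ take values in $\cH^{\mid}(Z_y)$ (so the fiberwise integrals collapse to the inner product on harmonic forms), a bookkeeping of signs reduces the expression to a single cross term $\ang{\alpha(u),\beta(v)}_Y$ (the other cross term $\ang{\beta(u),\alpha(v)}_Y$ enters only for $A_{\eth_{\dR}}$; for $A_d$ the degree constraint $|v|=|u|+1$ kills it).

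Next I would argue that the remainders $\wt u, \wt v$ contribute nothing in the limit. Since $\wt u \in xH^{-1}(X;\Lambda^*\Iie T^*X)$ (and similarly for $\wt v$), one of which is actually smooth in the $y$-variables by the hypothesis that $u$ or $v$ lies in $\cD_{\max}^{\reg}(\eth_{\dR})$, the cross contributions $\int_{\{x=\varepsilon\}} \wt u\wedge\star(\,\cdot\,)$ etc.\ are $O(\varepsilon^{1/2-})$ and vanish in the limit; the key is that the regular hypothesis is exactly what is needed to interpret the distributional pairing of an $H^{-1/2}$ leading coefficient against a smooth one and to pass this interpretation through the $\varepsilon$-limit. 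This gives the first identity. For the second, one applies the same argument to $A_\delta(u,v) := \ang{\delta u,v}-\ang{u,dv}$, or equivalently swaps the roles of $u$ and $v$ and conjugates, producing the term $-\ang{\beta(u),\alpha(v)}_Y$; summing $A_d+A_\delta = A_{\eth_{\dR}}$ yields the claimed antisymmetric formula.

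The main obstacle is the justification of the limit when only one of $u,v$ is regular: the boundary integral on $\{x=\varepsilon\}$ is a pairing of two a priori distributional coefficients, and without the regularity assumption one would need a finer trace theorem in the edge scale. With the regular hypothesis in hand, the pairing reduces to a smooth-against-distribution pairing on $Y$, which is exactly the content of $\ang{\,\cdot\,,\,\cdot\,}_Y$ in the statement.
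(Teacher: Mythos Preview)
Your overall strategy---Stokes on $\{x\geq\varepsilon\}$, identify the boundary term via the asymptotic expansion, pass to the limit---is the same as the paper's. The difference lies in how the mixed-regularity case is handled. You attempt to plug in the distributional expansion of the non-regular form directly and estimate the cross terms involving the remainder $\wt v\in xH^{-1}_e$ on the slice $\{x=\varepsilon\}$. This is the delicate point you yourself flag, and the claimed $O(\varepsilon^{1/2-})$ bound is not justified: the remainder of the non-regular form lives in a negative-order edge Sobolev space in the $y$-variables, so its restriction to a fixed slice is not controlled pointwise, and pairing it against a smooth form does not automatically yield the vanishing you assert.

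The paper sidesteps this entirely. It first treats the case where \emph{both} $u,v$ are in $\cD_{\max}^{\reg}(\eth_{\dR})$, where the expansion holds with smooth coefficients and the computation $\ang{u,v}_{x=\varepsilon}=\ang{\alpha(u),\beta(v)}_Y+O(\varepsilon)$ is elementary. Then for the mixed case it invokes the density of $\cD_{\max}^{\reg}(\eth_{\dR})$ in $\cD_{\max}(\eth_{\dR})$ (Lemma~\ref{lem:RegularDomain}): approximate the non-regular $v$ by $v_n\in\cD_{\max}^{\reg}$ in graph norm, use continuity of $A_d$ in graph norm and continuity of the Cauchy data map $v\mapsto\beta(v)$ into distributions (Lemma~\ref{lem:ExpansionExistence}) to pass to the limit in $\ang{\alpha(u),\beta(v_n)}_Y$. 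This reduces everything to the regular-regular case and a soft continuity argument, avoiding any trace-theoretic estimate on the remainder. Your direct route could likely be made to work, but it is strictly harder than what is needed.
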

\begin{proof}
We always have
\begin{equation*}
	[u,v]_d = \lim_{\eps \to 0} \lrpar{\ang{du,v}_{x\geq \eps} - \ang{u,\delta v}_{x\geq \eps} }
	= \lim_{\eps \to 0} \ang{u,v}_{x=\eps}.
\end{equation*}
If both $u, v$ are in $\cD_{\max}^{\reg}(\eth_{\dR})$ then 
$\ang{u,v}_{x=\eps} = \ang{\alpha(u), \beta(v)}_Y + \cO(\eps),$ and so $[u,v]_d = \ang{\alpha(u), \beta(v)}_Y.$
If only one of them, say $u$ is in $\cD_{\max}^{\reg}(\eth_{\dR}),$ we can find $v_n \in \cD_{\max}^{\reg}(\eth_{\dR})$
such that $v_n \to v$ in the graph norm. By continuity of the coefficients of the asymptotic expansion in Lemma \ref{lem:ExpansionExistence}, we have $\beta(v_n) \to \beta(v)$ as distributions, so
\begin{equation*}
	[u,v]_d = \lim [u,v_n]_d = \lim \ang{\alpha(u), \beta(v_n)}_Y = \ang{\alpha(u), \beta(v)}_Y.
\end{equation*}
The pairing $[\cdot,\cdot]_{\eth_{\dR}}$ is analogous.
\end{proof}

This lemma shows the usefulness of having elements in $\cD_{\max}^{\reg}(\eth_{\dR}),$ so we define analogous domains for $d$ and $\delta.$
The domain
\begin{equation*}
	\cD_{\max}^{\reg}(d) = \{ u \in \cD_{\max}(d) : u_{\delta} \in \cD_{\max}^{\reg}(\eth_{\dR}) \}
\end{equation*}
is a core domain for $\cD_{\max}(d)$ by the same argument that showed that $\cD_{\max}^{\reg}(\eth_{\dR})$ is a core domain for $\cD_{\max}(\eth_{\dR}).$
Similarly we define
\begin{equation*}
	\cD_{\max}^{\reg}(\delta) = \{ v \in \cD_{\max}(\delta) : v_d \in \cD_{\max}^{\reg}(\eth_{\dR}) \}
\end{equation*}
and point out that this is a core domain for $\cD_{\max}(\delta).$

\begin{lemma}\label{lem:PrescribedForms}
Let $\cU_q$ be a distinguished neighborhood of $q \in Y$ over which we have trivializations
\begin{equation*}
	\cU_q \cong [0,1)_x \times \bbB \times Z_q, \quad
	\bigcup_{y \in \cU_q \cap Y} \cH^{\mid}(Z_y) = \bbB \times \cH^{\mid}(Z_q),
\end{equation*}
where we recall that $\cH^{\mid}(Z_q)$ refers to the $L^2$ harmonic forms on $Z_q$ of degree equal to half of the dimension of $Z_q.$
For differential forms $u \in L^2(X; \Lambda^*(\Iie T^*X))$ supported in $\cU_q$ we have
\begin{equation*}
\begin{gathered}
	u \in \cD_{\min}(d) \iff 
	u \in \cD_{\max}(d) \Mand \alpha(u_{\delta})=0\\
	u \in \cD_{\min}(\delta) \iff 
	u \in \cD_{\max}(\delta) \Mand \beta(u_{d})=0.
\end{gathered}
\end{equation*}

Moreover, given $\eta \in \CIc(\bbB; \Lambda^*(\Iie T^*Y) \otimes \cH^{\mid}(Z_q)),$
\begin{itemize}
\item [i)] there exists $u(\eta) \in \cD_{\min}(d) \cap \cD_{\min}(\delta) \cap \cA_{\phg}^*$ such that $\delta u(\eta) \in \cD_{\max}^{\reg}(\eth_{\dR}),$
\begin{equation*}
	\alpha(\delta u(\eta)) = \eta,
\end{equation*}
\item [ii)] there exists $v(\eta) \in \cD_{\min}(d) \cap \cD_{\min}(\delta)\cap \cA_{\phg}^*$ such that $d v(\eta) \in \cD_{\max}^{\reg}(\eth_{\dR}),$
\begin{equation*}
	\beta(d v(\eta)) = \eta.
\end{equation*}
\item [iii)] For any $w \in \cD_{\max}^{\reg}(\eth_{\dR})$ compactly supported in $\cU_q$ with $\alpha(w) = a,$ $\beta(w) = b,$  there exists $\omega(a,b) \in \cD_{\max}^{\reg}(\eth_{\dR}) \cap \cD_{\max}(d) \cap \cD_{\max}(\delta)\cap \cA_{\phg}^*$ such that
\begin{equation*}
	\alpha(\omega(a,b)) = a, \quad
	\beta(\omega(a,b)) = b.
\end{equation*}
\end{itemize}

\end{lemma}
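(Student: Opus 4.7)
The plan is to first establish the three existence statements (i), (ii), (iii) by explicit polyhomogeneous constructions, and then to use the forms built in (iii) as test objects in the integration-by-parts identity Lemma~\ref{lem:StokesThmEdge} to deduce the two characterizations of $\cD_{\min}(d)$ and $\cD_{\min}(\delta)$. The key reduction for the characterizations is the trivial observation that $u - u_\delta \in \ker(d,\cD_{\min}(d)) \subseteq \cD_{\min}(d)$, so $u \in \cD_{\min}(d)$ iff $u_\delta \in \cD_{\min}(d)$; and $u_\delta$ already lies in $\cD_{\max}(\eth_{\dR})$, where the expansion from Corollary~\ref{cor:DeRhamExp} is available.

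For part (iii), starting from the naive ansatz $\omega_0(a,b) = \chi(x)\,x^{-f/2}(a + dx\wedge b)$ with $\chi$ a cutoff near $x=0$, I build $\omega(a,b)$ by iteratively adding polyhomogeneous correction terms at the successive indicial roots of $x\eth_{\dR}$. Concretely, applying $\eth_{\dR}$ to $\omega_0$ produces a leading $x^{-f/2-1}$ singular contribution (arising from the action of $\pa_x$ on $x^{-f/2}$ together with the number-operator off-diagonal terms in \eqref{eq:dNearBdy}--\eqref{eq:deltaNearBdy}); this is removed by inverting the indicial family against it, which introduces a polyhomogeneous subleading term, and the process is iterated. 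Invertibility of the indicial family at each successive step is guaranteed by Lemma~\ref{lem:SuitableScaling}, which isolates the indicial root at $-f/2$ in the critical strip. The resulting $\omega$ is polyhomogeneous with smooth coefficients $a, b$ at leading order, so $\omega \in \cD_{\max}^{\reg}(\eth_{\dR}) \subseteq \cD_{\max}(d)\cap\cD_{\max}(\delta)$ with $\alpha(\omega) = a$, $\beta(\omega) = b$.

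For parts (i) and (ii), I start from ans\"atze $c\,\chi(x)\,x^{-f/2+1}\,dx\wedge\eta$ and $c'\chi(x)\,x^{-f/2+1}\,\eta$ respectively and correct iteratively in the same way. A direct computation of the leading-order action of $\delta$ (resp.\ $d$) using the normal form shows that the leading term of $\delta u(\eta)$ is a nonzero multiple of $x^{-f/2}\eta$, and the leading term of $d v(\eta)$ is a nonzero multiple of $x^{-f/2}\,dx\wedge\eta$, allowing me to choose $c,c'$ to normalize these coefficients to $\eta$. Since $u(\eta), v(\eta) \in x^{\epsilon}L^2$ for some $\epsilon > 1/2$, a standard cutoff-and-mollify argument (truncating with $\chi_\delta(x)$ near $x=0$, whose commutator with $d$ or $\delta$ is $\chi_\delta'(x)\,dx\wedge\cdot$ and tends to zero in $L^2$ by the $\epsilon > 1/2$ decay, then convolving in the $y$-variable) approximates them by $\CIc$ forms in both the $d$- and $\delta$-graph norms; hence $u(\eta), v(\eta) \in \cD_{\min}(d) \cap \cD_{\min}(\delta)$.

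For the characterization of $\cD_{\min}(d)$: the forward direction uses that $u_\delta \in \cD_{\min}(d)$ implies $A_d(u_\delta, v) = 0$ for every $v \in \cD_{\max}(\delta)$. Taking $v = \omega(0, b)$ from (iii) (in $\cD_{\max}^{\reg}(\eth_{\dR})$) and applying Lemma~\ref{lem:StokesThmEdge} yields $A_d(u_\delta, v) = \ang{\alpha(u_\delta), b}_Y$, and varying the arbitrary $b$ forces $\alpha(u_\delta) = 0$. Conversely, if $\alpha(u_\delta) = 0$, then by the indicial root analysis of Lemma~\ref{lem:SuitableScaling} the next allowed exponent is $-f/2+1$, so $u_\delta \in xL^2$; the cutoff-and-mollify argument from Step~2 then approximates $u_\delta$ by $\CIc$ forms in the $d$-graph norm, placing it in $\cD_{\min}(d)$. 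The characterization of $\cD_{\min}(\delta)$ is entirely parallel, using $v = \omega(a, 0)$. The main obstacle is the inductive construction in (iii): one must verify that the polyhomogeneous correction procedure terminates in a form $\omega \in \cD_{\max}^{\reg}(\eth_{\dR})$, i.e.\ that no resonance between indicial roots obstructs solvability of the successive stages, which is precisely what Lemma~\ref{lem:SuitableScaling} ensures.
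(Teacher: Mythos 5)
Your proposal departs from the paper's argument at two key steps, and both departures are problematic.

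\textbf{Gap in (iii).} You claim that applying $\eth_{\dR}$ to $\omega_0 = \chi(x)\,x^{-f/2}(a + dx\wedge b)$ produces a leading $x^{-f/2-1}$ singular contribution that must be removed iteratively. This is false. By Lemma~\ref{lem:ExpansionExistence} / Corollary~\ref{cor:DeRhamExp}, the Cauchy data $a + dx\wedge b$ of any $w\in\cD_{\max}^{\reg}(\eth_{\dR})$ lies in $\ker I(x\eth_{\dR};-f/2)$. Concretely: $a,b$ are fiberwise harmonic of vertical degree $f/2$, so $d^Z a = \delta^Z a = 0$ and $\bN a = (f/2)a$; hence $(\pa_x + \tfrac1x\bN)(x^{-f/2}a) = x^{-f/2}(\pa_x a)$ with no $x^{-f/2-1}$ term, and similarly for the adjoint and for $b$. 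The paper's proof of (iii) is therefore the single observation that the naive ansatz $\omega(a,b)=x^{-f/2}\phi(a+dx\wedge b)$ already lies in $\cD_{\max}(d)\cap\cD_{\max}(\delta)$; no correction and no invocation of Lemma~\ref{lem:SuitableScaling} is needed. (Your proposed iteration is also not well-posed as stated: you cannot ``invert the indicial family against'' a term at an indicial root, since that is precisely where the indicial family fails to be invertible.)

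\textbf{Gap in the converse of the characterization.} You argue that $\alpha(u_\delta)=0$ forces $u_\delta\in xL^2$ and then approximate by a cutoff-and-mollify argument. This implication is wrong: the asymptotic expansion of $u_\delta$ reads $u_\delta\sim x^{-f/2}\bigl(\alpha(u_\delta)+dx\wedge\beta(u_\delta)\bigr)+\tilde u$, and the coefficient $\beta(u_\delta)$ need not vanish even when $\alpha(u_\delta)=0$. When $\beta(u_\delta)\neq 0$, the leading term $x^{-f/2}\,dx\wedge\beta(u_\delta)$ is $O(x^{-f/2})$, not $O(x^{1-f/2})$, and $u_\delta\notin xL^2$, so your approximation breaks down. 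The paper avoids this entirely by a duality argument: since $\cD_{\max}^{\reg}(\delta)$ is a core for $\cD_{\max}(\delta)$ one has $\cD_{\min}(d)=\bigl(\cD_{\max}^{\reg}(\delta)\bigr)^*$, and then $\alpha(u_\delta)=0$ forces $A_d(u,v)=\ang{\alpha(u_\delta),\beta(v_d)}_Y=0$ for every $v\in\cD_{\max}^{\reg}(\delta)$ by Lemma~\ref{lem:StokesThmEdge}, so $u\in\cD_{\min}(d)$ with no regularity-of-$u$ hypothesis needed. A related structural point: the paper establishes the characterization using only the explicit forms $u(\eta)$, $v(\eta)$ (which give arbitrary $\alpha(u_\delta)$ and $\beta(v_d)$), and then reads off that $u(\eta),v(\eta)\in\cD_{\min}(d)\cap\cD_{\min}(\delta)$ from the characterization just proved; your reliance on (iii) as a black box to supply test forms $\omega(0,b)$ for arbitrary $b$ is circular, since (iii) presupposes that $(0,b)$ arises as Cauchy data of some element of $\cD_{\max}^{\reg}(\eth_{\dR})$, which is exactly what (ii) supplies.

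Your ansätze for $u(\eta)$, $v(\eta)$ agree with the paper's (modulo the $\log x$ modification required when $f=2$, which you omit), but those constructions need no iterative correction either: they are already smooth, lie in $x^{1-}L^2$, and are directly seen to be in $\cD_{\max}^{\reg}(d)\cap\cD_{\max}^{\reg}(\delta)\cap\cA_{\phg}^*$.
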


\begin{proof}
Let $\phi$ be a smooth function identically equal to one on the support of $\eta,$ and vanishing outside a compact subset of $\cU_q.$ \\
Let
\begin{equation*}
	v(\eta) = 
	\begin{cases}
	(\log x) \phi \eta & \Mif f=2 \\
	(x^{-f/2+1}/(-f/2+1)) \phi \eta & \text{else}
	\end{cases}
\end{equation*}
and let $u(\eta) = -dx \wedge v(\eta).$
Since $u$ and $v$ are smooth and in $xL^2(X; \Lambda^*(\Iie T^*X),$  we clearly have
\begin{equation*}
	u, du, \delta u, v, dv, \delta v \in \cD_{\max}^{\reg}(d) \cap \cD_{\max}^{\reg}(\delta) \cap \cA_{\phg}^*.
\end{equation*}
Also $\alpha(\delta u(\eta)) = \eta$ and $\beta(dv(\eta)) = \eta.$\\

Now we can identify 
\begin{equation*}
\begin{gathered}
	\cD_{\min}(d) = \cD_{\max}^{\reg}(\delta)^* 
	= \{ u \in \cD_{\max}(d) : [u,v]_d = 0 \Mforall v \in \cD_{\max}^{\reg}(\delta) \} \\
	= \{ u \in \cD_{\max}(d) : \ang{\alpha(u{\delta}), \beta(v_d)}_Y = 0 \Mforall v \in \cD_{\max}^{\reg}(\delta) \}
\end{gathered}
\end{equation*}
and we have just shown that $\beta(v_d)$ can be prescribed arbitrarily, so $u \in \cD_{\min}(d)$ precisely when $\alpha(u_{\delta})=0.$
Similarly, from $\cD_{\min}(\delta) = \cD_{\max}^{\reg}(d)^*$ we can characterize $\cD_{\min}(\delta)$ as required.
Moreover we can now recognize that $u(\eta), v(\eta) \in \cD_{\min}(d) \cap \cD_{\min}(\delta),$ so we have established ($i$) and ($ii$).\\

For ($iii$), let $\omega(a,b) = x^{-f/2}\phi (a + dx \wedge b).$
In $\cU_q,$ we know from \eqref{eq:dNearBdy}, \eqref{eq:deltaNearBdy} that we can write
\begin{equation*}
\begin{gathered}
	d = 
	\begin{pmatrix}
	\tfrac1x d^Z + d_{\cT}^{0,1} + x R & 0 \\
	\pa_x + \tfrac1x \bN & -(\tfrac1x d^Z + d_{\cT}^{0,1} + x R)
	\end{pmatrix}, \\
	\delta = 
	\begin{pmatrix}
	\tfrac1x \delta^Z + (d_{\cT}^{0,1})^* + (x R)^* & -\pa_x + \tfrac1x(\bN -f) \\
	0 & -(\tfrac1x \delta^Z + (d_{\cT}^{0,1})^* + (x R)^*)
	\end{pmatrix}
\end{gathered}
\end{equation*}
and we know from Lemma \ref{lem:ExpansionExistence} that $(a + dx\wedge b)$ is in the null space of
\begin{equation*}
	I(x\eth_{\dR};-f/2)
	=\begin{pmatrix}
	d^Z + \delta^Z  & f/2 + (\bN -f) \\
	-f/2+  \bN & -(d^Z + \delta^Z)
	\end{pmatrix}.
\end{equation*}
It follows from the Kodaira decomposition on $Z_q$ and the explicit expressions for $d$ and $\delta$ that 
\begin{equation*}
	\omega(a,b) \in \cD_{\max}(d) \cap \cD_{\max}(\delta). 
\end{equation*}
\end{proof}

Since 
\begin{equation*}
	\cD_{\min}(d) \cap \cD_{\max}(\delta) \Mand
	\cD_{\max}(d) \cap \cD_{\min}(\delta)
\end{equation*}
are both closed domains for $\eth_{\dR},$  they both contain $\cD_{\min}(\eth_{\dR}),$ and hence so does their intersection
\begin{equation}\label{InclusionDmin}
	\cD_{\min}(\eth_{\dR}) \subseteq \cD_{\min}(d) \cap \cD_{\min}(\delta).
\end{equation}
On the other hand, in the pairing $[u,v]_{\eth_{\dR}}$ this lemma shows that we can prescribe $\alpha(u)$ and $\beta(u)$ arbitrarily and so 
\begin{equation*}
	\cD_{\min}(\eth_{\dR}) = (\cD_{\max}^{\reg}(\eth_{\dR}))^* = \{ v \in \cD_{\max}(\eth_{\dR}) : \alpha(v) = \beta(v) = 0 \}
	= \cD_{\min}(d) \cap \cD_{\min}(\delta).
\end{equation*}
In particular, if $w \in \cD_{\max}^{\reg}(\eth_{\dR})$ then we have
\begin{equation}\label{eq:DmaxregDecomp}
	w - \omega( \alpha(w), \beta(w)) \in \cD_{\min}(d) \cap \cD_{\min}(\delta), \Mhence
	w \in \cD_{\max}(d) \cap \cD_{\max}(\delta).
\end{equation}
$ $

Another useful consequence of this lemma is that whenever $u \in \cD_{\max}(\eth_{\dR}),$
\begin{equation}\label{eq:SameExpansion}
	\alpha(u_{\delta}) = \alpha(u), \quad
	\beta( u_d ) = \beta( u).
\end{equation}
Indeed, we have
\begin{equation*}
	\ang{\alpha(u), \beta(v)} = [u,v]_d = [u_{\delta},v]_d = \ang{\alpha(u_{\delta}), \beta(v)}
\end{equation*}
for any $v \in \cD_{\max}^{\reg}(\eth_{\dR})$ and from the Lemma we can prescribe $\beta(v)$ arbitrarily.
This shows that $\alpha(u_{\delta}) = \alpha(u)$ and a similar argument shows $\beta( u_d ) = \beta( u).$\\

Returning to \eqref{eq:Reduction}, we see that
whenever $u \in \cD_{\max}(d),$ $v \in \cD_{\max}(\delta)$ and either $u_{\delta}$ or $v_d$ is in $\cD_{\max}^{\reg}(\eth_{\dR}),$
\begin{equation*}
	\ang{\alpha(u_\delta), \beta(v_d)} 
	= \ang{\alpha(u_\delta),\beta(v_d)}_Y - \ang{\beta(u_{\delta}), \alpha(v_d)}_Y
\end{equation*}
and so
\begin{equation}\label{eq:Vanishing}
	 \ang{\beta(u_{\delta}), \alpha(v_d)}_Y =0.
\end{equation}
Let us see that this is a reflection of the flatness of the bundle of vertical harmonic forms over $Y.$ 

\begin{lemma}\label{lem:AlphaNabla}
For $u \in \cD_{\max}(d),$ $v \in \cD_{\max}(\delta),$
\begin{equation*}
	\alpha( (du)_{\delta} ) = \nabla^{\cH} \alpha(u_{\delta}), \quad
	\beta( (\delta v)_d ) = -(\nabla^{\cH})^* \beta(v_d)
\end{equation*}
where $\nabla^{\cH}$ is the flat connection on the bundle of vertical harmonic forms discussed in \S\ref{sec:SimpleEdgeBC}.
\end{lemma}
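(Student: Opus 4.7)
The plan is to prove the first identity by a direct local asymptotic analysis of $du_\delta$ near $Y$ using the matrix expression \eqref{eq:dNearBdy}, and to deduce the second identity from the parallel calculation with \eqref{eq:deltaNearBdy}. By the density of $\cD_{\max}^{\reg}(d)$ inside $\cD_{\max}(d)$ and continuity of the leading-coefficient maps on $\cD_{\max}(\eth_{\dR})$, we may assume $u \in \cD_{\max}^{\reg}(d)$, so that $u_\delta \in \cD_{\max}^{\reg}(\eth_{\dR})$ has smooth vertical-harmonic leading coefficients $\alpha(u_\delta)$ and $\beta(u_\delta)$. Writing $u = u_\delta + u^\perp$ with $u^\perp \in \overline{\delta(\cD_{\max}(\delta))}^\perp = \ker(d,\cD_{\min}(d))$ gives $du = du_\delta$. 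The projection $(du)_\delta$ lies in $\overline{\delta(\cD_{\max}(\delta))} \subset \ker(\delta,\cD_{\max}(\delta))$ and satisfies $d((du)_\delta) = d(du) - d((du)^\perp) = 0$, so $(du)_\delta \in \cD_{\max}(\eth_{\dR})$ admits the distributional expansion of Lemma~\ref{lem:ExpansionExistence} with vertical-harmonic leading coefficient (Corollary~\ref{cor:DeRhamExp}). Moreover $(du)^\perp \in \cD_{\min}(d)$ has $((du)^\perp)_\delta = 0$, so its $\alpha$-coefficient vanishes by Lemma~\ref{lem:PrescribedForms}, and $\alpha((du)_\delta)$ coincides with the non-$dx$ part of the $x^{-f/2}$-coefficient of $du$ itself as a distributional section on $Y$.

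Next, apply \eqref{eq:dNearBdy} to the leading term of $u_\delta$. Since $\alpha(u_\delta), \beta(u_\delta)$ are vertical harmonic of vertical degree $f/2$, we have $d^Z\alpha(u_\delta) = d^Z\beta(u_\delta) = 0$ and $\bN$ acts by $f/2$; the $\pa_x$- and $\tfrac{1}{x}\bN$-contributions to the $dx$-component cancel exactly, and the $(xR)$-terms are one $x$-order lower. The resulting formal leading $x^{-f/2}$-coefficient of $du$ has non-$dx$ part $d^{1,0}\alpha(u_\delta)$. The bidegree identity $d^Z\circ d^{1,0} + d^{1,0}\circ d^Z = 0$ coming from $d^2 = 0$ forces $d^{1,0}\alpha(u_\delta)$ to be vertically closed, so the vertical Hodge decomposition yields $d^{1,0}\alpha(u_\delta) = \Pi_{\cH}\, d^{1,0}\alpha(u_\delta) + d^Z\gamma$ for a vertical primitive $\gamma$. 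The $d^Z$-exact part $x^{-f/2}d^Z\gamma$ is absorbed by a subleading $(1-f/2)^{-1}x^{1-f/2}\gamma$ correction of the expansion of $u_\delta$: inserted into the $\tfrac{1}{x}d^Z$-entry of \eqref{eq:dNearBdy}, this produces exactly $x^{-f/2}d^Z\gamma$ (with opposite sign), while leaving the leading $x^{-f/2}$-coefficient of $u_\delta$ unchanged. What survives as the genuine vertical-harmonic leading coefficient of $(du)_\delta$ is therefore $\Pi_{\cH}\, d^{1,0}\alpha(u_\delta) = \nabla^{\cH}\alpha(u_\delta)$, by the definition of $\nabla^{\cH}$ in \S\ref{sec:SimpleEdgeBC}.

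The second identity is obtained by the same argument with \eqref{eq:deltaNearBdy} in place of \eqref{eq:dNearBdy}: the leading $x^{-f/2}$ $dx$-wedge coefficient of $\delta v_d$ is $-(d^{1,0})^*\beta(v_d)$, the minus sign coming from the off-diagonal entry of $\delta$, and the vertical Hodge projection identifies $\Pi_{\cH}(d^{1,0})^*\Pi_{\cH}$ with the formal adjoint $(\nabla^{\cH})^*$ on the vertical Hodge bundle. The main technical obstacle lies in rigorously justifying the absorption step, since $du_\delta$ need not itself belong to $\cD_{\max}(\eth_{\dR})$ (its $\delta$-image may fail to be $L^2$), so that the expansion theorem does not directly produce a vertical-harmonic leading coefficient for $du_\delta$. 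This is resolved either by the explicit subleading correction described above or, equivalently, by testing $(du)_\delta$ against $v_0 = \omega(0,\eta') \in \cD_{\max}^{\reg}(\eth_{\dR}) \cap \cD_{\max}(\delta)$ from Lemma~\ref{lem:PrescribedForms}(iii): the boundary pairing $A_{\eth_{\dR}}((du)_\delta, v_0) = \ang{\alpha((du)_\delta), \eta'}_Y$ (using that $\alpha(v_0) = 0$ and \eqref{eq:Vanishing}) is computed alternatively by Stokes' theorem on $\{x \geq \varepsilon\}$ and, upon passing to the limit $\varepsilon \to 0$, reduces to $\ang{\Pi_{\cH} d^{1,0}\alpha(u_\delta), \eta'}_Y$.
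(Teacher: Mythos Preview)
Your direct asymptotic computation has a genuine gap that your final paragraph partially acknowledges but does not resolve. Membership in $\cD_{\max}^{\reg}(\eth_{\dR})$ gives $u_\delta$ only a single leading term $x^{-f/2}(\alpha(u_\delta)+dx\wedge\beta(u_\delta))$ plus a remainder $\wt u$ that is merely in $xH^{-1}_e$ (smooth in $\bbB^h$ after mollification, but with no further $x$-expansion). Your ``absorption'' step asserts that the $d^Z$-exact piece $x^{-f/2}d^Z\gamma$ is cancelled by a subleading term $(1-f/2)^{-1}x^{1-f/2}\gamma$ in the expansion of $u_\delta$; but no such subleading term exists in general --- you would be modifying $u_\delta$, which is fixed, or else claiming structure of $\wt u$ that has not been established. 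Likewise, the statement that ``$\alpha((du)_\delta)$ coincides with the non-$dx$ part of the $x^{-f/2}$-coefficient of $du$ itself'' presupposes that $du_\delta$ has a well-defined $x^{-f/2}$-coefficient, which requires $du_\delta\in\cD_{\max}(\eth_{\dR})$ --- precisely what you note may fail.

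The paper circumvents this by first proving the identity for \emph{polyhomogeneous} forms: if $u,du\in\cD_{\max}(\eth_{\dR})\cap\cA_{\phg}^*$ then both sides have full expansions and one can legitimately compare coefficients term by term, giving $\alpha(du)=\nabla^{\cH}\alpha(u)$; similarly $\beta(\delta v)=-(\nabla^{\cH})^*\beta(v)$ for polyhomogeneous $v$. Only then does the paper pass to general $u\in\cD_{\max}(d)$, by pairing against such a polyhomogeneous $v$ via $A_d(du,v)=-A_d(u,\delta v)$ and invoking Lemma~\ref{lem:PrescribedForms} to let $\beta(v)$ range freely. Your boundary-pairing alternative is headed in this direction, but to carry it out you must compute $\beta(\delta v_0)$ for your test form $v_0=\omega(0,\eta')$ --- and that computation is exactly the polyhomogeneous step you tried to skip. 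In short: the duality argument is right, but it cannot stand alone; it needs the polyhomogeneous computation as input on the $v$-side.
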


\begin{proof}
Any $u \in \cD_{\max}(\eth_{\dR})$ has a partial asymptotic expansion 
\begin{equation*}
	x^{-f/2}(\alpha(u) + dx\wedge \beta(u)) + xH^{-1}_e.
\end{equation*}
If $u$ is polyhomogeneous then it has a full asymptotic expansion
\begin{equation*}
	u \sim 
	x^{-f/2}(\alpha(u) + dx\wedge \beta(u))
	+ \sum_{\zeta \in \cE} a_{\zeta} x^{\zeta}
\end{equation*}
for some index set $\cE.$ From the form of the partial asymptotic expansion above, we must have $\Re\cE\geq (1-f)/2.$

Consider a differential form $u$ such that $u, du \in \cD_{\max}(d+\delta) \cap \cA_{\text{phg}}^*.$
Applying $d$ to the expansion of $u$ we have
\begin{equation*}
	du \sim x^{-f/2}(d_{\cT}^{0,1}\alpha(u) - dx \wedge d_{\cT}^{0,1}\beta(u) ) 
	+ \sum_{\zeta \in \cE} ((d^Za_{\zeta} + \zeta dx \wedge a_{\zeta}) x^{\zeta-1} + b_{\zeta} x^{\zeta})
\end{equation*}
on the other hand, we have 
\begin{equation*}
	du \sim 
	x^{-f/2}(\alpha(du) + dx\wedge \beta(du))
	+ \sum_{\zeta \in \cE'} c_{\zeta} x^{\zeta}
\end{equation*}
with $\Re\cE'\geq (1-f)/2.$ Comparing coefficients we must have
\begin{equation*}
	\alpha(du) = d_{\cT}^{0,1}\alpha(u) + d^Z\eta(u)
\end{equation*}
for some form $\eta(u).$ From the Kodaira decomposition, 
\begin{equation*}
	\ker (d_Z, \cD_{\max}(d_Z)) \cap \ker (\delta_Z, \cD_{\min}(\delta_Z))
	\oplus
	d_Z(\cD_{\max}(d_Z))
	\oplus
	\delta_Z(\cD_{\min}(\delta_Z))
\end{equation*}
we see that projecting $d_{\cT}^{0,1}\alpha(u) \in \ker(d_Z, \cD_{\max}(d_Z))$ onto the harmonic forms comes down to subtracting an exact form, say $d_{\cT}^{0,1}\alpha(u) = \nabla^{\cH}\alpha(u) + d^Z\eta'(u).$
Thus we have
\begin{equation*}
	\alpha(du) - \nabla^{\cH}\alpha(u) = d^Z(\eta(u)-\eta'(u)),
\end{equation*}
however, note that the left hand side is harmonic while the right hand side is exact. Another appeal to the Kodaira decomposition shows that both sides must vanish, so we have shown
\begin{equation*}
	\alpha(du) = \nabla^{\cH}\alpha(u).
\end{equation*}
Similar reasoning shows that whenever $v, \delta v \in \cD_{\max}(\eth_{\dR}) \cap \cA_{\text{phg}}^*,$ we have
\begin{equation*}
	\beta(\delta v) = - (\nabla^{\cH})^*\beta(v).
\end{equation*}

Now for any $u \in \cD_{\max}(d),$ let us take $v$ such that $v, \delta v \in \cD_{\max}(d+\delta) \cap \cA_{\text{phg}}^*,$ and consider the boundary pairings
\begin{equation*}
	[du,v]_d = -\ang{du, \delta v} = -[u, \delta v]_d.
\end{equation*}
On the one hand, we can write
\begin{equation*}
	[du,v]_d = \ang{\alpha( (du)_{\delta} ), \beta(v) }_Y,
\end{equation*}
while on the other we have
\begin{equation*}
	-[u, \delta v]_d
	= -\ang{ \alpha(u_{\delta}), \beta( \delta v) }_Y
	= \ang{ \alpha( u_{\delta}), (\nabla^{\cH})^*\beta(v) }_Y
	= \ang{ \nabla^{\cH} \alpha(u_{\delta}), \beta(v) }_Y.
\end{equation*}
Since from Lemma \ref{lem:PrescribedForms} $\beta(v)$ can be arbitrarily prescribed, we conclude that
\begin{equation*}
	\alpha( (du)_{\delta}) = \nabla^{\cH}\alpha(u_{\delta}).
\end{equation*}
Similarly we see that $\beta( (\delta v)_d ) = -(\nabla^{\cH})^* \beta( v_d).$
\end{proof}

Thus in \eqref{eq:Vanishing}, $\beta(u_{\delta})$ is in the closure of the image of $(\nabla^{\cH})^*$ while $\alpha(v_d)$ is in the closure of the image of $\nabla^{\cH}.$
Since the connection is flat these images are orthogonal, which explains \eqref{eq:Vanishing}.\\

Next consider Cheeger ideal boundary conditions, $\bB,$ corresponding to the flat bundle $W \lra Y.$
We define
\begin{equation*}
\begin{gathered}
	\cD_{\bB}^{\reg}(d) = \{ u \in \cD_{\max}(d) : u_{\delta} \in \cD_{\bB}^{\reg}(\eth_{\dR}) \}\\
	\cD_{\bB}^{\reg}(\delta) = \{ u \in \cD_{\max}(\delta) : u_d \in \cD_{\bB}^{\reg}(\eth_{\dR}) \}
\end{gathered}
\end{equation*}
and then set $\cD_{\bB}(d)$ and $\cD_{\bB}(\delta)$ equal to the closures in the respective graph norms.
The discussion above allows us to compute the adjoints of these domains and then their closures.

First, $v \in \cD_{\bB}(d)^*$ is equivalent to $[u,v]_d=0$ for all $u \in \cD_{\bB}^{\reg}(d),$ i.e., 
\begin{equation*}
	\ang{ \alpha(u_{\delta}), \beta(v_d) }_Y =0 \Mforall u \in \cD_{\bB}^{\reg}(d).
\end{equation*}
From Lemma \ref{lem:PrescribedForms}, we can prescribe $\alpha(u_{\delta})$ arbitrarily among smooth forms on $Y$ with coefficients in $W.$
Thus
\begin{equation*}
	\cD_{\bB}(d)^* = \{ v \in \cD_{\max}(\delta) : \beta(v_d) \text{ is a current on $Y$ with coefficients in $W^{\perp}$} \}.
\end{equation*}
This domain has a core given by
\begin{equation*}
	\cE_{\bB}(\delta) = \{ v \in \cD_{\max}(\delta) : \beta(v_d) \text{ is a smooth form on $Y$ with coefficients in $W^{\perp}$} \}.
\end{equation*}

It is easy to see that a form $u \in \cD_{\max}(d)$ will be in $\cD_{\bB}(d) = \cD_{\bB}^{\reg}(d)^{**} = \cE_{\bB}(\delta)^*$ precisely when 
\begin{equation*}
	[u,v]_d=0 \Mforall v \in \cE_{\bB}(\delta).
\end{equation*}
This pairing is equal to $\ang{\alpha(u_\delta), \beta(v_d)}_Y,$ and from Lemma \ref{lem:PrescribedForms} we can prescribe $\beta(v_d)$ arbitrarily,
so we conclude that
\begin{equation*}
	\cD_{\bB}(d) = \{ u \in \cD_{\max}(d): \alpha(u_\delta)  \text{ is a current on $Y$ with coefficients in $W$} \}.
\end{equation*}
A similar computation lets us identify $\cD_{\bB}(\delta)$ and we find $\cD_{\bB}(\delta) = \cD_{\bB}(d)^*.$\\

Since $(d, \cD_{\bB}(d))$ is a closed operator,
\begin{multline*}
	\cD_{\bB}(d) \cap \cD_{\bB}(d)^*
	= \{ u \in \cD_{\max}(d) \cap \cD_{\max}(\delta) : 
	\alpha(u_{\delta}) \text{ is a current on $Y$ with coefficients in $W,$ and }\\
 	\beta(u_d) \text{ is a current on $Y$ with coefficients in $W^{\perp}$} \}
\end{multline*}
is a closed domain for $\eth_{\dR}.$
Notice that since $u \in \cD_{\max}(d) \cap \cD_{\max}(\delta)$ is in $\cD_{\max}(\eth_{\dR})$ we can use observation \eqref{eq:SameExpansion} to replace $\alpha(u_{\delta})$ and $\beta(u_d)$ by $\alpha(u)$ and $\beta(u).$ Thus directly from its description we have
\begin{equation*}
	\cD_{\bB}(d) \cap \cD_{\bB}(d)^*
	= \cD_{\bB}(\eth_{\dR}) \cap \cD_{\max}(d) \cap \cD_{\max}(\delta).
\end{equation*}
Since this domain clearly contains
\begin{equation*}
	\cD_{\bB}^{\reg}(\eth_{\dR})
	= \cD_{\bB}^{\reg}(\eth_{\dR}) \cap \cD_{\max}(d) \cap \cD_{\max}(\delta)
\end{equation*}
and the only closed domain for $\eth_{\dR}$ containing $\cD_{\bB}^{\reg}(\eth_{\dR})$ is $\cD_{\bB}(\eth_{\dR}),$ we have the first part of the following proposition:

\begin{proposition} \label{prop:DeRhamCohoSimpleEdge}
Let $\hat X$ be a stratified pseudomanifold with a single singular stratum  and $\bB$ a Cheeger ideal boundary condition corresponding to a mezzoperversity at $Y.$
We have 
\begin{equation*}
	\cD_{\bB}(\eth_{\dR}) = \cD_{\bB}(d) \cap \cD_{\bB}(d)^*,
\end{equation*}
so in particular $(\eth_{\dR}, \cD_{\bB}(\eth_{\dR}))$ is self-adjoint.
Together with Theorem \ref{Thm:MainHodgeThm}, this shows that it is also Fredholm.\\

The differential forms in $\cD_{\bB}(d)$ together with $d$ form a Hilbert complex, whose cohomology we denote
\begin{equation*}
	\tH^*_{\bB}(\hat X).
\end{equation*}
Since $(\eth_{\dR}, \cD_{\bB}(\eth_{\dR}))$ is Fredholm, this is a Fredholm complex and we have
\begin{equation}\label{Kodaira1}
\begin{gathered}
	L^2(X;\Lambda^* (\Iie T^*X) )
	= \cH^*_{\bB}(\hat X)  \oplus d(\cD_{\bB}(\eth_{\dR})) \oplus \delta(\cD_{\bB}(\eth_{\dR})) \\
	\cH^*_{\bB}(\hat X) \cong \tH^*_{\bB}(\hat X).
\end{gathered}
\end{equation}

Moreover, the domain $\cD_{\bB}(d)$ and the cohomology $\tH^*_{\bB}(\hat X)$ are independent of the choice of metric (within suitably scaled rigid $\iie$ metrics).
\end{proposition}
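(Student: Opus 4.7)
The identity $\cD_{\bB}(\eth_{\dR}) = \cD_{\bB}(d) \cap \cD_{\bB}(d)^*$ is essentially already contained in the discussion preceding the proposition. From the explicit description of $\cD_{\bB}(d)$ and its adjoint $\cD_{\bB}(d)^* = \cD_{\bB}(\delta)$ obtained via Lemma \ref{lem:PrescribedForms}, together with the observation \eqref{eq:SameExpansion}, an element $u \in \cD_{\max}(d) \cap \cD_{\max}(\delta)$ lies in $\cD_{\bB}(d) \cap \cD_{\bB}(d)^*$ precisely when $\alpha(u) \in W$ and $\beta(u) \in W^{\perp}$; this matches the definition of $\cD_{\bB}(\eth_{\dR})$ once one recalls that $\cD_{\bB}^{\reg}(\eth_{\dR})$ is a core and uses \eqref{eq:DmaxregDecomp} to write its elements as a smooth model term plus an element of $\cD_{\min}(d) \cap \cD_{\min}(\delta)$. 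Self-adjointness of $(\eth_{\dR}, \cD_{\bB}(\eth_{\dR}))$ is then automatic since $(d, \cD_{\bB}(d))$ is closed, and the Fredholm property follows from the compact inclusion \eqref{eq:CompactInclusionDomain} of Theorem \ref{Thm:MainHodgeThm} applied to the essentially injective operator $\eth_{\dR}$.

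Next I would verify that $(d, \cD_{\bB}(d))$ is a Hilbert complex. Clearly $d^2=0$ on the dense core $\CIc(X;\Lambda^*(\Iie T^*X))$ and extends by closedness of $d$, but one must check that $d$ preserves $\cD_{\bB}(d)$, i.e.\ that $du$ again satisfies the ideal boundary condition. Here Lemma \ref{lem:AlphaNabla} is the key: $\alpha((du)_\delta) = \nabla^{\cH} \alpha(u_\delta)$, and since $W$ is $\nabla^{\cH}$-parallel this remains in $W$. With the complex established, the Kodaira-type decomposition \eqref{Kodaira1} and the Hodge isomorphism $\cH^*_{\bB}(\hat X) \cong \tH^*_{\bB}(\hat X)$ follow from the general theory of Fredholm Hilbert complexes (Br\"uning--Lesch), using that the associated Laplacian $\eth_{\dR}^2$ inherits compact resolvent from $\eth_{\dR}$.

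The substantive step is metric independence of both $\cD_{\bB}(d)$ and $\tH^*_{\bB}(\hat X)$. The plan is to use the natural Hodge-de Rham identification of the vertical Hodge bundle $\cH^{\mid}(\pa X/Y)$ with the metric-independent vertical de Rham bundle $\tH^{\mid}(\pa X/Y)$ (as flat bundles, intertwining $\nabla^{\cH}$ with $\nabla^{\tH}$, cf.\ \cite[Proposition 3.14]{Bismut-Lott} as cited in \S\ref{sec:SimpleEdgeBC}). Consequently the notion of a flat subbundle $W$ defining $\bB$ is intrinsic to the smooth stratified structure. Given two suitably scaled rigid $\iie$ metrics $g_0, g_1$ joined by a smooth path $g_t$ through such metrics (the set of rigid iie metrics, viewed fiberwise as the cone over admissible link metrics, is convex up to overall scaling), the fiberwise harmonic projectors $\Pi_0^{g_t}$ depend smoothly on $t$ via the Hodge-de Rham isomorphism; this transfers $W$ across metrics and gives an identification of $\cD_{\bB}(d, g_t)$ through the characterization in terms of $\alpha(u_\delta)$.

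For cohomology invariance, the cleanest route is a continuity-of-kernel argument: the family $(\eth_{\dR}^{g_t}, \cD_{\bB}(\eth_{\dR}, g_t))$ is a continuous family of self-adjoint Fredholm operators with discrete spectrum, and the dimension of $\ker = \cH^*_{\bB}$ is locally constant provided no eigenvalues cross zero. The main obstacle will be making this continuity rigorous, as it requires uniform control on the resolvent of $\eth_{\dR}^{g_t}$ as the metric varies, which in turn depends on uniform bounds for the indicial roots (Lemma \ref{lem:SuitableScaling}) and for the index sets of the amalgamated $0$-calculus parametrix of \S\ref{sec:InvNormal}. An alternative approach would be to compare $\tH^*_{\bB}(\hat X)$ directly to a metric-independent algebraic complex via a smoothing argument like \cite[Proposition 5.11]{ALMP}, which sidesteps spectral continuity at the cost of introducing a more combinatorial model; I expect the analytic continuity argument to generalize better to higher depth and hence to be preferable.
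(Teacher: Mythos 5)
Your treatment of the domain identity $\cD_{\bB}(\eth_{\dR}) = \cD_{\bB}(d) \cap \cD_{\bB}(d)^*$, of self-adjointness, of the Hilbert-complex property via Lemma~\ref{lem:AlphaNabla} and flatness of $W$, and of the Kodaira decomposition via Br\"uning--Lesch, all follow the paper's argument essentially verbatim; nothing to add there.

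The substantive divergence is in the proof of metric independence, where your plan is genuinely different from the paper's and, I think, both harder to carry out and weaker in what it delivers. You propose to join two admissible metrics by a smooth path $g_t$ and run a spectral-continuity/flow argument to show $\dim\ker\eth_{\dR}^{g_t}$ is locally constant, flagging (correctly) that uniform resolvent control along the path and connectedness of the space of suitably scaled rigid $\iie$ metrics would need to be established. Even if carried out, this would only show the cohomology dimensions agree; it would not directly produce a canonical isomorphism $\tH^*_{\bB}(\hat X;g)\cong\tH^*_{\bB}(\hat X;g')$, and it certainly would not show the stronger statement that the domain $\cD_{\bB}(d)$ itself is metric-independent, which is part of what the proposition asserts. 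The paper's argument is much more direct and sidesteps all of these issues: any two rigid $\iie$ metrics on the compact $\wt X$ are quasi-isometric, so $L^2_g = L^2_{g'}$ with equivalent norms and hence $\cD_{\max}(d;g)=\cD_{\max}(d;g')$ and $\cD_{\min}(d;g)=\cD_{\min}(d;g')$ literally as sets; one then uses \eqref{eq:DmaxregDecomp} to write each $u\in\cD_{\bB}^{\reg}(d;g)$ as the explicit polyhomogeneous model $\omega(\alpha(u_\delta),\beta(u_\delta))$ plus an element of $\cD_{\min}(d)$, observes that $\omega$ lies in $\cD_{\bB}^{\reg}(d;g')$ as well (after the Hodge--de Rham identification of the mezzoperversity, which you do correctly invoke), and concludes $\cD_{\bB}^{\reg}(d;g)=\cD_{\bB}^{\reg}(d;g')$; taking graph closures in the shared $L^2$ gives literal equality $\cD_{\bB}(d;g)=\cD_{\bB}(d;g')$ and hence equality of the Hilbert complexes and of the cohomology groups. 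This argument also generalizes to arbitrary depth (Theorem~\ref{thm:MetricIndep}) without any of the extra analytic machinery your perturbative plan would require, contrary to your expectation that the continuity argument scales better. I would strongly recommend the quasi-isometry route.
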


\begin{proof}
The discussion above the proposition shows that $\cD_{\bB}(\eth_{\dR}) = \cD_{\bB}(d) \cap \cD_{\bB}(d)^*.$
We know that $(d, \cD_{\bB}(d))$ is a closed operator, so it will form a Hilbert complex iff 
\begin{equation*}
	d(\cD_{\bB}(d)) \subseteq \cD_{\bB}(d)
\end{equation*}
which, from the definition of $\cD_{\bB}(d),$ is equivalent to 
\begin{equation*}
\begin{gathered}
	\alpha(u_{\delta})  \text{ is a current on $Y$ with coefficients in $W$} \\
	\implies \alpha( (du)_{\delta} ) = \nabla^{\cH}( \alpha(u_{\delta}) )
	\text{ is a current on $Y$ with coefficients in $W,$}
\end{gathered}
\end{equation*}
which follows from the flatness of $W.$
The well-known general theory of Hilbert complexes of Br\"uning-Lesch \cite{Bruning-Lesch} implies that this complex is a Fredholm complex satisfying a Kodaira decomposition and a Hodge theorem \eqref{Kodaira1}.\\

For metric independence, let us start with $u \in \cD_{\bB}^{\reg}(d)$ compactly supported in a distinguished neighborhood $\cU_q$ of $q \in Y.$
If we denote $u_0 = u - u_{\delta},$ then we know that $u_0 \in \ker (d, \cD_{\min}(d))$ and that $u_{\delta} \in \cD_{\max}^{\reg}(\eth_{\dR}).$
So from \eqref{eq:DmaxregDecomp}, $u - \omega( \alpha(u_{\delta}), \beta(u_{\delta}) ) \in \cD_{\min}(d).$

Now if $g'$ is another suitably scaled rigid $\iie$ metrics on $X,$ then it is quasi-isometric to $g$ since they both define metrics on $\Iie T^*X$ over all of $\wt X$ which is compact.
Thus the sets $L^2_g(\hat X, \Lambda^*(\Iie T^*X))$ and $L^2_{g'}(\hat X, \Lambda^*(\Iie T^*X))$ coincide and the $L^2$-norms are equivalent. Directly from the definitions we see that
\begin{equation*}
	\cD_{\max}(d;g) = \cD_{\max}(d;g') 
	\Mand 
	\cD_{\min}(d;g) = \cD_{\min}(d;g').
\end{equation*}
The form $\omega( \alpha(u_{\delta}), \beta(u_{\delta}) )$ is in $\cD_{\max}^{\reg}(d;g')$ with leading tangential term $\alpha(u_{\delta}),$ hence it is in $\cD_{\bB}^{\reg}(d;g'),$
and so from the description above we have $u \in \cD_{\bB}^{\reg}(d;g').$
Finally, the graph closure of $\cD_{\bB}^{\reg}(d)$ with respect to $g$ is the same as the graph closure with respect to $g',$ so we conclude that
\begin{equation*}
	\cD_{\bB}(d;g) = \cD_{\bB}(d;g').
\end{equation*}
\end{proof}

\subsection{General depth} \label{sec:GralDepthDeRham}
Now consider a general stratified space $\hat X$, with strata $Y^1, \ldots, Y^{k+1}$ and Cheeger ideal boundary conditions 
with mezzoperversity, $\bB$. Recall that for any $u \in \cD_{\max}(d)$ we denote $u_{\delta}$ the orthogonal projection 
onto $\bar{\delta(\cD_{\max}(\delta))},$ and that $u_{\delta}\in \cD_{\max}(\eth_{\dR}).$ Similarly given $v \in \cD_{\max}(d)$ 
we denote the orthogonal projection onto $\bar{d(\cD_{\max}(d))}$ by $v_d$ and this is an element of $\cD_{\max}(\eth_{\dR}).$ Define
\begin{equation*}
\begin{gathered}
	\cD_{\bB}^{\reg}(d) = \{ u \in \cD_{\max}(d) : u_{\delta} \in \cD_{\bB}^{\reg}(\eth_{\dR}) \}\\
	\cD_{\bB}^{\reg}(\delta) = \{ u \in \cD_{\max}(\delta) : u_d \in \cD_{\bB}^{\reg}(\eth_{\dR}) \}
\end{gathered}
\end{equation*}
and then set ${\cD_{\bB}(d)}$ and ${\cD_{\bB}(\delta)}$ equal to the closures in the respective graph norms. 
In this section we will show that 
$\cD_{\bB}(\eth_{\dR}) = \cD_{\bB}(d) \cap \cD_{\bB}(d)^*,$ so that the de Rham cohomology and Hodge cohomologies with Cheeger ideal boundary conditions coincide.
In the next section we will show that surprisingly the de Rham cohomology is actually independent of the choice of metric (among suitably scaled, rigid $\iie$ metrics).\\

For $w \in \cD_{\bB}(\eth_{\dR})$ we will denote the leading term at the stratum $Y^k$ by $\alpha_k(w) + dx\wedge \beta_k(w).$
We will make strong use of the boundary pairing $[\cdot,\cdot]_d$ from the previous section.

We start with $v \in \cD_{\bB}(d)^*$ which is equivalent to $v \in \cD_{\max}(\delta)$ and $[u,v]_d = 0$ for all $u \in \cD_{\bB}^{\reg}(d).$
We know that $v_d \in \cD_{\max}(\eth_{\dR})$ and so it has a distributional asymptotic expansion at $Y^1.$
Choosing $u \in \cD_{\bB}^{\reg}(d)$ supported in a distinguished neighborhood of a point $q \in Y^1$ we see that
\begin{equation*}
	[u,v]_d = \ang{\alpha_1(u_{\delta}), \beta_1(v_d) }_{Y^1}
\end{equation*}
and as before we can choose $\alpha_1(u_{\delta})$ arbitrarily so we must have
\begin{equation*}
	\cD_{\bB}(d)^* \subseteq 
	\cD_{\max, B^1}(\delta) = \{ v \in \cD_{\max}(\delta) : \beta_1(v_d) \text{ is a current on $Y^1$ with coefficients in $(W^1)^{\perp}$} \}.
\end{equation*}
We have a similar domain for $d$
\begin{equation*}
	\cD_{\max, B^1}(d) = \{ u \in \cD_{\max}(d) : \alpha_1(u_{\delta}) \text{ is a current on $Y^1$ with coefficients in $W^1$} \}
\end{equation*}
and since $(d, \cD_{\max, B^1}(d))$ is a closed operator extending $(d, \cD_{\bB}^{\reg}(d))$ we must have $\cD_{\bB}(d) \subseteq \cD_{\max, B^1}(d).$
Thus we have shown that to determine $\cD_{\bB}(d)^*,$ we can restrict the pairing $[\cdot,\cdot]_d$ to
\begin{equation*}
	\cD_{\max, B^1}(d) \times \cD_{\max, B^1}(\delta) \ni (u,v) \mapsto [u,v]_d \in \bbC.
\end{equation*}

\begin{lemma}\label{lem:DefMinDomB1}
In terms of
\begin{equation*}
	\dCI_{Y^1} = \dCI_{Y^1}(X;\Lambda^*(\Iie T^*X)) = \{ u \in \CIc(\hat X; \Lambda^*(\Iie T^*X)) : \supp u \subseteq X \cup Y^1 \}
\end{equation*}
the domains
\begin{equation*}
\begin{gathered}
	\cD_{\min,B^1}(\delta) = \{ v \in \cD_{\max, B^1}(\delta) : 
	\exists (v_n) \subseteq \dCI_{Y^1} \cap \cD_{\max, B^1}(\delta) \Mst v_n \to v, \Mand (\delta v_n) \text{ Cauchy} \}, \\
	\cD_{\min, B^1}(d) = \{ u \in \cD_{\max, B^1}(d) : 
	\exists (u_n) \subseteq \dCI_{Y^1} \cap \cD_{\max, B^1}(d) \Mst u_n \to u, \Mand (d u_n) \text{ Cauchy} \}.
\end{gathered}
\end{equation*}
are, respectively, equal to $\cD_{\max, B^1}(d)^*$ and $\cD_{\max, B^1}(\delta)^*.$
\end{lemma}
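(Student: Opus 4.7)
The plan is to prove both equalities by treating the two inclusions separately; by the symmetry $d \leftrightarrow \delta$, it suffices to establish $\cD_{\min, B^1}(\delta) = \cD_{\max, B^1}(d)^*$, and the statement for $\cD_{\min, B^1}(d)$ follows by an entirely parallel argument.

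For the inclusion $\cD_{\min, B^1}(\delta) \subseteq \cD_{\max, B^1}(d)^*$, take $v \in \cD_{\min, B^1}(\delta)$ with an approximating sequence $v_n \in \dCI_{Y^1} \cap \cD_{\max, B^1}(\delta)$. Each $v_n$ is smooth on $X$ and its support, being contained in $X \cup Y^1$, is disjoint from $Y^2, \ldots, Y^{k+1}$; in particular, the only stratum where a boundary contribution can appear is $Y^1$. Localizing the argument of Lemma \ref{lem:StokesThmEdge} around $Y^1$, for any $u \in \cD_{\max, B^1}(d)$ we get $A_d(u, v_n) = \langle \alpha_1(u_\delta), \beta_1((v_n)_d) \rangle_{Y^1}$. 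By definition of $\cD_{\max, B^1}(d)$, the current $\alpha_1(u_\delta)$ takes values in $W^1$, while by definition of $\cD_{\max, B^1}(\delta)$ the (smooth) form $\beta_1((v_n)_d)$ takes values in $(W^1)^\perp$; the fiberwise orthogonality of $W^1$ and $(W^1)^\perp$ makes each pairing vanish. Passing to the limit using $v_n \to v$ in $L^2$ and $\delta v_n \to \delta v$ in $L^2$ yields $A_d(u,v) = 0$, so $v \in \cD_{\max, B^1}(d)^*$.

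For the reverse inclusion $\cD_{\max, B^1}(d)^* \subseteq \cD_{\min, B^1}(\delta)$, take $v \in \cD_{\max, B^1}(d)^*$ and construct an approximating sequence in two stages. First, the dual characterization forces $v$ to have no nonzero leading terms in its distributional expansion at any of the strata $Y^j$ for $j \geq 2$: by testing $A_d(u,v) = 0$ against forms $u$ of the type produced by (the higher-depth analog of) Lemma \ref{lem:PrescribedForms}, which have arbitrary prescribed leading coefficients at $Y^j$ and no restriction imposed by $B^1$ because $Y^j$ is disjoint from $Y^1$, we read off that the corresponding coefficients of $v_d$ must vanish. Combining this with Theorems \ref{thm:WitlessDecay}/\ref{thm:WittyDecay} applied inductively at each $Y^j$, $j \geq 2$, yields extra decay $v \in \rho_j^{\eps_j} L^2$ for some $\eps_j > 0$. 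This decay allows a standard cutoff: let $\chi_\delta$ equal one outside $\delta$-collar neighborhoods of $\bigcup_{j\geq 2} Y^j$ and zero close to them, with $|d\chi_\delta|_g \lesssim \rho_j^{-1}$; then $\chi_\delta v \to v$ in $L^2$, and $\|[\delta, \chi_\delta] v\|_{L^2}$ is controlled by $\|\rho_j^{-1} v\|_{L^2(\{\supp d\chi_\delta\})} \to 0$ thanks to the extra $\rho_j^{\eps_j}$ weight, so $\chi_\delta v \to v$ in the graph norm of $\delta$. Second, at $Y^1$ we apply tangential mollification exactly as in the proof of Lemma \ref{lem:RegularDomain}, which commutes with $B^1$ (since $W^1$ is flat and the mollifier acts along $Y^1$ where the trivialization of $\cH^{\mid}(Z)$ is used), followed by a standard cutoff/convolution in the normal variable $x$ that preserves the $(W^1)^\perp$-valued condition on $\beta_1$. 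The resulting forms lie in $\dCI_{Y^1} \cap \cD_{\max, B^1}(\delta)$ and converge to $v$ in the graph norm.

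The main obstacle is the first stage of the second inclusion: extracting from the abstract duality condition $v \in \cD_{\max, B^1}(d)^*$ the concrete quantitative decay $v \in \rho_j^{\eps_j} L^2$ at each higher stratum $Y^j$. Qualitatively this is the statement that imposing no boundary condition on $u$ at $Y^j$ forces $v$ to ``satisfy the minimal boundary condition'' there; the quantitative version needs the parametrix-based decay results of \S\ref{sec:ModelOps} together with the integration-by-parts trick of Lemma \ref{lem:IntByParts}, applied in the partially completed setting where only $B^1$ has been imposed. Once the decay is in hand the cutoff commutator estimate is routine, and the final mollification step is standard.
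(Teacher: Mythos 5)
Your first inclusion $\cD_{\min, B^1}(\delta) \subseteq \cD_{\max, B^1}(d)^*$ follows the paper's route through the boundary pairing $A_d$ and the orthogonality of $W^1$ and $(W^1)^\perp$ at $Y^1$. One wrinkle: Lemma~\ref{lem:StokesThmEdge} requires one of the two entries in the pairing to lie in the regular domain, and smoothness of $v_n$ does not obviously make $(v_n)_d$ regular; the clean version takes $u \in \cD_{\max, B^1}^{\reg}(d)$, which is graph-dense, exactly as the paper does.

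Your reverse inclusion has a genuine gap. You try to exhibit an explicit approximating sequence in $\dCI_{Y^1} \cap \cD_{\max, B^1}(\delta)$ for an arbitrary $v \in \cD_{\max, B^1}(d)^*$, and your first stage needs extra decay $v \in \rho_j^{\eps_j}L^2$ at each higher stratum. But Theorems~\ref{thm:WitlessDecay} and~\ref{thm:WittyDecay} concern $\eth_{\dR}$ with a \emph{complete} set of Cheeger ideal boundary conditions imposed at every stratum; here $v$ is only known to lie in $\cD_{\max, B^1}(\delta)$ and need not lie in $\cD_{\max}(\eth_{\dR})$ at all, so those theorems simply do not apply. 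Producing such decay from the adjoint condition alone, with only the $B^1$ condition in force and only $\delta v \in L^2$ known, would require substantial new estimates that you have not supplied.

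This difficulty is avoidable: the paper never constructs an approximating sequence. By the same boundary-pairing computation (restricted to $v$ supported near $Y^1$, using Lemma~\ref{lem:PrescribedForms} to prescribe $\beta(v)$ while driving $\|v\|_{L^2}$ to zero), one shows that any $u$ making $v \mapsto \ang{u,\delta v}$ continuous on $\dCI_{Y^1} \cap \cD_{\max, B^1}(\delta)$ necessarily lies in $\cD_{\max, B^1}(d)$; that is,
\begin{equation*}
	\lrpar{ \delta, \dCI_{Y^1} \cap \cD_{\max, B^1}(\delta) }^* \subseteq \lrpar{ d, \cD_{\max, B^1}(d) }.
\end{equation*}
Taking adjoints once more reverses the inclusion, and since the double adjoint of a densely defined operator is its graph closure, this immediately gives $\cD_{\max, B^1}(d)^* \subseteq \cD_{\min, B^1}(\delta)$, with no decay estimates, cutoffs, or mollification. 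Replace your entire second stage with this abstract argument.
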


\begin{proof}
Notice that $\cD_{\min, B^1}(\delta)$ is a closed domain for $\delta$ since it is the graph closure of $\dCI_{Y^1} \cap \cD_{\max, B^1}(\delta).$
If $u \in \cD_{\max, B^1}^{\reg}(d)$ and $v \in \cD_{\min, B^1}(\delta)$ then 
\begin{equation*}
	[u,v]_d = \lim [u, v_n]_d = \lim \ang{ \alpha_1(u_\delta), \beta_1((v_n)_d) }_{Y^1} = 0
\end{equation*}
so $\cD_{\min, B^1}(\delta) \subseteq (\cD_{\max, B^1}(d))^*.$

On the other hand,if $u \in L^2(X; \Lambda^*(\Iie T^*X))$ is such that 
\begin{equation*}
	\dCI_{Y^1} \cap \cD_{\max, B^1}(\delta) \ni v \mapsto \ang{u, \delta v} \text{ is continuous }
\end{equation*}
then restricting to those $v$ supported in a distinguished neighborhood of $q \in Y^1$ we have a continuous map $v \mapsto \ang{du,v} + \ang{\alpha(u_\delta), \beta(v)}_Y$
Letting the $L^2$-norm of $v$ go to zero while keeping $\beta(v)$ fixed shows that we must have $\ang{\alpha(u_\delta), \beta(v)}_Y=0,$ and hence that $u \in \cD_{\max, B^1}(d).$ Thus we have shown that
\begin{equation*}
	\lrpar{ \delta, \dCI_{Y^1} \cap \cD_{\max, B^1}(\delta) }^* \subseteq (d, \cD_{\max, B^1}(d))
\end{equation*}
and so $(\cD_{\max, B^1}(d))^*$ is contained in the closure of $\lrpar{ \delta, \dCI_{Y^1} \cap \cD_{\max, B^1}(\delta) }$ which is $\cD_{\min, B^1}(\delta).$
Putting these together,
\begin{equation*}
	\cD_{\min, B^1}(\delta) = (\cD_{\max, B^1}(d))^*,
\end{equation*}
and reversing the roles of $d$ and $\delta$ yields
\begin{equation*}
	\cD_{\min, B^1}(d) = (\cD_{\max, B^1}(\delta))^*.
\end{equation*}
\end{proof}

Since $\cD_{\max, B^1}(d)$ is a closed domain we have
\begin{equation*}
	L^2(X; \Lambda^* \Iie T^*X) = \ker (\delta, \cD_{\min, B^1}(\delta)) \oplus \bar{ d(\cD_{\max, B^1}(d)) }.
\end{equation*}
Given $v \in \cD_{\max,B^1}(\delta)$ let us denote its projection onto the second summand by 
\begin{equation*}
	v_{d,B^1} \in \bar{ d(\cD_{\max,B^1}(d))}.
\end{equation*}
Notice that, for any $v \in \cD_{\max, B^1}(\delta),$ we have $\delta v = \delta v_{d, B^1}$ 
and $v_{d, B^1} \in \ker (d, \cD_{\max, B^1}(d))$ so $v_{d, B^1} \in \cD_{\max}(\eth_{\dR})$
and in particular $v_{d, B^1}$ has an asymptotic expansion at $Y^1.$
Since $v_{d, B^1}$ differs from $v_d$ by an element of $\ker(\delta, \cD_{\min, B^1}(\delta))$ we see that
\begin{equation*}
	\beta_1(v_{d,B^1}) \text{ is a current on $Y$ with coefficients in $(W^1)^{\perp}$}
\end{equation*}
and since $v_{d, B^1} \in \bar{ d(\cD_{\max,B^1}(d))},$ we have, using the analogue of Lemma \ref{lem:AlphaNabla} and that $W^1$ is a  flat bundle,
\begin{equation*}
	\alpha_1(v_{d,B^1}) \text{ is a current on $Y$ with coefficients in $(W^1).$}
\end{equation*}
Thus altogether we have shown the important fact that
\begin{equation*}
	v \in \cD_{\max,B^1}(\delta) \implies v_{d,B^1} \in \cD_{\max, B^1}(\eth_{\dR})
\end{equation*}
which shows that $v_{d,B^1}$ has a distributional asymptotic expansion at $Y^2.$

Let us similarly use the decomposition
\begin{equation*}
	L^2(X; \Lambda^* \Iie T^*X) = \ker (d, \cD_{\min, B^1}(d)) \oplus \bar{ \delta(\cD_{\max, B^1}(\delta)) }
\end{equation*}
to define $u_{\delta, B^1}$ to be the projection of $u \in \cD_{\max}(d)$ onto the second summand.
We have
\begin{equation*}
	u \in \cD_{\max, B^1}(d) \implies u_{\delta, B^1} \in \cD_{\max, B^1}(\eth_{\dR}).
\end{equation*}
The pairing $[\cdot,\cdot]_d,$ restricted to $\cD_{\max, B^1}(d) \times \cD_{\max, B^1}(\delta)$ satisfies
\begin{equation*}
	[u,v]_d = [u_\delta, v_d]_d = [ u_{\delta, B^1}, v_{d, B^1} ]_d.
\end{equation*}
If $u \in \cD_{\bB}^{\reg}(d)$ is supported in a distinguished neighborhood of $q \in Y^2$ then, just as in Lemma \ref{lem:StokesThmEdge}, Stokes' theorem shows that
\begin{equation*}
	[u,v]_d = \ang{ \alpha_2( u_{\delta, B^1} ), \beta_2( v_{d, B^1} ) }
\end{equation*}
and, as in Lemma \ref{lem:PrescribedForms}, we may pick $\alpha_2( u_{\delta, B^1} )$ arbitrarily among the smooth sections of $W^2,$ so we conclude that
\begin{multline*}
	\cD_{\bB}(d)^* \subseteq 
	\cD_{\max, (B^1,B^2)}(\delta) = \{ v \in \cD_{\max}(\delta) : 
	\beta_1(v_d) \text{ is a current on $Y^1$ with coefficients in $(W^1)^{\perp}$} \\ \Mand
	\beta_2(v_{d,B^1}) \text{ is a current on $Y^2$ with coefficients in $(W^2)^{\perp}$} \}.
\end{multline*}
We have a similar domain for $d,$
\begin{multline*}
	\cD_{\max, (B^1,B^2)}(d) = \{ u \in \cD_{\max}(d) : 
	\alpha_1(u_\delta) \text{ is a current on $Y^1$ with coefficients in $W^1$} \\ \Mand
	\alpha_2(u_{\delta,B^1}) \text{ is a current on $Y^2$ with coefficients in $W^2$} \},
\end{multline*}
and reasoning as above shows that
\begin{equation*}
	\cD_{\bB}(\delta)^* \subseteq \cD_{\max, (B^1,B^2)}(d).
\end{equation*}
$ $

We now continue inductively in this way to define 
\begin{equation*}
	\cD_{\max,(B^1, \ldots, B^k)}(d), \quad
	\cD_{\max,(B^1, \ldots, B^k)}(\delta)
\end{equation*}
for all $1 \leq k < T,$ and show that
\begin{equation*}
	\cD_{\bB}(d) \subseteq \cD_{\max,(B^1, \ldots, B^k)}(d), \quad
	\cD_{\bB}(d)^* \subseteq \cD_{\max, (B^1, \ldots, B^k)}(\delta).
\end{equation*}
Once this is done for the first $T-1$ strata, then the argument above shows 
\begin{equation*}
	\cD_{\bB}(d)^* = \cD_{\max, (B^1, \ldots, B^T)}(\delta)
\end{equation*}
since at this stage we have $v \in \cD_{\bB}(d)^*$ if and only if $[u,v]_d=0$ for all $u \in \cD_{\bB}^{\reg}(d)$ supported in a distinguished neighborhood of a point $q \in Y^T.$
This final domain has a useful core domain
\begin{multline*}
	\cE_{\bB}(\delta) = 
	\{ v \in \cD_{\max}(\delta) : 
	\beta_1(v_d) \text{ is a smooth form on $Y^1$ with coefficients in $(W^1)^{\perp}$}, \\
	\beta_2(v_{d,B^1}) \text{ is a smooth form on $Y^2$ with coefficients in $(W^2)^{\perp}$}, \\
	\ldots\\
	\beta_T(v_{d,(B^1,\ldots, B^{T-1})}) \text{ is a smooth form on $Y^T$ with coefficients in $(W^T)^{\perp}$}
	 \}.
\end{multline*}
Computations analogous to those carried out above show that
\begin{equation*}
	\cD_{\bB}(d) = \cE_{\bB}(\delta)^* = \cD_{\max, (B^1, \ldots, B^T)}(d)
\end{equation*}
and that $\cD_{\bB}(\delta) = \cD_{\max, (B^1, \ldots, B^T)}(\delta).$

Finally, directly from the definition of $\cD_{\bB}(\eth_{\dR})$ we can conclude that
\begin{equation*}
	\cD_{\bB}(d)\cap \cD_{\bB}(d)^* = \cD_{\bB}(\eth_{\dR}) \cap \cD_{\max}(d) \cap \cD_{\max}(\delta).
\end{equation*}
To see that the final two intersections are superfluous we can proceed as we did on spaces of depth one.
Indeed, the same argument we gave there shows that
\begin{equation*}
	\cD_{\max, B^1}^{\reg}(\eth_{\dR}) \subseteq \cD_{\max,B^1}(d) \cap \cD_{\max,B^1}(\delta)
\end{equation*}
and iterating we find
\begin{equation*}
	\cD_{\bB}^{\reg}(\eth_{\dR}) \subseteq \cD_{\bB}(d) \cap \cD_{\bB}(d)^*.
\end{equation*}
Now since the latter space is a closed domain for $\eth_{\dR}$ it must contain the closure of the former, namely $\cD_{\bB}(\eth_{\dR}).$

\begin{theorem}\label{thm:SelfDual}
If $(\hat X,g)$ is a stratified pseudomanifold with a suitably scaled $\iie$ metric and $W^i \lra Y^i$ flat bundles forming a mezzoperversity with associated Cheeger ideal boundary conditions $\bB,$
then $(d, \cD_{\bB}(d))$ forms a Hilbert complex with dual complex $(\delta, \cD_{\bB}(\delta))$ and associated de Rham operator
\begin{equation*}
	(\eth_{\dR}, \cD_{\bB}(\eth_{\dR}))
\end{equation*}
which is thus self-adjoint.
In particular, using Theorem \ref{Thm:MainHodgeThm}, this operator is Fredholm and so $(d, \cD_{\bB}(d))$ is a Fredholm complex with corresponding Kodaira decomposition and Hodge theorem:
\begin{equation}\label{Kodaira2}
\begin{gathered}
	L^2(X;\Lambda^* (\Iie T^*X) )
	= \cH^*_{\bB}(\hat X)  \oplus d(\cD_{\bB}(\eth_{\dR})) \oplus \delta(\cD_{\bB}(\eth_{\dR})) \\
	\cH^*_{\bB}(\hat X) \cong \tH^*_{\bB}(\hat X).
\end{gathered}
\end{equation}
\end{theorem}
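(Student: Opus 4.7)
The plan is to assemble this theorem from the structural results already developed in Section \ref{sec:L2Coho}. The discussion preceding the theorem has already identified $\cD_{\bB}(d)$ with $\cD_{\max,(B^1,\ldots,B^T)}(d)$, established that $\cD_{\bB}(d)^* = \cD_{\bB}(\delta)$, and verified the identity
\begin{equation*}
\cD_{\bB}(d) \cap \cD_{\bB}(\delta) = \cD_{\bB}(\eth_{\dR}).
\end{equation*}
What remains is threefold: first, to check that $d$ preserves $\cD_{\bB}(d)$, so that $(d,\cD_{\bB}(d))$ is an honest Hilbert complex; second, to observe that the associated de Rham operator is $(\eth_{\dR},\cD_{\bB}(\eth_{\dR}))$; and third, to deduce Fredholmness together with the Kodaira decomposition and Hodge isomorphism.

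For the first point, the closure of the complex under $d$ follows from the flatness of each $W^j$. In the depth-one case this is exactly Proposition \ref{prop:DeRhamCohoSimpleEdge}, whose proof uses Lemma \ref{lem:AlphaNabla}: $\alpha((du)_{\delta}) = \nabla^{\cH}\alpha(u_{\delta})$, and since $W^1$ is parallel, if $\alpha(u_{\delta})$ takes values in $W^1$ then so does $\alpha((du)_{\delta})$. The same scheme applied inductively yields, at the $j$-th stratum, the identity
\begin{equation*}
\alpha_j\bigl((du)_{\delta, B^1, \ldots, B^{j-1}}\bigr) = \nabla^{\cH}_j \, \alpha_j\bigl(u_{\delta, B^1, \ldots, B^{j-1}}\bigr),
\end{equation*}
after which the flatness of $W^j$ under $\nabla^{\cH}_j$ closes the argument stratum by stratum.

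The second point is immediate from the identity $\cD_{\bB}(d) \cap \cD_{\bB}(\delta) = \cD_{\bB}(\eth_{\dR})$ combined with $\cD_{\bB}(\delta) = \cD_{\bB}(d)^*$, which together say that the Laplacian-type operator of the Hilbert complex is exactly $(\eth_{\dR}, \cD_{\bB}(\eth_{\dR}))$; in particular this operator is self-adjoint. For the third point, once the Hilbert complex structure is established, the abstract framework of Br\"uning--Lesch applies: the complex is Fredholm precisely when its associated Laplacian is Fredholm, which is Theorem \ref{Thm:MainHodgeThm}. The Kodaira decomposition
\begin{equation*}
L^2(X;\Lambda^*(\Iie T^*X)) = \cH^*_{\bB}(\hat X) \oplus \overline{d(\cD_{\bB}(d))} \oplus \overline{\delta(\cD_{\bB}(\delta))}
\end{equation*}
and the identity $\cH^*_{\bB}(\hat X) \cong \tH^*_{\bB}(\hat X)$ then follow from general Hilbert complex theory, with the closures removed because the ranges are closed by the Fredholm property.

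The main obstacle will be the inductive step that $d$ preserves $\cD_{\bB}(d)$. Propagating the identity $\alpha_j((du)_{\delta,\ldots}) = \nabla^{\cH}_j \alpha_j(u_{\delta,\ldots})$ from depth one to arbitrary depth requires care because the partial projections $u \mapsto u_{\delta, B^1, \ldots, B^{j-1}}$ depend nonlocally on the lower-depth boundary data. One must argue that replacing $u$ by $u_{\delta, B^1, \ldots, B^{j-1}}$ does not alter the leading Cauchy coefficient at $Y^j$; the key observation is that the difference lies in $\ker(d, \cD_{\min,(B^1,\ldots,B^{j-1})}(d))$, which by the analogue of Lemma \ref{lem:DefMinDomB1} contributes no asymptotic data at $Y^j$. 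Combined with a polyhomogeneous approximation via the regular subdomains $\cD_{\bB}^{\reg}(\eth_{\dR})$ of Lemma \ref{lem:RegularDomain}, the Lemma \ref{lem:AlphaNabla} computation extends verbatim.
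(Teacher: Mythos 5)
Your proposal matches the paper's approach: the theorem is assembled from the structural identities established in \S\ref{sec:GralDepthDeRham} (that $\cD_{\bB}(d) = \cD_{\max,(B^1,\ldots,B^T)}(d)$, that $\cD_{\bB}(\delta) = \cD_{\bB}(d)^*$, and that $\cD_{\bB}(d)\cap\cD_{\bB}(d)^* = \cD_{\bB}(\eth_{\dR})$), the Hilbert-complex closure property $d(\cD_{\bB}(d))\subseteq\cD_{\bB}(d)$ follows from flatness of the $W^j$ via the higher-depth analogue of Lemma~\ref{lem:AlphaNabla}, and Br\"uning--Lesch together with Theorem~\ref{Thm:MainHodgeThm} gives the Fredholm complex, Kodaira decomposition, and Hodge isomorphism. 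This is exactly the paper's route, extending the depth-one Proposition~\ref{prop:DeRhamCohoSimpleEdge}.

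One detail in your last paragraph is not quite the right mechanism: the higher-depth analogue of Lemma~\ref{lem:AlphaNabla}, namely $\alpha_j\bigl((du)_{\delta,B^1,\ldots,B^{j-1}}\bigr) = \nabla^{\cH}_j\,\alpha_j\bigl(u_{\delta,B^1,\ldots,B^{j-1}}\bigr)$, is established not by arguing that the projection difference in $\ker(d,\cD_{\min,(B^1,\ldots,B^{j-1})}(d))$ "contributes no asymptotic data" --- that component need not even have well-defined Cauchy data at $Y^j$ --- but by running the boundary-pairing argument of Lemma~\ref{lem:AlphaNabla} directly at $Y^j$, with $A_d$ replaced by $A_{d,(B^1,\ldots,B^{j-1})}$ and using the ability (the analogue of Lemma~\ref{lem:PrescribedForms} at depth $j$) to prescribe $\beta_j(v_{d,(B^1,\ldots,B^{j-1})})$ arbitrarily among forms with coefficients in $\cH^{\mid}_{W^1,\ldots,W^{j-1}}(H^j/Y^j)$. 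The analogue of identity~\eqref{eq:SameExpansion}, which does say the projection leaves the leading coefficient unchanged, is a consequence of this pairing argument rather than an input to it. The overall plan is nevertheless sound.
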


\subsection{Metric independence}

Let $(\hat X,g)$ be a stratified pseudomanifold with a rigid, suitably scaled $\iie$ metric, with Hodge mezzoperversity 
\begin{equation*}
	\cW = (W^1 \lra Y^1, \ldots, W^T \lra Y^{T})
\end{equation*}
as in Definition \ref{Def:FlatSystem}, with associated Cheeger ideal boundary conditions $\bB.$
Recall that the definition of the mezzoperversity is that
\[
	W^j \text{ is a flat subbundle of }\cH^{\tfrac12 \dim Z^j}(H^j/Y^j), \qquad j = 1, \ldots,T. 
\]

We now want to show that the de Rham cohomology is independent of the choice of appropriate $\iie$ metric. 
To state this properly, we need to rephrase the boundary conditions slightly since the Hodge cohomology 
will not be independent of the choice of metric.
\begin{definition}
Let $(\hat X,g)$ be a stratified pseudomanifold, as above. 
A {\bf de Rham mezzoperversity} consists of a list of bundles
\begin{equation*}
	\cW = \{ W^1 \lra Y^1, \ldots, W^T \lra Y^T \}
\end{equation*}
with
\[
	W^j \text{ flat subbundle of }\tH^{\tfrac12 \dim Z^j}_{W^1, \ldots, W^{j-1}}(H^j/Y^j), \quad j = 1, \ldots, T.
\]
\end{definition}
Given a metric $g$, a Hodge mezzoperversity $\cW$ is equivalent to a de Rham mezzoperversity; we denote this by $[\cW],$ 
since we have shown that the Hodge cohomology groups coincide with the de Rham cohomology groups. The following result, 
using the notation from \S \ref{sec:GralDepthDeRham}, allows us to identify domains defined using different metrics.

\begin{lemma}\label{lem:CoefIsExact}
Let $(\hat X, g, \bB)$ be a stratified pseudomanifold with a suitably scaled rigid $\iie$ metric and Hodge mezzoperversity.
Let $x$ be a boundary defining function for $Y^{k+1},$ $Z$ the link of $\hat X$ at $Y^{k+1}$ and $f = \dim Z.$

If $w \in \ker( d, \cD_{\min, (B^1, \ldots, B^k)}(d))$ has an expansion at $Y^{k+1}$ with leading term 
\begin{equation*}
	x^{-f/2}(\alpha(w) + dx \wedge \beta(w)), \quad \Mwhere \alpha(w), \beta(w) \in \ker d^Z
\end{equation*}
then $\alpha(w) \in d^Z(\cD_{\bB(Z)}(d^Z)).$
\end{lemma}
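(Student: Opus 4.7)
The plan is to exploit the fact that $w$ being in the minimal domain (with respect to $Y^{k+1}$) forces the boundary pairing against a large class of test forms to vanish, and then extract from this pairing an orthogonality statement that, combined with the inductively available Kodaira decomposition on the link $Z$, pins down $\alpha(w)$ as vertically exact.

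First I would set up the boundary pairing. By definition of $\cD_{\min,(B^1,\ldots,B^k)}(d)$ there exist $w_n$ satisfying the first $k$ boundary conditions, with support away from $Y^{k+1}$, converging to $w$ in the graph norm. For any $v \in \cD_{\max,(B^1,\ldots,B^k)}(\delta)$, the identity $\ang{dw_n,v}=\ang{w_n,\delta v}$ holds by Stokes on the manifold $X$ (the support of $w_n$ avoids $Y^{k+1}$, and the pairings at $Y^1,\ldots,Y^k$ vanish because both $w_n$ and $v$ satisfy the Cheeger ideal boundary conditions there, exactly as in the depth-one calculation behind Lemma~\ref{lem:StokesThmEdge}, iterated through the induction of \S\ref{sec:GralDepthDeRham}). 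Passing to the limit and using $dw=0$, I obtain
\begin{equation*}
\ang{w,\delta v}=0 \quad \text{for every } v \in \cD_{\max,(B^1,\ldots,B^k)}(\delta).
\end{equation*}

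Next I would localize near a point $q\in Y^{k+1}$ and evaluate this pairing as a boundary term at $Y^{k+1}$. Restricting to $v$ whose projection $v_d$ lies in the analog of $\cD_{\max}^{\reg}(\eth_{\dR})$ near $q$ with leading expansion $x^{-f/2}(\alpha(v_d)+dx\wedge\beta(v_d))$, the same truncation-and-Stokes argument used in Lemma~\ref{lem:StokesThmEdge} now yields
\begin{equation*}
\ang{w,\delta v} = \ang{\alpha(w),\beta(v_d)}_{Y^{k+1}},
\end{equation*}
where the right-hand pairing is in $L^2$ of $Y^{k+1}$ with coefficients in the vertical Hodge bundle $\cH^{\mid}_{\bB(Z)}(H^{k+1}/Y^{k+1})$; the powers $x^{-f/2}\cdot x^{-f/2}$ together with the volume factor $x^f$ kill the $\log\eps$ divergence at $x=\eps$ and produce a genuine boundary integral. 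By a local construction analogous to Lemma~\ref{lem:PrescribedForms}(ii), applied to the simple-edge analysis near $Y^{k+1}$ combined with the induction hypothesis that $Z$ has a Hodge theory with Cheeger boundary conditions $\bB(Z)$, one produces test forms $v$ whose $\beta(v_d)$ realize arbitrary smooth sections of $\Lambda^{*}(\Iie T^{*}Y^{k+1})\otimes \cH^{\mid}_{\bB(Z)}(H^{k+1}/Y^{k+1})$ of compact support near $q$.

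Combining these two steps, the fiberwise harmonic projection of $\alpha(w)$ vanishes at every $q$: that is, for almost every $q$ in $Y^{k+1}$, $\alpha(w)(q)\perp \cH^{\mid}_{\bB(Z_q)}(Z_q)$ inside $L^2(Z_q;\Lambda^{\mid}(\Iie T^{*}Z_q))$. The inductive Hodge theorem on $Z_q$ (Theorem~\ref{thm:SelfDual} on spaces of depth $\leq k$, which is available by the induction underlying Theorem~\ref{Thm:MainHodgeThm}) gives the Kodaira decomposition
\begin{equation*}
\ker (d^{Z_q},\cD_{\bB(Z_q)}(d^{Z_q})) = \cH^{\mid}_{\bB(Z_q)}(Z_q)\oplus d^{Z_q}\bigl(\cD_{\bB(Z_q)}(\eth^{Z_q}_{\dR})\bigr).
\end{equation*}
Since $\alpha(w)(q)$ lies in the left-hand side (by the hypothesis $\alpha(w)\in\ker d^{Z}$, interpreted fiberwise; the regularity of the coefficient needed to restrict to individual fibers comes from smoothing $w$ tangentially as in Lemma~\ref{lem:RegularDomain}, which preserves both $\ker d$ and the minimal domain property), and is orthogonal to the harmonic summand, we conclude $\alpha(w)(q)\in d^{Z_q}(\cD_{\bB(Z_q)}(d^{Z_q}))$, as required.

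The main obstacle is the rigorous identification of the boundary pairing as $\ang{\alpha(w),\beta(v_d)}_{Y^{k+1}}$ when $w$ is only in the distributional-expansion class of Lemma~\ref{lem:ExpansionExistence} at $Y^{k+1}$; one must either mollify tangentially to bring $w$ into a regular subdomain and use continuity of the leading coefficient (as in Lemma~\ref{lem:StokesThmEdge}), or work directly with the Mellin-transform residue calculus used to extract that coefficient. A secondary technical point is constructing the test forms $v$ with prescribed $\beta(v_d)$ at $Y^{k+1}$ while simultaneously realizing the boundary conditions $(B^1,\ldots,B^k)$ at lower strata, which again reduces to the inductive availability of the Hodge theory on $Z$ plus a local extension analogous to item (iii) of Lemma~\ref{lem:PrescribedForms}.
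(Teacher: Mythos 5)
Your proposal is correct and follows essentially the same route as the paper's proof: you establish $\ang{w,\delta v}=0$ via the fact that $w\in\ker(d,\cD_{\min,(B^1,\ldots,B^k)}(d))$ is orthogonal to the image of $\delta$, evaluate this as a boundary pairing $\ang{\alpha(w),\beta(v_{d,\bB})}$ at $Y^{k+1}$, use the ability to prescribe $\beta(v_{d,\bB})$ freely among harmonic-coefficient forms, and then invoke the Kodaira decomposition on $Z$ to conclude that a closed form orthogonal to the harmonics is exact. The only differences are cosmetic: you re-derive the orthogonality $\ang{w,\delta v}=0$ directly from an approximating sequence $w_n$ supported away from $Y^{k+1}$, whereas the paper simply quotes $A_{d,(B^1,\ldots,B^k)}(w,\cdot)=0$; and you should be careful that the pairing $\ang{\alpha(w),\beta(v_{d,\bB})}$ is the $L^2$ pairing over the hypersurface $(H^{k+1},g)$ rather than a pairing of harmonic-coefficient forms descending to $Y^{k+1}$, since $\alpha(w)$ is closed but not harmonic — a point the paper flags explicitly, and which you handle implicitly by passing to the fiberwise statement $\alpha(w)(q)\perp\cH^{\mid}_{\bB(Z_q)}(Z_q)$.
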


\begin{proof}
Consider the pairing
\begin{equation*}
	[\cdot,\cdot]_{d, (B^1, \ldots, B^k)}: \cD_{\max, (B^1, \ldots, B^k)}(d) \times \cD_{\max, (B^1, \ldots, B^k)}(\delta) \lra \bbC
\end{equation*}
given as usual by $(u,v)\mapsto \ang{du,v}-\ang{u,\delta v}.$
Since $\ker( d, \cD_{\min, (B^1, \ldots, B^k)}(d))$ is orthogonal to the image of $\delta,$ $\delta(\cD_{\max, (B^1, \ldots, B^k)}(\delta)),$ we have
\begin{equation*}
	[w,\cdot]_{d, (B^1, \ldots, B^k)} = 0.
\end{equation*}

If $v \in \cD_{\max, (B^1, \ldots, B^k)}^{\reg}(\delta)$ is supported in a distinguished neighborhood of $q \in Y^{k+1}$ then we have
\begin{equation*}
	0 = [w,v]_{d, (B^1, \ldots, B^k)} = \lim_{\eps \to 0} \ang{w,v}_{\eps} 
	= \ang{ \alpha(w), \beta(v_{d, (B^1, \ldots, B^k)}) }_{(H,g)}.
\end{equation*}
(Note that since $\alpha(w)$ is not a harmonic form, this pairing does not descend to $Y^{k+1}.$)
As this vanishes for all $v$ and we can prescribe $\beta(v_{d, (B^1, \ldots, B^k)})$ arbitrarily -- among $\iie$-forms on $Y$ with coefficients in the harmonic forms on $Z$ satisfying the boundary conditions $(B^1, \ldots, B^k)$ -- we can conclude that $\alpha(w)$ is a closed form perpendicular to harmonic forms, i.e., exact:
\begin{equation*}
	\alpha(w) \in d^Z(\cD_{\bB(Z)}(d^Z)),
\end{equation*}
since $d^Z(\cD_{\bB(Z)}(d^Z))$ is closed.
\end{proof}

\begin{theorem}\label{thm:MetricIndep}
Let $\hat X$ be a stratified pseudomanifold as above. 
Let $g$ and $g'$ are suitably scaled rigid $\iie$ metrics on $\hat X$ with Hodge mezzoperversities $\cW$ and $\cW',$ 
respectively, so that $\cW$ and $\cW'$ are both equivalent to the same de Rham mezzoperversity, $[\cW]$. Then  
\begin{equation*}
	\cD_{\cW}(d;g) = \cD_{\cW'}(d;g'), \quad \Mand \quad
	\cH^*_{\cW}(\hat X, g) \cong \cH^*_{\cW'}(\hat X, g').
\end{equation*}

Thus the de Rham cohomology groups $\tH^*_{[\cW]}(\hat X)$ are independent of the choice of suitably scaled rigid $\iie$ metric.
\end{theorem}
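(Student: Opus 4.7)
The plan is to proceed by induction on the depth of $\hat X$, with the depth-one case being Proposition \ref{prop:DeRhamCohoSimpleEdge}. The guiding principle is that although the Hodge bundles $\cH^{\mid}_{(W^1,\ldots,W^{j-1})}(H^j/Y^j)$ depend on the metric $g$, they are canonically isomorphic (fiberwise) to the de Rham bundles $\tH^{\mid}_{(W^1,\ldots,W^{j-1})}(H^j/Y^j)$ by Theorem \ref{thm:SelfDual}, and the flat structure making $W^j$ parallel is intrinsic to the de Rham side. Thus the condition characterizing $\cD_\cW(d;g)$ --- namely that the leading coefficient $\alpha_j$ be a current with values in $W^j_g$ --- is equivalent to the metric-independent condition that the de Rham class of $\alpha_j$ lie in the flat subbundle $[W^j]$.

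Since $g$ and $g'$ are both rigid $\iie$ metrics on the compact manifold-with-corners $\wt X$, they are quasi-isometric, so $L^2_g(X;\Lambda^*(\Iie T^*X))=L^2_{g'}(X;\Lambda^*(\Iie T^*X))$ as sets with equivalent norms, and $\cD_{\max}(d;g)=\cD_{\max}(d;g')$, $\cD_{\min}(d;g)=\cD_{\min}(d;g')$. Furthermore Corollary \ref{cor:DeRhamExp} shows that the indicial exponent $-f/2$ and the form of the asymptotic expansion at each stratum depend only on the $\iie$ structure. Assuming the theorem for stratified pseudomanifolds of depth $\leq k$, take $u \in \cD_\cW^{\reg}(d;g)$ supported near a point $q \in Y^{k+1}$ with leading coefficient $\alpha_{k+1}(u_\delta) \in W^{k+1}_g$. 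By the inductive hypothesis applied to the link $Z^{k+1}_q$, which has depth $\leq k$, the domain $\cD_{\bB(Z)}(d)$ and the de Rham cohomology $\tH^{\mid}_{\bB(Z)}(Z^{k+1}_q)$ are metric-independent, so there exists a unique $g'$-harmonic representative $\alpha' \in W^{k+1}_{g'}$ of the same de Rham class, and the difference $\alpha_{k+1}(u_\delta) - \alpha'$ is $d^{Z^{k+1}}$-exact relative to $\cD_{\bB(Z^{k+1})}(d^{Z^{k+1}})$, by Lemma \ref{lem:CoefIsExact} together with the Kodaira decomposition of Theorem \ref{thm:SelfDual} applied to the link.

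The main step is to absorb this exact difference into a correction term $\omega \in \cD_{\min,(B^1,\ldots,B^k)}(d) \cap \cD_{\min,(B^1,\ldots,B^k)}(\delta)$ so that $u - \omega \in \cD_{\cW'}(d;g')$. Writing $\alpha_{k+1}(u_\delta) - \alpha' = d^Z\eta$ with $\eta$ a primitive in the link domain, I would use the higher-depth analogue of Lemma \ref{lem:PrescribedForms}(ii): set $v(\eta) = x^{-f/2+1}\phi\,\eta/(-f/2+1)$ (or the logarithmic version when $f=2$) where $\phi$ is a suitable cutoff. The element $v(\eta)$ lies in $\cD_{\min}(d) \cap \cD_{\min}(\delta)$ at the stratum $Y^{k+1}$ because its leading coefficient vanishes, while $d\,v(\eta)$ contributes precisely the exact form $d^Z\eta$ to the leading coefficient. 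Then $u - v(\eta)$ has leading coefficient $\alpha'$ at $Y^{k+1}$, hence satisfies the $g'$-boundary condition; the graph closure gives $\cD_\cW(d;g) \subseteq \cD_{\cW'}(d;g')$, and the reverse inclusion follows by symmetry. The cohomological isomorphism $\cH^*_\cW(\hat X, g) \cong \cH^*_{\cW'}(\hat X, g')$ is then immediate from Theorem \ref{thm:SelfDual}, since both equal the common de Rham cohomology $\tH^*_{[\cW]}(\hat X)$ of a single Hilbert complex.

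The main obstacle is the compatibility of corrections across strata: when adjusting the leading coefficient at $Y^{k+1}$, one must ensure that the correction $v(\eta)$ does not violate the Cheeger ideal boundary conditions $(B^1,\ldots,B^k)$ already imposed at lower-depth strata, and more subtly that the primitive $\eta$ can itself be chosen in the inductively-constructed domain $\cD_{\bB(Z^{k+1})}(d^{Z^{k+1}})$. The resolution uses the structure of the mezzoperversity: because $W^{k+1}$ is a flat subbundle of $\cH^{\mid}_{(W^1,\ldots,W^k)}(H^{k+1}/Y^{k+1})$ --- whose very definition already incorporates the previous boundary data --- and because the Kodaira decomposition on $Z^{k+1}_q$ (granted by the inductive application of Theorem \ref{thm:SelfDual}) produces a primitive $\eta$ in the correct domain, the correction term automatically respects all lower-stratum conditions. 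Flatness of the $W^j$ ensures the construction globalizes from distinguished neighborhoods to all of $Y^{k+1}$ via a partition of unity argument.
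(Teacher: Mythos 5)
Your overall strategy --- induction on depth with base case Proposition~\ref{prop:DeRhamCohoSimpleEdge}, quasi-isometry to identify $L^2$ spaces and min/max domains, and an appeal to Lemma~\ref{lem:CoefIsExact} to control the leading coefficient --- matches the paper's. But the central mechanism, the correction term $v(\eta)$, does not do what you claim, and this is a genuine gap.

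You construct $v(\eta)$ so that its leading coefficient at order $x^{-f/2}$ vanishes (indeed you use this to put $v(\eta)$ in $\cD_{\min,(B^1,\ldots,B^k)}(d)\cap\cD_{\min,(B^1,\ldots,B^k)}(\delta)$). But precisely because $v(\eta)$ lies in the minimal domain, subtracting it from $u$ cannot change the relevant Cauchy datum: for any $v$ in $\cD_{\min}(d)$ one has $\alpha(v_{\delta_{g'}})=0$, so $\alpha\bigl((u-v(\eta))_{\delta_{g'}}\bigr)=\alpha(u_{\delta_{g'}})$. Your claim ``$u-v(\eta)$ has leading coefficient $\alpha'$'' conflates the leading coefficient of $d\,v(\eta)$ (which is indeed $d^Z\eta$, up to cutoff) with that of $v(\eta)$ itself (which is zero). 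You also invoke ``the higher-depth analogue of Lemma~\ref{lem:PrescribedForms}(ii),'' but item~(ii) concerns $\beta(dv(\eta))$ for harmonic-valued $\eta$ of middle degree, not the tangential coefficient for a primitive $\eta$ of degree $f/2-1$; the construction the paper's depth-one proof actually uses to produce a form with prescribed \emph{leading} coefficient is $\omega(a,b)$ from item~(iii), which is not what you are mimicking.

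The paper's proof sidesteps this by never attempting to correct $u$. Instead, since $u-u_{\delta_g}$ and $u-u_{\delta_{g'}}$ both lie in $\ker(d,\cD_{\min}(d))$, so does $w_{g,g'}=u_{\delta_g}-u_{\delta_{g'}}$; applying Lemma~\ref{lem:CoefIsExact} to $w_{g,g'}$ directly (inductively at each stratum, with $\cD_{\min,(B^1,\ldots,B^k)}(d)$) gives $\alpha_j(w_{g,g'})$ exact, hence $[\alpha_j(u_{\delta_g})]=[\alpha_j(u_{\delta_{g'}})]$ in the de Rham bundle. Combined with the characterization $\cD_{\max,B^j}(d;g)=\{u:[\alpha_j(u_{\delta_g})]\text{ is a current with coefficients in }[W^j]\}$ via the Hodge theorem, this immediately yields $\cD_{\max,(B^1,\ldots,B^j)}(d;g)=\cD_{\max,(B^1,\ldots,B^j)}(d;g')$, with the minimal domains then equal via Lemma~\ref{lem:DefMinDomB1}. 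You should replace the correction argument by this direct application of Lemma~\ref{lem:CoefIsExact} to $w_{g,g'}$; the rest of your outline (the cohomological conclusion via Theorem~\ref{thm:SelfDual}, symmetry for the reverse inclusion) is then fine.
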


\begin{proof}
It suffices to prove that $\cD_{\cW}(d;g) = \cD_{\cW'}(d;g'),$ as this implies that the de Rham cohomologies are isomorphic and hence so are the Hodge cohomologies.\\

If $\hat X$ is a space of depth one, then the result was proven in Proposition \ref{prop:DeRhamCohoSimpleEdge}. We now inductively carry out the same argument in the general case.

Since $g$ and $g'$ are quasi-isometric, we have
\begin{equation*}
	\cD_{\max}(d,g) = \cD_{\max}(d,g') = \cD_{\max}(d), \quad 
	\cD_{\min}(d,g) = \cD_{\min}(d,g') = \cD_{\min}(d).
\end{equation*}
Given $u \in \cD_{\max}(d)$ its projection $u_{\delta}$ from \eqref{eq:DefuDelta} will be different for $g$ or $g',$ so we denote them by $u_{\delta_g}$ and $u_{\delta_{g'}}$ respectively. We know that $u-u_{\delta_g}$ and $u - u_{\delta_{g'}}$ are both in $\ker (d, \cD_{\min}(d)),$ hence so is
\begin{equation*}
	w_{g,g'} = u_{\delta_g} - u_{\delta_{g'}}.
\end{equation*}
Since both $u_{\delta_g}$ and $u_{\delta_{g'}}$ have asymptotic expansions at $Y^1$ of the sort required in Lemma \ref{lem:CoefIsExact}, we can conclude that
\begin{equation*}
	\alpha_1(w_{g,g'}) \in d^{Z^1}( \cD(d^{Z^1}) )
\end{equation*}
and so we have the crucial fact 
\begin{equation*}
	[\alpha(u_{\delta_g})] = [\alpha(u_{\delta_{g'}})] \Min \tH^{\mid}(H^1/Y^1).
\end{equation*}
Note that the Hodge theorem allows us to write
\begin{equation*}
	\cD_{\max, B^1}(d;g) = \{ u \in \cD_{\max}(d) : [\alpha_1(u_{\delta_g})] \text{ is a current on $Y^1$ with coefficients in $[W^1]$} \} 
\end{equation*}
and so our observation implies
\begin{equation*}
	\cD_{\max, B^1}(d;g) = \cD_{\max, B^1}(d;g') = \cD_{\max, B^1}(d).
\end{equation*}
Then from Lemma \ref{lem:DefMinDomB1} it follows that
\begin{equation*}
	\cD_{\min, B^1}(d;g) = \cD_{\min, B^1}(d;g') = \cD_{\min, B^1}(d).
\end{equation*}

Inductively the same argument shows that once we know 
\begin{equation*}
	\cD_{\max, (B^1, \ldots, B^k)}(d;g) = \cD_{\max, (B^1, \ldots, B^k)}(d;g') = \cD_{\max, (B^1, \ldots, B^k)}(d)
\end{equation*}
then we see that
\begin{equation*}
	\cD_{\min, (B^1, \ldots, B^k)}(d;g) = \cD_{\min, (B^1, \ldots, B^k)}(d;g') = \cD_{\min, (B^1, \ldots, B^k)}(d)
\end{equation*}
and from Lemma \ref{lem:CoefIsExact} that
\begin{equation*}
	[\alpha(u_{\delta_g, (B^1, \ldots, B^k)})]
	= [\alpha(u_{\delta_{g'}, (B^1, \ldots, B^k)})]
	\Min \tH^{\mid}_{B^1, \ldots, B^k}(H^{k+1}/Y^{k+1})
\end{equation*}
for all $u \in \cD_{\max, (B^1, \ldots, B^k)}(d).$
This in turn shows that
\begin{equation*}
	\cD_{\max, (B^1, \ldots, B^k, B^{k+1})}(d;g) = \cD_{\max, (B^1, \ldots, B^k, B^{k+1})}(d;g') = \cD_{\max, (B^1, \ldots, B^k, B^{k+1})}(d)
\end{equation*}
completing the induction and the proof.
\end{proof}

An easy corollary is the invariance of de Rham cohomology under stratified diffeomorphism.
We show in \cite{ALMP13.2} the much less straightforward invariance under stratified homotopy equivalence.

Let $F: \wt X \lra \wt M$ be a stratified diffeomorphism with inverse map $G: \wt M \lra \wt X.$ 
Recall that this means that $F, G$ are smooth maps intertwining the boundary fibration structures and mutually inverse to one another.
Since $F$ intertwines the boundary fibration structures, it defines maps between the $\iie$-tangent bundles and the $\iie$-differential forms,
\begin{equation*}
\begin{gathered}
	DF: \Iie T\wt X \lra \Iie T\wt M\\
	F^*: 
	\CIc(\wt M; \Lambda^*(\Iie T^*\wt M)) \lra
	\CIc(\wt X; \Lambda^*(\Iie T^*\wt X)).
\end{gathered}
\end{equation*}
If we endow $\wt M$ with an $\iie$ metric, the first map can be used to define an $\iie$ metric on $\wt X,$ and with respect to these metrics the second map is an $L^2$-isometry. Since all $\iie$ metrics are quasi-isometric this shows that, regardless of which $\iie$ metrics are used, $F^*$ extends to a bounded isomorphism
\begin{equation*}
	F^*:
	L^2(\wt M; \Lambda^*(\Iie T^*\wt M)) \lra
	L^2(\wt X; \Lambda^*(\Iie T^*\wt X)).
\end{equation*}
Since $F^*$ intertwines the exterior derivatives, it defines isomorphisms
\begin{equation*}
	F^*:\cD_{\min}(d_M) \lra \cD_{\min}(d_X), \quad
	F^*:\cD_{\max}(d_M) \lra \cD_{\max}(d_X).
\end{equation*}
If $\hat X$ (and hence $\hat M$) are Witt, then it follows that $F^*$ descends to an isomorphism in cohomology.

\begin{theorem}\label{thm:StratDiffInv}
A stratified diffeomorphism  $F: \wt X \lra \wt M$ induces by pull-back a bijection between de Rham mezzoperversities,
\begin{equation*}
	\cW_M \leftrightarrow F^*\cW_M = \cW_X
\end{equation*}
and corresponding isomorphisms between de Rham cohomology groups,
\begin{equation*}
	F^*:\tH_{\cW_M}^*(\hat M) \lra \tH_{F^*\cW_M}(\hat X).
\end{equation*}
\end{theorem}

\begin{proof}
We induct over the depth of $\hat X,$ with the base case being smooth manifolds.

Suppose $\hat X$ has depth $k$ and we have established the theorem for space of depth less than $k.$
Let $\cW_M = (\cW_M', W_M(k))$ be a mezzo-perversity over $M$ with $\cW_M'$ the restriction to strata of depth less than $k.$
Let $Y\subseteq \hat X$ be a stratum of depth $k$ with link $Z,$ and let $N$ be the corresponding stratum of $\hat M$ with link $R.$
Let $H_Y$ and $H_N$ denote the corresponding boundary hypersurfaces, so that $F$ restricts to a fiber bundle map
\begin{equation*}
	\xymatrix 
	{H_Y \ar[d]^{\phi_Y} \ar[r]^{F_H} & H_N \ar[d]^{\phi_N} \\
	Y \ar[r]^{F_Y} & N }
\end{equation*}
and denote by $F_{Z_q}: \wt Z_q \lra \wt R_{F_Y(q)}$ the induced map on the fibers.
By inductive hypothesis each $F_{Z_q}$ induces via pull-back a mezzoperversity $\cW_X'$ on $Z_q$ and an isomorphism between de Rham cohomology groups
\begin{equation*}
	F_{Z_q}^*:\tH_{\cW_M'}^*(\hat R_{F_Y(q)}) \lra \tH_{\cW_X'}(\hat Z_q).
\end{equation*}
These maps fit together to define a vector bundle map
\begin{equation*}
	F_{H/Y}^*:\tH_{\cW_M'}^*(H_N/N) \lra \tH_{\cW_X'}(H_Y/Y)
\end{equation*}
which, because $F_H^*$ intertwines the exterior derivatives on $H_Y$ and $H_N,$ intertwines the flat connections.
Thus the pull-back of the flat bundle $W_M(k)$ defines a flat subbundle $F_{H/Y}^*W_M(k) = W_X(k),$ and thus pull-back by $F$ of the mezzoperversity $\cW_M$ determines a mezzoperversity $\cW_X.$

By localization and inductive hypothesis, pull-back by $F$ restricts to an isomorphism
\begin{equation*}
	F^*:\cD_{\max, \cW_M'}(d_M) \lra \cD_{\max, \cW_X'}(d_X).
\end{equation*}
Since the flat bundles $W_M(k)$ and $W_X(k)$ are related by pull-back by $F_{H/Y},$ we see that
\begin{equation*}
	F^*:\cD_{\cW_M}^{\reg}(d_M) \lra \cD_{\cW_X}^{\reg}(d_X).
\end{equation*}
By continuity of $F$ this must extend to the closure, $\cD_{W_M}(d_M),$ and since $F$ has an inverse this extension must be an isomorphism
\begin{equation*}
	F^*:\cD_{\cW_M}(d_M) \lra \cD_{\cW_X}(d_X).
\end{equation*}
It follows that the induced map on cohomology is an isomorphism, as required.
\end{proof}

\section{Generalized Poincar\'e Duality} \label{sec:SignLag} 
On any smooth, oriented, closed manifold $M,$ there is a natural intersection pairing
\begin{equation*}
	Q: \CI(M;\Lambda^*M) \times \CI(M;\Lambda^*M) \lra \bbR, \quad
	Q(\eta, \omega) = \int \eta \wedge \omega;
\end{equation*}
the signature of this pairing is by definition the signature of the manifold. Note that by Stokes' theorem
\begin{equation*}
	Q(d\eta, \omega) = \pm Q(\eta, d\omega)
\end{equation*}
and so if one of $\eta,$ $\omega$ is closed and the other exact then $Q(\eta, \omega)=0.$
Thus $Q$ descends to a map
\begin{equation*}
	Q: \tH^*(M) \times \tH^*(M) \lra \bbR.
\end{equation*}
Poincar\'e duality on $M$ is the assertion that this quadratic form is non-degenerate.
If we endow $M$ with a Riemannian metric $g$ and denote the Hodge star by $*,$ then non-degeneracy is immediate from
\begin{equation*}
	Q(\eta, *\eta) = \norm{\eta}^2_{L^2}.
\end{equation*}

The generalized Poincar\'e duality of Goresky-MacPherson \cite{GM1} is, for every stratified pseudomanifold, a non-degenerate map
\begin{equation*}
	\tI_{\bar p}\tH^*(\hat X) \times \tI_{\bar q}\tH^*(\hat X) \lra \bbR
\end{equation*}
induced by the usual intersection product, but involving two dual perversities $\bar p$ and $\bar q.$
In \cite{GM2} Goresky-MacPherson adopted a sheaf hypercohomology approach and showed that this generalized Poincar\'e duality 
can be deduced from Verdier duality. For Witt spaces, the intersection cohomology groups with `lower middle'
$\bar m$ and `upper middle' $\bar n$ perversities coincide, so the generalized Poincar\'e duality statement involves 
only a single sequence of groups. Cheeger \cite{Cheeger:Hodge} showed that Poincar\'e duality could be realized as the same integration 
map $Q$ on $L^2$ differential forms for an $\iie$ metric. For non-Witt spaces with conic singularities, he also 
indicated  \cite{Cheeger:Conic}  how to incorporate ideal boundary conditions to refine Poincar\'e duality. The sheaf approach has 
been extended by Banagl \cite{BanaglShort} to a refined Poincar\'e duality using Verdier duality.

In this section we generalize Cheeger's analytic approach to generalized Poincar\'e duality results to non-Witt spaces. 
Given a mezzoperversity $\cW,$ we introduce a dual mezzoperversity $\sD\cW$ and show that there is a non-degenerate 
intersection pairing 
\begin{equation*}
	Q: \tH^*_{\cW}(\hat X) \times \tH^*_{\sD\cW}(\hat X) \lra \bbR
\end{equation*}
realizing a refined generalized Poincar\'e duality.
In companion papers \cite{ALMP13.2, ABLMP} we will show that this is consistent with Banagl's sheaf theoretic approach.

\subsection{Dual mezzoperversities} 
Recall from the end of \S\ref{sec:SimpleEdgeBC} the construction of the natural flat connection on the vertical cohomology of a fibration of closed manifolds.
If we assume that $M$ is the total space of a fibration $F - M \xlra{\phi} B$ with all spaces closed and oriented, then we have a natural push-forward
\begin{equation*}
	\phi_*: \CI(M, \Lambda^*T^*M) \lra \CI(B;\Lambda^{*-f}T^*B)
\end{equation*}
given by integration over the fibers.
This factors as 
\begin{equation*}
	\CI(M, \Lambda^*T^*M) \lra \CI(M; \Lambda^*\phi^*(T^*B) \hat\otimes \Lambda^fT^*M/B) \lra \CI(B;\Lambda^*T^*B),
\end{equation*}
and satisfies
\begin{equation}\label{eq:PropPhi}
	d_B \phi_* = \phi_* d_M, \quad \phi_* d_{M/B} = 0. 
\end{equation}
We can use $\phi_*$ to define the family intersection pairing
\begin{equation*}
	Q_{M/B}:\CI(B;\tH^*(M/B)) \times \CI(B;\tH^*(M/B)) \lra \CI(B;\Lambda^0T^*B) = \CI(B)
\end{equation*}
by using the surjection
\begin{equation*}
	\CI(M;\Lambda^*M) \cap \ker d_{M/B} \xlra{\psi} \CI(B;\Lambda^*T^*B \hat \otimes \tH^*(M/B))
\end{equation*}
and then setting
\begin{equation*}
	Q_{M/B}(\psi(\eta), \psi(\omega)) = \phi_*( \eta \wedge \omega ).
\end{equation*}
Notice that this pairing is non-degenerate and from \eqref{eq:PropPhi} that
\begin{equation*}
	d_BQ_{M/B}(\cdot, \cdot) = Q_{M/B}(\nabla^{\tH}\cdot, \cdot) + Q_{M/B}(\cdot, \nabla^{\tH}\cdot).
\end{equation*}
In particular this shows that whenever $W \subseteq \tH^*(M/B)$ is a flat sub-bundle then so is $W^{\perp_Q},$ the $Q_{M/B}$-orthogonal complement of $W.$\\

At the end of \S\ref{sec:TwoEdgeBC} we discussed how one can adapt the construction of the connection $\nabla^{\tH}$ 
to the setting of $L^2$ cohomology with ideal boundary conditions. That discussion extends to this context and shows that 
\begin{multline*}
	W \text{ flat subbundle of } \tH^{\mid}_{W^1, \ldots, W^k}(H^{k+1}/Y^{k+1}) \\
	\implies
	W^{\perp_Q} \text{ flat subbundle of } \tH^{\mid}_{W^1, \ldots, W^k}(H^{k+1}/Y^{k+1}).
\end{multline*}
Notice that we can rewrite $Q_{H/Y}$ in terms of the vertical Hodge star $*=*_{H/Y}$ and the vertical $L^2$-inner product, namely 
\begin{equation*}
	Q(\eta, \omega) = \ang{\eta, *\omega}_{L^2}
\end{equation*}
so 
\begin{equation*}
	W^{\perp_Q} = * W^{\perp}.
\end{equation*}
$ $

\begin{proposition}\label{prop:DualMezzo}
Let $(\hat X,g, \bB)$ be a stratified pseudomanifold with a suitably scaled $\iie$-metric and Cheeger ideal boundary conditions $\bB$ corresponding to a Hodge mezzoperversity 
\begin{equation*}
	\cW = \{ W^j \lra Y^j \},
\end{equation*}
and for each $j,$ let $\sD W^j$ be the $Q_{H^j/Y^j}$-orthogonal complement of $W^j.$
Then 
\begin{equation*}
	\sD\cW = \{ \sD W^j \lra Y^j \}
\end{equation*}
is a Hodge mezzoperversity and if we denote the corresponding Cheeger ideal boundary conditions by $\sD\bB$ then the Hodge star $*$ defines bounded involutions
\begin{equation}\label{eq:HodgeStarInv}
	*: \cD_{\bB}(\eth_{\dR}) \lra \cD_{\sD \bB}(\eth_{\dR}), \quad
	*: \cD_{\bB}(d) \lra \cD_{\sD \bB}(d).
\end{equation}
\end{proposition}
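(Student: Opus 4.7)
\textbf{Proof plan for Proposition \ref{prop:DualMezzo}.}

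The plan is to induct on the depth of the stratum, proving simultaneously that $\sD\cW$ is a Hodge mezzoperversity and that the vertical Hodge star at each level intertwines the relevant domains. At the base of the induction (depth $1$), the fibration $H^1 \to Y^1$ has smooth closed fibers, $*_{Z^1}$ maps $\cH^{\mid}(Z^1_q)$ to itself, and by the observation immediately preceding the statement, the $Q$-orthogonal complement $\sD W^1 = *(W^1)^{\perp}$ is flat inside $\cH^{\mid}(H^1/Y^1)$. For the inductive step at $Y^{k+1}$, I will first apply the inductive hypothesis to the fiber $Z^{k+1}_q$, which is a stratified space of depth $k$ carrying an induced (suitably scaled, rigid) $\iie$ metric and the inherited Cheeger ideal boundary conditions coming from $W^1,\ldots,W^k$: the inductive statement gives a bounded involution
\[
*_{Z^{k+1}_q}: \cD_{W^1,\ldots,W^k}(\eth_{\dR}^{Z^{k+1}_q}) \lra \cD_{\sD W^1,\ldots,\sD W^k}(\eth_{\dR}^{Z^{k+1}_q}),
\]
which, restricted to the kernel, identifies $\cH^{\mid}_{W^1,\ldots,W^k}(H^{k+1}/Y^{k+1})$ with $\cH^{\mid}_{\sD W^1,\ldots,\sD W^k}(H^{k+1}/Y^{k+1})$. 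This is what is needed to make sense of $\sD W^{k+1} := *(W^{k+1})^{\perp}$ as a subbundle of the correct bundle; its flatness then follows from the $Q$-orthogonal complement discussion preceding the proposition, applied to this $L^2$-vertical cohomology bundle (the flat connection $\nabla^{\cH}$ still exists in this setting by the construction in \S\ref{sec:TwoEdgeBC}).

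For the Hodge star intertwining statement \eqref{eq:HodgeStarInv}, I would again proceed inductively over the strata, using the explicit leading asymptotic term given by Corollary \ref{cor:DeRhamExp}. The starting observations are that $*$ exchanges $d$ and $\pm\delta$ on $X$ and hence identifies $\cD_{\max}(d) \leftrightarrow \cD_{\max}(\delta)$ and $\cD_{\min}(d) \leftrightarrow \cD_{\min}(\delta)$, so in particular $*\cD_{\max}(\eth_{\dR}) = \cD_{\max}(\eth_{\dR})$. Assuming by induction that the intertwining holds for boundary conditions at strata $Y^1,\ldots,Y^k$, I pick $u \in \cD_{(B^1,\ldots,B^k)}(\eth_{\dR})$ supported near a point $q \in Y^{k+1}$ and use its distributional expansion
\[
u \sim x^{-f/2}\bigl(\alpha(u) + dx\wedge \beta(u)\bigr) + \widetilde u,
\]
with $f=\dim Z^{k+1}_q$ even and $\alpha(u),\beta(u)$ sections of $\cH^{\mid}_{W^1,\ldots,W^k}(H^{k+1}/Y^{k+1})$. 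A direct computation using the local form of the $\iie$-metric shows that $*u$ has expansion of the same type but with leading coefficients (up to an overall sign depending on the parity of the total degree and of $f/2$)
\[
\alpha(*u) = \pm *_{Z^{k+1}_q}\beta(u), \qquad \beta(*u) = \pm *_{Z^{k+1}_q}\alpha(u).
\]
The Cheeger boundary condition $B_{W^{k+1}}$ on $u$ says $\alpha(u) \in W^{k+1}$ and $\beta(u) \in (W^{k+1})^{\perp}$; under the formulas above this becomes $\alpha(*u) \in *(W^{k+1})^{\perp} = \sD W^{k+1}$ and $\beta(*u) \in *W^{k+1} = (\sD W^{k+1})^{\perp}$, which is exactly the Cheeger condition $B_{\sD W^{k+1}}$. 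Combined with the inductive claim on the lower strata, this proves that $*$ maps $\cD_{\bB}(\eth_{\dR})$ into $\cD_{\sD\bB}(\eth_{\dR})$; applying the same argument to $\sD\cW$ (whose dual is $\cW$, since $\sD\sD = \mathrm{Id}$) gives the reverse inclusion. The statement for $\cD_{\bB}(d)$ then follows via the identification $\cD_{\bB}(d)\cap\cD_{\bB}(d)^* = \cD_{\bB}(\eth_{\dR})$ from Theorem \ref{thm:SelfDual} together with the obvious dual fact $*\cD_{\bB}(d) = \cD_{\sD\bB}(\delta) = \cD_{\sD\bB}(d)^*$.

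The main obstacle, to be handled carefully, is the verification of the Hodge star's action on fiberwise Hodge cohomology with inductively defined boundary conditions: we must know that the induced $\iie$-metric on each link $Z^{k+1}_q$ is again suitably scaled and rigid so that Theorem \ref{thm:SelfDual}, the inductive hypothesis, and the asymptotic expansion of Corollary \ref{cor:DeRhamExp} all apply on the fiber. Given the rigidity assumption on $g$, the metric on the fiber is exact conic in the normal direction and the suitable scaling condition is inherited (possibly after invoking Lemma \ref{lem:SuitableScaling} fiberwise), so this is a matter of bookkeeping. Once this fiberwise compatibility is in place, the algebraic manipulation of the leading coefficients of the asymptotic expansion under $*$ is entirely routine, and the two displayed assertions of the proposition follow.
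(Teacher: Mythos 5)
Your proposal follows essentially the same inductive approach as the paper's proof. Both argue by induction on depth: the flatness of $\sD W^j$ comes from the $Q$-orthogonality discussion preceding the statement, applied fiberwise once the inductive hypothesis identifies $\cH^{\mid}_{W^1,\ldots,W^k}(H^{k+1}/Y^{k+1})$ with $\cH^{\mid}_{\sD W^1,\ldots,\sD W^k}(H^{k+1}/Y^{k+1})$ via the Hodge star on the link; and the intertwining property is checked on the leading coefficients of the asymptotic expansion, which the paper encodes through the induced map on the trace bundle and you write out explicitly as $\alpha(*u) = \pm *_Z\beta(u)$, $\beta(*u) = \pm *_Z\alpha(u)$. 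One small imprecision, shared with the paper: since the Hodge star exchanges $d$ with $\pm\delta$, your own observation gives $*\cD_{\bB}(d) = \cD_{\sD\bB}(\delta) = \cD_{\sD\bB}(d)^*$, which is not literally the asserted $*\cD_{\bB}(d)=\cD_{\sD\bB}(d)$; the clean content is that $*$ furnishes an isomorphism of Hilbert complexes $(d,\cD_{\bB}(d))\cong(\delta,\cD_{\sD\bB}(\delta))$, and the paper's formulation (which argues from the common core $\cD^{\reg}_{\bB}(\eth_{\dR})$ and likewise treats $*$ as if it preserved the $d$-graph topology) glosses the same distinction.
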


\begin{proof}
The Hodge star of $g$ defines a unitary involution
\begin{equation*}
	*: L^2(X; \Lambda^*(\Iie T^*X)) \lra L^2(X; \Lambda^*(\Iie T^*X)) 
\end{equation*}
that for any $\omega \in \cD_{\max}(d) \cap \cD_{\max}(\delta)$ satisfies
\begin{equation*}
	d*\omega = \pm *\delta \omega, \quad
	\delta *\omega = \pm * d \omega.
\end{equation*}
For any subbundle $W$ of the vertical harmonic forms of $H \lra Y,$ there is an induced map
\begin{equation*}
	*: 
	\CI(Y; \Lambda^*Y \otimes W) \oplus dx \wedge \CI(Y; \Lambda^*Y \otimes W^{\perp}) 
	\lra
	\CI(Y; \Lambda^*Y \otimes *_{H/Y}W^{\perp}) \oplus dx \wedge \CI(Y; \Lambda^*Y \otimes *_{H/Y}W) 
\end{equation*}
by identifying these forms with forms on $H$ in the kernel of the vertical de Rham operator. Denoting $\sD W$ the $Q_{H/Y}$-orthogonal complement of $W,$ this map is
\begin{equation*}
	*: 
	\CI(Y; \Lambda^*Y \otimes W) \oplus dx \wedge \CI(Y; \Lambda^*Y \otimes W^{\perp}) 
	\lra
	\CI(Y; \Lambda^*Y \otimes \sD W) \oplus dx \wedge \CI(Y; \Lambda^*Y \otimes \sD W^{\perp}) 
\end{equation*}

Now if $\hat X$ has a single singular stratum, this is enough to prove the proposition.
Indeed, clearly $\sD \cW$ is a mezzoperversity and $u \in \cD_{\bB}^{\reg}(\eth_{\dR})$ if and only if  $*u \in \cD_{\sD\bB}^{\reg}(\eth_{\dR}),$
which suffices since $\cD_{\bB}(\eth_{\dR})$ and $\cD_{\bB}(d)$ are the graph closures of $\cD_{\bB}^{\reg}(\eth_{\dR}).$

If $\hat X$ has two singular strata and they are both non-Witt, then 
\begin{equation*}
	\sD W^1 \text{ is a flat subbundle of } \cH^{\mid}(H^1/Y^1) \lra Y^1.
\end{equation*}
Applying the proposition to the typical link at a point in $Y^2,$ $Z^2,$ a stratified space with a single singular stratum, we see that $\sD W^2$ is a flat subbundle of $\cH^{\mid}_{\sD W^1}(H^2/Y^2) \lra Y^2.$ Thus $\sD \cW$ is a mezzoperversity and the same argument as above shows that $u \in \cD_{\bB}^{\reg}(\eth_{\dR})$ if and only if  $*u \in \cD_{\sD\bB}^{\reg}(\eth_{\dR}),$ and hence \eqref{eq:HodgeStarInv} holds. The case of more strata is proven inductively in the same way.
\end{proof}

\begin{definition}
Let $(\hat X,g, \bB)$ be a stratified pseudomanifold with a suitably scaled $\iie$-metric and Cheeger ideal boundary conditions $\bB$ corresponding to a Hodge mezzoperversity $\cW.$ The mezzoperversity $\sD \cW$ of Proposition \ref{prop:DualMezzo} is known as the {\bf dual Hodge mezzoperversity} to $\cW.$ The de Rham mezzoperversity $[\sD \cW]$ is the {\bf dual de Rham mezzoperversity} of $[\cW].$
\end{definition}

Note that the dual de Rham mezzoperversity is independent of the choice of metric.

\begin{corollary}
Let $(\hat X,g, \bB)$ be an oriented stratified pseudomanifold with a suitably scaled $\iie$-metric and Cheeger ideal boundary conditions $\bB$ corresponding to a Hodge mezzoperversity $\cW$ with duals $\sD \bB$ and $\sD \cW.$
The intersection pairing
\begin{equation*}
	Q : \cD_{\bB}(d) \times \cD_{\sD \bB}(d) \lra \bbR, \quad Q(\eta, \omega) = \int_X \eta \wedge \omega
\end{equation*}
is non-degenerate and descends to a non-degenerate `generalized Poincar\'e duality' pairing
\begin{equation}\label{eq:PDPairing}
	Q: \tH_{\cW}^*(\hat X) \times \tH_{\sD\cW}^*(\hat X) \lra \bbR.
\end{equation}
\end{corollary}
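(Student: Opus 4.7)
The plan is to prove the corollary in three steps, combining the Hodge theorem from Theorem \ref{thm:SelfDual} with the involution property established in Proposition \ref{prop:DualMezzo}. First, for continuity of $Q$: on an oriented manifold one has $\eta \wedge \omega = \ang{\eta, *\omega}_g \, \dvol_g$ for forms of complementary degree, so Cauchy--Schwarz gives $|Q(\eta,\omega)| \leq \|\eta\|_{L^2}\|\omega\|_{L^2}$. Hence $Q$ is defined and continuous on $L^2(X;\Lambda^*(\Iie T^*X)) \times L^2(X;\Lambda^*(\Iie T^*X))$, and a fortiori on $\cD_{\bB}(d) \times \cD_{\sD\bB}(d)$ with the graph topology.

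Next, descent to cohomology. I would show that $Q(d\alpha,\omega)=0$ whenever $\alpha \in \cD_{\bB}(d)$ and $\omega \in \cD_{\sD\bB}(d) \cap \ker d$ (and the symmetric case). By continuity and the Kodaira decomposition \eqref{Kodaira2}, it suffices to take $\alpha \in \cD_{\bB}(\eth_{\dR})$. The crucial input is that the Hodge star sends $\cD_{\sD\bB}(d)$ into $\cD_{\bB}(\delta)$: since $** = \pm \Id$ gives $\sD(\sD\cW)=\cW$, this follows from applying Proposition \ref{prop:DualMezzo} to the dual mezzoperversity $\sD\cW$, combined with the intertwining relation $\delta = \pm *d*$ and the identification $\cD_{\bB}(\delta) = \cD_{\bB}(d)^*$ from Theorem \ref{thm:SelfDual}. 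Consequently $*\omega \in \cD_{\bB}(\delta)$ with $\delta(*\omega) = \pm *d\omega = 0$, and the adjoint pairing yields
\[
Q(d\alpha,\omega) = \pm \ang{d\alpha, *\omega}_{L^2} = \pm \ang{\alpha, \delta(*\omega)}_{L^2} = 0 .
\]
The case with $\omega$ exact in $\cD_{\sD\bB}(d)$ is handled symmetrically, so $Q$ descends to the pairing \eqref{eq:PDPairing}.

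Finally, for non-degeneracy, given a nonzero class $[\eta] \in \tH^*_{\cW}(\hat X)$ I would invoke the Hodge isomorphism in Theorem \ref{thm:SelfDual} to represent it by a harmonic form $\eta \in \cH^*_{\bB}(\hat X)$, so $\eta \in \cD_{\bB}(\eth_{\dR})$ with $d\eta = \delta\eta = 0$. By Proposition \ref{prop:DualMezzo}, $*\eta \in \cD_{\sD\bB}(\eth_{\dR})$, and since $*$ intertwines $d$ and $\delta$ up to sign one has $\eth_{\dR}(*\eta)=0$, hence $*\eta \in \cH^*_{\sD\bB}(\hat X)$ represents a class $[*\eta] \in \tH^*_{\sD\cW}(\hat X)$. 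The pointwise identity $\eta \wedge *\eta = |\eta|_g^2 \, \dvol_g$ then yields
\[
Q([\eta],[*\eta]) = \int_X \eta \wedge *\eta = \|\eta\|^2_{L^2} > 0 ,
\]
showing that $[\eta] \mapsto Q([\eta],\cdot)$ is injective; the symmetric argument with $\cW$ and $\sD\cW$ exchanged handles the other factor, and together with the finite-dimensionality of these cohomology groups coming from Theorem \ref{Thm:MainHodgeThm} this establishes non-degeneracy.

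The main obstacle is verifying that the Hodge star genuinely implements the interchange $*\cD_{\sD\bB}(d) = \cD_{\bB}(\delta)$, or equivalently that the integration-by-parts identity $\ang{d\alpha,*\omega} = \ang{\alpha,\delta(*\omega)}$ holds in the required generality. I expect this to follow by tracing through the inductive description of $\cD_{\bB}(d)$ and $\cD_{\bB}(\delta)$ via the asymptotic expansions developed in Section \ref{sec:GralDepthDeRham}, using crucially that the definition $\sD W^j = * (W^j)^\perp$ (with respect to the vertical Hodge star) is exactly what is needed to match the boundary data of $*\omega$ to the subspace $(W^j)^{\perp}$ dictated by the $\delta$-side boundary conditions at the $j$-th stratum.
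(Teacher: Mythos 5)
Your proof is correct, and the non-degeneracy argument (harmonic representative $\eta$, pairing with $*\eta$) is essentially the one the paper uses. The descent argument, however, is genuinely different. The paper applies Stokes' theorem on the resolved manifold with corners, obtaining
\begin{equation*}
\int_X d(\eta\wedge\omega) \;=\; \sum_j \int_{H_j}\alpha_j(\eta)\wedge\alpha_j(\omega),
\end{equation*}
and then kills each boundary contribution by the defining property $\sD W^j = (W^j)^{\perp_Q}$; this is a hands-on computation with the leading coefficients of the asymptotic expansion, followed by a density/continuity step. You instead use the Hilbert-complex machinery: rewrite $Q(d\alpha,\omega)=\pm\ang{d\alpha,*\omega}$, move $*\omega$ into $\cD_{\bB}(\delta)=\cD_{\bB}(d)^*$, and then apply the adjoint identity $\ang{d\alpha,*\omega}=\ang{\alpha,\delta(*\omega)}=0$. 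This is cleaner once one has the right form of the interchange, which is the point you correctly flag as the technical crux. Worth noting: what you actually need is $*:\cD_{\sD\bB}(d)\to\cD_{\bB}(\delta)$, whereas Proposition \ref{prop:DualMezzo} as written asserts $*:\cD_{\bB}(d)\to\cD_{\sD\bB}(d)$. Since $*$ intertwines $d$ and $\delta$, it carries the $d$-graph norm to the $\delta$-graph norm, so the closure of $\cD_{\bB}^{\reg}(\eth_{\dR})$ in the $d$-graph norm is mapped by $*$ to the closure of $\cD_{\sD\bB}^{\reg}(\eth_{\dR})$ in the $\delta$-graph norm, which is $\cD_{\sD\bB}(\delta)$; your reading is the one that makes the adjoint pairing go through, and it is consistent with the projection identities $*(u_\delta)=(*u)_d$ and the description of $\cD_{\bB}(\delta)$ via the constraint on $\beta(v_d)$. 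The trade-off is thus: the paper's Stokes-theorem route needs only the $Q$-orthogonality $\sD W^j=(W^j)^{\perp_Q}$ and density of regular elements, while yours needs the slightly sharper domain interchange but avoids the explicit boundary integral computation. Both are valid; make sure to record the precise form of the Hodge-star map you invoke, since it does not follow merely from citing Proposition \ref{prop:DualMezzo} verbatim.
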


\begin{proof}
At the level of differential forms the statement follows easily from $Q(\eta, \omega) = \ang{\eta, *\omega}_{L^2}$ with both $\eta$ and $*\omega$ in $\cD_{\bB}(d).$
To see that the pairing descends to cohomology, note that if both $\eta$ and $\omega$ are regular then by Stokes' theorem for manifolds with corners
\begin{equation*}
	\int_X d(\eta \wedge \omega) = \sum \int_{H_j} \alpha_j(\eta) \wedge \alpha_j(\omega)
\end{equation*}
and each of these summands vanishes since $\sD W^j = (W^j)^{\perp_Q}.$ By continuity in the graph norm, it follows that $\int_X d(\eta\wedge\omega)=0$ for all pairs in $\cD_{\bB}(d) \times \cD_{\sD \bB}(d),$ and hence $Q(\eta, \omega)$ vanishes if one of the forms is closed and the other exact.  
\end{proof}

\subsection{Cheeger spaces}

The generalized Poincar\'e duality of Goresky-MacPherson is most interesting when the dual perversities $\bar p$ and $\bar q$ coincide, i.e., on Witt spaces.
Similarly our generalized Poincar\'e duality is most interesting when the dual mezzoperversities $\cW$ and $\sD \cW$ coincide.

\begin{definition}
We say that a de Rham mezzoperversity is {\bf self-dual}  if it coincides with its dual mezzoperversity.
If $\hat X$ is a stratified pseudomanifold that carries a self-dual mezzoperversity we say that $\hat X$ is a {\bf Cheeger space.}
\end{definition}

\begin{remark}
Not all stratified pseudomanifolds are Cheeger spaces. 
Indeed, $W^j \lra Y^j$ is isomorphic to $\sD W^j \lra Y^j$ precisely when 
the vertical Hodge star sends $W^j$ to its orthogonal complement with respect to the induced vertical metrics.
This implies that the vertical family of signature operators on $H^j \lra Y^j$ has vanishing families index.
In particular the individual links $Z^j$ must have vanishing index.\\

If $\hat X$ has only conic singularities, then it is a Cheeger space if and only if the links all have vanishing signature.
\end{remark}

On a Cheeger space we can refine the de Rham operator to the signature operator by restricting to self-dual forms.
If $\hat X$ is even-dimensional, the Hodge star induces a natural involution on the differential forms on $X,$
\begin{equation*}
	\cI: \CIc(X; \Lambda^*T^*X) \lra \CIc(X;\Lambda^*T^*X), \quad \cI^2 =\Id
\end{equation*}
that extends to $L^2$ $\iie$ forms
\begin{equation*}
	\cI: L^2(X; \Lambda^*(\Iie T^*X)) \lra \CIc(X;\Lambda^*(\Iie T^*X)), \quad \cI^2 =\Id.
\end{equation*}
The $+1,$ $-1$ eigenspaces are known as the self-dual and anti-self-dual forms and denoted by
\begin{equation*}
	L^2(X; \Lambda_+^*(\Iie T^*X)), \quad
	\Mand
	L^2(X; \Lambda_-^*(\Iie T^*X)).
\end{equation*}
The signature operator $\eth_{\sign}^+$ is the de Rham operator $d+\delta$ as an unbounded operator between self-dual and anti-self-dual forms.
Its formal adjoint is $\eth_{\sign}^-,$ the de Rham operator $d+\delta$ as an unbounded operator between anti-self-dual and self-dual forms.
If $\cW$ is a self-dual mezzoperversity and $\bB$ are the associated Cheeger ideal boundary conditions, we define
\begin{equation*}
	\cD_{\bB}(\eth_{\sign}^{\pm}) = \cD_{\bB}(\eth_{\dR}) \cap L^2(X; \Lambda_\pm^*(\Iie T^*X)).
\end{equation*}

\begin{theorem}
Let $(\hat X, g, \bB)$ be a stratified space endowed with a suitably scaled $\iie$ metric and Cheeger ideal boundary conditions $\bB$ corresponding to a self-dual mezzoperversity $\cW.$
The signature operator 
\begin{equation*}
	\eth_{\sign}^+: \cD_{\bB}(\eth_{\sign}^+) \subseteq L^2(X; \Lambda_+^*(\Iie T^*X)) \lra L^2(X; \Lambda_-^*(\Iie T^*X))
\end{equation*}
is closed and Fredholm. Its adjoint is $(\eth_{\sign}^-, \cD_{\bB}(\eth_{\sign}^-))$ and 
its Fredholm index is the signature of the generalized Poincar\'e duality quadratic form $Q$ from \eqref{eq:PDPairing}.
\end{theorem}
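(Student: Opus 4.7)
The plan is to deduce this theorem from the already established self-adjoint Fredholm property of $(\eth_{\dR}, \cD_{\bB}(\eth_{\dR}))$ by exploiting that, when $\cW$ is self-dual, the chirality involution $\cI$ built from the Hodge star commutes with the boundary condition structure. By Proposition \ref{prop:DualMezzo} applied with $\sD\cW = \cW,$ the map $\cI$ restricts to a bounded involution of $\cD_{\bB}(\eth_{\dR}).$ Since on smooth forms one has $\cI \eth_{\dR} = -\eth_{\dR} \cI$ and $\cI^2 = \Id,$ the orthogonal decomposition $L^2 = L^2(\Lambda^*_+) \oplus L^2(\Lambda^*_-)$ induces
\begin{equation*}
	\cD_{\bB}(\eth_{\dR}) = \cD_{\bB}(\eth_{\sign}^+) \oplus \cD_{\bB}(\eth_{\sign}^-),
\end{equation*}
with $\eth_{\dR}$ interchanging the two summands and restricting to $\eth_{\sign}^{\pm}$ there.

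I would then obtain closedness of $(\eth_{\sign}^+, \cD_{\bB}(\eth_{\sign}^+))$ directly: if $u_n \in \cD_{\bB}(\eth_{\sign}^+)$ with $u_n \to u$ and $\eth_{\sign}^+ u_n \to v,$ then $\cI u_n = u_n$ and $\cI \eth_{\sign}^+ u_n = -\eth_{\sign}^+ u_n,$ so $\cI u = u$ and $\cI v = -v$ by boundedness of $\cI,$ while $u \in \cD_{\bB}(\eth_{\dR})$ and $\eth_{\dR} u = v$ by Theorem \ref{Thm:MainHodgeThm}. The identification of the adjoint is then formal: for $u \in \cD_{\bB}(\eth_{\sign}^+)$ and $w \in \cD_{\bB}(\eth_{\sign}^-),$ using the self-adjointness of $\eth_{\dR}$ from Theorem \ref{thm:SelfDual} and the orthogonality of $L^2(\Lambda^*_{\pm})$ with respect to $\cI,$ one shows $\cD_{\bB}(\eth_{\sign}^+)^* = \cD_{\bB}(\eth_{\sign}^-)$ with $(\eth_{\sign}^+)^* = \eth_{\sign}^-.$ Fredholmness and compactness of the resolvent transfer from $\eth_{\dR}$ via the inclusion \eqref{eq:CompactInclusionDomain}: kernel and cokernel of $\eth_{\sign}^+$ are the $\pm 1$ eigenspaces of $\cI$ on the finite-dimensional space $\cH^*_{\cW}(\hat X),$ which is compactly included in $L^2.$

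The nontrivial step is the index computation. By the Hodge decomposition \eqref{Kodaira2},
\begin{equation*}
	\ind(\eth_{\sign}^+) = \dim \cH^*_{\cW,+}(\hat X) - \dim \cH^*_{\cW,-}(\hat X),
\end{equation*}
where the subscripts denote $\pm 1$ eigenspaces of $\cI.$ The idea is that for $\iie$ forms of pure degree $k \neq n/2$ (with $n = \dim X$), the involution $\cI$ exchanges degree $k$ with degree $n-k$ up to a scalar factor, and since $\cD_\bB(\eth_{\dR})$ is graded and $\cI$ preserves it, the $\cI$-invariant and anti-invariant pieces sitting in $\cH^k_\cW \oplus \cH^{n-k}_\cW$ have equal dimension, contributing zero to the index. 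Only the middle degree $k = n/2$ survives; there $\cI$ is an involution on $\cH^{n/2}_\cW(\hat X),$ and the pairing $Q$ restricted to middle-degree harmonic forms is represented by $\langle \cdot, \cI \cdot \rangle_{L^2},$ so the signature of $Q$ on $\tH^{n/2}_\cW(\hat X)$ equals $\dim \cH^{n/2}_{\cW,+} - \dim \cH^{n/2}_{\cW,-}.$ Combining with the isomorphism $\cH^*_\cW(\hat X) \cong \tH^*_\cW(\hat X)$ and the vanishing of $Q$ on the off-middle summands (since Poincar\'e duality pairs $\tH^k_\cW$ with $\tH^{n-k}_\cW$ and these pair trivially within the symmetric form on the total cohomology), one concludes that $\ind(\eth_{\sign}^+) = \sign(Q).$

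The main obstacle is the bookkeeping in the last paragraph: one must verify that $\cI$ restricts to the domain graded by degree, check the sign and phase conventions that make $\cI^2 = \Id$ and $\cI \eth_{\dR} = -\eth_{\dR} \cI$ simultaneously, and justify that the contributions from non-middle degrees cancel at the level of domains (not merely on smooth forms). For this one uses that the distributional asymptotic expansions from Corollary \ref{cor:DeRhamExp}, hence the mezzoperversity constraints, are compatible with both the Hodge star and the degree decomposition, which is precisely the content of Proposition \ref{prop:DualMezzo} applied degree-by-degree.
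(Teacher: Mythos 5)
The paper states this theorem without proof: it appears as the final statement of Section~6, immediately before the Local Poincar\'e Lemma section begins, with no argument given. There is therefore no proof in the paper to compare against, and the authors evidently regard the statement as a formal consequence of the machinery already assembled. Your proof correctly supplies the argument they leave implicit, and it uses precisely the ingredients one would expect from the paper's toolkit: Proposition~\ref{prop:DualMezzo} with $\sD\cW=\cW$ to show $\cI$ preserves $\cD_\bB(\eth_{\dR})$ and hence splits the domain; closedness, adjointness, and Fredholmness inherited from $(\eth_{\dR},\cD_\bB(\eth_{\dR}))$ via Theorems~\ref{thm:SelfDual} and~\ref{Thm:MainHodgeThm}; and the Atiyah--Singer degree-cancellation argument on the harmonic representatives supplied by the Hodge theorem~\eqref{Kodaira2}. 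The two points you flag as requiring care --- that the Cheeger ideal boundary conditions are compatible with both the degree grading and the Hodge star, so the cancellation in degrees $k\neq n/2$ holds at the level of domains rather than just on smooth forms, and that on middle-degree harmonic forms $Q(\eta,\omega)=\langle\eta,\cI\omega\rangle_{L^2}$ --- are exactly the right ones to isolate, and your appeal to Proposition~\ref{prop:DualMezzo} and Corollary~\ref{cor:DeRhamExp} disposes of them correctly.
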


\section{Local Poincar\'e Lemma}
In order to relate the cohomology of the complex $(d, \cD_{\bB}(d))$ to the hypercohomology of a complex of sheaves, as 
we do in the companion paper \cite{ABLMP}, we must compute the cohomology of a distinguished neighborhood 
with respect to ideal boundary conditions. So assume that $\cU_q \cong \mathbb B^h \times [0,1)_x \times Z_q = \mathbb B \times
C(Z_q)$ is a distinguished neighborhood, with trivialized vertical cohomology with boundary conditions, (consistent with the 
flat structure), $\cU_q \times \cH_{\bB(Z_q)}(Z_q)$, and flat subbundle $\cU_q \times W(Z_q),$; 
we compute $\Ht^*(d\rest{\cU_q}, \cD_{\bB}(d\rest{\cU_q})).$

A mezzoperversity for $\hat X$ induces a mezzoperversity on each link $Z_y,$ and hence ideal boundary conditions 
$(\bB_y, B_y^{k+1})$ on $\{ y \} \times C(Z)$ for every $y \in \bbB.$ The cohomology of the resulting complex is given 
by the expected formula:
\begin{proposition}
The cohomology 
$\Ht^k(d_{C(Z_q)}, \cD_{(\bB_y, B_y^{k+1})}(d_{C(Z_q)}))$ is independent of the point $y \in \bbB$ and is given by
\begin{equation*}
	\Ht^k(d_{C(Z_q)}, \cD_{(\bB_y,B^{k+1}_y)}(d_{C(Z_q)})) 
	=\begin{cases}
	\Ht^k_{\bB(Z_q)}(Z_q) & \Mif k < \tfrac12\dim Z \\
	W(Z_q) & \Mif k = \tfrac12\dim Z\\
	0 & \Mif k >\tfrac12 \dim Z
	\end{cases}
\end{equation*}
\end{proposition}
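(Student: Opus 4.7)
The argument proceeds by induction on the depth of $\hat X$ and follows the classical strategy of Cheeger for isolated conic singularities, now in the iterated-edge setting. By the inductive hypothesis applied to the link $Z_q$ (a stratified space of strictly smaller depth), the full Hodge/Kodaira package from Theorem \ref{thm:SelfDual} is available on $Z_q$ with boundary conditions $\bB(Z_q)$; in particular, the decomposition
\[
L^2(Z_q;\Lambda^*(\Iie T^*Z_q))=\cH^*_{\bB(Z_q)}(Z_q)\oplus\overline{d^{Z_q}(\cD_{\bB}(d^{Z_q}))}\oplus\overline{\delta^{Z_q}(\cD_{\bB}(\delta^{Z_q}))}
\]
holds, with harmonic representatives realizing $\Ht^*_{\bB(Z_q)}(Z_q)$. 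The flat subbundle $W(Z_q)\subset\cH^{\mid}_{\bB(Z_q)}(Z_q)$ gives the new Cheeger ideal boundary condition $B^{k+1}_y$ at the apex $\{0\}\times Z_q$ of the cone.

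First I would write any $\omega\in\cD_{(\bB_y,B^{k+1}_y)}(d_{C(Z_q)})$ as $\omega=\alpha(x)+dx\wedge\beta(x)$ with $\alpha(x)$ and $\beta(x)$ one-parameter families of $\iie$-forms on $Z_q$ of vertical degree $k$ and $k-1$, and apply the Kodaira decomposition fiberwise to split each into its harmonic, $d^{Z_q}$-exact and $\delta^{Z_q}$-coexact parts. The closedness relation $d\omega=0$ becomes $d^{Z_q}\alpha=0$ and $\partial_x\alpha=d^{Z_q}\beta$, which forces the harmonic component of $\alpha$ to be independent of $x$ and determines the remaining non-harmonic parts in terms of each other. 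The cone-homotopy operator
\[
K\omega(x,z)=\int_0^x\beta(t,z)\,dt
\]
yields the Cartan formula $\omega-dK\omega-K(d\omega)=\alpha(0,z)$, which shows that every closed class is represented by a harmonic form on $Z_q$ pulled back to the cone. Boundedness of $K$ on $L^2$ with respect to the iie volume $x^f\,dx\,\dvol_{Z_q}$ is controlled by a weighted Hardy inequality with weight $x^{f-2(k-1)}$; preservation of the inductively-imposed boundary conditions $\bB(Z_q)$ is automatic since $K$ acts only in the $x$-direction and commutes with fiberwise operations.

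For $k<f/2$, the Hardy estimate is strictly subcritical, the map $\cH^k_{\bB(Z_q)}(Z_q)\hookrightarrow\Ht^k$ is an isomorphism, and no apex boundary condition intervenes because the leading $x^{-f/2}$ Cauchy data vanishes automatically in these degrees (Corollary \ref{cor:DeRhamExp}). For $k>f/2$ the same strategy, applied with a cutoff near $x=0$ that exploits the now-supercritical weight, shows that the constant-in-$x$ representative $\alpha(0,z)$ itself fails to be $L^2$, so every closed form is exact and $\Ht^k=0$. The genuine obstacle, and the heart of the argument, is the middle degree $k=f/2$ (even $f$), where the Hardy bound is critical and $B_{W}$ becomes nontrivial. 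Here my plan is as follows: for $h\in W(Z_q)^{\perp}$, construct an explicit Laurent-type primitive $\psi$ built from the indicial-family calculus of \S \ref{sec:IndicialDeRham}, whose leading $x^{-f/2}$ Cauchy data lies in $W^{\perp}$ and therefore satisfies the Cheeger boundary condition, witnessing that $h$ is cohomologous to zero; for $h\in W(Z_q)$, use the Stokes/integration-by-parts identity of Lemma \ref{lem:IntByParts}, applied between $\psi$ and the harmonic representative $*h$ on a cone truncated at $x=\varepsilon$, to show that the boundary term at $x=\varepsilon$ tends to a nonzero multiple of $\langle h,h\rangle_{Z_q}$, obstructing the existence of a primitive in $\cD_{(\bB_y,B^{k+1}_y)}(d_{C(Z_q)})$. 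The trickiest step will be calibrating the Laurent ansatz for the primitive in the first case so that it satisfies simultaneously the lower-stratum boundary conditions $\bB(Z_q)$ and the new condition at the apex; this requires using the flatness of $W(Z_q)\subset\cH^{\mid}_{\bB(Z_q)}(H^{k+1}/Y^{k+1})$ to lift fiberwise constructions consistently. Combining the three ranges of $k$ gives the stated formula and, as a consequence, independence of $y\in\bbB$.
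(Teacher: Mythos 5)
Your proposal follows essentially the same strategy as the paper's proof, which is quite terse: the paper defers to Cheeger's original conical computation in \cite{Cheeger:Hodge} and to Hunsicker--Mazzeo \cite[\S 3.5]{Hunsicker-Mazzeo}, flagging only the one ingredient that is new in the iterated-edge setting, namely that the pullback $\pi^*$ and the Cheeger homotopy operator $K$ (built from the fiber integral $\pi_*$, possibly after cutting off) preserve the inductively-imposed ideal boundary conditions $\bB(Z_q)$ on the link. Your observation that ``$K$ acts only in the $x$-direction and commutes with fiberwise operations'' is the same point stated informally; the paper pins it down by exhibiting the mapping properties of $\pi^*$ and $K$ between the boundary-conditioned domains, truncated by vertical degree.

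The one place where the plan should be tightened is the middle degree $k=f/2$. You propose to ``construct a primitive $\psi$ for $h\in W(Z_q)^{\perp}$ whose leading Cauchy data lies in $W^{\perp}$,'' but this frames the calculation backwards. A closed form $\omega$ of degree $f/2$ in $\cD_{(\bB_y,B^{k+1}_y)}(d_{C(Z_q)})$ necessarily has tangential Cauchy datum $\alpha(\omega)\in W$ by the very definition of the Cheeger ideal boundary condition $B^{k+1}_y=(\Id-\cP_W,\cP_W)$; the $W^{\perp}$-component of $\cH^{f/2}_{\bB(Z_q)}(Z_q)$ is simply not represented in the complex at all, so there is nothing for a Laurent primitive to kill. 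The cleaner middle-degree argument is that $[\omega]\mapsto\alpha(\omega)\in W$ is an isomorphism: it is well defined because $\alpha(d\psi)=0$ for any $\psi$ of degree $f/2-1$ in the domain (a form of that degree has no middle vertical harmonic content, or equivalently apply Lemma \ref{lem:AlphaNabla} with the trivial flat connection over a point); it is surjective because $\pi^*h$ is closed and lies in the domain for $h\in W$; and it is injective because the homotopy operator $K$ produces an $L^2$ primitive in the domain whenever $\alpha(\omega)=0$, by the subcritical Hardy estimate. With this correction the plan coincides with Cheeger's argument as invoked by the paper, and the bookkeeping in the ranges $k<f/2$ and $k>f/2$ is the standard weight analysis you describe.
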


The proof follows the usual computation, as in \cite{Cheeger:Hodge} and \cite[\S 3.5]{Hunsicker-Mazzeo}. 
Note that Cheeger defines certain operators on the maximal domain of the exterior derivative; we point out that,
using notation as in \cite{Cheeger:Hodge}, 
\begin{equation*}
\begin{gathered}
	\pi^*: \cD_{\bB(Z)}(d^Z)_{[j]\leq f/2} \lra \cD_{\max, \bB(Z)}(d^{C(Z)})_{[j]\leq f/2} \\
	w\in \cD_{\max, \bB(Z)}(d^{C(Z)}) \implies Kw = \pi_*(w) \Mor \pi_*(\chi w) \in \cD_{\bB(Z)}(d^{C(Z)})
\end{gathered}
\end{equation*}
hence Cheeger's proof carries over directly. 

Finally, observe that one can reduce the cohomology $\Ht^*(d\rest{\cU_q}, \cD_{\bB}(d\rest{\cU_q}))$ to 
$\Ht^k(d_{C(Z)}, \cD_{\bB}(d_{C(Z)})).$ This requires a bit of care since, even though $\bbB$ is contractible, the 
boundary conditions vary with $y \in \bbB.$ However the natural filtration by vertical degree with respect to 
$\pi: \cU_q \lra \bbB$ shows that the cohomology $\Ht^*(d\rest{\cU_q}, \cD_{\bB}(d\rest{\cU_q}))$ can be computed 
by first passing to $\Omega^*(\bbB; \Ht^*(d_{C(Z)}, \cD_{(\bB,B^{k+1})}(d_{C(Z)})) )$ (indeed this is the $E_1^{*,*}$ term in 
the Serre spectral sequence). Since this is now a trivial local system, we can use the contractibility of $\bbB$ to 
see that this reduces to $\Ht^k(d_{C(Z)}, \cD_{\bB}(d_{C(Z)})).$  One can proceed alternately 
as in \cite[Corollary 2.15]{Bruning-Lesch}. In any case, this gives 
\begin{equation*}
	\Ht^*(d\rest{\cU_q}, \cD_{\bB}(d\rest{\cU_q})) = \Ht^*(\bbB) \otimes \Ht^*(d_{C(Z)}, \cD_{\bB}(d_{C(Z)})).
\end{equation*}

\end{document}